\titleformat{\section}{\large\bfseries\selectfont}{\thesection\;\;\;}{0em}{}
\titleformat{\subsection}{\normalsize\bfseries\selectfont}{\thesubsection\;\;\;}{0em}{}
\setlist[itemize]{wide=0pt, leftmargin=16pt, labelwidth=10pt, align=left}
\theoremstyle{plain}
\newtheorem{theorem}{Theorem}[section]
\newtheorem{proposition}[theorem]{Proposition}
\newtheorem{lemma}[theorem]{Lemma}
\newtheorem{corollary}[theorem]{Corollary}
\theoremstyle{definition}
\newtheorem{definition}[theorem]{Definition}
\newtheorem{remark}[theorem]{Remark}
\newtheorem{assumption}{Assumption}
\newtheorem*{notations*}{Notations}
\newtheorem*{acknowledge*}{Acknowledgement}
\numberwithin{equation}{section}
\newcommand{\bbc}{\mathbb{C}}
\newcommand{\bbd}{\mathbb{D}}
\newcommand{\bbe}{\mathbb{E}}
\newcommand{\bbj}{\mathbb{J}}
\newcommand{\bbn}{\mathbb{N}}
\newcommand{\bbp}{\mathbb{P}}
\newcommand{\bbr}{\mathbb{R}}
\newcommand{\bbt}{\mathbb{T}}
\newcommand{\fka}{\mathfrak{a}} 
\newcommand{\fkx}{\mathfrak{x}}
\newcommand{\fks}{\mathfrak{s}} 
\newcommand{\fkm}{\mathfrak{m}}
\newcommand{\fkn}{\mathfrak{n}}
\newcommand{\fkt}{\mathfrak{t}}
\newcommand{\fkb}{\mathfrak{b}}
\newcommand{\fkw}{\mathfrak{w}}
\newcommand{\fkg}{\mathfrak{g}}
\newcommand{\bfe}{\mathbf{e}}
\newcommand{\bfu}{\mathbf{u}}
\newcommand{\bfv}{\mathbf{v}}
\newcommand{\bfy}{\mathbf{y}}
\newcommand{\fku}{\boldsymbol{\mathfrak{u}}}
\newcommand{\fkv}{\boldsymbol{\mathfrak{v}}}
\newcommand{\bfxi}{\boldsymbol{\xi}}
\newcommand{\bfId}{\mathbf{I}}
\newcommand{\bfSigma}{\mathbf{\Sigma}}
\newcommand{\paraAVE}{\psi^{\mathrm{ave}}}
\newcommand{\paraISO}{\psi^{\mathrm{iso}}}
\newcommand{\boundAVE}{\phi^{\mathrm{ave}}}
\newcommand{\boundISO}{\phi^{\mathrm{iso}}}
\newcommand{\ParaAVE}{\Psi^{\mathrm{ave}}}
\newcommand{\ParaISO}{\Psi^{\mathrm{iso}}}
\newcommand{\BoundAVE}{\Phi^{\mathrm{ave}}}
\newcommand{\BoundISO}{\Phi^{\mathrm{iso}}}
\newcommand{\rpeAVE}{\mathcal{E}^{\mathrm{ave}}}
\newcommand{\rpeISO}{\mathcal{E}^{\mathrm{iso}}}
\newcommand{\gaussAVE}{\mathcal{Y}^{\mathrm{ave}}}
\newcommand{\gaussISO}{\mathcal{Y}^{\mathrm{iso}}}
\newcommand{\highAVE}{\mathcal{Z}^{\mathrm{ave}}}
\newcommand{\highISO}{\mathcal{Z}^{\mathrm{iso}}}
\newcommand{\caQ}{\mathcal{Q}}
\newcommand{\rie}{\mathrm{\textup{\tiny RIE}}}
\newcommand{\fr}{\mathrm{\textup{\tiny F}}}
\newcommand{\finv}{\mathrm{\textup{\tiny FINV}}}
\newcommand{\coefOne}{\theta}
\newcommand{\coefTwo}{\vartheta}
\newcommand{\coefOnePre}{\varsigma}
\newcommand{\dimu}{\mathbf{\Delta}^{i \mu}}
\newcommand{\parimu}{\partial_{i \mu}}
\newcommand{\opB}{\mathscr{B}}
\newcommand{\opX}{\mathscr{X}}
\newcommand{\opS}{\mathscr{S}}
\newcommand{\opSd}{\opS_{\mathrm{d}}}
\newcommand{\opSo}{\opS_{\mathrm{o}}}
\newcommand{\oprec}{{O}_{\prec}}
\newcommand{\oprecBig}[1]{{O}_{\prec} \bigg ( {#1} \bigg )}
\newcommand{\qwhere}{\quad \text{ where } \quad}
\newcommand{\qand}{\quad \text{ and } \quad}
\newcommand{\abs}[1]{| {#1} |}
\newcommand{\norm}[1]{\| {#1} \|}
\newcommand{\angles}[1]{\langle {#1} \rangle}
\newcommand{\shortpara}[1]{\noindent\underline{\emph{#1}}}
\pgfplotsset{compat=1.18,
    /pgf/declare function={
	density(\x) = (2/pi/\x)*sqrt((\x^2-1/4)*(9/4-\x^2));
    },
}
\newcommand{\drawPolygon}[2]{
    \draw[line width=1pt,color=SkyBlue,fill=Gray,fill opacity=0.08] (0.5, 0.02*#1) -- (1.5, 0.02*#1) -- ({1.5+(15-#1)/#1/50}, 0.3) -- ({1.5+(15-#1)/#1/50}, {0.6+#2*0.04}) -- ({0.5-(15-#1)/#1/50}, {0.6+#2*0.04}) -- ({0.5-(15-#1)/#1/50}, 0.3) -- cycle;
    \node[below] (Dm#1) at ({0.85+#2*0.15},{0.6+#2*0.04}) {};
    \coordinate (Br#1) at (1.5, 0.02*#1) {};
}
\begin{document}


\title{Eigenvector overlaps in large sample covariance matrices and nonlinear shrinkage estimators}


\author{ 
    \normalsize Zeqin Lin \\ 
    \small \href{mailto:zeqin001@e.ntu.edu.sg}{zeqin001@e.ntu.edu.sg} 
    \and 
    \normalsize Guangming Pan \\ 
    \small \href{mailto:gmpan@ntu.edu.sg}{gmpan@ntu.edu.sg}
}
\date{
    \small School of Physical and Mathematical Sciences \\
    \small Nanyang Technological University
}


\maketitle
\begin{abstract}
Consider a data matrix $Y = [\mathbf{y}_1, \cdots, \mathbf{y}_N]$ of size $M \times N$, where the columns are independent observations from a random vector $\mathbf{y}$ with zero mean and population covariance $\Sigma$. Let $\bfu_i$ and $\bfv_j$ denote the left and right singular vectors of $Y$, respectively. This study investigates the eigenvector/singular vector overlaps $\angles{\bfu_i, D_1 \bfu_j}$, $\angles{\bfv_i, D_2 \bfv_j}$ and $\angles{\bfu_i, D_3 \bfv_j}$, where $D_k$ are general deterministic matrices with bounded operator norms. In the high-dimensional regime, where the dimension $M$ scales proportionally with the sample size $N$, we establish the convergence in probability of these eigenvector overlaps towards their deterministic counterparts with explicit convergence rates. Building on these findings, we offer a more precise characterization of the loss associated with Ledoit and Wolf's nonlinear shrinkage estimators of the population covariance $\Sigma$.
\end{abstract}
\tableofcontents \label{sec:contents}




\section{Introduction} 
\label{sec:intro}

The estimation of covariance matrices or precision matrices is a fundamental problem in the field of multivariate statistics that continues to garner significant attention. This seemingly straightforward problem holds substantial importance across a multitude of domains such as finance, genomics, signal processing, and many others, due to its wide-ranging downstream applications. 

Consider a $M$-dimensional random vector $\bfy \in \bbr^M$ with a zero mean. The $M \times M$ nonnegative-definite \emph{population covariance matrix} of interest is defined as $\Sigma := \bbe \bfy \bfy^\top$. Let $Y = [\bfy_1, \cdots, \bfy_N]$ be the $M \times N$ data matrix where $(\bfy_\mu)_{\mu = 1}^N$ are independent observations from the random vector $\bfy$. It is well understood that when the number of variables $M$ is commensurate with the sample size $N$, 
the \emph{sample covariance matrix}
\begin{equation}
    \hat{\Sigma} = \frac{1}{N} Y Y^\top = \frac{1}{N} \sum_{\mu = 1}^N \bfy_\mu \bfy_\mu^\top \in \bbr^{M \times M},
    \label{def:sample-covariance}
\end{equation}
has limited efficiency when estimating $\Sigma$ and can lead to erroneous conclusions. This curse of dimensionality has spurred a plethora of efforts over the past two decades to develop robust, efficient, and scalable methods for covariance estimation in high-dimensional settings. For a comprehensive review of recent advances in this topic, we recommend  \cite{caiEstimatingStructuredHighdimensional2016,fanOverviewEstimationLarge2016,lamHighdimensionalCovarianceMatrix2020,pourahmadiHighDimensionalCovarianceEstimation2013} and the references therein.

In the absence of any prior knowledge about the structure of the population covariance, the class of shrinkage estimators developed by Ledoit and Wolf  \cite{ledoitNonlinearShrinkageEstimation2012,ledoitPowerNonLinear2022} is one of the most widely recognized and commonly employed method due to their proven capability to reduce estimation error. The key characteristic of shrinkage estimators is their rotational invariance, which essentially means that these estimators are derived by retaining the eigenvectors of the sample covariance matrix while manipulating the eigenvalues. Specifically, suppose that the singular value decomposition (SVD) of $Y$ and the spectral decomposition of $\hat{\Sigma}$ are respectively given by
\begin{equation}
    Y 
    = \sqrt{N} \sum_{i = 1}^{M \wedge N} s_i \bfu_i \bfv_i^\top
    \qand
    \hat{\Sigma} 
    = \sum_{i = 1}^{M} s_i^2 \bfu_i \bfu_i^\top
    =: \sum_{i = 1}^{M} \lambda_i \bfu_i \bfu_i^\top
    =: U \Lambda U^\top,
    \label{eqn:spectral-decomposition}
\end{equation}
where $\{ \bfu_i \}_{i=1}^M$ and $\{ \bfv_\mu \}_{\mu=1}^N$ denote the left and right singular vectors of $Y$, respectively. Here the singular values of $Y/\sqrt{N}$ and the eigenvalues of $\hat{\Sigma}$ are ordered as 
\begin{equation*}
    s_1 \geq s_2 \geq \cdots \geq s_{M \wedge N} \geq 0
    \qand
    \lambda_1 \geq \lambda_2 \geq \cdots \geq \lambda_{M} \geq 0.
\end{equation*}
Then, a \emph{rotationally invariant estimator} (RIE) of $\Sigma$ admits the form
\begin{equation*}
    \hat{\Sigma}^\rie = \sum_{i = 1}^{M} \hat{\lambda}^\rie_i \bfu_i \bfu_i^\top
    =: U \hat{\Lambda}^\rie U^\top,
\end{equation*}
with the shrunk eigenvalues $\hat{\lambda}^\rie_i$ typically dependent on $Y$. The concept of shrinking the eigenvalues of sample covariance matrices dates back to the pioneering works by Stein \cite{stein1975estimation,steinLecturesTheoryEstimation1986}. Within this framework, the task of covariance estimation boils down to the identification of an optimal $\hat{\Lambda}^\rie$, such that the resulting RIE is optimal with respect to (w.r.t.) some specified criterion. 

In the high-dimensional regime ($M \asymp N$), a remarkable breakthrough in this line of research was the introduction of the linear shrinkage estimator by Ledoit and Wolf \cite{ledoitWellconditionedEstimatorLargedimensional2004}. In this context, $\hat{\Lambda}^\rie = \operatorname{diag} (\hat{\lambda}^\rie_1, \cdots, \hat{\lambda}^\rie_M)$ is derived through a (data-dependent) linear transformation of the sample eigenvalues $\Lambda$. Subsequently, Ledoit and Wolf continued to excavate the potential of shrinkage methods by investigating the more flexible approach of nonlinear shrinkage of sample eigenvalues \cite{ledoitAnalyticalNonlinearShrinkage2020,ledoitNonlinearShrinkageEstimation2012,ledoitOptimalEstimationLargedimensional2018,ledoitQuadraticShrinkageLarge2022,ledoitShrinkageEstimationLarge2021}. The introduction of these nonlinear shrinkage estimators coincided with significant advancements in random matrix theory (RMT), resulting in an additional level of performance improvement. One can refer to the systematic review provided in \cite{ledoitPowerNonLinear2022} for more details about this trajectory and extensive pointers to the literature.

To motivate our research about eigenvector overlaps in large sample covariance matrices, let us quickly revisit Ledoit and Wolf's approach of constructing nonlinear shrinkage estimators. The first step involves selecting an appropriate loss function $\mathscr{L}$ to quantify the estimation error. One can derive the minimizer of the optimization problem
\begin{equation}
    \min_{\hat{\lambda}^\rie_1, \cdots, \hat{\lambda}^\rie_M} \mathscr{L} (\hat{\Sigma}^\rie, \Sigma)
    \quad \text{ s.t. } \quad
    \hat{\Sigma}^\rie = U \operatorname{diag} (\hat{\lambda}^\rie_1, \cdots, \hat{\lambda}^\rie_M) U^\top.
    \label{eqn:min-estimation-error}
\end{equation}
That is, we operate within the class of RIEs and target at the optimal estimator that minimizes estimation error. For illustration, we consider the Frobenius loss $\mathscr{L}^\fr (\hat{\Sigma}^\rie, \Sigma) := M^{-1} \norm{\hat{\Sigma}^\rie - \Sigma}_{\mathrm{F}}^2$, where $\norm{\cdot}_{\mathrm{F}}$ denotes the Frobenius norm. In this situation, by elementary matrix algebra, one can easily show that the minimizer of (\ref{eqn:min-estimation-error}) is given by the \emph{oracle} shrinkages
\begin{equation}
    \lambda^\fr_i := \angles{\bfu_i, \Sigma \bfu_i}
    \equiv \bfu_i^\top \Sigma \bfu_i,
    \label{def:oracle-overlap}
\end{equation}
i.e., the overlaps of sample eigenvectors w.r.t the population covariance. Consequently, the finite-sample optimal ``estimator'' is given by
\begin{equation*}
    {\Sigma}^\fr 
    = U \Lambda^\fr U^\top
    = U \operatorname{diag} ({\lambda}^\fr_1, \cdots, {\lambda}^\fr_M) U^\top
\end{equation*}
with $\lambda^\fr_i$ defined in (\ref{def:oracle-overlap}). However, the ``estimator'' ${\Sigma}^\fr$ is not practically feasible since the shrunk eigenvalues $\lambda^\fr_i$ involve the unknown population covariance $\Sigma$. 
This explains why these values are termed ``oracle''. Nevertheless, it signifies considerable progress as the problem has now been transformed from estimating $M(M+1)/2$ parameters in the full population covariance matrix to the more amenable task of estimating $M$ eigenvector overlaps. This paves the way for the application of high-dimensional asymptotic results from RMT.

In the high-dimensional regime where $M$ is proportional to $N$, Mar\v{c}enko and Pastur's groundbreaking work \cite{marcenkoDistributionEigenvaluesSets1967} established the weak convergence of $M^{-1} \sum_{i = 1}^M \delta_{\lambda_i}$, namely the \emph{empirical spectral distribution} (ESD) of the sample covariance $\hat{\Sigma}$, to a certain \emph{limiting spectral distribution} (LSD). Ledoit and P\'{e}ch\'{e} \cite{ledoitEigenvectorsLargeSample2011} extended this result by demonstrating the weak convergence of $M^{-1} \sum_{i = 1}^M \angles{\bfu_i, \Sigma \bfu_i} \delta_{\lambda_i}$, i.e., the empirical spectral measure weighted by eigenvector overlaps. Specifically, it has been established that the Radon-Nikodym derivative of the weak limit of this weighted measure w.r.t. the Mar\v{c}enko-Pastur law is given by $1/(\lambda \abs{\fkm (\lambda)}^2)$ for $\lambda > 0$. Here $\fkm$ represents the Stieltjes transform of the Mar\v{c}enko-Pastur law. Refer to (\ref{eqn:self-consistent-z}) below for its formal definition. This development gave rise to the introduction of the transitional shrinkages
\begin{equation}
    \lambda^{\fr \star}_i := \frac{1}{\lambda_i \abs{\fkm (\lambda_i)}^2}.
    \label{def:oracle-estimate-renewed}
\end{equation}

The nonlinear shrinkage estimator based on the transitional estimates in (\ref{def:oracle-estimate-renewed}) is still infeasible in practice due to its reliance on $\fkm$, the Stieltjes transform of the LSD. However, a practical estimator can be derived from these transitional quantities upon employing a consistent estimator of $\fkm$,
\begin{equation}
    \hat{\lambda}^{\fr \star}_i := \frac{1}{\lambda_i \abs{\hat{\fkm} (\lambda_i)}^2}.
    \label{def:practical-estimate}
\end{equation}
In the early implementations by Ledoit and Wolf \cite{ledoitNonlinearShrinkageEstimation2012,ledoitOptimalEstimationLargedimensional2018,ledoitShrinkageEstimationLarge2021}, the estimation of $\fkm (\lambda_i)$ was accomplished through an indirect approach that involves a large dimensional optimization problem \cite{ledoitSpectrumEstimationUnified2015} to recover the spectrum of $\Sigma$. However, due to the lack of an analytical solution for this optimization problem, implementing these numerical strategies is cumbersome and typically demands a considerable amount of computational time. In their follow-up work \cite{ledoitAnalyticalNonlinearShrinkage2020}, Ledoit and Wolf devised a computationally more tractable implementation by integrating the kernel estimator of LSD developed in \cite{jingNonparametricEstimateSpectral2010}. More recently, Benaych-Georges \cite{benaych-georgesShortProofLedoitPeche2023} suggested an even more direct approach, approximating $\fkm (\lambda_i)$ by $\fkg (\lambda_i + \mathrm{i} \eta)$ with some $\eta \ll 1$. Here, $\fkg$ represents the Stieltjes transform of the ESD, which can be readily computed from the sample eigenvalues $\lambda_i$. This strategy has also been adopted in the algorithm developed in \cite{benaych-georgesOptimalCleaningSingular2023} for estimating cross-covariance matrices.

Both approaches in \cite{ledoitAnalyticalNonlinearShrinkage2020} and \cite{benaych-georgesShortProofLedoitPeche2023} result in effective and robust implementations of nonlinear shrinkage estimators. In \cite{ledoitAnalyticalNonlinearShrinkage2020},  by drawing comparisons with the oracle estimator $\Sigma^\fr$, the practical performance of the algorithmic estimator
\begin{equation*}
    \hat{\Sigma}^{\fr \star} := U \hat{\Lambda}^{\fr \star} U^\top
    = U \operatorname{diag} (\hat{\lambda}^{\fr \star}_1, \cdots, \hat{\lambda}^{\fr \star}_M) U^\top.
\end{equation*}
has been extensively validated through Monte Carlo simulations. Regarding the theoretical efficacy of $\hat{\Sigma}^{\fr \star}$, Ledoit and Wolf \cite{ledoitAnalyticalNonlinearShrinkage2020,ledoitOptimalEstimationLargedimensional2018} demonstrated that, under some technical assumptions,
\begin{equation}
    \mathscr{L}^{\fr} ({\Sigma}^{\fr}, \Sigma)
    - \mathscr{L}^{\fr} (\hat{\Sigma}^{\fr \star}, \Sigma) 
    \stackrel{\bbp}{\longrightarrow} 0,
    \quad \text{ as } \quad
    N \to \infty.
    \label{eqn:convergence-loss}
\end{equation}
Essentially, this means that asymptotically, the algorithmic implementation $\hat{\Sigma}^{\fr \star}$ is equally effective as its oracle counterpart ${\Sigma}^{\fr}$ when minimizing the loss.

However, a quantitative evaluation of the loss incurred when estimating (\ref{def:oracle-overlap}) with (\ref{def:practical-estimate}),
\begin{equation}
    \mathscr{L}^{\fr} ({\Sigma}^{\fr}, \hat{\Sigma}^{\fr \star})
    = \frac{1}{M} \sum_{i=1}^M \abs{{\lambda}^{\fr}_i - \hat{\lambda}^{\fr \star}_i}^2
    = \frac{1}{M} \sum_{i=1}^M \bigg | { \angles{\bfu_i, \Sigma \bfu_i} - \frac{1}{\lambda_i \abs{\hat{\fkm} (\lambda_i)}^2}} \bigg |^2,
    \label{def:loss-oracle-pratical}
\end{equation}
is still absent in the literature. From the perspective of practitioners, a quantitative evaluation of the loss (\ref{def:loss-oracle-pratical}) offers compelling justifications to employ $\hat{\Sigma}^{\fr \star}$ as a feasible substitute for the oracle estimator ${\Sigma}^{\fr}$. Moreover, an explicit convergence rate in terms of $N$ for (\ref{def:loss-oracle-pratical}) could provide theoretical supports for the rapid convergence, in terms of loss, of $\hat{\Sigma}^{\fr \star}$ towards ${\Sigma}^{\fr}$ as depicted in \cite[Figure 4]{ledoitAnalyticalNonlinearShrinkage2020}.

In view of the above, this article focuses on the asymptotic behavior of the eigenvector overlaps ${\lambda}^{\fr}_i = \angles{\bfu_i, \Sigma \bfu_i}$ within the context of large sample covariance matrices. We highlight that the results we establish here are considerably more general than the convergence of the loss (\ref{def:loss-oracle-pratical}). Actually, we are able to quantify, in the high-dimensional regime, the convergence of each eigenvector overlap towards its corresponding deterministic counterpart,
\begin{equation}
    {\lambda}^{\fr}_i - \hat{\lambda}^{\fr \star}_i
    = \angles{\bfu_i, \Sigma \bfu_i} - \frac{1}{\lambda_i \abs{\hat{\fkm} (\lambda_i)}^2}.
\end{equation}
This elementwise convergence even yields a satisfactory control, in terms of the operator norm, over the difference between ${\Sigma}^{\fr}$ and $\hat{\Sigma}^{\fr \star}$,
\begin{equation}
    \norm{{\Sigma}^{\fr} - \hat{\Sigma}^{\fr \star}}
    = \max_{1 \leq i \leq M} \abs{{\lambda}^{\fr}_i - \hat{\lambda}^{\fr \star}_i}.
    \label{def:diff-operator-norm}
\end{equation}
More specifically, our main result covers the overlaps of the left/right singular vectors of the data matrix $Y$ w.r.t. arbitrary deterministic matrices with bounded operator norms,
\begin{equation}
    \angles{\bfu_i, D_1 \bfu_j},
    \quad
    \angles{\bfv_i, D_2 \bfv_j}
    \qand
    \angles{\bfu_i, D_3 \bfv_j},
    \label{def:singular-vec-overlaps}
\end{equation}
where the matrices $D_1, D_2, D_3$ are of compatible sizes. This level of flexibility in choosing the deterministic matrix $D_1$ enables us to manage various nonlinear shrinkage estimators induced by different loss functions \cite{ledoitOptimalEstimationLargedimensional2018,ledoitShrinkageEstimationLarge2021}, extending our analysis beyond the Frobenius loss $\mathscr{L}^\fr$.

The methodology of this article, from a theoretical perspective, is inspired by the work of \cite{cipolloniOptimalLowerBound2023}, which investigated eigenvector overlaps in the context of additively deformed non-Hermitian i.i.d. matrices. The research \cite{cipolloniOptimalLowerBound2023} is part of a broader effort led by Cipolloni, Erd\H{o}s, Schr\"{o}der, and their collaborators, focusing on the overlaps of singular vectors/eigenvectors in large random matrix ensembles. A pivotal aspect of this line of research is the development of the so-called \emph{multi-resolvent local laws}, which serves as a potent theoretical tool for inferring asymptotic properties of eigenvector overlaps. This ambitious project initially focused on resolving the eigenstate thermalization hypothesis (ETH) for the most fundamental prototype in RMT, the Wigner matrices \cite{cipolloniEigenstateThermalisationEdge2023,cipolloniEigenstateThermalizationHypothesis2021,cipolloniOptimalMultiresolventLocal2022,cipolloniRankuniformLocalLaw2022,cipolloniThermalisationWignerMatrices2022}. Subsequently, the scope is expanded to encompass random matrix ensembles involving full-rank deformation, including deformed non-Hermitian i.i.d. matrices \cite{cipolloniOptimalLowerBound2023}, deformed Wigner matrices \cite{cipolloniGaussianFluctuationsEquipartition2023}, and general Wigner-type matrices \cite{erdosEigenstateThermalizationHypothesis2024}.

The sample covariance matrices examined in this article are subjected to a multiplicative deformation induced by the population covariance $\Sigma$, which aligns our work with the studies \cite{cipolloniGaussianFluctuationsEquipartition2023,cipolloniOptimalLowerBound2023,erdosEigenstateThermalizationHypothesis2024}. This intrinsic anisotropy within the underlying random matrix ensembles forces the deterministic counterpart of $\angles{\bfu_i, D \bfu_i}$ to be energy-dependent, meaning it relies on both the observable $D$ and the location of $\lambda_i$. Consequently, these deformed ensembles pose a significantly more complex technical challenge compared to Wigner matrices, for which $\angles{\bfu_i, D \bfu_i}$ asymptotically concentrates around the normalized trace of $D$ regardless of the index $i$.

The situation is further complicated by the sparsity of eigenvalues near the edges of LSD. In fact, the studies \cite{cipolloniGaussianFluctuationsEquipartition2023,cipolloniOptimalLowerBound2023,erdosEigenstateThermalizationHypothesis2024} have predominantly concentrated on developing asymptotic results for the overlap of bulk eigenvectors, i.e., those corresponding to eigenvalues located in the bulk of the asymptotic spectrum. Essentially, this excludes eigenvectors associated with edge eigenvalues (e.g., $\bfu_i$ with $i \ll N$). In this article, we venture into this challenging edge regime by conducting a meticulous analysis on resolvent chains involving spectral parameters close to the edges. Importantly, this rigorous effort leads to a nontrivial convergence rate for the overlap of edge eigenvectors, and significantly enhances our analysis of the nonlinear shrinkage estimators.

Finally, let us mention that apart from \cite{benaych-georgesShortProofLedoitPeche2023,ledoitEigenvectorsLargeSample2011}, there are two additional research directions in the field of RMT concerning the asymptotic behavior of eigenvectors of large sample covariance matrices. Both directions focus on the generalized eigenvector components, i.e. quantities of the form $\angles{\mathbf{a}, \bfu_i}$, where $\mathbf{a} \in \bbr^M$ is deterministic. One direction, represented by \cite{baiAsymptoticsEigenvectorsLarge2007,silversteinEigenvectorsLargeDimensional1989,silversteinLimitTheoremsEigenvectors1984,silversteinWeakConvergenceRandom1990,xiConvergenceEigenvectorEmpirical2020,yangLinearSpectralStatistics2020}, investigates the global behavior of eigenvectors by examining the LSD weighted by $\abs{\angles{\mathbf{a}, \bfu_i}}^2$. In the presence of an additional finite-rank deformation, another direction, represented by \cite{baoStatisticalInferencePrincipal2022,bloemendalPrincipalComponentsSample2016,paulAsymptoticsSampleEigenstructure2007}, analyzes the almost sure limit and fluctuation of the spiked eigenvectors. Overall speaking, compared with the extensive literature on eigenvalues, there are relatively few works dedicated to eigenvectors. Furthermore, we highlight that for a general non-null $\Sigma$, the analysis of individual eigenvectors of sample covariance matrices is notably scarce in the literature, and this article makes a progress in alleviating this situation.

During the preparation of the first version of this article, a contemporaneous work \cite{dingEigenvectorDistributionsOptimal2024} by Ding, Li and Yang established, in the context of the sample covariance matrices, the joint asymptotic Gaussian fluctuation of the generalized eigenvector components
\begin{equation*}
    \sqrt{M} [\angles{\mathbf{a}, \bfu_{i_1}}, \cdots, \angles{\mathbf{a}, \bfu_{i_n}}]
\end{equation*}
for arbitrary deterministic $\mathbf{a} \in \bbr^M$ with bounded norms and an finite set of indices $\{ i_1, \cdots, i_n \}$ associated with non-spiked and non-zero eigenvalues. This insightful theoretical result was achieved by analyzing the eigenvector moment flow induced by the rectangular Dyson Brownian motion (DBM). Particularly, as a corollary, they concluded the $L^1$ convergence of the eigenvector overlaps $\angles{\bfu_i, D_1 \bfu_i}$. Compared with the methodology adopted by \cite{dingEigenvectorDistributionsOptimal2024}, the analysis of eigenvector overlaps in the current article is built on the multi-resolvent local laws. This approach allows us to explicitly characterize the convergence rate of these overlaps. In addition, as a byproduct, we also cover the overlaps $\angles{\bfv_i, D_2 \bfv_j}$ and $\angles{\bfu_i, D_3 \bfv_j}$, which involve the right singular vectors of the data matrix $Y$.

To summarize, this article makes two key contributions. Firstly, we give more precise characterizations of the loss functions for estimating the population covariance matrix including the one from the algorithmic approximation of the oracle estimates for Ledoit-Wolf's nonlinear shrinkage estimators in the high-dimensional regime. Secondly, we establish the convergence of overlaps for all individual left and right singular vectors of the data matrix, except for those in the null space, towards their respective deterministic counterparts with convergence rates. 

The structure of this article is organized as follows. Section \ref{sec:singular-vec-overlaps} presents the main result regarding the convergence of eigenvector overlaps. Section \ref{sec:nonlinear-shrinkage} characterizes the convergence rate of the loss functions of nonlinear shrinkage estimators. Section \ref{sec:multi-local-law} is dedicated to our primary technical result, namely the multi-resolvent local laws for sample covariance matrices. Here, a crucial concept, the regularity of observables in resolvent chains, is introduced. In Section \ref{subsec:proof-main-result} we demonstrate how this powerful technical result is utilized to establish the main results discussed in Sections \ref{sec:singular-vec-overlaps} and \ref{sec:nonlinear-shrinkage}. The subsequent sections delve into the proof of the multi-resolvent local laws, with a detailed organization provided at the beginning of Section \ref{sec:self-improving-inequalities}.

\begin{notations*}
Throughout this article, we regard $N$ as the fundamental parameter and take $M \equiv M^{(N)}$. To streamline notation, we frequently suppress the dependence on $N$ from the notations, bearing in mind that all quantities that are not explicitly constant may depend on the asymptotic parameter $N$. Given the $N$-dependent nonnegative quantities $a \equiv a^{(N)}$ and $b \equiv b^{(N)}$, we write $a \lesssim b$ or $a = O (b)$ if $a^{(N)} \leq C b^{(N)}$ for some $N$-independent constant $C > 0$. Here $C$ may implicitly depend on other quantities that are explicitly $N$-independent (e.g., $\tau, C_p$ used in our assumptions below). We also write $a \asymp b$ if both $a \lesssim b$ and $a \gtrsim b$ hold.

We adopt the following notion of high probability bounds from \cite{erdosLocalSemicircleLaw2013} to systematize statements of the form ``$\mathcal{Y}$ is bounded with high probability by $\mathcal{Z}$ up to small powers of $N$''. For two families of nonnegative random variables parameterized by $N \in \mathbb{N}$ and $t \in \bbt^{(N)}$,
\begin{equation*}
    \mathcal{Y} \equiv \mathcal{Y}^{(N)}(t)
    \qand
    \mathcal{Z} \equiv \mathcal{Z}^{(N)}(t),
\end{equation*}
we say $\mathcal{Y}$ is \emph{stochastically dominated} by $\mathcal{Z}$, uniformly in $t$, if for any (small) $\varepsilon > 0$ and (large) $L > 0$,
\begin{equation}
    \sup_{t \in \bbt^{(N)}} 
    \bbp \big \{ {\mathcal{Y}^{(N)}(t) > N^{\varepsilon} \mathcal{Z}^{(N)}(t)} \big \} \leq N^{-L},
    \quad \text{ for all } \quad N \geq N_0(\varepsilon, L).
    \label{def:stochastic-domination}
\end{equation}
In this case, we write $\mathcal{Y} \prec \mathcal{Z}$. If $\mathcal{Y}$ is complex with $\abs{\mathcal{Y}} \prec \mathcal{Z}$, we also write $\mathcal{Y} = \oprec (\mathcal{Z})$. We refer to \cite[Lemma 3.4]{benaych-georgesLecturesLocalSemicircle2018} for the basic properties of $\prec$.

Given a complex number $z \in \bbc$, we use $\Re z$ and $\Im z$ to denote its real and imaginary parts, respectively. The transpose, complex conjugate, Hermitian transpose of a matrix $A$ are denoted as $A^\top$, $\bar{A}$ and $A^*$, respectively. Let $A \equiv A^{(N)}$ be a sequence of square matrices indexed by $N \in \mathbb{N}$. We denote the \emph{normalized trace} of $A$ as 
\begin{equation}
    \angles{A} := N^{-1} \operatorname{tr} A^{(N)}.
\end{equation}
Let us emphasize that the normalization constant is determined by the index $N$ rather than the dimension of $A$. The standard inner product of two vectors of the same dimension is denoted as $\angles{\fku, \fkv} := \fku^* \fkv$. We consistently use $\norm{\cdot}$ to denote the Euclidean norm of a vector or the operator norm of a matrix w.r.t. the Euclidean norm. Finally, given positive integers $N_1 < N_2$, we denote $\llbracket N_1, N_2 \rrbracket := [N_1, N_2] \cap \bbn$ and abbreviate $\llbracket N_2 \rrbracket \equiv \llbracket 1, N_2 \rrbracket$. 

\end{notations*}

\begin{acknowledge*}
The authors would like to express their sincere gratitude to Oliver Ledoit for bringing to our attention the importance of quantifying the convergence of the loss (\ref{def:loss-oracle-pratical}). We are also thankful to Xiucai Ding, Yun Li and Fan Yang for sharing the details of their recent work \cite{dingEigenvectorDistributionsOptimal2024}.
\end{acknowledge*}

\section{Singular vector/eigenvector overlaps} 
\label{sec:singular-vec-overlaps}

This section is to present our main results concerning the singular vector overlaps for the data matrix $Y$. First, the three singular vector overlaps in (\ref{def:singular-vec-overlaps}) can be unified as follows. Let
\begin{equation}
    \bfxi_{\pm i} := \frac{1}{\sqrt{2}} \begin{bmatrix} \bfu_i \\ \pm \bfv_i \end{bmatrix},
    \quad
    i \in \llbracket M \wedge N \rrbracket
    \qand
    \bfxi_{i} := \begin{bmatrix} \mathbb{1} ({M > N}) \bfu_i \\ 
    \mathbb{1} ({M < N}) \bfv_i \end{bmatrix},
    \quad
    i \in \llbracket M \wedge N + 1, M \vee N \rrbracket.
\end{equation}
We can define $\bbj := \llbracket -M \wedge N, M \vee N \rrbracket \backslash \{ 0 \}$. Then, $\{ \bfxi_i \}_{i \in \bbj}$ constitute an orthonormal basis of $\bbr^{M+N}$ and correspond to the eigenvectors of the self-adjoint dilation of $Y/\sqrt{N}$,
\begin{equation}
    H := \frac{1}{\sqrt{N}} \begin{bmatrix} 0 & Y \\ Y^\top & 0 \end{bmatrix}
    = \sum_{i = 1}^{M \wedge N} s_i \begin{bmatrix} 0 & \bfu_i \bfv_i^\top \\ \bfv_i \bfu_i^\top & 0 \end{bmatrix} 
    = \sum_{i \in \bbj} s_i \bfxi_i \bfxi_i^\top.
    \label{def:linearization}
\end{equation}
Here we also make the convention that 
\begin{equation*}
    s_{-i} = -s_i, \quad i \in \llbracket 1, M \wedge N \rrbracket
    \qand
    s_{i} = 0, \quad i \in \llbracket M \wedge N + 1, M \vee N \rrbracket.
\end{equation*}
Now with the $\bfxi_i$'s, it suffices to target the overlaps $\angles{\bfxi_i, D \bfxi_j}$ for general deterministic $(M+N) \times (M+N)$ matrices $D$. Informally speaking, the main theoretical result of this article states that, in the high-dimensional regime, under certain regularity condition on the spectrum of $\Sigma$, 
\begin{equation*}
    \angles{\bfxi_i, D \bfxi_j} - \delta_{ij} \theta_{i} (D) \to 0,
    \quad \text{ as } \quad N \to \infty,
\end{equation*}
where the deterministic coefficients $\theta_{i} (D)$ can be computed from the LSD for the $s_i$'s (or the $\lambda_i$'s). The formal statement is presented in Theorem \ref{thm:eigenvector-overlaps}. Before that, we need to rigorously formulate the technical assumptions and recall some fundamental results from RMT that are necessary for a coherent presentation.

For the assumptions presented subsequently, we adopt the convention that $\tau > 0$ represents a constant (independent of $N$) that can be chosen arbitrarily small.

\begin{assumption}[High-dimensional regime] \label{assump:high-dimension}
Suppose $\tau \leq M / N \leq \tau^{-1}$ uniformly for all $N$.
\end{assumption}

\begin{assumption}[Data generating process] \label{assump:moments}
We assume that $Y$, the matrix of observations, is generated according to $Y = \sqrt{N} \Sigma^{1/2} X$, where the entries of $X = [x_{i \mu}]$ are independent real random variables with $\mathbb{E} x_{i \mu} = 0$ and $\mathbb{E} \abs{x_{i \mu}}^2 = 1/N$. Furthermore, we assume that there exist constants $C_p > 0$, for any $p \in \bbn$, such that $\mathbb{E} \abs{x_{i \mu}}^p \leq C_p N^{-p/2}$.
\end{assumption}

With Assumption \ref{assump:moments}, the sample covariance matrix can be expressed as $\hat{\Sigma} = \Sigma^{1/2} X X^\top \Sigma^{1/2}$. The remaining assumptions pertain to the LSD of $\hat{\Sigma}$. Assume that the eigenvalues of $\Sigma$ are given by
\begin{equation*}
    \sigma_1 \geq \sigma_2 \geq \cdots \geq \sigma_M \geq 0.
\end{equation*}
We make the following two basic assumptions to avoid problem degeneration.

\begin{assumption}[Boundedness of $\Sigma$] \label{assump:boundedness}
$\| \Sigma \| = \sigma_1 \leq \tau^{-1}$.
\end{assumption}

\begin{assumption}[Anti-concentration at $0$] \label{assump:anti-concentration}
$M^{-1} \sum_{i=1}^M \mathbb{1} (\sigma_i \leq \tau) \leq 1 - \tau$.
\end{assumption}

Our next two assumptions concern the regularity of LSD of $\hat{\Sigma}$, and therefore require a more technical discussion. Before detailing  these assumptions, let us gather several elementary results concerning the LSD of $\hat{\Sigma}$ and its Stieltjes transform. Most of the subsequent statements, as well as their proofs, can be founded in \cite{baoUniversalityLargestEigenvalue2015,knowlesAnisotropicLocalLaws2017,silversteinAnalysisLimitingSpectral1995}. 

For each $z \in \bbc \backslash \bbr$, there exists a unique $\fkm \equiv \fkm^{(N)} (z) \in \bbc$ satisfying the self-consistent equation
\begin{equation}
    z = f(\fkm)
    := - \frac{1}{\fkm} + \frac{1}{N} \operatorname{tr} \frac{\Sigma}{1 + \fkm \Sigma},
    \quad \text{ with } \quad
    \Im z \cdot \Im \fkm > 0.
    \label{eqn:self-consistent-z}
\end{equation}
Moreover, $\fkm (z)$ is the Stieltjes transform of a probability measure $\varrho \equiv \varrho^{(N)}$ supported in $[0,\infty)$,
\begin{equation*}
    \fkm (z) = \int_{\bbr} \frac{1}{\lambda - z} \varrho (\mathrm{d} \lambda).
\end{equation*}
By slight abuse of terminology, we refer to the measure $\varrho$ as the limiting spectral distribution (LSD) of $\hat{\Sigma}$. Rigorously speaking, $\varrho$ represents LSD of the Gram matrix $Y^\top Y/N = X^\top \Sigma X$ since almost surely, the L\'{e}vy-Prokhorov metric between $\varrho$ and $N^{-1} \sum_{\mu=1}^N \delta_{\lambda_\mu}$, the ESD of $Y^\top Y/N$, converges to zero as $N \to \infty$. The measure $\varrho$ possesses a continuous density on $(0, \infty)$. In fact, by \cite[Theorem 1.1]{silversteinAnalysisLimitingSpectral1995}, one can extend the definition of $\fkm(z)$ down to the real axis by setting
\begin{equation*}
    \fkm (E) := \lim_{\eta \downarrow 0} \fkm (E + \mathrm{i} \eta),
    \quad E \not= 0.
\end{equation*}
Then, the density of $\varrho$ is given by $(1/\pi) \Im \fkm (E)$. When there is no ambiguity, we also use $\varrho(E)$ to denote the density of $\varrho$. The following lemma characterizes the support of $\varrho$ and can be found in \cite[Lemma 2.6]{knowlesAnisotropicLocalLaws2017}. Note that $\bbr_+ = (0, \infty)$.

\begin{lemma}
Under Assumptions \ref{assump:high-dimension}-\ref{assump:anti-concentration}, there exist $\fka_1 \geq \fka_2 \geq \cdots \geq \fka_{2 K} \geq 0$ such that
\begin{equation*}
    \operatorname{supp} \varrho \cap \bbr_+
    = \left ( \bigcup_{k=1}^K [\fka_{2k}, \fka_{2k-1}] \right ) \cap \bbr_+.
\end{equation*}
Moreover, we have $\fka_1 \lesssim 1$, i.e., the rightmost edge of $\operatorname{supp} \varrho$ is uniformly bounded in $N$.
\end{lemma}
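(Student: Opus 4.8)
The plan is to read off $\operatorname{supp}\varrho$ from the self-consistent function $f$ of \eqref{eqn:self-consistent-z}, regarded as a function of a \emph{real} variable, following the classical analysis of Silverstein and Choi \cite{silversteinAnalysisLimitingSpectral1995}. Expanding the trace, $f(m) = -m^{-1} + N^{-1}\sum_{i=1}^M \sigma_i/(1+m\sigma_i)$ is real-analytic on $\mathcal{D} := \bbr \setminus \big( \{0\} \cup \{-\sigma_i^{-1} : \sigma_i > 0\}\big)$, a disjoint union of finitely many open intervals, with derivative $f'(m) = m^{-2} - N^{-1}\sum_{i=1}^M \sigma_i^2(1+m\sigma_i)^{-2}$. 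I would first invoke the Silverstein--Choi characterization (recorded as \cite[Lemma 2.6]{knowlesAnisotropicLocalLaws2017}): a point $E\in\bbr_+$ lies outside $\operatorname{supp}\varrho$ if and only if $E = f(m)$ for some $m\in\mathcal{D}$ with $f'(m)>0$, and on each maximal subinterval of $\mathcal{D}$ where $f'>0$ the map $f$ is an increasing homeomorphism onto a connected component of $\bbr\setminus\operatorname{supp}\varrho$. Hence the edges of $\operatorname{supp}\varrho$ are precisely the critical values $\{f(m):m\in\mathcal{D},\ f'(m)=0\}$; clearing denominators turns $f'(m)=0$ into a polynomial equation in $m$, so there are only finitely many such values, which — after relabeling in decreasing order — yields the stated decomposition $\operatorname{supp}\varrho\cap\bbr_+ = \big(\bigcup_{k=1}^K[\fka_{2k},\fka_{2k-1}]\big)\cap\bbr_+$ with $K<\infty$ (the restriction to $\bbr_+$ excising a possible atom of $\varrho$ at $0$, present when $M>N$ or $\Sigma$ is singular). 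A fully self-contained proof can be given instead by analyzing the sign of $f'$ together with the boundary limits of $f$ interval by interval on $\mathcal{D}$; the only delicate bookkeeping is near the poles $-\sigma_i^{-1}$ and near $0$, which is why I would lean on the cited reference for this step.

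For the \emph{moreover} assertion $\fka_1\lesssim 1$, I would argue directly. Since $f(m)\to+\infty$ as $m\uparrow 0$ and also as $m\downarrow -\sigma_1^{-1}$ (the summand $N^{-1}\sigma_1/(1+m\sigma_1)$ blows up at the latter, $\sigma_1 = \norm{\Sigma}$ being the largest eigenvalue), $f$ attains a minimum on $(-\sigma_1^{-1},0)$ at an interior point $m_1$, so the rightmost component of $\bbr\setminus\operatorname{supp}\varrho$ is $(\fka_1,\infty)$ with $\fka_1 = f(m_1)$ and $f'(m_1)=0$. Set $b_i := 1 - (1+m_1\sigma_i)^{-1} = m_1\sigma_i/(1+m_1\sigma_i)$; since $m_1\in(-\sigma_1^{-1},0)$ we have $1+m_1\sigma_i\in(0,1)$, hence $b_i<0$. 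The equation $f'(m_1)=0$ is exactly $N^{-1}\sum_{i=1}^M b_i^2 = 1$, and combining this with the algebraic identity $\sigma_i/(1+m_1\sigma_i)^2 = m_1^{-1}(b_i-b_i^2)$ one computes
\begin{equation*}
    \fka_1 = f(m_1) = N^{-1}\sum_{i=1}^M \frac{\sigma_i}{(1+m_1\sigma_i)^2} = \frac{1}{m_1}\,N^{-1}\sum_{i=1}^M (b_i - b_i^2) = \frac{1}{\abs{m_1}}\Big(1 + N^{-1}\sum_{i=1}^M \abs{b_i}\Big).
\end{equation*}
By Cauchy--Schwarz and $\sum_i b_i^2 = N$, we get $N^{-1}\sum_i\abs{b_i}\le\sqrt{M/N}\le\tau^{-1/2}$ by Assumption~\ref{assump:high-dimension}. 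For the factor $\abs{m_1}^{-1}$: writing $a := 1+m_1\sigma_1\in(0,1)$ and using $1+m_1\sigma_i\ge a$ for all $i$ (as $\sigma_i\le\sigma_1$ and $m_1<0$), the relation $f'(m_1)=0$ gives $\sigma_1^2/(1-a)^2 = m_1^{-2} \le (M/N)\,\sigma_1^2/a^2$, i.e.\ $a/(1-a)\le\sqrt{M/N}$, whence $1-a\ge(1+\sqrt{M/N})^{-1}\gtrsim 1$; since $\abs{m_1} = (1-a)/\sigma_1$ and $\sigma_1\le\tau^{-1}$ by Assumption~\ref{assump:boundedness}, this yields $\abs{m_1}\gtrsim 1$. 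Combining the three bounds gives $\fka_1\lesssim 1$.

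The genuinely finicky part is the branch analysis of $f$ on $\mathcal{D}$ underpinning the Silverstein--Choi description — deciding which intervals of $\mathcal{D}$ carry a band and matching up the $2K$ edges in decreasing order — which is standard but tedious; the edge bound $\fka_1\lesssim 1$ is then self-contained modulo the identification $\fka_1 = f(m_1)$ at the minimizing critical point, and uses only Assumptions~\ref{assump:high-dimension} and~\ref{assump:boundedness} (Assumptions~\ref{assump:moments} and~\ref{assump:anti-concentration} play no role here).
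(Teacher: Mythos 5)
Your proposal is correct, and it follows the same route as the paper: the paper does not prove this lemma at all, but simply quotes it from \cite[Lemma 2.6]{knowlesAnisotropicLocalLaws2017}, i.e.\ exactly the Silverstein--Choi-type characterization of $\operatorname{supp}\varrho$ via the critical values of $f$ that you invoke for the band decomposition. The extra content you supply -- a self-contained verification of $\fka_1 \lesssim 1$ -- checks out: on $(-\sigma_1^{-1},0)$ the function $m \mapsto N^{-1}\sum_i \big(m\sigma_i/(1+m\sigma_i)\big)^2$ is strictly monotone, so $f$ has a unique critical point $m_1$ there (which also justifies your identification of the minimizer with the rightmost edge), the identity $f(m_1)=N^{-1}\sum_i \sigma_i/(1+m_1\sigma_i)^2 = |m_1|^{-1}\big(1+N^{-1}\sum_i|b_i|\big)$ at the critical point is correct, and the Cauchy--Schwarz bound $N^{-1}\sum_i|b_i|\le\sqrt{M/N}$ together with $1-a\ge(1+\sqrt{M/N})^{-1}$ and $\sigma_1\le\tau^{-1}$ indeed gives $|m_1|\gtrsim 1$ and hence $\fka_1\lesssim 1$, using only Assumptions \ref{assump:high-dimension} and \ref{assump:boundedness} (with the degenerate case $\sigma_1=0$ being trivial, and in any case excluded by Assumption \ref{assump:anti-concentration}). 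So your argument proves, rather than merely cites, the ``moreover'' clause, which is a small but genuine addition over the paper's treatment.
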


Here we use $K \in \bbn$ to denote the number of bulk components of $\operatorname{supp} \varrho$. Note that both $K$ and the edges $\fka_k$ may depend on $N$ via $\Sigma \equiv \Sigma^{(N)}$. We are now ready to state our assumptions on the regularity of the spectrum of $\Sigma$, which essentially preclude pathological behaviors of the LSD $\varrho$. These regularity conditions have been previously discussed in various works on high-dimensional sample covariance matrices. The version we present here takes the same form as in \cite{knowlesAnisotropicLocalLaws2017}. For a more detailed explanation of the rationale behind these assumptions, we refer readers to \cite{knowlesAnisotropicLocalLaws2017} and the references therein. 

We acknowledge a potential critique of Assumption \ref{assump:edge-regularity} regarding its exclusion of spiked eigenvalues in the spectrum of $\Sigma$. However, we anticipate that, through a straightforward perturbation argument, most estimates on eigenvector overlaps and the loss of shrinkage estimators presented in this article remain valid for non-spiked sample eigenvectors within a spiked population model. We have refrained from incorporating spiked eigenvalues due to the methodological differences in deriving the shrinkage estimator for such cases, which warrant an independent treatment. Interested readers can refer to the works by Donoho, Gavish and Johnstone \cite{donohoOptimalShrinkageEigenvalues2018} or Ding, Li and Yang \cite{dingEigenvectorDistributionsOptimal2024} for a detailed exploration of shrinkage methods in the context of spiked population models.

\begin{assumption}[Edge regularity] \label{assump:edge-regularity}
Assume that the following holds for all $k \in \llbracket 2 K \rrbracket$,
\begin{equation*}
    \fka_{k} \geq \tau,
    \quad \min_{\ell: \ell \not= k} |\fka_{k}-\fka_{\ell}| \geq \tau,
    \qand \min_{i} |1 + \fkm(\fka_{k}) \sigma_{i}| \geq \tau.
\end{equation*}
\end{assumption}

\begin{assumption}[Bulk regularity] \label{assump:bulk-regularity}
Given any $\tau^\prime > 0$, there exists a constant $c \equiv c(\tau, \tau^\prime) > 0$ such that
\begin{equation*}
    \varrho(E) \geq c,
    \quad \text{ whenever } \quad 
    E \in \bigcup_{k=1}^K [\fka_{2k}+\tau^\prime, \fka_{2k-1}-\tau^\prime].
\end{equation*}
\end{assumption}

We note that Assumption \ref{assump:edge-regularity} constrains $K$ to be bounded by some constant depending only on $\tau$. Here the two assumptions are formulated in terms of $\varrho$. We mention that one can readily rephrase them using the LSD of the singular values of $Y/\sqrt{N}$, or equivalently, the eigenvalues of $H$. Actually, we find it convenient to introduce 
\begin{equation*}
    m (w) := w \fkm (w^2) = \int_{\bbr} \frac{1}{s - w} \rho (\mathrm{d} s),
\end{equation*}
where the probability measure $\rho \equiv \rho^{(N)}$ can be related to $\varrho$ via
\begin{equation*}
    \rho (\{ 0 \}) = \varrho (\{ 0 \})
    \qand
    \rho ([a, b]) = \rho ([-b, -a]) = \frac{1}{2} \varrho ([a^2, b^2]),
    \quad \text{ for all } \quad 0 < a < b.
\end{equation*}
In this article, we interchangeably use the LSDs $\varrho$ and $\rho$ (and their Stieltjes transform $\fkm$ and $m$). The choice of which one to use depends on the context. 

As mentioned earlier, the behavior of eigenvector overlaps in our context is influenced by the positions of their corresponding eigenvalues. Hence, we introduce the concept of \emph{classical locations} $\gamma_1 \geq \gamma_2 \geq \cdots \geq \gamma_{M \wedge N}$, which serve as the deterministic counterpart of the singular values $s_i$. These classical locations can be defined as quantiles of $\rho$ as follows,
\begin{equation}
    \frac{i - 1/2}{N}
    = \varrho ([\gamma_i^2, \infty)) 
    = 2 \rho ([\gamma_i, \infty)),
    \quad \text{ for all } \quad 
    i \in \llbracket M \wedge N \rrbracket.
    \label{def:classical-locations}
\end{equation}
A profound result in RMT is the so-call singular value/eigenvalue rigidity. Essentially, with high probability, the singular values $s_i$ (reps. the eigenvalues $\lambda_i = s_i^2$) are closed to their classical locations $\gamma_i$ (resp. $\gamma_i^2$) with a precision depending on $N$. The details are presented in Proposition \ref{prop:rigidity}. To streamline the presentation, we also define the classical number of eigenvalues in the $k$-th bulk component through
\begin{equation}
    N_k := N \int_{\fka_{2k}}^{\fka_{2k-1}} \varrho ({\mathrm{d} \lambda}),
    \quad \text{ for all } \quad 
    k \in \llbracket K \rrbracket.
    \label{def:classical-num-eigen}
\end{equation}
We note that it has been proved in \cite[Lemma A.1]{knowlesAnisotropicLocalLaws2017} that $N_k \in \bbn$ for all $k$. Now we can relabel the spectral components as follows,
\begin{equation}
    s_{k,i} := s_{N_1 + \cdots + N_{k-1} + i},
    \quad
    \gamma_{k,i} := \gamma_{N_1 + \cdots + N_{k-1} + i}
    \qand
    \bfxi_{k,i} := \bfxi_{N_1 + \cdots + N_{k-1} + i}.
    \label{def:relabelling}
\end{equation}
The eigenvalues $\lambda_{k,i}$ and the singular vectors $\bfu_{k,i}, \bfv_{k,i}$ can be defined in exactly the same way. Finally, let us introduce the main theoretical tool of this article, the resolvent (Green function),
\begin{equation}
    G(w) := (H - w)^{-1}
    = \sum_{i \in \bbj} \frac{1}{s_i - w} \bfxi_i \bfxi_i^\top.
    \label{def:resolvent-G}
\end{equation}
The single-resolvent local law (see Proposition \ref{prop:single-resolvent-local} below) asserts that whenever $\abs{\Im w} \gg N^{-1}$, the resolvent $G(w)$ becomes asymptotically deterministic in the weak operator sense as $N \to \infty$. The deterministic surrogate $\Pi (w)$ is given by
\begin{equation}
    \Pi (w) := \begin{bmatrix} \Gamma(w) & 0 \\ 0 & m(w) \end{bmatrix} 
    = \begin{bmatrix} - (w + m(w) \Sigma)^{-1} & 0 \\ 0 & m(w) \end{bmatrix}
    \in \bbr^{(M+N) \times (M+N)}.
    \label{def:Pi}
\end{equation} 
Here we use $\Gamma(w)$ to denote the top-left $M \times M$ block of $\Pi(w)$. Similar to $\fkm(z)$, the function $m(w)$, and thus $\Gamma (w)$, also have a continuous extension to the real line from the upper half plane,
\begin{equation*}
    m (E) := \lim_{\eta \downarrow 0} m (E + \mathrm{i} \eta) = E \fkm (E^2)
    \qand
    \Gamma (E) := - (E + m(E) \Sigma)^{-1},
    \quad E \not= 0.
\end{equation*}

We are now prepared to rigorously present the main theoretical result of this article.

\begin{theorem}[Eigenvector overlaps] \label{thm:eigenvector-overlaps}
Under Assumptions \ref{assump:high-dimension}-\ref{assump:bulk-regularity}, we have the following uniformly for all deterministic matrices $D \in \bbc^{(M+N) \times (M+N)}$ with $\norm{D} \lesssim 1$,
\begin{equation}
    \bigg | {\angles{\bfxi_{k, i}, D \bfxi_{\ell, j}} 
    - \delta_{k \ell} \delta_{ij} \frac{\angles{\Im \Pi(\gamma_{k,i}) D}}{2 \Im m(\gamma_{k,i})} } \bigg |
    \prec (N \fkn_{k,i} \fkn_{\ell,j})^{-1/6},
    \label{bound:overlap-xi-linearization}
\end{equation}
where $\fkn_{k,i} \equiv i \wedge (N_k+1-i)$ and the bound is also uniform in $k,\ell \in \llbracket K \rrbracket$, $i \in \llbracket N_k \rrbracket$ and $j \in \llbracket N_\ell \rrbracket$.
\end{theorem}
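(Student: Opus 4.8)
I would follow the resolvent methodology of \cite{cipolloniOptimalLowerBound2023}, writing the overlap as an integral of two-resolvent chains over short spectral windows and then combining the multi-resolvent local laws of Section \ref{sec:multi-local-law} with the singular-value rigidity of Proposition \ref{prop:rigidity}. Throughout, let $m,n$ be the linear indices corresponding to $(k,i)$ and $(\ell,j)$, write $\fkn_m := \fkn_{k,i}$, $\fkn_n := \fkn_{\ell,j}$, and abbreviate the target coefficient as $\theta_m(D) := \angles{\Im\Pi(\gamma_m)D}/(2\Im m(\gamma_m))$. The first step is a reduction: decomposing $D$ into its self-adjoint and anti-self-adjoint parts reduces to the case $D = D^*$, and, since $\angles{\Im\Pi(\gamma_m)} = 2\Im m(\gamma_m)$, adding a large multiple of the identity shifts $\angles{\bfxi_m, D\bfxi_m}$ and $\theta_m(D)$ by the same real constant, so we may additionally assume $D \geq 1$; then $\angles{\bfxi_m, D\bfxi_m} \geq 1$ and $\theta_m(D) \gtrsim 1$, and by $|a - b| = |a^2 - b^2|/(a+b)$ (diagonal case $m=n$) and $|c| = \sqrt{|c|^2}$ (off-diagonal case) it suffices to control the squared overlap $|\angles{\bfxi_m, D\bfxi_n}|^2$.

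\textbf{Resolvent chains and the local law.} For a scale $\eta > 0$ and short intervals $I_m \ni \gamma_m$, $I_n \ni \gamma_n$ of length of order the local eigenvalue spacing, the spectral decomposition of $\Im G$ gives the identity
\[
\frac{1}{\pi^2}\int_{I_m}\!\int_{I_n} \operatorname{tr}\!\big[\Im G(x+\mathrm{i}\eta)\, D\, \Im G(y+\mathrm{i}\eta)\, D^*\big]\,\mathrm{d}x\,\mathrm{d}y \;=\; \sum_{p,q} w_p(I_m,\eta)\, w_q(I_n,\eta)\, |\angles{\bfxi_p, D\bfxi_q}|^2 ,
\]
where $w_p(I,\eta) = \frac1\pi \int_I \frac{\eta\,\mathrm{d}x}{(s_p - x)^2 + \eta^2}$ is a smoothed indicator of $\{ s_p \in I \}$. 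By rigidity, off a negligible event the intervals isolate $s_m$, resp.\ $s_n$, so $w_p(I_m,\eta) = \mathbb{1}(p = m) + O(\eta/|I_m|)$ and the right-hand side equals $|\angles{\bfxi_m, D\bfxi_n}|^2$ up to a \emph{leakage} error of size $O(\eta/|I_m| + \eta/|I_n|)$. It then remains to evaluate the left-hand side through the two-resolvent local law: its deterministic approximation reproduces, after integration, exactly $\delta_{mn}\,\theta_m(D)^2$ (the factor $2$ in $\theta_m$ reflecting the $\pm$-symmetry of the spectrum of $H$), while for $m \neq n$ the two spectral parameters sit at energies separated by more than their spacing, the chain carries no $\Im m/\eta$ enhancement, and its contribution is of strictly lower order. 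The key structural input is the splitting of the observable into its \emph{regular} part $\mathring D$ (in the sense of Section \ref{sec:multi-local-law}) and a remainder carrying the coefficient $\theta_m(D)$: the fluctuation $\angles{\bfxi_m, D\bfxi_n} - \delta_{mn}\theta_m(D)$ is governed by resolvent chains built from the regular observable $\mathring D$, for which the local law delivers a genuinely smaller error.

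\textbf{Choosing the scale.} Two errors now compete: the leakage, which forces $\eta$ down, and the regularized two-resolvent local-law error at imaginary part $\eta$, which forces $\eta$ up. Near a regular edge $\fka$ of $\operatorname{supp} \varrho$ at index-distance $\fkn_m$ one has $|\gamma_m - \fka| \asymp (\fkn_m/N)^{2/3}$, $\Im m(\gamma_m) \asymp (\fkn_m/N)^{1/3}$, $|I_m| \asymp (N^2\fkn_m)^{-1/3}$, and $|s_m - \gamma_m| \prec (N^2\fkn_m)^{-1/3}$ by rigidity; inserting these scales together with the edge behavior of the deterministic approximation into the two error terms and optimizing over $\eta$ produces the exponent $1/6$ and the symmetric combination $(N\fkn_m\fkn_n)^{-1/6}$, uniformly in $D$ with $\norm{D} \lesssim 1$ and in all indices. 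In the bulk, where $\fkn_m \asymp N_k \asymp N$, the same optimization returns the familiar rate $N^{-1/2}$.

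\textbf{Main obstacle.} The hard part will be the edge regime. The two-resolvent local law must be run with both spectral parameters \emph{close to} the edges of $\operatorname{supp} \varrho$, where the deterministic approximation degenerates, the naive bulk error estimates blow up, and the stability operator of the self-consistent equation loses uniform invertibility. Establishing the $\fkn$-dependent quantitative control of the regularized resolvent chains in this regime --- uniformly over all admissible $D$ and all indices --- is the heart of the matter; it is carried out in the subsequent sections via a careful analysis of resolvent chains with spectral parameters near the edges, and is exactly where the present work goes beyond the bulk-only analyses of \cite{cipolloniGaussianFluctuationsEquipartition2023, cipolloniOptimalLowerBound2023, erdosEigenstateThermalizationHypothesis2024}.
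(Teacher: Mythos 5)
Your overall machinery (two-resolvent chains at the rigidity scale, regular decomposition of the observable, rigidity to locate $s_{k,i}$ near $\gamma_{k,i}$) is the right one, but the specific route you propose — evaluating the windowed integral $\int_{I_m}\int_{I_n}\operatorname{tr}[\Im G\, D\, \Im G\, D^*]$ two-sidedly and matching it to $\delta_{mn}\theta_m(D)^2$ — has genuine gaps. First, your leakage estimate rests on "off a negligible event the intervals isolate $s_m$", with $|I_m|$ of the order of the local spacing. Rigidity only locates each $s_p$ to within $N^\varepsilon$ times the spacing; it gives no gap or level-repulsion statement, so $s_{m\pm1}$ may sit inside $I_m$, and the leaked terms $w_p w_q\abs{\angles{\bfxi_p,D\bfxi_q}}^2$ involve diagonal overlaps of neighbors which are of order one — you cannot discard them without either a gap estimate or an a priori bound on exactly the quantities being proved. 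This matters because your reduction to $D\geq 1$ and the identity $\abs{a-b}=\abs{a^2-b^2}/(a+b)$ require a two-sided evaluation of the window integral, not merely an upper bound. Second, your off-diagonal argument ("the two spectral parameters sit at energies separated by more than their spacing, so the chain carries no $\Im m/\eta$ enhancement") fails for adjacent indices $m\neq n$, where $\abs{\gamma_m-\gamma_n}\asymp\eta$; the correct mechanism suppressing the deterministic term is the regularity of the observable, not energy separation. Third, matching the deterministic approximation of the windowed chain to $\theta_m(D)^2$ requires handling the non-regular components $\bfId^\pm$ of $D$, which is outside the scope of Theorem \ref{thm:local-law-regular} and is never carried out in your sketch.

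The paper's proof avoids all of these issues and needs only a one-sided bound. It sets $w_i=\gamma_i+\mathrm{i}\eta_i$ with $\eta_i=N^{-2/3+\varepsilon}\fkn_i^{-1/3}$, regularizes $A:=(D)^\circ_{w_i}$, and invokes the averaged law with two regular matrices (\ref{bound:ave-law-two-regular}) together with Lemma \ref{lemma:opD-bound-regular} to get $\angles{G(\fkw_i)AG(\fkw_j)A^*}=\oprec(1)$; then the double spectral sum for $\angles{\Im G(w_i)A\Im G(w_j)A^*}$ is bounded \emph{below} by the single term $(i,j)$, yielding $\abs{\angles{\bfxi_i,A\bfxi_j}}^2\leq 4N^{1+\varepsilon}\eta_i\eta_j$ — no window integration, no isolation of eigenvalues, no evaluation of the deterministic limit. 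The coefficient $\theta$ then appears purely algebraically, since $\angles{\bfxi_i,A\bfxi_j}=\angles{\bfxi_i,D\bfxi_j}-\delta_{ij}\coefOne^+_{w_i}(D)$ (using $\angles{\bfxi_i,\bfId^+\bfxi_j}=\delta_{ij}$, $\angles{\bfxi_i,\bfId^-\bfxi_j}=0$), and $\coefOne^+_{w_i}(D)=\angles{\Im\Pi(\gamma_i)D}/(2\Im m(\gamma_i))+\mathcal{O}(\eta_i^{1/2})$ by (\ref{eqn:Gamma-resolvent-identity})--(\ref{eqn:Gamma-imaginary-part}) and Lemmas \ref{lemma:m-order-estimate}, \ref{lemma:diff-estimate}. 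If you replace your window-matching step by this "lower bound by a single term plus regularize-and-shift" argument — which your own remark about the regular part $\mathring D$ already points toward — your outline becomes the paper's proof.
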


By specifying
\begin{equation*}
    D = \begin{bmatrix} D_1 & 0 \\ 0 & 0 \end{bmatrix},
    \quad
    D = \begin{bmatrix} 0 & 0 \\ 0 & D_2 \end{bmatrix}
    \qand
    D = \begin{bmatrix} 0 & D_3 \\ 0 & 0 \end{bmatrix},
\end{equation*}
respectively, we immediately obtain the following corollary of Theorem \ref{thm:eigenvector-overlaps}.

\begin{corollary}[Singular vector overlaps]
Under Assumptions \ref{assump:high-dimension}-\ref{assump:bulk-regularity}, we have the following uniformly for all deterministic matrices $D_1 \in \bbc^{M \times M}, D_2 \in \bbc^{N \times N}, D_3 \in \bbc^{M \times N}$ with $\norm{D_1}, \norm{D_2}, \norm{D_3} \lesssim 1$,
\begin{subequations} \label{bound:overlap-singular-all}
\begin{align}
    \bigg | {\angles{\bfu_{k, i}, D_1 \bfu_{\ell, j}} 
    - \delta_{k \ell} \delta_{ij} \frac{\angles{\Im \Gamma(\gamma_{k,i}) D_1}}{\Im m(\gamma_{k,i})} } \bigg |
    & \prec (N \fkn_{k,i} \fkn_{\ell,j})^{-1/6}, 
    \label{bound:overlap-singular-u-u} \\
    \big | { \angles{\bfv_{k, i}, D_2 \bfv_{\ell, j}} 
    - \delta_{k \ell} \delta_{ij} \angles{D_2} } \big |
    & \prec (N \fkn_{k,i} \fkn_{\ell,j})^{-1/6}, 
    \label{bound:overlap-singular-v-v} \\
    \big | { \angles{\bfu_{k, i}, D_3 \bfv_{\ell, j}} } \big |
    & \prec (N \fkn_{k,i} \fkn_{\ell,j})^{-1/6},
    \label{bound:overlap-singular-u-v}
\end{align}
\end{subequations}
where $\fkn_{k,i} \equiv i \wedge (N_k+1-i)$ and the bounds are also uniform in $k,\ell \in \llbracket K \rrbracket$, $i \in \llbracket N_k \rrbracket$ and $j \in \llbracket N_\ell \rrbracket$.
\end{corollary}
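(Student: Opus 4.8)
The plan is to obtain this corollary as a direct specialization of Theorem \ref{thm:eigenvector-overlaps} to the three block matrices $D$ exhibited just above the statement; the whole argument is bookkeeping with the $2\times 2$ block structure of the dilation $H$, so I do not anticipate any genuine obstacle.

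First I would settle the indexing. For $k\in\llbracket K\rrbracket$ and $i\in\llbracket N_k\rrbracket$ the relabelled index $N_1+\cdots+N_{k-1}+i$ from (\ref{def:relabelling}) is a positive integer not exceeding $N_1+\cdots+N_K = N(1-\varrho(\{0\}))$, which is the classical number of nonzero positive eigenvalues of $H$ and hence at most $M\wedge N$; consequently this index lies in $\llbracket M\wedge N\rrbracket$ and never meets the null block of $H$. Therefore $\bfxi_{k,i}=\tfrac1{\sqrt2}\begin{bmatrix}\bfu_{k,i}\\ \bfv_{k,i}\end{bmatrix}$, with $\bfu_{k,i},\bfv_{k,i}$ the analogously relabelled left and right singular vectors, and $\gamma_{k,i}>0$, so that $\Im m(\gamma_{k,i})$ is a legitimate denominator.

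Next I would substitute the three choices $D=\begin{bmatrix}D_1&0\\0&0\end{bmatrix}$, $D=\begin{bmatrix}0&0\\0&D_2\end{bmatrix}$ and $D=\begin{bmatrix}0&D_3\\0&0\end{bmatrix}$ into (\ref{bound:overlap-xi-linearization}). In each case $\norm D$ equals $\norm{D_1}$, $\norm{D_2}$ or $\norm{D_3}$, so the hypothesis $\norm D\lesssim 1$ and the uniformity of the bound over $D$ carry over verbatim, together with the uniformity in $k,\ell,i,j$. Expanding $\bfxi_{k,i}^{*}D\bfxi_{\ell,j}$ with the block form of $\bfxi$ produces the left-hand sides $\tfrac12\angles{\bfu_{k,i},D_1\bfu_{\ell,j}}$, $\tfrac12\angles{\bfv_{k,i},D_2\bfv_{\ell,j}}$ and $\tfrac12\angles{\bfu_{k,i},D_3\bfv_{\ell,j}}$, respectively. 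For the deterministic term, $\Im\Pi(w)=\begin{bmatrix}\Im\Gamma(w)&0\\0&\Im m(w)\end{bmatrix}$ by (\ref{def:Pi}), so $\angles{\Im\Pi(w)D}$ equals $\angles{\Im\Gamma(w)D_1}$ for the first $D$, equals $\Im m(w)\angles{D_2}$ for the second (here I use that $\angles{\cdot}$ is normalised by $N^{-1}$, which matches $D_2\in\bbc^{N\times N}$), and equals $0$ for the third, since $\Im\Pi(w)D$ is then block-off-diagonal and hence traceless. Dividing by $2\Im m(\gamma_{k,i})$ and then multiplying the estimate through by $2$, which preserves stochastic domination, gives precisely (\ref{bound:overlap-singular-u-u})--(\ref{bound:overlap-singular-u-v}) with the asserted error $(N\fkn_{k,i}\fkn_{\ell,j})^{-1/6}$.

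The only points deserving a moment's care --- and none an obstacle --- are the overall factor $\tfrac12$ coming from the $1/\sqrt2$ normalisation of the dilation vectors (cancelled by the final scaling by $2$), the trace normalisation in the $\bfv$--$\bfv$ overlap, and the one-line check above that the relabelled indices stay clear of the null space of $H$. Given Theorem \ref{thm:eigenvector-overlaps}, the corollary follows immediately.
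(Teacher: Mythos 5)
Your proposal is correct and matches the paper's own argument: the corollary is obtained by specializing Theorem \ref{thm:eigenvector-overlaps} to the three block choices of $D$, expanding $\angles{\bfxi_{k,i},D\bfxi_{\ell,j}}$ via the $\tfrac{1}{\sqrt{2}}$-normalized block structure, and reading off $\angles{\Im\Pi(\gamma_{k,i})D}$ blockwise, with the factor $2$ cancelling the dilation normalization. Your extra checks (indices staying in $\llbracket M\wedge N\rrbracket$, the $N^{-1}$ trace normalization) are accurate and only make explicit what the paper leaves implicit.
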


\begin{remark}[Convergence rate]
To gain insight into the convergence rate provided on the r.h.s. of (\ref{bound:overlap-xi-linearization}) and (\ref{bound:overlap-singular-all}), let us consider the simplest case where $K = 1$, meaning that $\operatorname{supp} \varrho$ contains only one bulk component. In this scenario, for $i = j < N_1/2$, (\ref{bound:overlap-singular-u-u}) simplifies to 
\begin{equation*}
    \bigg | {\angles{\bfu_{i}, D_1 \bfu_{i}} 
    - \frac{\angles{\Im \Gamma(\gamma_{i}) D_1}}{\Im m(\gamma_{i})} } \bigg |
    \prec N^{-1/6} i^{-1/3},
\end{equation*}
In the bulk regime where $i \asymp N$, this aligns with the convergence rate established in \cite{cipolloniOptimalLowerBound2023} for overlap of bulk singular vectors (in the context of a different random matrix ensemble). We anticipate this rate to be optimal, up to the arbitrary small prefactor $N^\varepsilon$ introduced by the definition of $\prec$. 

This convergence rate gradually degrades when transitioning to the edge regime $i \ll N$. Particularly, for eigenvectors corresponding to the largest eigenvalues, i.e., $\bfu_i$ with $i \lesssim 1$, the convergence rate we obtain reduces to $N^{-1/6}$. While this still provides a nontrivial quantification of the convergence of edge eigenvector overlaps towards their deterministic counterparts, which is sufficient for our statistical motivation, we acknowledge the possibility of improving upon this rate to $N^{-1/2}$, as demonstrated for bulk eigenvectors. Considering the scope and length of this article, we defer this exploration to future work.
\end{remark}

\begin{remark}[Relationship with Ledoit and P\'{e}ch\'{e}'s result]
The quantiles $\gamma_{k,i}$ strictly locate within the interior of $\operatorname{supp} \rho$, ensuring the strict positivity of the density $\Im m(\gamma_{k,i})$ in (\ref{bound:overlap-xi-linearization}) and (\ref{bound:overlap-singular-u-u}). The maps $E \mapsto \Im m(E)$ and $E \mapsto \angles{\Im \Gamma(E) D_1}$, up to a common factor, correspond to the asymptotic density of the measures $N^{-1} \sum_{i} (\delta_{s_i} + \delta_{-s_i})$ and $N^{-1} \sum_{i} \angles{\bfu_{i}, D_1 \bfu_{i}} (\delta_{s_i} + \delta_{-s_i})$, respectively. In essence, $\angles{\Im \Gamma(E) D_1} / \Im m(E)$ precisely represents the Radon-Nikodym derivative as described in \cite{ledoitEigenvectorsLargeSample2011}.

The concentration bound (\ref{bound:overlap-singular-u-u}) can be interpreted as a local refinement of Ledoit and P\'{e}ch\'{e}'s conclusion \cite{ledoitEigenvectorsLargeSample2011} regarding the weak convergence of $N^{-1} \sum_{i} \angles{\bfu_{i}, D_1 \bfu_{i}} (\delta_{s_i} + \delta_{-s_i})$. This is somewhat reminiscent of how the eigenvalue rigidity estimates refine the weak convergence of the ESD. Both the rigidity estimates and the concentration of eigenvector overlaps (first established in \cite{erdosRigidityEigenvaluesGeneralized2012} and \cite{cipolloniEigenstateThermalizationHypothesis2021} respectively, in the context of Wigner matrices) suggest that, in the realm of RMT, we can generally expect more than mere global convergence; to some extent, finer control over each individual spectral component is achievable. 
\end{remark}

\begin{remark}[Energy dependency]
The energy dependency of the deterministic counterparts of the eigenvector overlaps only manifests in the estimate (\ref{bound:overlap-singular-u-u}) and is absent from (\ref{bound:overlap-singular-v-v}) and (\ref{bound:overlap-singular-u-v}). This is consistent with the structure of $\Pi$ as outlined in (\ref{def:Pi}): the anisotropy is confined to its upper-left $M \times M$ block. In fact, in the null case where $\Sigma = I$, this anisotropy vanishes, and (\ref{bound:overlap-singular-u-u}) becomes energy independent,
\begin{equation*}
    \big | { \angles{\bfu_{k, i}, D_1 \bfu_{\ell, j}} 
    - \delta_{k \ell} \delta_{ij} (M^{-1} \operatorname{tr} D_1) } \big |
    \prec (N \fkn_{k,i} \fkn_{\ell,j})^{-1/6}.
\end{equation*}

We remark that the estimate (\ref{bound:overlap-singular-v-v}) on right singular vectors $\bfv_i$ closely resembles the corresponding estimates regarding eigenvectors of Wigner matrices \cite{cipolloniEigenstateThermalisationEdge2023,cipolloniEigenstateThermalizationHypothesis2021}. Moreover, we observe that the overlaps of right singular vectors of the matrix $\Sigma^{1/2} X$ share the same asymptotic limit as those of the undeformed matrix $X$. Consequently, we can conclude that, up to the first-order behavior, the left multiplicative deformation $\Sigma^{1/2}$ has no impact on the overlaps of right singular vectors. However, whether this deformation affects higher-order behaviors of $\angles{\bfv_{k, i}, D_2 \bfv_{\ell, j}}$, such as fluctuation, requires further investigation.

Finally, let us underline the distinctive pattern exhibited by (\ref{bound:overlap-singular-u-v}) compared to its counterpart \cite[Equation (2.8c)]{cipolloniOptimalLowerBound2023} for additively deformed i.i.d. square matrices. For the undeformed matrix $X$ with Gaussian entries, the estimate (\ref{bound:overlap-singular-u-v}) can be easily derived from the independence between the left and right eigenvectors. This result is anticipated to hold for $X$ with a general entry distribution due to universality. Our estimate (\ref{bound:overlap-singular-u-v}) suggests that the left multiplicative deformation maintains this weak correlation between the $\bfu_i$'s and the $\bfv_i$'s. Contrastingly, an additive deformation as in \cite{cipolloniOptimalLowerBound2023}, would disrupt this weak correlation, causing the overlap $\angles{\bfu_i, D_3 \bfv_i}$ to concentrate around a nonzero deterministic quantity.
\end{remark}

\section{Ledoit-Wolf's nonlinear shrinkage estimators} 
\label{sec:nonlinear-shrinkage}

In this section, we utilize the estimate (\ref{bound:overlap-singular-u-u}) to conduct a finite-sample analysis of Ledoit-Wolf's nonlinear shrinkage estimators. We mention that our results regarding eigenvector overlaps, as presented in Section \ref{sec:singular-vec-overlaps}, do not cover those eigenvectors in the null space of $\hat{\Sigma}$. Therefore, to facilitate discussion, we introduce the following additional assumption to ensure the invertibility of $\hat{\Sigma}$. The investigation of eigenvectors in the null space of $\hat{\Sigma}$ is deferred to future work.

\begin{assumption}[Invertibility of $\hat{\Sigma}$] \label{assump:invertibility}
Suppose $\tau \leq M / N \leq 1 - \tau$ and $\sigma_M \geq \tau$ uniformly for all $N$.
\end{assumption}

Note that Assumption \ref{assump:invertibility} automatically implies Assumptions \ref{assump:high-dimension} and \ref{assump:anti-concentration}. However, for simplicity, we still state the subsequent result under Assumptions \ref{assump:high-dimension}-\ref{assump:invertibility}. With Assumption \ref{assump:invertibility}, it is not difficult to verify that the relabelling introduced in (\ref{def:relabelling}) satisfies $\sum_{k = 1}^K N_k = M$. As per the regularity of the leftmost edge of $\varrho$ ensured by Assumption \ref{assump:edge-regularity}, as well as the eigenvalue rigidity estimate (\ref{bound:rigidity}), this implies that $\lambda_M$, the smallest eigenvalue of $\hat{\Sigma}$, is bounded below by some positive constant with high probability. Also note that now the estimate (\ref{bound:overlap-singular-u-u}) encompasses all the eigenvectors of $\hat{\Sigma}$.

The derivation of Ledoit-Wolf's nonlinear shrinkage estimators has been previously outlined. The transition from the finite-sample optimal yet infeasible shrinkage (\ref{def:oracle-overlap}) to the practical algorithmic one (\ref{def:practical-estimate}) hinges on the deterministic counterparts of eigenvector overlaps as described in (\ref{bound:overlap-singular-u-u}). Actually, by setting $D_1 = \Sigma$ and utilizing (\ref{eqn:self-consistent-z}), it is straightforward to verify that the formula of the transitional shrinkage introduced in (\ref{def:oracle-estimate-renewed}) aligns with this Radon-Nikodym derivative in (\ref{bound:overlap-singular-u-u}). The only difference is that, in (\ref{def:oracle-estimate-renewed}) the classical eigenvalue locations $\gamma_i^2$ are replaced by their empirical counterparts $\lambda_i$, whose impact is amenable due to eigenvalue rigidity (\ref{bound:rigidity}).

It is worth noting that this strategy extends beyond the Frobenius loss $\mathscr{L}^\fr$ and can be applied to various loss functions with statistical backgrounds, as explored by Ledoit and Wolf \cite{ledoitOptimalEstimationLargedimensional2018,ledoitShrinkageEstimationLarge2021}. Hence, besides the Frobenius loss, we also discuss another representative loss function, the so-called inverse Frobenius loss $\mathscr{L}^\finv$, which essentially represents the Frobenius loss for estimating the precision matrix $\Sigma^{-1}$. The corresponding formulas for the finite-sample optimal, transitional and algorithmic shrinkages under $\mathscr{L}^\finv$ are detailed in Table \ref{table:shrinkage}. In this case, the transitional shrinkages $\{\lambda^{\finv \star}_i\}$ are derived by setting $D_1 = \Sigma^{-1}$ in (\ref{bound:overlap-singular-u-u}).

\begin{table}[htbp]
    \centering
    \begin{tblr}{colspec={ccc}, row{1} = {font=\bfseries}, hline{1,2,6} = {solid,1pt},
        hline{3-5} = {dashed,0.5pt}, colsep = 12pt, rowsep = 3pt}
        Loss function & Frobenius & Inverse Frobenius \\
        Formula of loss 
        & $\mathscr{L}^\fr (\hat{\Sigma}^\rie, \Sigma) = M^{-1} \norm{\hat{\Sigma}^\rie - \Sigma}_{\mathrm{F}}^2$
        & $\mathscr{L}^\finv (\hat{\Sigma}^\rie, \Sigma) = M^{-1} \norm{(\hat{\Sigma}^\rie)^{-1} - \Sigma^{-1}}_{\mathrm{F}}^2$ \\ 
        Oracle
        & $\lambda^\fr_i = \angles{\bfu_i, \Sigma \bfu_i}$ 
        & $\lambda^\finv_i = 1/{\angles{\bfu_i, \Sigma^{-1} \bfu_i}}$ \\
        Transitional
        & $\lambda^{\fr \star}_i = \dfrac{1}{\lambda_i \abs{\fkm (\lambda_i)}^2}$ 
        & $\lambda^{\finv \star}_i = - \dfrac{\lambda_i}{1 - {M}/{N} + 2 \lambda_i \Re [\fkm (\lambda_i)]}$ \\
        Algorithmic  
        & $\hat{\lambda}^{\fr \star}_i = \dfrac{1}{\lambda_i \abs{\hat{\fkm} (\lambda_i)}^2}$ 
        & $\hat{\lambda}^{\finv \star}_i = - \dfrac{\lambda_i}{1 - {M}/{N} + 2 \lambda_i \Re [\hat{\fkm} (\lambda_i)]}$ \\
    \end{tblr}    
    \caption{Nonlinear shrinkage under Frobenius/inverse Frobenius loss}
    \label{table:shrinkage}
\end{table}

For a rigorous investigation of these algorithmic shrinkage estimators, it is necessary to clarify how $\fkm$, the Stieltjes transform of the LSD, is estimated. A natural estimator of $\fkm$ is its empirical counterpart, the Stieltjes transform of the ESD of $Y^\top Y / N$,
\begin{equation}
    \fkg (z) := \frac{1}{N} \sum_{j=1}^N \frac{1}{\lambda_j - z}
    = \frac{1}{N} \sum_{j=1}^M \frac{1}{\lambda_j - z} - \bigg ( {1 - \frac{M}{N}} \bigg ) \frac{1}{z}.
\end{equation}
Note that we have let $\lambda_{M+1} = \cdots = \lambda_N = 0$. However, directly setting $\hat{\fkm} (\lambda_i) = \fkg (\lambda_i)$ is not viable due to the singularity of $\fkg$ at $\lambda_i$. As previously discussed, currently there exist two computationally efficient solutions for this issue. One strategy, proposed by \cite{ledoitAnalyticalNonlinearShrinkage2020}, involves the use of a kernel to smooth the ESD, thereby enabling the evaluation of its Stieltjes transform at $\lambda_i$. Another approach, suggested by \cite{benaych-georgesShortProofLedoitPeche2023}, incorporates an additional imaginary part for the spectral parameter $0 < \eta \ll 1$ to mitigate this singularity,
\begin{equation}
    \hat{\fkm} (\lambda_i) = \fkg (\lambda_i + \mathrm{i} \eta).
    \label{def:emprical-estimator-Stieltjes}
\end{equation}
In this article we adopt the latter approach for analysis, as it is more transparent from the perspective of RMT. Another motivation for this selection is to circumvent the introduction of additional notations caused by kernel smoothing, which could potentially complicate the work. Nevertheless, we emphasis that this choice is not particularly significant, as the finite-sample analysis in this article can be straightforwardly adapted to analyze the shrinkage estimators implemented in \cite{ledoitAnalyticalNonlinearShrinkage2020}. In fact, the most formidable aspect of the analysis lies in quantifying the loss from the finite-sample optimal shrinkages to the transitional ones, which is addressed by our main estimate (\ref{bound:overlap-singular-u-u}). Conversely, the loss from the transitional shrinkages to the algorithmic ones is relatively manageable in viewing of existing tools \cite{jingNonparametricEstimateSpectral2010,knowlesAnisotropicLocalLaws2017}.

Algorithm \ref{algo:shrinkage} below outlines the entire estimation process, where we operate under the Frobenius loss. Its adaption to the inverse Frobenius Loss is straightforward using Table \ref{table:shrinkage} and (\ref{def:emprical-estimator-Stieltjes}). Note that in Algorithm \ref{algo:shrinkage} we have locally adapted the scale parameters $\eta_i$ to ensure that the resulting estimator fulfills the scale-equivariant property discussed in \cite{ledoitAnalyticalNonlinearShrinkage2020}.

\begin{algorithm}[htbp]
    \SetAlgoLined
    \KwIn{Data matrix $Y \in \mathbb{R}^{M \times N}$ with $M \leq N$; scale parameter $\eta > 0$ ($N$-dependent).}
    \KwOut{Nonlinear shrinkage estimator $\hat{\Sigma}^{\fr \star}$.}
    
    \textbf{Step 1:} Calculate the sample covariance matrix $\hat{\Sigma}$\;
    \textbf{Step 2:} Compute spectral decomposition of $\hat{\Sigma}$ as $\hat{\Sigma} = \sum_{i = 1}^{M} \lambda_i \mathbf{u}_i \mathbf{u}_i^\top$\;
    \textbf{Step 3:} \ForEach{$i \in \llbracket M \rrbracket$}{
        \begin{enumerate}[label=(\arabic*),noitemsep,nolistsep,leftmargin=*]
            \item Set $z_i = \lambda_i + \mathrm{i} \eta_i$ with $\eta_i = \lambda_i \eta$\;
            \item Compute the Stieltjes transform $\fkg (z_i) = N^{-1} \sum_{j=1}^M 1/(\lambda_j - z_i) - ( 1 - {M}/{N} ) / z_i$\;
            \item Calculate the shrunk eigenvalue $\hat{\lambda}^{\fr \star}_i = 1/(\lambda_i \abs{\fkg (z_i)}^2)$\;
        \end{enumerate}
    }
    \textbf{Step 4:} Return the nonlinear shrinkage estimator $\hat{\Sigma}^{\fr \star} = \sum_{i = 1}^{M} \hat{\lambda}^{\fr \star}_i \bfu_i \bfu_i^\top$\;
    
    \caption{Nonlinear Shrinkage (Frobenius Loss)}
    \label{algo:shrinkage}
\end{algorithm}

We are now in a position to present our main statistical finding concerning the finite-sample performance of nonlinear shrinkage estimators.

\begin{theorem}[Nonlinear shrinkage estimators] \label{thm:shrinkage}
Let $\eta \equiv \eta^{(N)}$ be some $N$-dependent scale parameter satisfying $\eta \in [N^{-2/3+c}, N^{-c}]$, where $c > 0$ is a small constant. Suppose that $\hat{\Sigma}^{\fr \star}$ is the nonlinear shrinkage estimator obtained from Algorithm \ref{algo:shrinkage}. Recall the relabelling introduced in (\ref{def:relabelling}). Then, under Assumptions \ref{assump:high-dimension}-\ref{assump:invertibility}, we have the entrywise bound
\begin{equation}
    \abs{\hat{\lambda}_{k,i}^{\fr \star} - \lambda_{k,i}^{\fr}}
    \prec \frac{1}{N^{1/6} (\fkn_{k,i})^{1/3}} 
    + \frac{\eta}{(\fkn_{k,i}/N)^{1/3} + \eta^{1/2}} 
    + \frac{1}{N \eta},
    \label{bound:entrywise-optimal-algo}
\end{equation}
where $k \in \llbracket K \rrbracket$, $i \in \llbracket N_k \rrbracket$ and $\fkn_{k,i} \equiv i \wedge (N_k+1-i)$. Consequently, in terms of the loss function, we have
\begin{equation}
    \mathscr{L}^\fr (\hat{\Sigma}^{\fr \star}, \Sigma^\fr) 
    \prec \frac{1}{N} + \eta^2 + \frac{1}{N^2 \eta^2}.
    \label{bound:loss-optimal-algo}
\end{equation}
In addition, if $\hat{\Sigma}^{\finv \star}$ is the estimator derived by adapting Algorithm \ref{algo:shrinkage} to the inverse Frobenius loss (defined in Table \ref{table:shrinkage}), then the estimates (\ref{bound:entrywise-optimal-algo}) and (\ref{bound:loss-optimal-algo}) also hold for $\hat{\Sigma}^{\finv \star}$.
\end{theorem}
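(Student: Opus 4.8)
The plan is to compare the algorithmic shrinkage $\hat\lambda^{\fr\star}_{k,i}$ to the oracle shrinkage $\lambda^{\fr}_{k,i}=\angles{\bfu_{k,i},\Sigma\bfu_{k,i}}$ through two intermediate quantities: the \emph{deterministic counterpart} $\vartheta_{k,i}:=\angles{\Im\Gamma(\gamma_{k,i})\Sigma}/\Im m(\gamma_{k,i})$ from (\ref{bound:overlap-singular-u-u}) with $D_1=\Sigma$, and the \emph{transitional shrinkage} $\lambda^{\fr\star}_{k,i}=1/(\lambda_{k,i}\abs{\fkm(\lambda_{k,i})}^2)$ of (\ref{def:oracle-estimate-renewed}); one then bounds $\abs{\hat\lambda^{\fr\star}_{k,i}-\lambda^{\fr}_{k,i}}\le\abs{\lambda^{\fr}_{k,i}-\vartheta_{k,i}}+\abs{\vartheta_{k,i}-\lambda^{\fr\star}_{k,i}}+\abs{\lambda^{\fr\star}_{k,i}-\hat\lambda^{\fr\star}_{k,i}}$, the three terms producing (after the second is absorbed into the first) the three summands of (\ref{bound:entrywise-optimal-algo}). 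I would first record the deterministic facts used throughout: under Assumption \ref{assump:invertibility} the classical location $\gamma_{k,i}^2$ lies in the interior of the $k$-th bulk component at distance $\kappa_{k,i}\asymp(\fkn_{k,i}/N)^{2/3}$ from the nearest edge; $\lambda_{k,i},\gamma_{k,i}^2\asymp 1$; $\abs{\fkm(E)}\asymp 1$ uniformly on $\operatorname{supp}\varrho$ (the lower bound from $\abs{\Re\fkm}\gtrsim 1$ near the edges, forced by Assumption \ref{assump:edge-regularity} and the self-consistent equation (\ref{eqn:self-consistent-z}), together with $\Im\fkm=\pi\varrho\gtrsim 1$ in the bulk interior); and the square-root edge bound $\abs{\fkm'(E)}\lesssim\kappa(E)^{-1/2}$, with $\kappa(E)$ the distance of $E$ to the nearest edge.

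The first term is directly (\ref{bound:overlap-singular-u-u}) with $D_1=\Sigma$ (valid as $\norm\Sigma\lesssim 1$), giving $\prec N^{-1/6}(\fkn_{k,i})^{-1/3}$. For the second, a short computation from (\ref{eqn:self-consistent-z}) identifies $\vartheta_{k,i}$ with $1/(\gamma_{k,i}^2\abs{\fkm(\gamma_{k,i}^2)}^2)$, so the term gauges the effect of replacing $\gamma_{k,i}^2$ by $\lambda_{k,i}$ in $g(\lambda):=1/(\lambda\abs{\fkm(\lambda)}^2)$; since $\abs{g'}\lesssim\kappa(\cdot)^{-1/2}$ and rigidity (Proposition \ref{prop:rigidity}) gives $\abs{\lambda_{k,i}-\gamma_{k,i}^2}\prec N^{-2/3}(\fkn_{k,i})^{-1/3}$ on an event where also $\kappa(\lambda_{k,i})\asymp\kappa_{k,i}$, this is $\prec N^{-1/3}(\fkn_{k,i})^{-2/3}$, dominated by the first term. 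For the third, set $z_{k,i}=\lambda_{k,i}+\mathrm{i}\eta_{k,i}$ with $\eta_{k,i}=\lambda_{k,i}\eta\asymp\eta$; using $\abs{\fkm},\abs{\fkg}\asymp 1$ at $z_{k,i}$ it reduces to $\abs{\fkm(\lambda_{k,i})-\fkg(z_{k,i})}\le\abs{\fkm(\lambda_{k,i})-\fkm(z_{k,i})}+\abs{\fkm(z_{k,i})-\fkg(z_{k,i})}$. The first piece is $\le\int_0^{\eta_{k,i}}\abs{\fkm'(\lambda_{k,i}+\mathrm{i}t)}\,\mathrm{d}t\lesssim\int_0^\eta(\kappa_{k,i}+t)^{-1/2}\,\mathrm{d}t\asymp\eta/(\kappa_{k,i}^{1/2}+\eta^{1/2})\asymp\eta/((\fkn_{k,i}/N)^{1/3}+\eta^{1/2})$, the second summand; the second piece is $\prec 1/(N\eta)$ by the averaged local law for the Stieltjes transform (Proposition \ref{prop:single-resolvent-local}, applicable since $\Im z_{k,i}\asymp\eta\ge N^{-2/3+c}\gg N^{-1}$), the third summand — the randomness of $z_{k,i}$ handled by a routine grid-plus-Lipschitz argument together with rigidity. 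Collecting the three gives (\ref{bound:entrywise-optimal-algo}).

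For the loss bound (\ref{bound:loss-optimal-algo}) I would square (\ref{bound:entrywise-optimal-algo}), sum over $k\in\llbracket K\rrbracket$ and $i\in\llbracket N_k\rrbracket$, divide by $M$, and use the uniformity of (\ref{bound:entrywise-optimal-algo}) to let $\prec$ pass through the finite sum; with $\sum_{i=1}^{N_k}(\fkn_{k,i})^{-2/3}\asymp N_k^{1/3}$, $\sum_{i=1}^{N_k}\eta^2\big((\fkn_{k,i}/N)^{2/3}+\eta\big)^{-1}\lesssim\eta^2 N$, $\sum_k N_k=M\asymp N$ and $K\lesssim 1$, the three sums collapse to $N^{-1}$, $\eta^2$ and $(N^2\eta^2)^{-1}$. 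The inverse-Frobenius estimator $\hat\Sigma^{\finv\star}$ is treated identically with $D_1=\Sigma^{-1}$ (allowed since $\norm{\Sigma^{-1}}\le\tau^{-1}$ under Assumption \ref{assump:invertibility}): the interpolating quantities become $\angles{\Im\Gamma(\gamma_{k,i})\Sigma^{-1}}/\Im m(\gamma_{k,i})$, $\lambda^{\finv\star}_{k,i}$ and $\hat\lambda^{\finv\star}_{k,i}$ of Table \ref{table:shrinkage} (all $\asymp 1$), with two extra ingredients: $x\mapsto 1/x$ is Lipschitz on $\{x\asymp 1\}$ (needed because the oracle is now $1/\angles{\bfu_{k,i},\Sigma^{-1}\bfu_{k,i}}$ and $\angles{\bfu_{k,i},\Sigma^{-1}\bfu_{k,i}}\in[\tau,\tau^{-1}]$), and the transitional formula $-\lambda/(1-M/N+2\lambda\Re\fkm(\lambda))$ is affine in $\fkm$ with denominator bounded away from $0$, so the second and third terms transfer verbatim.

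I expect the main difficulty to be the edge analysis behind the second summand of (\ref{bound:entrywise-optimal-algo}) (and the matching piece of the third term): one must establish the blow-up $\abs{\fkm'}\lesssim\kappa^{-1/2}$ at distance $\kappa$ from an edge, confirm $\kappa_{k,i}\asymp(\fkn_{k,i}/N)^{2/3}$ at the classical locations, and combine these with edge rigidity so that every error lands exactly on the claimed bound rather than something weaker — in particular handling the extreme indices $\fkn_{k,i}\asymp 1$, where $\lambda_{k,i}$ may fluctuate just outside its bulk component and the $\kappa^{-1/2}$ estimate must still be applied. A secondary nuisance is making the local law and the edge estimates uniform in the random spectral parameter $z_{k,i}$.
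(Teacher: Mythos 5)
Your proposal is correct and follows essentially the same route as the paper's proof: the bound (\ref{bound:overlap-singular-u-u}) with $D_1=\Sigma$ (resp.\ $\Sigma^{-1}$) gives the first summand, the square-root behavior of the Stieltjes transform near the edges (Lemmas \ref{lemma:m-order-estimate} and \ref{lemma:diff-estimate}) together with rigidity and $\operatorname{dist}(\gamma_{k,i}^2,\partial\operatorname{supp}\varrho)\asymp(\fkn_{k,i}/N)^{2/3}$ gives the second, and the single-resolvent averaged law gives the third, with the same summation computations for the loss. The only (cosmetic) difference is that you split the middle comparison into the two steps $\gamma_{k,i}^2\to\lambda_{k,i}$ and $\lambda_{k,i}\to\lambda_{k,i}+\mathrm{i}\eta$, whereas the paper compares $m(\gamma_{k,i})$ with $m(s_{k,i}+\mathrm{i}\eta)$ in one stroke; the edge-index subtlety you flag (where $\kappa(\lambda_{k,i})\asymp\kappa_{k,i}$ can fail) is harmless because there $\kappa_{k,i}\ll\eta$ and the $1/2$-H\"older bound alone already lands on the claimed rate, exactly as in the paper's treatment.
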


Considering the r.h.s. of the bound (\ref{bound:loss-optimal-algo}) on the loss, a favorable choice for the scale parameter is $\eta = N^{-1/2}$. With this choice, we have the following corollary of Theorem \ref{thm:shrinkage}.

\begin{corollary}[Nonlinear shrinkage estimators with $\eta = N^{-1/2}$] \label{coro:shrinkage-half}
In the same context as described in Theorem \ref{thm:shrinkage}, assuming $\eta = N^{-1/2}$, the estimate (\ref{bound:entrywise-optimal-algo}) reduces to
\begin{equation}
    \abs{\hat{\lambda}_{k,i}^{\fr \star} - \lambda_{k,i}^{\fr}} \prec N^{-1/6} (\fkn_{k,i})^{-1/3}
    \label{bound:optimal-algo-eta-half-entrywise}
\end{equation}
Moreover, this entrywise estimate implies that
\begin{equation}
    \mathscr{L}^\fr (\hat{\Sigma}^{\fr \star}, \Sigma^\fr) \prec N^{-1}
    \qand
    \norm{\hat{\Sigma}^{\fr \star} - \Sigma^\fr} \prec N^{-1/6}.
    \label{bound:optimal-algo-eta-half-norm}
\end{equation}
These bounds also apply to the estimator $\hat{\Sigma}^{\finv \star}$.
\end{corollary}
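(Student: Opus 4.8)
The plan is to compare the algorithmic shrinkage $\hat\lambda_{k,i}^{\fr\star}$ with the oracle shrinkage $\lambda_{k,i}^{\fr}=\angles{\bfu_{k,i},\Sigma\bfu_{k,i}}$ by interpolating through two intermediate quantities: the transitional shrinkage $\lambda_{k,i}^{\fr\star}=\big(\lambda_{k,i}\abs{\fkm(\lambda_{k,i})}^{2}\big)^{-1}$ from (\ref{def:oracle-estimate-renewed}), and its ``classicalised'' counterpart $\widetilde\lambda_{k,i}:=\big(\gamma_{k,i}^{2}\abs{\fkm(\gamma_{k,i}^{2})}^{2}\big)^{-1}$, obtained by replacing the random eigenvalue $\lambda_{k,i}$ by the deterministic location $\gamma_{k,i}^{2}$. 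The decomposition
\begin{equation*}
    \abs{\hat\lambda_{k,i}^{\fr\star}-\lambda_{k,i}^{\fr}}
    \le \abs{\hat\lambda_{k,i}^{\fr\star}-\lambda_{k,i}^{\fr\star}}
    +\abs{\lambda_{k,i}^{\fr\star}-\widetilde\lambda_{k,i}}
    +\abs{\widetilde\lambda_{k,i}-\lambda_{k,i}^{\fr}}
\end{equation*}
reduces the problem to bounding the three terms on the right, which I claim are, respectively, $\prec(N\eta)^{-1}+\eta\big((\fkn_{k,i}/N)^{1/3}+\eta^{1/2}\big)^{-1}$, $\prec N^{-1/3}\fkn_{k,i}^{-2/3}$, and $\prec N^{-1/6}\fkn_{k,i}^{-1/3}$. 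Since $\fkn_{k,i}\ge1$, the middle bound is absorbed into the last, and the sum is exactly the entrywise estimate (\ref{bound:entrywise-optimal-algo}). The loss bound (\ref{bound:loss-optimal-algo}) then follows by squaring this estimate, using its uniformity in $(k,i)$, and evaluating the resulting averages via the elementary sums $\tfrac1M\sum_{k,i}N^{-1/3}\fkn_{k,i}^{-2/3}\lesssim N^{-1}$ and $\tfrac1M\sum_{k,i}\eta^{2}\big((\fkn_{k,i}/N)^{2/3}+\eta\big)^{-1}\lesssim\eta^{2}$ (consequences of $\sum_{j\le N_{k}}j^{-2/3}\lesssim N_{k}^{1/3}$, $K\lesssim1$ and $\sum_{k}N_{k}=M\asymp N$). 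Corollary \ref{coro:shrinkage-half} follows by putting $\eta=N^{-1/2}$ and noting that $\hat\Sigma^{\fr\star}-\Sigma^{\fr}=U\operatorname{diag}(\hat\lambda_{i}^{\fr\star}-\lambda_{i}^{\fr})U^{\top}$ is diagonal in the sample eigenbasis, so its operator norm equals $\max_{k,i}\abs{\hat\lambda_{k,i}^{\fr\star}-\lambda_{k,i}^{\fr}}$.

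The third term carries the genuine statistical content. I would first record the algebraic identity, valid at every classical location $E=\gamma_{k,i}$,
\begin{equation*}
    \frac{\angles{\Im\Gamma(E)\Sigma}}{\Im m(E)}=\frac{1}{E^{2}\abs{\fkm(E^{2})}^{2}},
\end{equation*}
which comes from the self-consistent equation (\ref{eqn:self-consistent-z}) and its complex conjugate: using $\Gamma(E)=-E^{-1}(1+\fkm(E^{2})\Sigma)^{-1}$ and $m(E)=E\fkm(E^{2})$ one gets $\Im\Gamma(E)=E^{-1}\Im\fkm(E^{2})\,\Sigma\abs{1+\fkm(E^{2})\Sigma}^{-2}$, while subtracting (\ref{eqn:self-consistent-z}) from its conjugate yields $\tfrac1N\operatorname{tr}\big(\Sigma^{2}\abs{1+\fkm(E^{2})\Sigma}^{-2}\big)=\abs{\fkm(E^{2})}^{-2}$; this is the ``straightforward verification'' alluded to in Section \ref{subsec:nonlinear-shrinkage}. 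Specialising (\ref{bound:overlap-singular-u-u}) to $D_{1}=\Sigma$ (admissible since $\norm{\Sigma}\le\tau^{-1}$ by Assumption \ref{assump:boundedness}) with $k=\ell$ and $i=j$ then gives $\abs{\widetilde\lambda_{k,i}-\lambda_{k,i}^{\fr}}\prec(N\fkn_{k,i}^{2})^{-1/6}=N^{-1/6}\fkn_{k,i}^{-1/3}$, uniformly in $k,i$.

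The first two terms are deterministic-function estimates combined with eigenvalue rigidity. Assumption \ref{assump:invertibility}, the rigidity estimate (\ref{bound:rigidity}) and $\fka_{2K}\ge\tau$ force $\lambda_{k,i}\asymp1$ with high probability, while the bound on the third term and the a priori inequalities $\tau\le\lambda_{k,i}^{\fr}\le\norm{\Sigma}$ give $\abs{\fkm(\gamma_{k,i}^{2})}\asymp1$, hence $\abs{\fkm(\lambda_{k,i})}\asymp1$ and (via the local law below) $\abs{\fkg(\lambda_{k,i}+\mathrm{i}\eta_{k,i})}\asymp1$ --- so all shrinkage maps are evaluated where they are bounded and regular. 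For the second term, the square-root behaviour of $\fkm$ at the edges of $\operatorname{supp}\varrho$ makes $\lambda\mapsto(\lambda\abs{\fkm(\lambda)}^{2})^{-1}$ Hölder-$\tfrac12$ near $\gamma_{k,i}^{2}$ with constant $\lesssim\kappa_{k,i}^{-1/2}$, where $\kappa_{k,i}:=\operatorname{dist}(\gamma_{k,i}^{2},\partial\operatorname{supp}\varrho)\asymp(\fkn_{k,i}/N)^{2/3}$; together with the index-resolved rigidity $\abs{\lambda_{k,i}-\gamma_{k,i}^{2}}\prec N^{-2/3}\fkn_{k,i}^{-1/3}$ this gives $\abs{\lambda_{k,i}^{\fr\star}-\widetilde\lambda_{k,i}}\prec\kappa_{k,i}^{-1/2}N^{-2/3}\fkn_{k,i}^{-1/3}=N^{-1/3}\fkn_{k,i}^{-2/3}$. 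For the first term, since $\abs{\fkm},\abs{\fkg}\asymp1$ there, it suffices to bound $\abs{\fkg(z_{k,i})-\fkm(z_{k,i})}$ and $\abs{\fkm(z_{k,i})-\fkm(\lambda_{k,i})}$ with $z_{k,i}=\lambda_{k,i}+\mathrm{i}\eta_{k,i}$, $\eta_{k,i}=\lambda_{k,i}\eta\asymp\eta$. Expressing $\fkg(w^{2})$ through the normalised trace of the bottom-right $N\times N$ block of the resolvent (\ref{def:resolvent-G}), the first difference is $\prec(N\eta)^{-1}$ by the averaged single-resolvent local law (Proposition \ref{prop:single-resolvent-local}), which applies at the random point $z_{k,i}$ thanks to its uniformity in the spectral parameter together with rigidity, and since $\eta\ge N^{-2/3+c}\gg N^{-1}$. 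The second difference is the imaginary-shift estimate $\abs{\fkm(E+\mathrm{i}\eta')-\fkm(E)}\lesssim\eta'(\kappa(E)+\eta')^{-1/2}$, a standard consequence of the square-root edge structure of $\varrho$; with $E=\lambda_{k,i}$, $\kappa(\lambda_{k,i})\asymp(\fkn_{k,i}/N)^{2/3}$ and $\eta'\asymp\eta$ it becomes $\lesssim\eta\big((\fkn_{k,i}/N)^{1/3}+\eta^{1/2}\big)^{-1}$. This proves (\ref{bound:entrywise-optimal-algo}). The inverse-Frobenius case runs identically once the identity above is replaced by $\angles{\Im\Gamma(E)\Sigma^{-1}}/\Im m(E)=-\big(1-M/N+2E^{2}\Re\fkm(E^{2})\big)/E^{2}$ (again from (\ref{eqn:self-consistent-z}), using $\norm{\Sigma^{-1}}\le\tau^{-1}$ from Assumption \ref{assump:invertibility} and noting this quantity is $\asymp1$, so the shrinkage denominators stay bounded away from $0$) and $\abs{\fkg}^{2},\abs{\fkm}^{2}$ are replaced throughout by $\Re\fkg,\Re\fkm$.

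I expect the main obstacle to be the edge analysis behind the first two terms: extracting the moduli of continuity of $\fkm$ and of the shrinkage maps near the spectral edges with the sharp scaling $\kappa_{k,i}\asymp(\fkn_{k,i}/N)^{2/3}$, and matching it against the index-resolved rigidity so that the second term is exactly absorbed by the third, $\prec N^{-1/6}\fkn_{k,i}^{-1/3}$. The genuinely hard probabilistic input --- uniform concentration of the overlaps $\angles{\bfu_{k,i},\Sigma\bfu_{k,i}}$ all the way down to the spectral edge, i.e.\ Theorem \ref{thm:eigenvector-overlaps} --- is taken as given here; granted it, what remains is a deterministic-analysis-plus-rigidity computation of a routine type, given the single-resolvent local law and eigenvalue rigidity \cite{knowlesAnisotropicLocalLaws2017}.
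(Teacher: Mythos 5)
Your proposal is correct and follows essentially the same route as the paper: the paper proves the entrywise bound (\ref{bound:entrywise-optimal-algo}) by exactly your three-step comparison (Theorem \ref{thm:eigenvector-overlaps}/(\ref{bound:overlap-singular-u-u}) with $D_1=\Sigma$ at the classical location, rigidity plus the square-root edge behaviour of the Stieltjes transform for the shift from $\gamma_{k,i}$ to $s_{k,i}+\mathrm{i}\eta$, and the single-resolvent averaged law for replacing $\fkm$ by $\fkg$), and the corollary is then just the substitution $\eta=N^{-1/2}$ together with the observations that all three error terms are absorbed into $N^{-1/6}(\fkn_{k,i})^{-1/3}$, that $\frac1M\sum_{k,i}N^{-1/3}(\fkn_{k,i})^{-2/3}\lesssim N^{-1}$, and that $\hat\Sigma^{\fr\star}-\Sigma^{\fr}$ is diagonal in the basis $\{\bfu_i\}$ so its operator norm is the maximal entrywise difference. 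The only differences are cosmetic: you work with $\fkm$ at $z=\lambda_{k,i}+\mathrm{i}\eta_{k,i}$ rather than with $m$ at $w=s_{k,i}+\mathrm{i}\eta$ and split the middle comparison slightly differently, and your inverse-Frobenius identity is stated in the form consistent with Table \ref{table:shrinkage}.
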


It should be emphasized that Theorem \ref{thm:shrinkage} and Corollary \ref{coro:shrinkage-half} are not intended to justify the convergence of $\hat{\Sigma}^{\fr \star}$ towards the population covariance $\Sigma$. Actually, according to \cite[Theorem 4.2]{ledoitOptimalEstimationLargedimensional2018}, the loss $\mathscr{L}^\fr ({\Sigma}^{\fr}, \Sigma)$ is typically of a constant order. In other words, within the class of RIEs, even the finite-sample optimal estimator $\Sigma^\fr$ is incapable of achieving consistency without additional structural assumptions on $\Sigma$. The primary contribution of Theorem \ref{thm:shrinkage} and Corollary \ref{coro:shrinkage-half} lies in quantifying the loss incurred when employing $\hat{\Sigma}^{\fr \star}$ as a feasible surrogate for ${\Sigma}^{\fr}$. Fortunately, with an appropriate choice of the scale parameter $\eta = N^{-1/2}$, it is found that this loss is dominated by the rate of $N^{-1}$, which is sufficiently rapid to convince the practitioners to adopt the algorithmic estimator $\hat{\Sigma}^{\fr \star}$ as an approximation of ${\Sigma}^{\fr}$.

As highlighted, our approach actually yields a more general entrywise estimate (\ref{bound:optimal-algo-eta-half-entrywise}) that goes beyond just assessing the loss. This entrywise estimate even implies the convergence of $\hat{\Sigma}^{\fr \star}$ towards $\Sigma^\fr$ in terms of the operator norm. However, it should be noted that, akin to Theorem \ref{thm:eigenvector-overlaps}, we do not seek for the optimal convergence rate for those indices $i$ corresponding to the eigenvalues near the spectral edges. Consequently, there is potential to enhance this rate for the edge regime, thus refining the resulting convergence rate of $\norm{\hat{\Sigma}^{\fr \star} - \Sigma^\fr}$.

The entrywise convergence (\ref{bound:optimal-algo-eta-half-entrywise}) is depited in Figure \ref{fig:entrywise}, where the finite-sample optimal shrinkages $\lambda^\fr$ (resp. $\lambda^\finv$) and their algorithmic counterparts $\hat{\lambda}^{\fr \star}$ (resp. $\hat{\lambda}^{\finv \star}$) are juxtaposed. These values are computed from a $1000 \times 1000$ sample covariance matrix. In this Monte Carlo simulation, we allocate $20\%$ of the population eigenvalues as $1$, $40\%$ as $3$, and $40\%$ as $10$, a configuration notably discussed and analyzed in detail by \cite{baiNoEigenvaluesOutside1998}. Refer to \cite{ledoitAnalyticalNonlinearShrinkage2020} for a comprehensive simulation study on the loss of nonlinear shrinkage estimators under this population covariance setting. For the simulation corresponding to Figure \ref{fig:entrywise}, a sample size of $N = 2000$ is used, the scale parameter is set as $\eta = N^{-1/2}$, and $X$ is generated from i.i.d. normal variables of zero mean and variance $1/N$. The simulation results confirm our theoretical findings: an entrywise alignment of the algorithmic shrinkages with the finite-sample optimal ones is observed.

\begin{figure}[htbp]
    \centering
    \includegraphics[width=\textwidth]{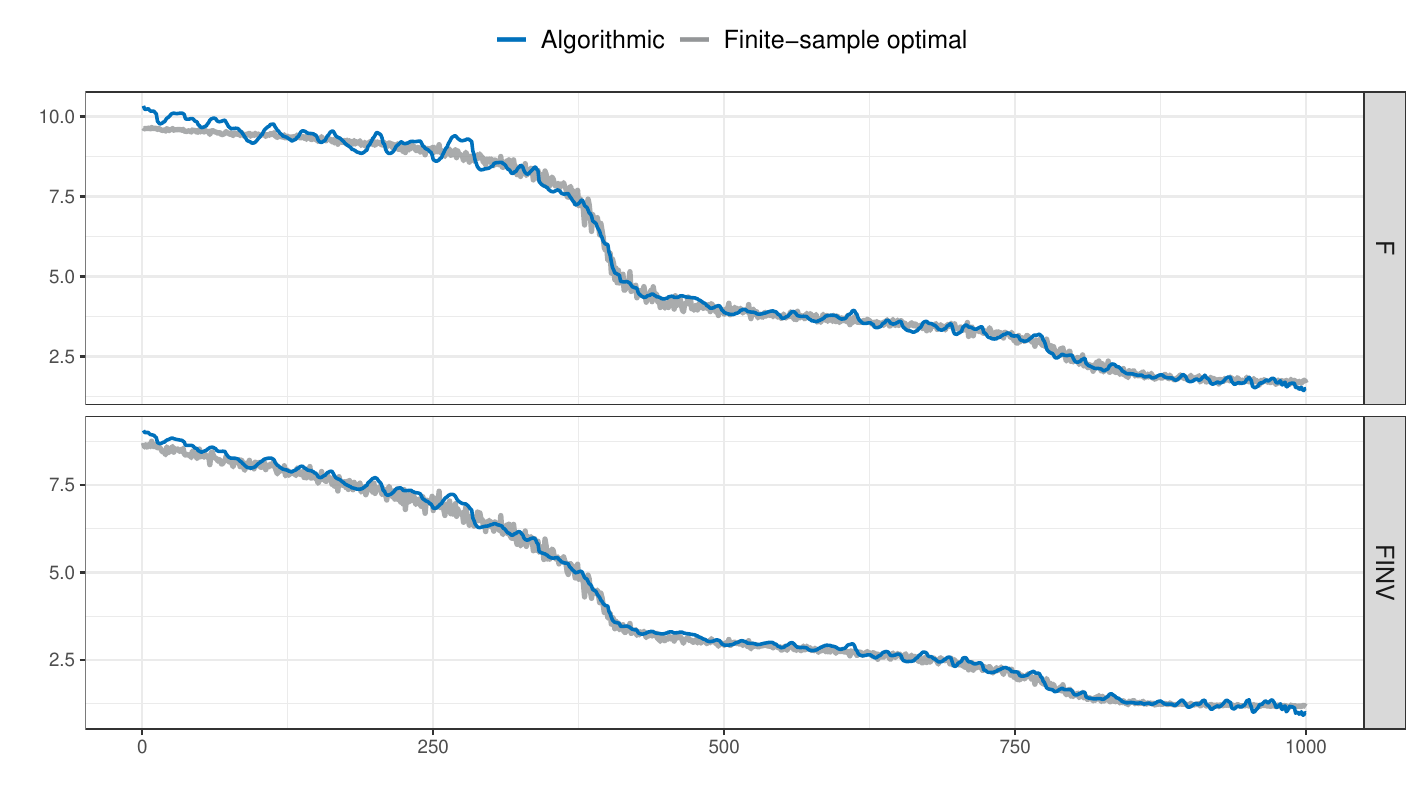}
    \caption{Finite-sample optimal shrinkages v.s. their algorithmic counterparts}
    \label{fig:entrywise}
\end{figure}

We conclude this section by two additional remarks on Theorem \ref{thm:shrinkage} and Corollary \ref{coro:shrinkage-half}.

\begin{remark}[Probabilistic bounds]
As previously clarified, the notation $\mathcal{Y} \prec \mathcal{Z}$ is used to signify that ``$\mathcal{Y}$ is bounded with high probability by $\mathcal{Z}$ up to small powers of $N$''. Nevertheless, it would be beneficial to further elaborate on this notation in (\ref{bound:optimal-algo-eta-half-entrywise}) and (\ref{bound:optimal-algo-eta-half-norm}), and to rephrase these bounds in a way that statisticians are more accustomed to. 

Firstly, we remark that the entrywise bound in (\ref{bound:optimal-algo-eta-half-entrywise}) is uniform across indices $k \in \llbracket K \rrbracket$ and $i \in \llbracket N_k \rrbracket$. As the cardinality of the corresponding index set is bounded by powers of $N$, a simple union bound allows the interchange of $\bbp$ with the supremum over indices in (\ref{def:stochastic-domination}), without degrading the probability tail. Specifically, for arbitrary (small) $\varepsilon > 0$ and (large) $L > 0$, we have
\begin{equation*}
    \bbp \Big \{ \max_{1 \leq k \leq K} \max_{1 \leq i \leq N_k} {(\fkn_{k,i})^{1/3} \abs{\hat{\lambda}_{k,i}^{\fr \star} - \lambda_{k,i}^{\fr}} \leq N^{-1/6 + \varepsilon}} \Big \}
    \geq 1 - N^{-L}
\end{equation*}
for all sufficiently large $N \geq N_0(\varepsilon, L)$. 

Similarly, one can use the definition (\ref{def:stochastic-domination}) to explicitly formulate the probabilistic bounds on $\mathscr{L}^\fr (\hat{\Sigma}^{\fr \star}, \Sigma^\fr)$ and $\norm{\hat{\Sigma}^{\fr \star} - \Sigma^\fr}$. Here we would like to mention the compatibility of $\prec$ with $\bbe$, as introduced in Lemma \ref{lemma:prec-expectation} below. Consequently, the first bound in (\ref{bound:optimal-algo-eta-half-norm}) implies that, for arbitrary (small) $\varepsilon > 0$, we have
\begin{equation*}
    \bbe \mathscr{L}^\fr (\hat{\Sigma}^{\fr \star}, \Sigma^\fr) 
    = M^{-1} \bbe \norm{\hat{\Sigma}^{\fr \star} - \Sigma^\fr}_{\mathrm{F}}^2
    \leq N^{-1+\varepsilon}
\end{equation*}
for all sufficiently large $N \geq N_0(\varepsilon)$.
\end{remark}

\begin{remark}[Loss functions]
As previously noted, Ledoit and Wolf \cite{ledoitOptimalEstimationLargedimensional2018,ledoitShrinkageEstimationLarge2021} have explored various loss functions with statistical backgrounds. Notably, both the inverse Stein loss and the minimum variance loss yield the same finite-sample optimal shrinkages as the Frobenius loss. Consequently, the analysis of estimators derived from these loss functions is fundamentally similar, with the exception of using a different formula when evaluating the loss. Overall speaking, this evaluation is not difficult to achieve by leveraging the entrywise bound (\ref{bound:optimal-algo-eta-half-entrywise}).
\end{remark}

\section{Multi-resolvent local laws} \label{sec:multi-local-law}

Unless otherwise specified, the subsequent theorems are all stated under Assumptions \ref{assump:high-dimension}-\ref{assump:bulk-regularity}.

\subsection{Single resolvent local laws and matrix Dyson equation} 
\label{subsec:single-resolvent-laws}

This section provides a concise overview of the single resolvent local laws for sample covariance matrices \cite{baoUniversalityLargestEigenvalue2015,knowlesAnisotropicLocalLaws2017} and the matrix Dyson equation \cite{erdosMatrixDysonEquation2019}. For a more comprehensive introduction to these topics, readers are encouraged to explore the works cited and the references therein. 

The spectral parameter $w$ we consider always falls into the fundamental domain
\begin{equation}
    \bbd \equiv \bbd (\tau, \tau^\prime)
    := \{ {w \in \bbc: \Re w > \tau^\prime, \
    \operatorname{dist} (\Re w, \operatorname{supp} \rho) \leq \tau^\prime 
    \text{ and }
    0 < \abs{\Im w} \leq \tau^{-1}} \}.
    \label{def:bbd-all}
\end{equation}
where $\tau^\prime > 0$ is some sufficiently small constant (depending only on $\tau$). Throughout this article, we reserve $w$ for the spectral parameter of the functions $m$, $\Pi$ and $G$, and $z = w^2$ for the spectral parameter of $\fkm$. It is worth noting that for any $w \in \bbd$, one can uniquely revert $w$ from its square $z = w^2$ through $w = \sqrt{z}$, where the square root is selected to have a branch cut along the negative real line. Consequently, functions of $w$ can effortlessly be regarded as functions of $z$, and vice versa. Throughout the subsequent discussion, we will frequently utilize this one-to-one correspondence without further explanation. 

Thanks to Assumption \ref{assump:edge-regularity}, the following comparison holds uniformly for $w \in \bbd$ and $z = w^2$,
\begin{equation}
    \abs{w} \asymp \abs{z} \asymp \Re w \asymp \Re z \asymp 1,
    \quad \abs{\Im w} \asymp \abs{\Im z}
    \qand \operatorname{dist} (w, \partial \operatorname{supp} \rho)
    \asymp \operatorname{dist} (z, \partial \operatorname{supp} \varrho).
\end{equation}
We denote the distance of $w \in \bbd$ to the real line and the closest spectral edge as
\begin{equation}
    \eta \equiv \eta (w)
    := \abs{\Im w}
    \qand
    d \equiv d(z) 
    := \operatorname{dist} (z, \partial \operatorname{supp} \varrho).
\end{equation}

The following lemma summarizes some basic estimates regarding the Stieltjes transform $m$, specifically ensuring the boundedness of the deterministic approximation $\Pi$. 

\begin{lemma}[Basic properties of $m$] \label{lemma:m-order-estimate}
\begin{subequations}
For sufficiently small $\tau^\prime > 0$ (depending only on $\tau$), the following estimates hold uniformly for $w \in \bbd (\tau, \tau^\prime)$,
\begin{equation}
    \abs{m(w)} \asymp \abs{\fkm (z)} \asymp 1
    \qand \abs{\Im m (w)} \asymp \abs{\Im \fkm (z)}
    \asymp \begin{cases}
        d^{1/2}, & \text{ if } \Re w \in \operatorname{supp} \rho, \\
        \eta / d^{1/2}, & \text{ if } \Re w \notin \operatorname{supp} \rho.
    \end{cases} 
    \label{eqn:m-order-estimate}
\end{equation}
In addition, we have $\| \Pi(w) \|, \| \Gamma(w) \| \lesssim 1$ that follows from
\begin{equation}
    \min_{i \in \llbracket M \rrbracket} \abs{w + m(w) \sigma_{i}} 
    \asymp \min_{i \in \llbracket M \rrbracket} \abs{1 + \fkm(z) \sigma_{i}} \gtrsim 1.
    \label{eqn:Pi-stability}
\end{equation}    
\end{subequations}
\end{lemma}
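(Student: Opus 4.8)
The plan is to derive the estimates in Lemma \ref{lemma:m-order-estimate} from the structure of the self-consistent equation \eqref{eqn:self-consistent-z}, working primarily with $\fkm(z)$ and then transferring to $m(w)$ via the relation $m(w) = w\fkm(w^2)$ together with the comparisons $\abs{w} \asymp \abs{z} \asymp 1$ and $\abs{\Im w} \asymp \abs{\Im z}$ that hold on $\bbd$. Most of these facts are standard in the square-root-edge analysis of the Marchenko–Pastur type equation, so the proof should mostly amount to invoking and quoting the relevant estimates from \cite{knowlesAnisotropicLocalLaws2017,baoUniversalityLargestEigenvalue2015,silversteinAnalysisLimitingSpectral1995}, then assembling them. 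Concretely, I would proceed as follows.

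First, I would establish \eqref{eqn:Pi-stability}, the stability bound $\min_i \abs{1 + \fkm(z)\sigma_i} \gtrsim 1$. On the real axis this is exactly the third condition in Assumption \ref{assump:edge-regularity} evaluated at the edges $\fka_k$; one then propagates it to all of $\bbd$ by a continuity/perturbation argument, using that $\fkm$ is uniformly Lipschitz on $\bbd$ (a consequence of the square-root behaviour of $\varrho$ near its edges and boundedness away from them, so that $\fkm$ extends Hölder-$\tfrac12$ continuously up to the real line), that $\sigma_i \leq \tau^{-1}$ by Assumption \ref{assump:boundedness}, and that $\tau'$ is chosen small enough relative to $\tau$. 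An alternative is to note that $1 + \fkm(z)\sigma_i$ cannot be small for any $i$ unless $z$ is near an edge or $\Im z$ is large, and both regimes are excluded or handled by Assumption \ref{assump:edge-regularity}. From $\min_i \abs{w + m(w)\sigma_i} = \abs{w}\min_i\abs{1 + \fkm(z)\sigma_i} \gtrsim 1$ and $\abs{w}\asymp 1$, the bound $\norm{\Gamma(w)} = \norm{(w + m(w)\Sigma)^{-1}} \lesssim 1$ is immediate, and $\norm{\Pi(w)}\lesssim 1$ follows once we also control $\abs{m(w)}$.

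Next I would prove $\abs{\fkm(z)}\asymp 1$. The upper bound $\abs{\fkm(z)}\lesssim 1$ follows because $\varrho$ is a probability measure supported in a set bounded away from $0$ (its leftmost edge $\fka_{2K}\geq\tau$ by Assumption \ref{assump:edge-regularity}) and with rightmost edge $\fka_1\lesssim 1$, so $\abs{\fkm(z)} = \abs{\int(\lambda-z)^{-1}\varrho(d\lambda)} \lesssim 1$ uniformly for $z$ with $\Re z \asymp 1$ at distance $\lesssim\tau'$ from $\operatorname{supp}\varrho$ — here one uses that the singularity at $\lambda = z$ is integrable against the bounded density. The lower bound $\abs{\fkm(z)}\gtrsim 1$ is obtained from the self-consistent equation itself: rearranging \eqref{eqn:self-consistent-z} gives $1/\fkm = -z + N^{-1}\operatorname{tr}\,\Sigma(1+\fkm\Sigma)^{-1}$, whose right-hand side is $O(1)$ in absolute value by \eqref{eqn:Pi-stability} and $\abs{z}\asymp 1$, so $\abs{\fkm}\gtrsim 1$. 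Then $\abs{m(w)} = \abs{w}\abs{\fkm(z)}\asymp 1$, completing $\norm{\Pi(w)}\lesssim 1$.

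Finally I would handle the imaginary-part asymptotics $\abs{\Im\fkm(z)}\asymp d^{1/2}$ when $\Re z\in\operatorname{supp}\varrho$ and $\abs{\Im\fkm(z)}\asymp\eta/d^{1/2}$ when $\Re z\notin\operatorname{supp}\varrho$; via $\Im m(w) = \Im(w\fkm(z))$ and $\abs{w}\asymp 1$, $\abs{\Im w}\asymp\abs{\Im z}\asymp d^{1/2}\cdot\eta$-type bookkeeping, the claim for $m$ follows from that for $\fkm$ (one checks $\abs{\Im(w\fkm)}\asymp\abs{\Im\fkm}$ since $\abs{\Im w}\cdot\abs{\Re\fkm}$ and $\abs{\Re w}\cdot\abs{\Im\fkm}$ can be compared using $\abs{\Im w}\lesssim d^{1/2}\lesssim 1$ and the stability bounds, or simply quote the corresponding statement from \cite{knowlesAnisotropicLocalLaws2017}). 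The estimate for $\Im\fkm$ is the genuinely quantitative ingredient and is precisely the square-root edge behaviour of the Marchenko–Pastur-type density: near a regular edge $\fka_k$ one has $\varrho(x)\asymp(x-\fka_k)_+^{1/2}$ and hence the stated behaviour of $\Im\fkm$; in the bulk, at distance $\geq\tau'$ from any edge, Assumption \ref{assump:bulk-regularity} gives $\varrho(E)\gtrsim 1$ and one also has $\varrho(E)\lesssim 1$ from the upper bound on $\abs{\fkm}$, so $d\asymp 1$ there and both sides are $\asymp 1$. These facts are established in \cite[Section 2 and Appendix]{knowlesAnisotropicLocalLaws2017} and \cite{baoUniversalityLargestEigenvalue2015}; I would cite them and verify only that the hypotheses of those references are met under Assumptions \ref{assump:high-dimension}--\ref{assump:bulk-regularity}.

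The main obstacle, such as it is, is the edge square-root estimate $\abs{\Im\fkm(z)}\asymp d^{1/2}$ (and $\eta/d^{1/2}$ outside the support): proving this from scratch requires analysing $f(\fkm)$ near the critical points where $f'(\fkm) = 0$ and showing these correspond exactly to the edges $\fka_k$, with the edge regularity Assumption \ref{assump:edge-regularity} (in particular $\min_i\abs{1+\fkm(\fka_k)\sigma_i}\geq\tau$) guaranteeing a genuine square-root rather than a degenerate singularity. Since this is entirely standard and carried out in detail in the cited references, in practice the ``proof'' of this lemma is a matter of quoting those results and checking compatibility of assumptions, with the stability bound \eqref{eqn:Pi-stability} being the one piece worth spelling out explicitly via the perturbation argument from the edges into $\bbd$.
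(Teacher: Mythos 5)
Your proposal matches the paper's treatment: the paper gives no self-contained proof, simply noting the trivial comparison between $m(w)$ and $\fkm(z)$ and citing \cite[Theorem 3.1]{baoUniversalityLargestEigenvalue2015} for $\abs{\fkm(z)} \asymp 1$ and \cite[Lemmas A.4 and A.6]{knowlesAnisotropicLocalLaws2017} for the $\abs{\Im \fkm}$ asymptotics and the stability bound (\ref{eqn:Pi-stability}) near the edges and in the bulk, respectively, exactly as you propose. One small caution on your sketched alternative: propagating $\min_i \abs{1 + \fkm(\fka_k)\sigma_i} \geq \tau$ from the edges into all of $\bbd$ by H\"older continuity only controls a neighborhood of the edges, since $\abs{\fkm(z)-\fkm(\fka_k)} \lesssim \abs{z-\fka_k}^{1/2}$ is $O(1)$ at bulk distances; in the bulk the bound instead follows from $\Im \fkm \gtrsim 1$ (Assumption \ref{assump:bulk-regularity}), which is how the cited Lemma A.6 argues.
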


Apart from the apparent comparison between $m(w)$ and $\fkm(z)$, the estimates in Lemma \ref{lemma:m-order-estimate} have already been established in \cite{baoUniversalityLargestEigenvalue2015,knowlesAnisotropicLocalLaws2017} in terms of $\fkm$. Specifically, the estimate for $\abs{\fkm(z)}$ can be proved as in \cite[Theorem 3.1]{baoUniversalityLargestEigenvalue2015}. The estimates for $\abs{\Im \fkm(z)}$ and $\min_i \abs{1 + \fkm(z) \sigma_{i}}$ have been established in \cite[Lemma A.4]{knowlesAnisotropicLocalLaws2017} near the edge and in \cite[Lemma A.6]{knowlesAnisotropicLocalLaws2017} in the bulk.

According to (\ref{eqn:m-order-estimate}), it is possible for the ``density'' $\abs{\Im m (w)}$ to be much smaller than $\eta^{1/2}$ if $\Re w \notin \operatorname{supp} \rho$ and $d \gg \eta$. This low-density regime presents challenges for theoretical analysis, as many parts of our proof require a lower bound on $\abs{\Im m (w)}$. Consequently, for most of the results presented in this article, we need to exclude this regime and focus on the spectral parameters from
\begin{equation}
    \bbd_0 \equiv \bbd_0 (\tau, \tau^\prime)
    := \{ {w \in \bbd: \operatorname{dist} (\Re w, \operatorname{supp} \rho) \leq \abs{\Im w}} \}.
    \label{def:bbd-0}
\end{equation}
Note that this definition guarantees $\abs{\Im m (w)} \asymp d^{1/2}$ uniformly for $w \in \bbd_0 (\tau, \tau^\prime)$.

\begin{notations*}
Let us make some notational conventions before proceeding. We use $\bfId_M, \bfId_N$ and $\bfSigma_M$ to denote the natural embeddings of the matrices $I_M, I_N$ and $\Sigma$ in $\bbr^{(M + N) \times (M \times N)}$, respectively, i.e.
\begin{equation*}
    \bfId_M \equiv \begin{bmatrix} I_M & 0 \\ 0 & 0 \end{bmatrix},
    \quad
    \bfId_N \equiv \begin{bmatrix} 0 & 0 \\ 0 & I_N \end{bmatrix},
    \qand
    \bfSigma_M \equiv \begin{bmatrix} \Sigma & 0 \\ 0 & 0 \end{bmatrix}.
\end{equation*}
We also introduce the matrices
\begin{equation}
    \bfId^{\pm} := \bfId_M \pm \bfId_N = \begin{bmatrix} I_M & 0 \\ 0 & \pm I_N \end{bmatrix}
    \qand
    \bfSigma^{\pm} := \bfSigma_M \pm \bfId_N = \begin{bmatrix} \Sigma & 0 \\ 0 & \pm I_N \end{bmatrix}.
    \label{def:Sigma-pm}
\end{equation}
When there is no ambiguity, we use the abbreviations $\bfId \equiv \bfId^+$ and $\bfSigma \equiv \bfSigma^+$. 

We often use the letter $i$ to represent indices within $\llbracket M \rrbracket$, while reserve $\mu$ for indices within $\llbracket N \rrbracket$ (or $\llbracket M+1, M+N \rrbracket$, depending on the specific context). Hence, summation over $i$ (resp. $\mu$) without further specification should be interpreted as $\sum_{i \in \llbracket M \rrbracket}$ (resp. $\sum_{\mu \in \llbracket N \rrbracket}$). Continuing with this convention, we denote the canonical bases of $\bbr^M$ and $\bbr^N$ as $\{ \mathbf{e}_i \}_{i \in \llbracket M \rrbracket}$ and $\{ \mathbf{e}_\mu \}_{\mu \in \llbracket N \rrbracket}$, respectively. These two set of vectors can also be identified with their natural embeddings in $\bbr^{M + N}$, so that their union constitutes the canonical bases of $\bbr^{M + N}$.
\end{notations*}

We now present the single resolvent local laws for sample covariance matrices, both in \emph{averaged} and \emph{isotropic form}, established in \cite[Theorem 3.6]{knowlesAnisotropicLocalLaws2017}.

\begin{proposition}[Single resolvent local laws] \label{prop:single-resolvent-local}
Fix arbitrary (small) $\varepsilon \in (0, 1)$. Under assumptions \ref{assump:high-dimension}-\ref{assump:bulk-regularity}, we have the following average law and isotropic law uniformly in $w \in \bbd (\tau, \tau^\prime)$ with $\eta = \abs{\Im w} \geq N^{-1 + \varepsilon}$ and deterministic vectors $\fku, \fkv \in \bbc^{M+N}$ with $\norm{\fku}, \norm{\fkv} \lesssim 1$,
\begin{equation}
    \abs{\angles{G(w) \bfSigma^\pm} - \angles{\Pi(w) \bfSigma^\pm}} \prec \frac{1}{N \eta}
    \qand
    \abs{\angles{\fku, G(w) \fkv} - \angles{\fku, \Pi(w) \fkv}}
    \prec \frac{1}{\sqrt{N \eta}}.
    \label{eqn:single-resolvent-local-law}
\end{equation}
\end{proposition}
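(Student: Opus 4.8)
The plan is to obtain this statement, after the substitution $z = w^2$, directly from the anisotropic local law of Knowles and Yin \cite[Theorem 3.6]{knowlesAnisotropicLocalLaws2017}, rather than to reprove a local law from scratch. Writing $A := Y/\sqrt{N} = \Sigma^{1/2}X$, so that $H$ in (\ref{def:linearization}) equals $\begin{bmatrix} 0 & A \\ A^* & 0\end{bmatrix}$ and $\hat\Sigma = AA^*$, a Schur-complement inversion of $H - w$ gives, with $z := w^2$ and $A^*A = Y^\top Y/N = X^*\Sigma X$ the Gram matrix whose LSD is $\varrho$,
\begin{equation}
    G(w) = \begin{bmatrix} w(\hat\Sigma - z)^{-1} & (\hat\Sigma - z)^{-1}A \\ A^*(\hat\Sigma - z)^{-1} & w(A^*A - z)^{-1}\end{bmatrix}, \qquad (\hat\Sigma - z)^{-1}A = A(A^*A - z)^{-1}.
    \label{eqn:G-block-sketch}
\end{equation}
Since $\Re w \geq \tau'$ on $\bbd$, one has $|w| \asymp 1$ and $\Im z = 2(\Re w)(\Im w)$, hence $|\Im z| \asymp \eta$ and $\operatorname{dist}(z, \partial\operatorname{supp}\varrho) \asymp \operatorname{dist}(w, \partial\operatorname{supp}\rho)$; thus the scale condition $\eta \geq N^{-1+\varepsilon}$ corresponds (up to constants) to the optimal regime $|\Im z| \geq N^{-1+\varepsilon}$ treated in \cite{knowlesAnisotropicLocalLaws2017}, and the regularity hypotheses there are implied by Assumptions \ref{assump:high-dimension}-\ref{assump:bulk-regularity}, which are stated in the same form.

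Next I would match the deterministic surrogates. The anisotropic law approximates $(\hat\Sigma - z)^{-1}$ by $(-z)^{-1}(I_M + \fkm(z)\Sigma)^{-1}$ and $(A^*A - z)^{-1}$ by $\fkm(z) I_N$; multiplying by $w$ and using $m(w) = w\fkm(z)$ converts these into $-(w + m(w)\Sigma)^{-1} = \Gamma(w)$ and $m(w) I_N$, which are exactly the two diagonal blocks of $\Pi(w)$ in (\ref{def:Pi}), while the two off-diagonal blocks of $G(w)$ in (\ref{eqn:G-block-sketch}) have vanishing deterministic part, matching the zero off-diagonal blocks of $\Pi(w)$. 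A short trace computation from the self-consistent equation (\ref{eqn:self-consistent-z}) confirms this identification, and $\norm{\Pi(w)} \lesssim 1$ is Lemma \ref{lemma:m-order-estimate}.

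The transfer of the two bounds is then routine. For the averaged law, $\bfSigma^\pm$ is block diagonal, so only the diagonal blocks of (\ref{eqn:G-block-sketch}) contribute: $\angles{G(w)\bfSigma^\pm} = w N^{-1}\operatorname{tr}[(\hat\Sigma - z)^{-1}\Sigma] \pm w N^{-1}\operatorname{tr}[(A^*A - z)^{-1}]$, and the first summand is controlled by the averaged anisotropic law with the bounded observable $\Sigma$, the second by the classical averaged law for the Stieltjes transform of the Gram matrix, each with error $\prec (N|\Im z|)^{-1} \asymp (N\eta)^{-1}$; the $O(1)$ factor $w$ is harmless. For the isotropic law, split $\fku = (\fku_1, \fku_2)$ and $\fkv = (\fkv_1, \fkv_2)$ along the $M+N$ decomposition and expand $\angles{\fku, G(w)\fkv}$ into its four block contributions. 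The two diagonal contributions are bounded by the isotropic anisotropic law for $(\hat\Sigma - z)^{-1}$ and $(A^*A - z)^{-1}$, with error $\prec \sqrt{\Im\fkm(z)/(N|\Im z|)} + (N|\Im z|)^{-1} \lesssim (N\eta)^{-1/2}$ since $\Im\fkm \lesssim 1$; the two off-diagonal contributions, such as $\angles{\fku_1, (\hat\Sigma - z)^{-1}A\fkv_2}$, carry no deterministic part and are bounded by $(N\eta)^{-1/2}$, either by the isotropic form of the full linearized anisotropic law of \cite{knowlesAnisotropicLocalLaws2017} or, using $(\hat\Sigma - z)^{-1}A = A(A^*A - z)^{-1}$, by the mixed isotropic estimates that already appear in the proof of the isotropic law for sample covariance matrices.

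Because the proposition is quoted rather than reproved, there is no substantive obstacle here: the only care needed is the bookkeeping of the substitution $z = w^2$ (checking that $|\Im z| \asymp \eta$, $|z| \asymp 1$, the distance-to-edge and the regularity assumptions all transfer) together with the verification that multiplying the approximations of \cite{knowlesAnisotropicLocalLaws2017} by $w$ reproduces $\Gamma$ and $m$ as in (\ref{def:Pi}). If one instead wanted a self-contained argument, the genuinely delicate ingredient would be the isotropic control of the mixed blocks $(\hat\Sigma - z)^{-1}A$ all the way down to $\eta \gtrsim N^{-1+\varepsilon}$, which is the heart of anisotropic local laws for sample covariance matrices.
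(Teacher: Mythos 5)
Your proposal is correct and coincides with what the paper does: Proposition \ref{prop:single-resolvent-local} is not reproved but simply quoted from \cite[Theorem 3.6]{knowlesAnisotropicLocalLaws2017}, and your Schur-complement block formula for $G(w)$, the substitution $z=w^2$ with $m(w)=w\fkm(z)$, and the identification of the blocks of $\Pi(w)$ constitute exactly the standard dictionary by which that result is transferred to the linearization $H$. The only detail worth flagging is that the averaged bound for the $\Sigma$-weighted trace $\angles{G\bfSigma_M}$ is not verbatim the statement $|\fkm_N-\fkm|\prec (N\eta)^{-1}$ but follows from the same fluctuation-averaging machinery in \cite{knowlesAnisotropicLocalLaws2017}, a point the paper glosses over just as you do.
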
 

As a standard corollary of the single resolvent local laws, we have the following rigidity result, obtained in \cite[Theorem 3.12]{knowlesAnisotropicLocalLaws2017}, which offers large deviation bounds on the locations of individual singular values/eigenvalues.

\begin{proposition}[Singular value/eigenvalue rigidity] \label{prop:rigidity}
Under assumptions \ref{assump:high-dimension}-\ref{assump:bulk-regularity}, we have
\begin{equation}
    \abs{s_{k,i} - \gamma_{k,i}} + \abs{\lambda_{k,i} - \gamma_{k,i}^2} 
    \prec N^{-2/3} (\fkn_{k,i})^{-1/3}
    \equiv N^{-2/3} [i \wedge (N_k+1-i)]^{-1/3},
    \label{bound:rigidity}
\end{equation}
uniformly in $k \in \llbracket K \rrbracket$ and $i \in \llbracket N_k \rrbracket$.
\end{proposition}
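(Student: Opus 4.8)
The statement is the rigidity estimate of \cite[Theorem 3.12]{knowlesAnisotropicLocalLaws2017}, and the plan is to recover it from the averaged single resolvent local law in Proposition \ref{prop:single-resolvent-local} along the now-standard two-step route: first convert control of the Stieltjes transform of the empirical singular value measure into control of its counting function, then invert the counting function using the square-root edge profile of $\rho$.

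First I would extract a Stieltjes-transform estimate. Since $\bfId_N = \frac{1}{2}(\bfSigma^+ - \bfSigma^-)$, the averaged law (\ref{eqn:single-resolvent-local-law}) gives $|\angles{G(w) \bfId_N} - \angles{\Pi(w) \bfId_N}| \prec (N\eta)^{-1}$ uniformly for $w \in \bbd$ with $\eta = |\Im w| \geq N^{-1+\varepsilon}$. By the spectral decomposition (\ref{def:resolvent-G}) and the explicit form of the $\bfxi_i$, the quantity $m_N(w) := \angles{G(w) \bfId_N}$ is the Stieltjes transform of the symmetrized empirical singular value measure $\rho_N$ (the null modes of $H$ contributing a term proportional to $w^{-1}$, exactly mirroring the structure of $\rho$), while $\angles{\Pi(w) \bfId_N} = m(w)$. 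Hence $|m_N(w) - m(w)| \prec (N\eta)^{-1}$ on this domain.

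Next, a standard contour-integral (Helffer--Sj\"{o}strand) argument upgrades this into a bound on the empirical counting function $n_N(E) := \#\{ i : 0 < s_i \leq E \}$ against its classical counterpart, the deterministic quantity appearing in (\ref{def:classical-locations}). Integrating $\Im(m_N - m)$ against a smoothed indicator of $(-\infty, E]$ and deforming the contour down to height $\eta \asymp N^{-1+\varepsilon}$ --- invoking the local-law bound along the way, plus the a priori input that no singular value lies far outside $\operatorname{supp}\rho$ (a consequence of the local law at $\eta \asymp 1$ together with Assumption \ref{assump:edge-regularity}) to close the contour --- yields $|n_N(E) - N \rho((0,E])| \prec 1$ in the bulk of each component, with the error suitably rescaled near each edge. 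The multi-component structure is harmless: the spectral gaps have length $\gtrsim \tau$ by Assumption \ref{assump:edge-regularity}, so the local law forbids any mass of $\rho_N$ there, and $N_k \in \bbn$ for every $k$ by \cite[Lemma A.1]{knowlesAnisotropicLocalLaws2017}, which keeps the per-component relabelling (\ref{def:relabelling}) consistent.

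Finally I would invert. Under Assumptions \ref{assump:edge-regularity}--\ref{assump:bulk-regularity}, the self-consistent equation (\ref{eqn:self-consistent-z}) forces $\varrho$ (hence $\rho$) to vanish like a square root at each edge $\fka_k$, so by (\ref{def:classical-locations}) one has $\operatorname{dist}(\gamma_{k,i}, \partial \operatorname{supp}\rho) \asymp (\fkn_{k,i}/N)^{2/3}$ and $\rho(\gamma_{k,i}) \asymp (\fkn_{k,i}/N)^{1/3}$. Feeding the counting-function estimate through the (locally Lipschitz) inverse of $E \mapsto N\rho((0,E])$ then gives $|s_{k,i} - \gamma_{k,i}| \prec (N \rho(\gamma_{k,i}))^{-1} \asymp N^{-2/3}(\fkn_{k,i})^{-1/3}$; since $s_{k,i}, \gamma_{k,i} \asymp 1$ on every bulk component (as $\fka_k \geq \tau$), the bound on $|\lambda_{k,i} - \gamma_{k,i}^2| = |s_{k,i} - \gamma_{k,i}| \, |s_{k,i} + \gamma_{k,i}|$ follows. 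I expect the edge regime to be the main obstacle: obtaining the sharp exponents $N^{-2/3}(\fkn_{k,i})^{-1/3}$ --- rather than a bound lossy by a small power of $N$ --- requires running the counting argument with care all the way down to the local spacing scale $\asymp N^{-2/3}$ at each $\fka_k$ and tracking the square-root density profile precisely, while the endpoint indices $\fkn_{k,i} \lesssim 1$ (in particular the extreme singular value $s_1$) call for a dedicated edge estimate, where the isotropic law in (\ref{eqn:single-resolvent-local-law}) and the input $\min_i |1 + \fkm(\fka_k)\sigma_i| \geq \tau$ of Assumption \ref{assump:edge-regularity} become essential for controlling fluctuations near the spectral edges.
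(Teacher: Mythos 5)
The paper does not actually prove Proposition \ref{prop:rigidity}: it is imported verbatim from \cite[Theorem 3.12]{knowlesAnisotropicLocalLaws2017} and stated as a known corollary of the single-resolvent local law, so there is no internal argument to compare against. Your sketch reconstructs the standard derivation that underlies that citation (Stieltjes transform control from the averaged law, Helffer--Sj\"{o}strand comparison of counting functions, inversion through the square-root edge profile of $\rho$ with $\operatorname{dist}(\gamma_{k,i},\partial\operatorname{supp}\rho)\asymp(\fkn_{k,i}/N)^{2/3}$), and in outline it is the right argument; what the paper buys by citing is simply that all of this machinery is already carried out in the reference, uniformly over the multi-component support permitted by Assumption \ref{assump:edge-regularity}.

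One step in your sketch deserves a sharper statement, because as written it would not close. To get the edge exponent $N^{-2/3}(\fkn_{k,i})^{-1/3}$ you must rule out singular values at distance $\kappa\gg N^{-2/3+\varepsilon}$ \emph{outside} $\operatorname{supp}\rho$, and the bound $\abs{\angles{G\bfId_N}-m}\prec(N\eta)^{-1}$ on $\bbd$ is exactly borderline for this: a single outlier at distance $\kappa$ contributes $\gtrsim (N\eta)^{-1}$ to $\Im\angles{G\bfId_N}$, which is of the same order as the error term, so no contradiction arises from the $(N\eta)^{-1}$ law alone, and the isotropic law you invoke does not help here either. What is needed is the improved averaged estimate outside the spectrum (an error of the form $1/(N(\kappa+\eta))$ plus a smaller correction, as in \cite{erdosRigidityEigenvaluesGeneralized2012} and as established for this model in \cite{knowlesAnisotropicLocalLaws2017}), or an independent ``no eigenvalues outside the support'' statement at the $N^{-2/3+\varepsilon}$ scale; your a priori input from the local law at $\eta\asymp 1$ only excludes outliers at constant distance. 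With that ingredient supplied, the rest of your counting-and-inversion argument goes through and reproduces (\ref{bound:rigidity}).
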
 

Although Proposition \ref{prop:single-resolvent-local} is stated for spectral parameters with positive real parts, its extension to parameters with negative real parts is straightforward thanks to the chiral symmetry
\begin{equation}
    \bfId^- G (w) \bfId^- = - G (- w).
        \label{eqn:chiral-symmetry}
\end{equation}    

The approach employed in \cite{knowlesAnisotropicLocalLaws2017} to establish Proposition \ref{prop:single-resolvent-local} is essentially grounded in the Schur complement formula. Here, we opt for the approach of cumulant expansion (reduced to Stein's lemma when dealing with Gaussian variables), which offers greater adaptability for deriving multi-resolvent local laws. The cumulant expansion formula presented here can be derived through a slight modification of \cite[Proposition 3.1]{lytovaCentralLimitTheorem2009}.

\begin{lemma}[Cumulant expansion formula] \label{lemma:cumulant-expansion}
Let $n \in \mathbb{N}$ be fixed and $g \in C^{n+1}(\bbr)$. Supposed $\xi$ is a centered real random variable with finite moments to order $n + 2$. Denote the $r$-th cumulant of $\xi$ by $\kappa_r (\xi)$. Then, we have the expansion
\begin{equation}
    \mathbb{E} [ \xi g(\xi) ]
    = \sum_{r=0}^{n} \frac{\kappa_{r+1}(\xi)}{r !} 
    \mathbb{E} [g^{(r)}(\xi)] + \mathcal{R}_n,
    \label{eqn:cumulant-expansion}
\end{equation}
assuming that all expectations on the r.h.s. exist. The remainder term $\mathcal{R}_n$ satisfies, for any $s > 0$,
\begin{equation*}
    \abs{\mathcal{R}_n}
    \leq \frac{(Cn)^n}{n!} \bigg [ \mathbb{E} \abs{\xi}^{n+2} 
    \cdot \sup_{\abs{t} \leq s} \big | { g^{(n+1)}(t) } \big |
    + \mathbb{E} \big [ \abs{\xi}^{n+2} \mathbb{1}(\abs{\xi}>s) \big ]
    \cdot \sup_{t \in \bbr} \big | { g^{(n+1)}(t) } \big | \bigg ].
\end{equation*}
\end{lemma}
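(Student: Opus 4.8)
\textbf{Proof plan for the cumulant expansion formula (Lemma \ref{lemma:cumulant-expansion}).}

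The plan is to proceed by Taylor expansion of $g$ around the origin together with the standard relation between moments and cumulants. First I would write $g(\xi) = \sum_{r=0}^{n} \frac{g^{(r)}(0)}{r!} \xi^{r} + R(\xi)$, where $R$ is the integral-form remainder $R(\xi) = \frac{1}{n!}\int_0^\xi g^{(n+1)}(t)(\xi - t)^n\,\mathrm{d}t$. Multiplying by $\xi$ and taking expectations, the contribution of the polynomial part is $\sum_{r=0}^n \frac{g^{(r)}(0)}{r!}\mathbb{E}[\xi^{r+1}]$. The key algebraic identity is that, for a centered random variable, $\mathbb{E}[\xi^{r+1}]$ can be re-expressed via the cumulants: more precisely, one matches this with $\sum_{r=0}^n \frac{\kappa_{r+1}(\xi)}{r!}\mathbb{E}[g^{(r)}(\xi)]$ by expanding $\mathbb{E}[g^{(r)}(\xi)]$ in turn and invoking the combinatorial moment–cumulant formula (equivalently, one recognizes both sides as the coefficient extraction in the formal identity $\mathbb{E}[\xi e^{t\xi}] = \big(\sum_{r\ge 0}\frac{\kappa_{r+1}(\xi)}{r!}t^r\big)\mathbb{E}[e^{t\xi}]$, which is just $\frac{\mathrm{d}}{\mathrm{d}t}\log\mathbb{E}[e^{t\xi}]\cdot\mathbb{E}[e^{t\xi}] = \frac{\mathrm{d}}{\mathrm{d}t}\mathbb{E}[e^{t\xi}]$, differentiated term by term). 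This bookkeeping shows the polynomial parts on both sides of \eqref{eqn:cumulant-expansion} agree, so that $\mathcal{R}_n = \mathbb{E}[\xi R(\xi)]$ up to the analogous remainder terms generated when the $\mathbb{E}[g^{(r)}(\xi)]$ are themselves only finitely expanded; tracking these carefully is the bulk of the work.

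Next I would estimate $\mathcal{R}_n = \mathbb{E}[\xi R(\xi)]$. Using the remainder bound $|R(\xi)| \le \frac{|\xi|^{n+1}}{(n+1)!}\sup_{|t|\le|\xi|}|g^{(n+1)}(t)|$, I split the expectation according to whether $|\xi| \le s$ or $|\xi| > s$. On the event $\{|\xi|\le s\}$ one bounds $\sup_{|t|\le|\xi|}|g^{(n+1)}(t)| \le \sup_{|t|\le s}|g^{(n+1)}(t)|$ and $|\xi R(\xi)| \lesssim \frac{|\xi|^{n+2}}{(n+1)!}\sup_{|t|\le s}|g^{(n+1)}(t)|$, giving the first term of the claimed bound after taking expectations. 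On the complementary event $\{|\xi| > s\}$ one simply uses the global supremum $\sup_{t\in\bbr}|g^{(n+1)}(t)|$, producing the second term $\mathbb{E}[|\xi|^{n+2}\mathds{1}(|\xi|>s)]\cdot\sup_{t}|g^{(n+1)}(t)|$. The combinatorial prefactor $(Cn)^n/n!$ arises from collecting the constants generated by the moment–cumulant expansion of the $\mathbb{E}[g^{(r)}(\xi)]$ terms (each cumulant $\kappa_{r+1}$ is a bounded combination of moments up to order $r+1$, and there are at most exponentially-in-$n$ many terms), together with the factorials from the various Taylor remainders.

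The main obstacle is the careful reconciliation of the two finite expansions: the direct Taylor expansion of $\xi g(\xi)$ produces $\mathbb{E}[\xi^{r+1}]$ multiplied by $g^{(r)}(0)$, whereas the target formula carries $\kappa_{r+1}(\xi)$ multiplied by $\mathbb{E}[g^{(r)}(\xi)]$ — so one must simultaneously re-expand $\mathbb{E}[g^{(r)}(\xi)]$ back around $0$, substitute the moment–cumulant relations, and verify that all the cross terms telescope to leave exactly the stated remainder structure. This is precisely the content of \cite[Proposition 3.1]{lytovaCentralLimitTheorem2009}, and I would follow that argument, only adjusting the truncation level and the remainder split at $|\xi| = s$ to obtain the two-term bound stated here (the original reference typically states a one-term bound under a boundedness hypothesis on $g^{(n+1)}$; the refinement to handle unbounded $g^{(n+1)}$ via the indicator split is the one genuinely new bookkeeping step). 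Everything else — the Taylor remainder estimates, the boundedness of low-order cumulants in terms of moments, the union of finitely many error terms — is routine.
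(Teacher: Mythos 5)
Your plan coincides with the paper's own treatment: the paper offers no self-contained proof and simply states that the lemma follows from a slight modification of Proposition 3.1 of Lytova and Pastur, which is exactly the route you describe — Taylor expansion with moment–cumulant bookkeeping carried out as in that reference, with the truncation split at $\abs{\xi} = s$ supplying the refined two-term remainder bound. No gap to flag beyond what the paper itself defers to the cited reference.
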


To apply the cumulant expansion formula (\ref{eqn:cumulant-expansion}) to the resolvent $G$, we start by representing the self-adjoint dilation $H$ as a sum of independent rank-$2$ matrices,
\begin{equation}
    H = \sum_{i, \mu} x_{i \mu} \dimu,
    \qwhere
    \dimu = \bfSigma^{1/2} (\mathbf{e}_i \mathbf{e}_\mu^\top 
    + \mathbf{e}_\mu \mathbf{e}_i^\top) \bfSigma^{1/2}.
    \label{eqn:H-sum-of-rank2}
\end{equation}
Given any differentiable $g(H)$, we define its derivative w.r.t. $x_{i \mu}$ as
\begin{equation*}
    \parimu g(H)
    \equiv \frac{\partial}{\partial x_{i \mu}} g(H)
    := \frac{\mathrm{d}}{\mathrm{d} t} g( H + t \dimu) \Big |_{t=0}.
\end{equation*}
In particular, we have
\begin{equation}
    \parimu H = \dimu
    \qand
    \parimu G = - G (\parimu H) G = - G \dimu G.
    \label{eqn:derivative-G}
\end{equation}

\begin{definition}[Second-order renormalization]
Let $g_1, g_2: \bbr \to \bbc$. Following \cite{cipolloniEigenstateThermalizationHypothesis2021}, we define the \emph{second-order renormalization} of $g_1(H) H g_2(H)$, denoted with an underline, as
\begin{equation}
    \underline{g_1(H) H g_2(H)}
    := g_1(H) H g_2(H) - \frac{1}{N} \sum_{i, \mu} \parimu [ g_1(H) \dimu g_2(H) ].
    \label{def:second-order-renormalization}
\end{equation}    
\end{definition}

The summation in (\ref{def:second-order-renormalization}) essentially corresponds to the second-order term in the cumulant expansion formula (\ref{eqn:cumulant-expansion}). Particularly, when the $x_{i \mu}$'s are Gaussian, we have $\bbe \underline{g_1(H) H g_2(H)} = 0$. Therefore, by concentration and universality, it is reasonable to expect that the summation in (\ref{def:second-order-renormalization}) captures the leading term of $g_1(H) H g_2(H)$, and in a certain sense, $\underline{g_1(H) H g_2(H)}$ collects the terms that are subleading to this term.

Utilizing (\ref{eqn:derivative-G}) and $HG = I + w G$, we have by definition
\begin{equation}
    \underline{HG}
    = H G + \frac{1}{N} \sum_{i, \mu} \dimu G \dimu G
    = I + w G + \opS [G] G,
    \label{eqn:underline-HG}
\end{equation}
where we introduced the linear operator $\opS$ on $(M+N) \times (M+N)$ matrices,
\begin{equation}
    A \quad \mapsto \quad \opS [A]
    := \frac{1}{N} \sum_{i, \mu} \dimu A \dimu.
\end{equation}
Note that we can decompose $\opS = \opSd + \opSo$, with
\begin{subequations}
\begin{align}
    \opSd [A] & := \angles{A \bfId_N} \bfSigma_M + \angles{A \bfSigma_M} \bfId_N
    = \frac{1}{2} ( {\angles{A \bfSigma^+} \bfSigma^+ 
    - \angles{A \bfSigma^-} \bfSigma^-} ), 
    \label{def:opS-diagonal} \\
    \opSo [A] & := \frac{1}{N} ({\bfSigma_M A^\top \bfId_N 
    + \bfId_N A^\top \bfSigma_M}) 
    = \frac{1}{2N} ({\bfSigma^+ A^\top \bfSigma^+ 
    - \bfSigma^-  A^\top \bfSigma^-})
    \label{def:opS-off-diagonal}.
\end{align}    
\end{subequations}
The motivation behind this decomposition is that, given any $A$, the image $\opSd [A]$ is block-diagonal, whereas $\opSo [A]$ is block-off-diagonal. In the context of this article, for terms involving $\opS$, the primary contribution is typically captured by the part involving $\opSd$, whereas the contribution from the part involving $\opSo$ is negligible up to the leading order. This is not surprising since the prefactor $N^{-1}$ in (\ref{def:opS-off-diagonal}) inherently diminishes the importance of $\opSo$.

Now, referring to (\ref{eqn:underline-HG}) and the preceding discussion, it is reasonable to expect that $\Pi(w)$, the deterministic approximation of $G(w)$, satisfies the following so-called \emph{matrix Dyson equation} (MDE)
\begin{equation}
    I + w \Pi + \opSd [\Pi] \Pi 
    = I + w \Pi + \Pi \opSd [\Pi] = 0.
    \label{eqn:single-MDE}
\end{equation}
This is exactly the case for $\Pi(w)$ defined in (\ref{def:Pi}). Actually, utilizing the self-consistent equation (\ref{eqn:self-consistent-z}), we know that $m(w)$ can also be characterized as the unique solution to
\begin{equation}
    w = - \frac{1}{m (w)} + \frac{1}{N} \operatorname{tr} \frac{\Sigma}{w + m(w) \Sigma},
    \quad \text{ with } \quad 
    \Im w \cdot \Im m > 0.
    \label{eqn:self-consistent-w}
\end{equation}

\subsection{Multi-resolvent local laws with regular matrices} \label{subsec:multi-resolvent-laws}

As previously mentioned, the key technical result of this article, leading to Theorem \ref{thm:eigenvector-overlaps}, is the multi-resolvent local laws, which extend Proposition \ref{prop:single-resolvent-local} to chains of resolvents and deterministic matrices. For our purposes, we primarily focus on resolvent chains of length $2$ or $3$,
\begin{equation}
    G_1 A_1 G_2 
    \qand
    G_1 A_1 G_2 A_2 G_3,
    \label{eqn:resolvent-chains}
\end{equation}
where $A_1, A_2$ are deterministic matrices. Here we also abbreviate $G_k \equiv G(w_k)$. For what follows, we frequently use such abbreviations to streamline notation when there are multiple spectral parameters. In particular, we also write $\Pi_k \equiv \Pi(w_k)$. Similar to the single resolvent local laws, we can identify certain deterministic matrices, dependent on the spectral parameters $w_k$ as well as the matrices $A_k$, that asymptotically serve as the deterministic approximation (in weak operator sense) of these resolvent chains. Let us denote the deterministic approximations for the two resolvent chains in (\ref{eqn:resolvent-chains}) respectively by
\begin{equation*}
    \Pi_{12} (A_1)
    \equiv \Pi (w_1, A_1, w_2)
    \qand
    \Pi_{123} (A_1, A_2)
    \equiv \Pi (w_1, A_1, w_2, A_2, w_3),
\end{equation*}
whose definitions are to be clear in (\ref{def:chain-deter-app}) below. As highlighted in \cite{cipolloniOptimalLowerBound2023,cipolloniThermalisationWignerMatrices2022}, these deterministic approximations cannot be directly obtained by substituting the $G_k$'s with the $\Pi_k$'s in (\ref{eqn:resolvent-chains}). Instead, they can be derived from the so-called \emph{recursive Dyson equations}, for which we refer to the derivations in Section \ref{sec:representation} as well as \cite[Appendix D]{cipolloniOptimalLowerBound2023}.

To this end, we introduce the \emph{two-body stability operator}
\begin{equation}
    \opB_{12} [A] \equiv \opB_{w_1, w_2} [A] := A - \Pi_1 \opSd [A] \Pi_2.
    \label{def:operator-B12}
\end{equation}
We also define $\opX_{12} \equiv \opX_{w_1, w_2}$ as the inverse of the operator $V \mapsto V - \opSd [\Pi_1 V \Pi_2]$. In other words,
\begin{equation}
    V = \opX_{12} [A]
    \quad \Longleftrightarrow \quad
    V - \opSd [\Pi_1 V \Pi_2] = A.
    \label{def:operator-X12}
\end{equation}
Note that notations $\opB_{13}, \opB_{23}, \opX_{13}, \opX_{23}$ can be interpreted analogously. Using (\ref{def:opS-diagonal}), one can easily verify that $\angles{\Pi_1 \opSd[A] \Pi_2 V} = \angles{A \opSd[\Pi_2 V \Pi_1]}$ for generic matrices $A$ and $V$. Consequently, the two operators $\opB$ and $\opX$ can be related through
\begin{equation}
    \angles{\opB_{12} [A_1] V_2} = \angles{A_1 \opX_{21}^{-1} [V_2]}
    \qand
    \angles{\opB_{12} [A_1] \opX_{21} [A_2]} = \angles{A_1 A_2},
    \label{eqn:opB-opX-inverse}
\end{equation}
where $A_k$ and $V_k$ are general matrices with compatible dimensions.

\begin{definition}[Deterministic approximations] \label{Def:chain-deter-app}
Given $(M+N) \times (M+N)$ deterministic matrices $A_1, A_2$ and spectral parameters $w_1, w_2, w_3 \in \bbd_0$, we set $V_1 \equiv \opX_{12} [A_1]$, $V_2 \equiv \opX_{23} [A_2]$ and define
\begin{subequations} \label{def:chain-deter-app}
\begin{align}
    \Pi_{12} (A_1)
    & := \opB_{12}^{-1} [\Pi_1 A_1 \Pi_2]
    = \Pi_1 V_1 \Pi_2, 
    \label{def:chain-deter-app-length2} \\
    \Pi_{123} (A_1, A_2)
    & := \opB_{13}^{-1} [ {\Pi_1 V_1 \Pi_2 V_2 \Pi_3} ]
    = \Pi_{13} ({V_1 \Pi_2 V_2}).
    \label{def:chain-deter-app-length3}
\end{align}
\end{subequations}
\end{definition}

Here the identities in (\ref{def:chain-deter-app}) directly follow from (\ref{def:operator-B12}) and (\ref{def:operator-X12}). In fact, we have
\begin{equation*}
    \opB_{12} [\Pi_1 V_1 \Pi_2]
    = \Pi_1 V_1 \Pi_2 - \Pi_1 \opSd [\Pi_1 V_1 \Pi_2] \Pi_2
    = \Pi_1 A_1 \Pi_2.
\end{equation*}
Alternatively, the equivalence between the definitions in (\ref{def:chain-deter-app}) can be obtained by expanding different resolvents in the chain. For more details, we refer to \cite[Appendix D]{cipolloniOptimalLowerBound2023} , where the readers can also find how to generalize Definition \ref{Def:chain-deter-app} to accommodate chains involving more than three resolvents.

We proceed to derive an explicit formula for the operator $\opX$ defined in (\ref{def:operator-X12}), in the context of sample covariance matrices. Given spectral parameters $w_1, w_2 \in \bbd$, we define the \emph{divided difference}
\begin{equation}
    \fkm[z_1, z_2] := \begin{cases}
        \fkm^\prime (z) \equiv \partial_z \fkm (z), & \text{ if } z_1 = z_2 = z, \\
        (\fkm_1 - \fkm_2)/(z_1 - z_2), & \text{ if } z_1 \not= z_2,
    \end{cases}
    \label{def:divided-diff}
\end{equation}
Note that we can unify two cases in the definition (\ref{def:divided-diff}) using the LSD $\varrho$,
\begin{equation*}
    \fkm[z_1, z_2] 
    = \int_{\bbr} \frac{1}{(\lambda - z_1) (\lambda - z_2)} \varrho (\mathrm{d} \lambda).
\end{equation*}
We introduce the deterministic quantities (with $\fkt$ indicating ``top'' and $\fkb$ referring to ``bottom'')
\begin{subequations}
\begin{align}
    \fkt_{12} \equiv \fkt(w_1, w_2) & = \angles{\Gamma_1 \Sigma \Gamma_2 \Sigma}
    = \angles{\Pi_1 \bfSigma_M \Pi_2 \bfSigma_M}
    = \frac{1}{\sqrt{z_1 z_2}} \left ( \frac{1}{\fkm_1 \fkm_2}
    - \frac{1}{\fkm[z_1, z_2]} \right ), \\
    \fkb_{12} \equiv \fkb(w_1, w_2) & = m_1 m_2
    = \angles{\Pi_1 \bfId_N \Pi_2 \bfId_N}
    = \sqrt{z_1 z_2} \fkm_1 \fkm_2.
\end{align} \label{def:fkt-fkb}
\end{subequations}
Now let $A$ be a deterministic $(M+N) \times (M+N)$ matrix. We denote its top-left $M \times M$ diagonal block by $A_M$ and bottom-right $N \times N$ diagonal block by $A_N$, i.e.
\begin{equation*}
    A = \begin{bmatrix} A_M & * \\ * & A_N \end{bmatrix}.
\end{equation*}
Set $V := \opX_{12} [A]$. Utilizing (\ref{def:opS-diagonal}) and (\ref{def:operator-X12}), we have
\begin{equation}
    A = V - \opSd [\Pi_1 V \Pi_2]
    = V - \angles{\Pi_1 V \Pi_2 \bfId_N} \bfSigma_M
    - \angles{\Pi_1 V \Pi_2 \bfSigma_M} \bfId_N.
    \label{tmp:derivation-X12}
\end{equation}
Multiplying both sides of (\ref{tmp:derivation-X12}) by $\Pi_2 \bfSigma_M \Pi_1$ and $\Pi_2 \bfId_N \Pi_1$, respectively, and taking the normalize trace, we obtain
\begin{align*}
    \angles{\Pi_1 A \Pi_2 \bfSigma_M}
    & = \angles{\Pi_1 V \Pi_2 \bfSigma_M}
    - \fkt_{12} \angles{\Pi_1 V \Pi_2 \bfId_N}, \\
    \angles{\Pi_1 A \Pi_2 \bfId_N} 
    & = \angles{\Pi_1 V \Pi_2 \bfId_N} 
    - \fkb_{12} \angles{\Pi_1 V \Pi_2 \bfSigma_M}.
\end{align*}
Solving these equations leads to
\begin{equation*}
    \angles{\Pi_1 V \Pi_2 \bfSigma_M}
    = \frac{\angles{\Pi_1 A \Pi_2 (\bfSigma_M + \fkt_{12} \bfId_N)}}
    {1 - \fkt_{12} \fkb_{12}}
    \qand
    \angles{\Pi_1 V \Pi_2 \bfId_N}
    = \frac{\angles{\Pi_1 A \Pi_2 (\fkb_{12} \bfSigma_M + \bfId_N)}}
    {1 - \fkt_{12} \fkb_{12}}.
\end{equation*}
Plugging this back into (\ref{tmp:derivation-X12}) yields
\begin{align}
\begin{split}
    \opX_{12} [A]
    & = A + \frac{\angles{\Pi_1 A \Pi_2 (\fkb_{12} \bfSigma_M + \bfId_N)}}
    {1 - \fkt_{12} \fkb_{12}} \bfSigma_M
    + \frac{\angles{\Pi_1 A \Pi_2 (\bfSigma_M + \fkt_{12} \bfId_N)}}
    {1 - \fkt_{12} \fkb_{12}} \bfId_N \\
    & = A - \angles{A_N} \bfId_N 
    + \frac{\angles{\Gamma_1 A_M \Gamma_2 \Sigma} + \angles{A_N}}{1 - \fkt_{12} \fkb_{12}} (\fkb_{12} \bfSigma_M + \bfId_N).
\end{split} \label{eqn:explicit-X12}    
\end{align}

Now we would like to conduct a stability analysis of the operator $\opX$ in the local regime so as to motivate the introduction of regular observables in Definition \ref{Def:regular-obs} below. Specifically, our goal is to obtain an asymptotic estimate of $\norm{\opX_{12} [A]}$ for $\norm{A} \lesssim 1$ and $w_1, w_2 \in \bbd$ with $\eta_1 \wedge \eta_2 \ll 1$. In viewing of (\ref{eqn:explicit-X12}) and Lemma \ref{lemma:m-order-estimate}, the most trivial estimate is that
\begin{equation}
    \norm{ \opX_{12} [A] } \lesssim \abs{1 - \fkt_{12} \fkb_{12}}^{-1}
    \asymp \abs{\fkm [z_1, z_2]}.
    \label{tmp:X12-norm-estimate}
\end{equation}
The magnitude of $\fkm[z_1, z_2]$ is significantly influenced by whether the two parameters $z_1$ and $z_2$ (or equivalently, $w_1$ and $w_2$, by our convention) lie in the same half-plane. hence, for $w \in \bbd$, we let
\begin{equation*}
    \fks (w) := \operatorname{sgn} (\Im w) = \operatorname{sgn} (\Im z).
\end{equation*}

The next lemma offers several estimates of the divided difference that are crucial for subsequent analysis. Its proof is presented in Section \ref{subsec:proof-stieltjes-transform}.

\begin{lemma}[Divided difference] \label{lemma:diff-estimate}
\begin{subequations}
By letting $\tau^\prime > 0$ sufficiently small (depending only on $\tau$), we have the following estimates uniformly for $w_1, w_2 \in \bbd (\tau, \tau^\prime)$ with $\fks_1 = \fks_2$,
\begin{align}
    1 \lesssim \abs{\fkm [z_1, z_2]} 
    & \lesssim (d_1 + d_2)^{-1/2} \wedge \abs{z_1-z_2}^{-1/2},
    \label{eqn:diff-estimate-same} \\
    \abs{\fkm [z_1^*, z_2]} 
    & \lesssim ({\Im \fkm_1}/{\Im z_1}) \wedge ({\Im \fkm_2}/{\Im z_2}) \wedge \abs{z_1 - z_2}^{-1} \wedge \abs{\fkm_1 - \fkm_2}^{-1}.
    \label{eqn:diff-estimate-opp}
\end{align}
Moreover, for $w_1, w_2 \in \bbd_0 (\tau, \tau^\prime)$ with $\fks_1 = \fks_2$, we have
\begin{equation}
    \abs{\fkm [z_1^*, z_2]} \gtrsim \abs{\fkm [z_1, z_2]} \vee \abs{z_1^* - z_2}^{-1/2}.
    \label{eqn:diff-compare-same-opp}
\end{equation}
\end{subequations}
\end{lemma}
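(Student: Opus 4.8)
The plan is to reduce everything to the integral representation of the divided difference against the measure $\varrho$ and then track the geometry of the spectral parameters. Writing $\fkm(z)=\int (\lambda-z)^{-1}\varrho(\mathrm d\lambda)$, one computes directly
\[
    \fkm[z_1,z_2]
    = \int_{\bbr} \frac{\varrho(\mathrm d\lambda)}{(\lambda-z_1)(\lambda-z_2)},
    \qquad
    \fkm[z_1^*,z_2]
    = \int_{\bbr} \frac{\varrho(\mathrm d\lambda)}{(\lambda-z_1^*)(\lambda-z_2)},
\]
the first being valid both for $z_1\ne z_2$ and, by continuity, for $z_1=z_2$ where it becomes $\int(\lambda-z)^{-2}\varrho(\mathrm d\lambda)=\fkm'(z)$. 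The key elementary inputs are the comparisons $\abs z\asymp\Re z\asymp 1$ and $\abs{\Im z}\asymp\eta$ on $\bbd$, the support description from the lemma preceding Assumption \ref{assump:edge-regularity}, and the square-root edge behavior of $\varrho$: near an edge $\fka$ one has $\varrho(\mathrm d\lambda)\asymp \abs{\lambda-\fka}^{1/2}\,\mathrm d\lambda$, while $\Im\fkm(z)\asymp d^{1/2}$ when $\Re z\in\operatorname{supp}\varrho$ and $\Im\fkm(z)\asymp \eta/d^{1/2}$ otherwise, exactly as recorded in Lemma \ref{lemma:m-order-estimate}.

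For the upper bounds in \eqref{eqn:diff-estimate-same} and \eqref{eqn:diff-estimate-opp} I would first dispose of the factor $\abs{z_1-z_2}^{-1}$ (resp.\ $\abs{z_1-z_2}^{-1/2}$, resp.\ $\abs{\fkm_1-\fkm_2}^{-1}$) trivially from the definition $\fkm[z_1,z_2]=(\fkm_1-\fkm_2)/(z_1-z_2)$ together with the uniform bound $\abs{\fkm}\asymp 1$. For the remaining factors I split the integral into the region where $\abs{\lambda-\Re z_j}\lesssim d_1+d_2$ near the relevant edge and its complement. On the near-edge piece the square-root density makes the integral of $\abs{\lambda-z_1}^{-1}\abs{\lambda-z_2}^{-1}$ (resp.\ with one conjugated parameter) of order $(d_1+d_2)^{-1/2}$; on the far piece the integrand decays fast enough to give a bounded contribution. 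When $\fks_1=\fks_2$ the two denominators never align, so there is no enhancement beyond $(d_1+d_2)^{-1/2}$, giving \eqref{eqn:diff-estimate-same}; when one parameter is conjugated, the factor $\abs{\lambda-z_1^*}^{-1}$ may resonate with $\abs{\lambda-z_2}^{-1}$, and a Cauchy–Schwarz step bounds the integral by $(\int\abs{\lambda-z_1^*}^{-2}\varrho)^{1/2}(\int\abs{\lambda-z_2}^{-2}\varrho)^{1/2}=(\Im\fkm_1/\Im z_1)^{1/2}(\Im\fkm_2/\Im z_2)^{1/2}$, and symmetrically one keeps only one of the two factors by bounding the other denominator crudely; combined with the trivial bounds this yields \eqref{eqn:diff-estimate-opp}. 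The matching lower bound $1\lesssim\abs{\fkm[z_1,z_2]}$ in \eqref{eqn:diff-estimate-same} follows because $\Re\int(\lambda-z_1)^{-1}(\lambda-z_2)^{-1}\varrho(\mathrm d\lambda)$ is, when $\fks_1=\fks_2$, a nonnegative weighted average of $\Re[(\lambda-z_1)^{-1}(\lambda-z_2)^{-1}]$, each term of which is $\asymp 1$ in modulus with a coherent sign since $\Re z_j\asymp 1$ and the measure lives at distance $O(1)$ from $z_j$; one checks the real part does not vanish using that $\operatorname{dist}(z_j,\operatorname{supp}\varrho)\lesssim\tau'$ is small.

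For the lower bound \eqref{eqn:diff-compare-same-opp}, restricted to $\bbd_0$, note that the defining feature of $\bbd_0$ is $d_j\lesssim\eta_j$, hence $\Im\fkm_j\asymp d_j^{1/2}\asymp\eta_j/d_j^{1/2}$ up to constants, which removes the distinction between the ``in support'' and ``out of support'' cases. Then $\abs{\fkm[z_1^*,z_2]}\gtrsim\abs{\Re\fkm[z_1^*,z_2]}$ and I would show this real part is $\gtrsim (d_1+d_2)^{-1/2}$ by the same near-edge analysis as above — the conjugation of $z_1$ does not destroy the positivity of the near-edge contribution because the dominant mass sits within distance $O(d_1+d_2)$ of the edge where $\abs{\Im z_1}=\eta_1\asymp d_1$, so $(\lambda-z_1^*)^{-1}$ and $(\lambda-z_1)^{-1}$ differ only by a bounded factor there. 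This gives $\abs{\fkm[z_1^*,z_2]}\gtrsim(d_1+d_2)^{-1/2}\asymp\abs{z_1^*-z_2}^{-1/2}$ when $\abs{z_1-z_2}\lesssim d_1+d_2$, and $\gtrsim\abs{z_1-z_2}^{-1/2}\asymp\abs{z_1^*-z_2}^{-1/2}$ otherwise via the trivial identity again, while $\abs{\fkm[z_1^*,z_2]}\gtrsim\abs{\fkm[z_1,z_2]}$ follows by comparing the two integrands termwise on $\bbd_0$ since $\abs{\lambda-z_1^*}\asymp\abs{\lambda-z_1}$ throughout $\operatorname{supp}\varrho$ once $d_1\lesssim\eta_1$.

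The main obstacle I anticipate is making the near-edge asymptotics fully quantitative and uniform when $w_1,w_2$ straddle two different edges of a multi-component support, or when one parameter is deep in the bulk while the other is at an edge: in those mixed configurations the integral $\int(\lambda-z_1)^{-1}(\lambda-z_2)^{-1}\varrho(\mathrm d\lambda)$ must be partitioned edge-by-edge and the contributions balanced against the worst of $d_1,d_2$, and one must verify no cancellation destroys the lower bound \eqref{eqn:diff-compare-same-opp}. The separation of edges guaranteed by Assumption \ref{assump:edge-regularity} ($\min_{\ell\ne k}\abs{\fka_k-\fka_\ell}\ge\tau$) is exactly what prevents pathologies here, and it should let one localize to a single dominant edge at distance $O(1)$ from all the others, after which the single-edge computation sketched above applies.
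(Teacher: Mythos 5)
Your plan hinges on estimating $\fkm[z_1,z_2]=\int\varrho(\mathrm d\lambda)/((\lambda-z_1)(\lambda-z_2))$ by moving the absolute value inside the integral and splitting into a near-edge and a far region, but this discards exactly the cancellation that makes the \emph{same}-half-plane divided difference small, so it cannot prove the upper bound in (\ref{eqn:diff-estimate-same}). Concretely, take $z_1=z_2=z$ with $\Re z$ inside the support at distance $d\asymp 1$ from the edges and $\eta\ll d$ (such $z$ are allowed in $\bbd$, which is where (\ref{eqn:diff-estimate-same}) is claimed): then $\int\varrho(\mathrm d\lambda)/\abs{\lambda-z}^2=\Im\fkm/\eta\asymp d^{1/2}/\eta$, which is far larger than the asserted bound $d^{-1/2}$; the true estimate $\abs{\fkm'}\asymp d^{-1/2}$ comes from the self-consistent equation ($\fkm'=1/f'(\fkm)$ with $\abs{f'(\fkm)}\asymp\abs{\fkm-\fkx_k}\asymp d^{1/2}$), not from any termwise bound on the integral. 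This is precisely why the paper argues differently: it applies the mean value theorem along the segment joining $z_1,z_2$ inside one half-plane, reduces to $\abs{\fkm'(z_0)}\asymp d(z_0)^{-1/2}$, and handles the delicate configurations (same edge versus different edges, segments passing near an edge) through the expansion $z-\fka_k=\tfrac12 f''(\fkx_k)(\fkm-\fkx_k)^2+\mathcal O(\abs{\fkm-\fkx_k}^3)$ and an angle analysis. The same loss of cancellation also undermines your route to (\ref{eqn:diff-estimate-opp}): Cauchy--Schwarz yields the geometric mean of $\Im\fkm_1/\Im z_1$ and $\Im\fkm_2/\Im z_2$, which is weaker than the stated minimum, and the absolute-value integral genuinely violates the minimum bound (by a logarithm) when one parameter sits in the bulk with tiny $\eta$ and the other is at distance of order one; the paper instead splits $\fkm[z_1^*,z_2]$ into $2\mathrm i\,\Im\fkm_1/(z_1^*-z_2)$ plus a same-half-plane divided difference and reuses (\ref{eqn:diff-estimate-same}). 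Also, the bound $\abs{z_1-z_2}^{-1/2}$ is not ``trivial from $\abs{\fkm}\asymp1$''; the trivial bound is only $\abs{z_1-z_2}^{-1}$, and the $1/2$-H\"older statement is part of what must be proved.

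The lower bounds have separate gaps. The claim that $\Re[(\lambda-z_1)^{-1}(\lambda-z_2)^{-1}]$ has a coherent sign is false (already for $z_1=z_2$ the real part of $(\lambda-z)^{-2}$ changes sign across $\abs{\lambda-\Re z}=\eta$), and no argument using only the integral representation can give $1\lesssim\abs{\fkm[z_1,z_2]}$ without additional structure, since a priori two parameters near different spectral components could have nearly equal $\fkm$-values; the paper rules this out by the identity $(z_1-z_2)/(\fkm_1-\fkm_2)=1/(\fkm_1\fkm_2)-N^{-1}\operatorname{tr}\Sigma^2[(1+\fkm_1\Sigma)(1+\fkm_2\Sigma)]^{-1}$ together with the stability bound (\ref{eqn:Pi-stability}), i.e.\ by input from the self-consistent equation. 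Likewise, for (\ref{eqn:diff-compare-same-opp}) a ``termwise comparison of integrands'' does not compare the moduli of two oscillatory integrals; the mechanism in the paper is that the imaginary parts add without cancellation, $\abs{\fkm_1^*-\fkm_2}\ge\Im\fkm_1+\Im\fkm_2\asymp(d_1+d_2)^{1/2}$ on $\bbd_0$, which combined with $\abs{z_1^*-z_2}$ and the upper bound from (\ref{eqn:diff-estimate-same}) gives the claim. In short, the integral-representation strategy can at best recover the conjugate-parameter (absolute-value) scales, and the heart of the lemma — the gain from $\Im\fkm/\eta$ down to $(d_1+d_2)^{-1/2}$ for parameters in the same half-plane, and the constant-order lower bound — requires the analytic/self-consistent-equation arguments the paper uses.
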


For what follows, we consistently assume that $\tau^\prime > 0$ is chosen to be sufficiently small, dependent only on $\tau > 0$, such that the estimates in Lemmas \ref{lemma:m-order-estimate} and \ref{lemma:diff-estimate} hold.

The upper bound $({\Im \fkm_1}/{\Im z_1}) \wedge ({\Im \fkm_2}/{\Im z_2})$ in (\ref{eqn:diff-estimate-opp}) is indeed tight, as evident by setting $w_1 = w_2$. Let $w_1, w_2 \in \bbd$ such that $\fks_1 = \fks_2$. Recall (\ref{def:chain-deter-app-length2}) and (\ref{def:operator-X12}). Let us abbreviate 
\begin{equation*}
    \Pi_{1^*2} (A_1) \equiv \Pi (w_1^*, A_1, w_2)
    \qand
    \opX_{1^*2} \equiv \opX_{w_1^*, w_2}.
\end{equation*}
Now, with Lemmas \ref{lemma:m-order-estimate} and \ref{lemma:diff-estimate}, it is apparent that the estimate (\ref{tmp:X12-norm-estimate}) yields
\begin{equation*}
    \norm{ \Pi_{1^*2} [A_1] }
    = \norm{ \Pi_1^* \opX_{1^*2} [A] \Pi_2 } 
    \lesssim \abs{\fkm [z_1^*, z_2]}
    \lesssim (d_1^{1/2} / \eta_1) \wedge (d_2^{1/2} / \eta_2).
\end{equation*}
To gain some intuition from this bound, consider the simplest case where $w_1 = w_2 = w$. In this scenario, this upper bound is essentially of order $\eta^{-1}$ in the bulk, gradually transitioning to $\eta^{-1/2}$ as we approach the edges.

We remark that the trivial bound (\ref{tmp:X12-norm-estimate}) applies to all deterministic matrices $A$ with bounded norms. However, this bound may not be tight for certain specific class of matrices. Actually, we can provide a much better bound than (\ref{tmp:X12-norm-estimate}) if $\angles{\Gamma_1 A_M \Gamma_2 \Sigma} + \angles{A_N}$, the numerator of the fraction in the second line of (\ref{eqn:explicit-X12}), is sufficiently small to counterbalance the singularity induced by the denominator. This prompts us to examine the regular observables defined as follows.

\begin{definition}[Regular observables] \label{Def:regular-obs}
Let $w_1, w_2 \in \bbd_0$. We introduce the control parameter
\begin{equation}
    \beta (w_1, w_2) := \abs{1 - \fkt_{12} \fkb_{12}} \asymp \abs{\fkm [z_1, z_2]}^{-1}.
    \label{def:beta}
\end{equation}
Let $A$ be a deterministic $(M+N) \times (M+N)$ matrix, with its top-left and bottom-right diagonal blocks given by $A_M \in \bbc^{M \times M}$ and $A_N \in \bbc^{N \times N}$, respectively. We say that $A$ is $\{ w_1, w_2 \}$-\emph{regular} if it satisfies $\| A \| \lesssim 1$, $\angles{A_N} = 0$ and
\begin{equation}
    \abs{\angles{\Gamma (\fkw_1) A_M \Gamma (\fkw_2) \Sigma}} \lesssim \beta (\fkw_1, \fkw_2),
    \quad \text{ for } \fkw_1 \in \{ w_1, w_1^* \}, \fkw_2 \in \{ w_2, w_2^*\}.
    \label{eqn:regular-obs-condition}
\end{equation}
\end{definition}

\begin{lemma}[Basic properties of regular observables] \label{lemma:regular-obs-properties}
We have the following statements regarding the regular observables as introduced in Definition \ref{Def:regular-obs}. For statements \ref{item:ppt-reflected-para}-\ref{item:ppt-opX-bound-norm}, we assume that $w_1, w_2 \in \bbd_0$ and $A$ is $\{ w_1, w_2 \}$-regular.
\begin{enumerate}[label=(\roman*)]
    \item \label{item:ppt-reflected-para} $A$ is $\{ \fkw_1, \fkw_2 \}$-regular for $\fkw_1 \in \{ w_1, w_1^* \}, \fkw_2 \in \{ w_2, w_2^*\}$.
    \item \label{item:ppt-A-conjugate} $A^\top, \bar{A}, A^*$ are $\{ w_1, w_2 \}$-regular.
    \item \label{item:ppt-A-negative} $A \bfId^-$ is $\{ w_1, w_2 \}$-regular.
    \item \label{item:ppt-opX-bound-norm} $\| {\opX_{\fkw_1, \fkw_2} (A)} \| \lesssim 1$ for $\fkw_1 \in \{ w_1, w_1^* \}, \fkw_2 \in \{ w_2, w_2^*\}$.    
    \item \label{item:ppt-constraint} Among the four constraints listed in (\ref{eqn:regular-obs-condition}), the two constraints with $\fks (\fkw_1) = \fks (\fkw_2)$ are implied by either of the two constraints with $\fks (\fkw_1) \not= \fks (\fkw_2)$. For example, if $\fks (w_1) = \fks (w_2)$, then 
    \begin{equation*}
        \abs{\angles{\Gamma (w_1^*) A_M \Gamma (w_2) \Sigma}} \lesssim \beta (w_1^*, w_2)
        \quad \Longrightarrow \quad
        \abs{\angles{\Gamma (w_1) A_M \Gamma (w_2) \Sigma}} \lesssim \beta (w_1, w_2).
    \end{equation*}
\end{enumerate}
\end{lemma}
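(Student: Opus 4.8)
The plan is to dispatch items \ref{item:ppt-reflected-para}--\ref{item:ppt-opX-bound-norm} as quick consequences of the definitions and of the explicit formula \eqref{eqn:explicit-X12}, and to spend the real effort on item \ref{item:ppt-constraint}. Two facts will be used repeatedly: $\Sigma$ and every $\Gamma(w)$ are complex-symmetric and pairwise commute (being functions of $\Sigma$), with $\overline{\Gamma(w)}=\Gamma(w^*)$ since $\overline{m(w)}=m(w^*)$ and $\Sigma$ is real; and $\beta$ in \eqref{def:beta} is symmetric in its two arguments and satisfies $\beta(\fkw_1^*,\fkw_2^*)=\beta(\fkw_1,\fkw_2)$ (by inspection, as $\fkt$ and $\fkb$ then get complex-conjugated).

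For \ref{item:ppt-reflected-para} I would observe that $\bbd_0$ is closed under $w\mapsto w^*$, that $\{w_1^*,(w_1^*)^*\}=\{w_1,w_1^*\}$ (and likewise for $w_2$), and that $\beta(\fkw_1,\fkw_2)$ depends only on the unordered pair; hence the family of inequalities \eqref{eqn:regular-obs-condition} defining $\{\fkw_1,\fkw_2\}$-regularity is literally the same as for $\{w_1,w_2\}$-regularity, while $\norm{A}\lesssim 1$ and $\angles{A_N}=0$ are untouched. For \ref{item:ppt-A-negative}, $A\bfId^-$ retains the top-left block $A_M$ and has bottom-right block $-A_N$ (so $\angles{(A\bfId^-)_N}=0$) with $\norm{A\bfId^-}\le\norm{A}$, so it inherits all three requirements. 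For \ref{item:ppt-A-conjugate}, operator norms and the condition $\angles{\cdot_N}=0$ are plainly stable under $A\mapsto\bar A,A^\top,A^*$; for the bilinear constraint I would use cyclicity together with the symmetry and commutativity of $\Gamma(\fkw_1),\Gamma(\fkw_2),\Sigma$ to get $\angles{\Gamma(\fkw_1)A_M^\top\Gamma(\fkw_2)\Sigma}=\angles{\Gamma(\fkw_1)A_M\Gamma(\fkw_2)\Sigma}$ (so $A^\top$ satisfies the very same constraints), and $\overline{\Gamma(w)}=\Gamma(w^*)$ with $\bar\Sigma=\Sigma$ to get $\abs{\angles{\Gamma(\fkw_1)\bar A_M\Gamma(\fkw_2)\Sigma}}=\abs{\angles{\Gamma(\fkw_1^*)A_M\Gamma(\fkw_2^*)\Sigma}}\lesssim\beta(\fkw_1^*,\fkw_2^*)=\beta(\fkw_1,\fkw_2)$; the case $A^*=\overline{A^\top}$ then follows by composing the two.

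For item \ref{item:ppt-opX-bound-norm}, using \ref{item:ppt-reflected-para} it suffices to bound $\opX_{w_1,w_2}[A]$. With $\angles{A_N}=0$ the formula \eqref{eqn:explicit-X12} collapses to
\[
\opX_{12}[A]=A+\frac{\angles{\Gamma_1 A_M\Gamma_2\Sigma}}{1-\fkt_{12}\fkb_{12}}\,(\fkb_{12}\bfSigma_M+\bfId_N),
\]
and \eqref{eqn:regular-obs-condition} is exactly the statement that the numerator is $\lesssim\beta(w_1,w_2)=\abs{1-\fkt_{12}\fkb_{12}}$, so the scalar prefactor is $O(1)$; since $\abs{\fkb_{12}}\asymp 1$ and $\norm{\bfSigma_M}=\norm{\Sigma}\lesssim 1$ by Lemma \ref{lemma:m-order-estimate} and Assumption \ref{assump:boundedness}, this gives $\norm{\opX_{12}[A]}\lesssim\norm{A}+1\lesssim 1$.

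The hard part is item \ref{item:ppt-constraint}. Fix $w_1,w_2\in\bbd_0$ with $\fks_1=\fks_2$; the two ``same-sign'' instances of \eqref{eqn:regular-obs-condition} are those indexed by $(w_1,w_2)$ and $(w_1^*,w_2^*)$, the two ``opposite-sign'' ones by $(w_1^*,w_2)$ and $(w_1,w_2^*)$. I would show that the $(w_1^*,w_2)$-bound implies the $(w_1,w_2)$-bound; the remaining three implications follow by running the same step in the other resolvent slot and/or conjugating, invoking the relevant instances of Lemma \ref{lemma:diff-estimate}. The point is that $\Gamma(w_1)-\Gamma(w_1^*)$ is a function of $\Sigma$ with
\[
\norm{\Gamma(w_1)-\Gamma(w_1^*)}=\sup_{\sigma\in\operatorname{spec}\Sigma}\frac{2\,\abs{\Im(w_1+m_1\sigma)}}{\abs{w_1+m_1\sigma}^2}\lesssim\eta_1+\abs{\Im m_1}\asymp d_1^{1/2},
\]
where I used $\abs{w_1+m_1\sigma}\gtrsim 1$ from \eqref{eqn:Pi-stability}, that $\Im(w_1+m_1\sigma)=\Im w_1+\sigma\Im m_1$ has both summands sharing the sign $\fks_1$, that $\abs{\Im m_1}\asymp d_1^{1/2}$ on $\bbd_0$, and the elementary bound $\eta_1\lesssim d_1^{1/2}$ on $\bbd_0$ (valid because $d_1\ge\abs{\Im z_1}\asymp\eta_1$ and $\eta_1$ is bounded). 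Writing $\Gamma(w_1)=\Gamma(w_1^*)+(\Gamma(w_1)-\Gamma(w_1^*))$ inside $\angles{\Gamma(w_1)A_M\Gamma(w_2)\Sigma}$ and using $\norm{A_M},\norm{\Gamma(w_2)},\norm{\Sigma}\lesssim 1$ yields $\abs{\angles{\Gamma(w_1)A_M\Gamma(w_2)\Sigma}}\lesssim\beta(w_1^*,w_2)+d_1^{1/2}$, and both terms are then absorbed into $\beta(w_1,w_2)$: by \eqref{eqn:diff-estimate-same}, $\beta(w_1,w_2)\asymp\abs{\fkm[z_1,z_2]}^{-1}\gtrsim(d_1+d_2)^{1/2}\ge d_1^{1/2}$, while \eqref{eqn:diff-compare-same-opp} gives $\abs{\fkm[z_1^*,z_2]}\gtrsim\abs{\fkm[z_1,z_2]}$, i.e. $\beta(w_1^*,w_2)\lesssim\beta(w_1,w_2)$. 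Hence $\abs{\angles{\Gamma(w_1)A_M\Gamma(w_2)\Sigma}}\lesssim\beta(w_1,w_2)$, the claimed same-sign bound. I expect this item to be the main obstacle: the delicate part is extracting the gain $d_1^{1/2}$ from the resolvent difference and then threading it, together with the opposite-sign divided-difference estimate, through the comparisons of Lemma \ref{lemma:diff-estimate}; items \ref{item:ppt-reflected-para}--\ref{item:ppt-opX-bound-norm} are essentially bookkeeping by comparison.
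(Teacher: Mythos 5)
Your proposal is correct and follows essentially the same route as the paper: items \ref{item:ppt-reflected-para}--\ref{item:ppt-A-negative} by direct inspection of Definition \ref{Def:regular-obs}, item \ref{item:ppt-opX-bound-norm} from the explicit formula (\ref{eqn:explicit-X12}) with the regularity condition cancelling the singular denominator, and item \ref{item:ppt-constraint} by swapping $\Gamma_1$ for $\Gamma_1^*$, bounding the difference by $\eta_1 + \abs{\Im m_1} \asymp d_1^{1/2}$, and absorbing both this error and $\beta_{1^*2}$ into $\beta_{12}$ via Lemma \ref{lemma:diff-estimate}. The only cosmetic deviation is that you bound $\norm{\Gamma_1-\Gamma_1^*}$ by functional calculus in $\Sigma$ together with (\ref{eqn:Pi-stability}), whereas the paper gets the same bound from the identity (\ref{eqn:Gamma-imaginary-part}).
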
 

The proof of Lemma \ref{lemma:regular-obs-properties} is provided in Section \ref{subsec:proof-regularity}. Now, one can compare the bound obtained in Lemma \ref{lemma:regular-obs-properties} \ref{item:ppt-opX-bound-norm} with the trivial bound in (\ref{tmp:X12-norm-estimate}). Note that Definition \ref{Def:regular-obs} is actually formulated for a sequence of matrices $A \equiv A^{(N)}$. Therefore, there should be no ambiguity in the use of the asymptotic notation $\lesssim$. Moreover, in the global regime $\eta_1 \wedge \eta_2 \gtrsim 1$, the condition (\ref{eqn:regular-obs-condition}) plays no role, as in this case we have $\beta (\fkw_1, \fkw_2) \gtrsim 1$ according to Lemma \ref{lemma:diff-estimate}.

Compared with the regular observables given in \cite[Definition 3.1]{cipolloniOptimalLowerBound2023}, our definition is adapted to moderately tolerate the deviation of $\angles{\Gamma (\fkw_1) A_M \Gamma (\fkw_2) \Sigma}$ from zero. This adaption offers greater flexibility and accommodates the edge regime, thereby significantly enhancing our proof. 

The next lemma provides bounds on the deterministic equivalents (\ref{def:chain-deter-app}) when the matrices $A_k$ are regular. It is a straightforward corollary of (\ref{eqn:explicit-X12}) and Lemma \ref{lemma:diff-estimate}.

\begin{lemma}[Bounds on the deterministic approximations] \label{lemma:opD-bound-regular}
Let $w_1, w_2, w_3 \in \bbd_0$ and $A_1, A_2$ be deterministic matrices. If, for each $k=1,2$, the matrix $A_k$ is $\{ w_k, w_{k+1} \}$-regular as per Definition \ref{Def:regular-obs}, then
\begin{equation}
    \| \Pi_{12} (A_1) \| \lesssim 1
    \qand
    \| \Pi_{123} (A_1, A_2) \| \lesssim 1/\eta.
\end{equation}
\end{lemma}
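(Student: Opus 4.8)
The plan is to prove both estimates by unfolding Definition~\ref{Def:chain-deter-app} and then invoking: the boundedness $\norm{\Pi_k}\lesssim 1$ from Lemma~\ref{lemma:m-order-estimate}; the regularity bound $\norm{\opX_{\fkw_1,\fkw_2}[A]}\lesssim 1$ for regular $A$ from Lemma~\ref{lemma:regular-obs-properties}\,\ref{item:ppt-opX-bound-norm}; the closed-form expression (\ref{eqn:explicit-X12}) for $\opX_{12}$; and the divided-difference estimates of Lemma~\ref{lemma:diff-estimate}. For the length-two chain this is immediate: by (\ref{def:chain-deter-app-length2}) we have $\Pi_{12}(A_1)=\Pi_1 V_1\Pi_2$ with $V_1=\opX_{12}[A_1]$, and since $A_1$ is $\{w_1,w_2\}$-regular, $\norm{V_1}\lesssim 1$ by Lemma~\ref{lemma:regular-obs-properties}\,\ref{item:ppt-opX-bound-norm}, so $\norm{\Pi_{12}(A_1)}\le\norm{\Pi_1}\norm{V_1}\norm{\Pi_2}\lesssim 1$.

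For the length-three chain, I would use (\ref{def:chain-deter-app-length3}) together with (\ref{def:chain-deter-app-length2}) to write $\Pi_{123}(A_1,A_2)=\Pi_{13}(B)=\Pi_1\opX_{13}[B]\Pi_3$, where $B:=V_1\Pi_2 V_2$, $V_1=\opX_{12}[A_1]$ and $V_2=\opX_{23}[A_2]$. Regularity of $A_1$ and $A_2$ again gives $\norm{V_1},\norm{V_2}\lesssim 1$, hence $\norm{B}\lesssim 1$ and in particular $\norm{B_M},\norm{B_N}\lesssim 1$ for its diagonal blocks. Now $B$ is generically not regular, so Lemma~\ref{lemma:regular-obs-properties}\,\ref{item:ppt-opX-bound-norm} does not apply to $\opX_{13}[B]$; instead I would plug $B$ into (\ref{eqn:explicit-X12}) with spectral parameters $w_1,w_3$. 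There the quantities $B$, $\angles{B_N}$, $\angles{\Gamma_1 B_M\Gamma_3\Sigma}$, $\fkb_{13}=m_1 m_3$, $\bfSigma_M$ and $\bfId_N$ are all $\lesssim 1$ by Lemma~\ref{lemma:m-order-estimate}, so the only potentially large factor is $\abs{1-\fkt_{13}\fkb_{13}}^{-1}=\beta(w_1,w_3)^{-1}\asymp\abs{\fkm[z_1,z_3]}$; combined with $\abs{\fkm[z_1,z_3]}\gtrsim 1$ (which follows from (\ref{eqn:diff-estimate-same}) when $\fks_1=\fks_3$, and from (\ref{eqn:diff-compare-same-opp}) applied to the pair $(w_1^*,w_3)$ when $\fks_1\neq\fks_3$), this gives $\norm{\opX_{13}[B]}\lesssim\abs{\fkm[z_1,z_3]}$. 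Finally, by Lemma~\ref{lemma:diff-estimate}, $\abs{\fkm[z_1,z_3]}\lesssim(d_1+d_3)^{-1/2}$ if $\fks_1=\fks_3$ and $\abs{\fkm[z_1,z_3]}\lesssim\Im\fkm_1/\abs{\Im z_1}\lesssim d_1^{1/2}/\eta_1$ if $\fks_1\neq\fks_3$ (via (\ref{eqn:diff-estimate-opp})); since $d_k\lesssim 1$ while $d_k\gtrsim\eta_k$ (as $z_k$ lies at height $\asymp\eta_k$ above the real line, hence at distance $\gtrsim\eta_k$ from $\partial\operatorname{supp}\varrho$), both cases yield $\abs{\fkm[z_1,z_3]}\lesssim 1/\eta$ with $\eta:=\eta_1\wedge\eta_2\wedge\eta_3$. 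Multiplying by $\norm{\Pi_1}\norm{\Pi_3}\lesssim 1$ then gives $\norm{\Pi_{123}(A_1,A_2)}\lesssim 1/\eta$.

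I do not anticipate a genuine difficulty; the argument is essentially a bookkeeping exercise on (\ref{eqn:explicit-X12}). The one point worth care is that the sandwiched observable $B=V_1\Pi_2 V_2$ need not be regular, so the improved bound of Lemma~\ref{lemma:regular-obs-properties}\,\ref{item:ppt-opX-bound-norm} is lost precisely at the inner application of $\opX$; this is exactly where the factor $\beta(w_1,w_3)^{-1}\asymp\abs{\fkm[z_1,z_3]}$, and thus the $1/\eta$ degradation of the length-three bound, enters. Controlling that factor requires only the crude part of Lemma~\ref{lemma:diff-estimate}, with no need for the finer opposite-half-plane improvement.
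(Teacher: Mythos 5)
Your argument is correct and is exactly the route the paper intends: the paper states Lemma \ref{lemma:opD-bound-regular} as a straightforward corollary of the explicit formula (\ref{eqn:explicit-X12}) and Lemma \ref{lemma:diff-estimate}, and your write-up simply unfolds this — the regular observables handled via $\| \opX[A] \| \lesssim 1$ (Lemma \ref{lemma:regular-obs-properties} \ref{item:ppt-opX-bound-norm}) and the non-regular middle observable $V_1 \Pi_2 V_2$ via the trivial bound (\ref{tmp:X12-norm-estimate}) together with the divided-difference estimates, yielding the $1/\eta$ loss. No gaps.
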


The multi-resolvent local laws in this article are formulated for resolvent chains with regular matrices. For such resolvent chains, not only does the magnitude of the deterministic approximation decrease significantly (as depicted in Lemma \ref{lemma:opD-bound-regular}), but the corresponding approximation error does as well. Here, the regularity of a matrix in a resolvent chain is conventionally understood as its regularity w.r.t. its two surrounding parameters. For ease of future reference, let us clarify this convention in the following definition.

\begin{definition}[Regular observables in resolvent chains] \label{Def:regular-obs-in-chain}
Consider any of the two expressions
\begin{equation*}
    \angles{G_1 A_1 \cdots G_k A_k}
    \qand
    \angles{\fku, G_1 A_1 \cdots G_k A_k G_{k+1} \fkv}
\end{equation*}
for some fixed length $k \in \bbn$, where $w_1, \cdots, w_{k+1} \in \bbd_0$ and $A_1, \cdots, A_k$ are deterministic matrices. For any $\ell \in \llbracket k \rrbracket$, we call $A_\ell$ regular (w.r.t. its surrounding parameters) if it is $\{ w_{\ell}, w_{\ell+1} \}$-regular as per Definition \ref{Def:regular-obs}. In case of the first expression above (i.e. in the averaged form), the indices are understood cyclically modulo $k$ (i.e. $w_{k+1} \equiv w_1$).
\end{definition}

We are now ready to present our main technical result of this article. The multi-resolvent local laws developed here are applicable to spectral parameters within the domain
\begin{equation}
    \bbd_\infty \equiv \bbd_\infty (\varepsilon, \tau) 
    := \{ w \in \bbc : \Re w \in \operatorname{supp} \rho \cap \bbr_+ 
    \text{ and }
    N^{-1+\varepsilon} \leq \abs{\Im w} \leq (2 \tau)^{-1} \}.
    \label{def:bbd-infty}
\end{equation}

\begin{theorem}[Local laws with regular matrices] \label{thm:local-law-regular}
Fix arbitrary (small) $\varepsilon, \tau > 0$. Under Assumptions \ref{assump:high-dimension}-\ref{assump:bulk-regularity}, we have the following local laws uniformly in $w_1, w_2, w_3 \in \bbd_{\infty} (\varepsilon, \tau)$, deterministic vectors $\fku, \fkv \in \bbc^{M+N}$ with $\norm{\fku}, \norm{\fkv} \lesssim 1$, and deterministic matrices $A_1$, $A_2$ that are regular w.r.t. their surrounding parameters.
\begin{subequations} \label{bound:multi-local-law-regular}
\begin{enumerate}[label=(\roman*)]
    \item Averaged law with one regular matrix:
    \begin{equation}
        \abs{\angles{G_1 A_1} - \angles{\Pi_1 A_1}} 
        \prec \frac{1}{N \eta^{1/2}}.
        \label{bound:ave-law-one-regular}
    \end{equation}
    \item Isotropic law with one regular matrix:
    \begin{equation}
        \abs{\angles{\fku, G_1 A_1 G_2 \fkv} - \angles{\fku, \Pi (w_1, A_1, w_2) \fkv}} 
        \prec \frac{1}{\sqrt{N \eta^2}}.
        \label{bound:iso-law-one-regular}
    \end{equation}
    \item Averaged law with two regular matrices:
    \begin{equation}
        \abs{\angles{G_1 A_1 G_2 A_2} - \angles{\Pi (w_1, A_1, w_2) A_2}} 
        \prec (N \eta)^{1/2} \cdot \frac{1}{N \eta}.
        \label{bound:ave-law-two-regular}
    \end{equation}
    \item Isotropic law with two regular matrices:
    \begin{equation}
        \abs{\angles{\fku, G_1 A_1 G_2 A_2 G_3 \fkv} - \angles{\fku, \Pi (w_1, A_1, w_2, A_2, w_3) \fkv}} 
        \prec (N \eta)^{1/4} \cdot \frac{1}{\sqrt{N \eta^3}}.
        \label{bound:iso-law-two-regular}
    \end{equation}
\end{enumerate}
\end{subequations}
\end{theorem}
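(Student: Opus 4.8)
The plan is to establish the four estimates in \eqref{bound:multi-local-law-regular} by a bootstrap/self-improvement scheme, following the strategy of \cite{cipolloniOptimalLowerBound2023} but adapted to the multiplicative deformation and, crucially, to the enlarged class of regular observables from Definition \ref{Def:regular-obs}. The starting point is the cumulant expansion: for each resolvent chain one writes $\underline{H G_1 A_1 \cdots}$ using \eqref{eqn:underline-HG}, so that $G_1 A_1 G_2 = \Pi_1 A_1 \Pi_2 + \Pi_1 \opSd[\cdots] G_2 + (\text{fluctuation})$, and then inverts the two-body stability operator $\opB_{12}$ from \eqref{def:operator-B12} to isolate the deterministic approximation $\Pi_{12}(A_1)$ defined in \eqref{def:chain-deter-app-length2}. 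The regularity of $A_1$ enters precisely here: by Lemma \ref{lemma:regular-obs-properties}\ref{item:ppt-opX-bound-norm} and Lemma \ref{lemma:opD-bound-regular}, applying $\opB_{12}^{-1}$ does \emph{not} produce the dangerous factor $\beta^{-1} \asymp |\fkm[z_1,z_2]|$ that a generic observable would incur, which is what allows the error bounds to carry the improved powers of $\eta$. For the averaged laws one additionally exploits the gain from the trace: the leading fluctuation is of the schematic form $\langle \underline{G_1 A_1 \cdots} W \rangle$ with $W$ deterministic, and the cumulant expansion reduces its size by an extra factor $(N\eta)^{-1/2}$ relative to the isotropic case, which accounts for the discrepancy between \eqref{bound:ave-law-one-regular} and \eqref{bound:iso-law-one-regular}, and between \eqref{bound:ave-law-two-regular} and \eqref{bound:iso-law-two-regular}.

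The core mechanism is a \emph{master inequality}: one posits that the errors satisfy bounds of the assumed form but with an undetermined control parameter $\Lambda$ (a deviation functional uniform over the relevant spectral parameters and regular observables), derives from the cumulant expansion a self-consistent inequality of the type $\Lambda \prec (N\eta)^{-c} + (N\eta)^{-c'}\Lambda^{q} + (\text{lower order})$ for suitable exponents, and then closes it by a continuity (``bootstrap in $\eta$'') argument: one starts at large $\eta \asymp 1$ where the single-resolvent local law Proposition \ref{prop:single-resolvent-local} and the trivial bound $\|G\| \le \eta^{-1}$ give the base case, and descends in $\eta$ in steps, at each scale feeding in the bound already proven at the larger scale. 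The four statements are coupled: the length-$3$ isotropic law \eqref{bound:iso-law-two-regular} requires the length-$2$ isotropic law \eqref{bound:iso-law-one-regular} as input (to control sub-chains appearing after an expansion), which in turn uses the averaged laws; so the induction is simultaneously on chain length and on the scale $\eta$. A recurring technical point is that expanding a resolvent in a chain produces new chains in which the ``observable'' between two resolvents is of the form $V_\ell \Pi_{\ell+1} \cdots$ or $\opSd[\cdots]$ rather than a genuinely regular matrix; one must check, using the identities \eqref{eqn:opB-opX-inverse} and the explicit formula \eqref{eqn:explicit-X12} together with Lemma \ref{lemma:diff-estimate}, that these induced observables still satisfy the regularity-type bounds needed to propagate the improved powers — essentially the content of Lemma \ref{lemma:regular-obs-properties}\ref{item:ppt-opX-bound-norm}, applied iteratively.

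I expect the main obstacle to be the \textbf{edge regime}, i.e.\ spectral parameters $w_j \in \bbd_\infty$ with $\Re w_j$ close to an edge of $\operatorname{supp}\rho$ and $d_j \gg \eta_j$. In the bulk, $|\fkm[z_1,z_2]| \asymp \eta^{-1}$ and the standard Wigner-type arguments of \cite{cipolloniOptimalLowerBound2023} transfer with only cosmetic changes; but near the edge the density $\Im m \asymp d^{1/2}$ is no longer $\asymp 1$, the stability operator $\opB_{12}$ degenerates in a different way (with $\beta \asymp d^{1/2}$ rather than $\asymp \eta$), and the precise interplay between $\eta$ and $d$ in the bounds \eqref{eqn:diff-estimate-same}--\eqref{eqn:diff-compare-same-opp} of Lemma \ref{lemma:diff-estimate} must be tracked carefully to verify that the right-hand sides of \eqref{bound:multi-local-law-regular}, which are stated in terms of $\eta$ alone, are actually the worst case. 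This is exactly why the domain is restricted to $\bbd_\infty$ (so that $\Re w \in \operatorname{supp}\rho$, ruling out the low-density regime $\bbd \setminus \bbd_0$), and why the regularity condition \eqref{eqn:regular-obs-condition} is formulated with the $\eta$-independent threshold $\beta(\fkw_1,\fkw_2)$: the slack built into Definition \ref{Def:regular-obs} relative to the exactly-traceless condition of \cite{cipolloniOptimalLowerBound2023} is precisely what is needed so that observables arising in the proof of Theorem \ref{thm:eigenvector-overlaps} — where spectral parameters sit at distance $\sim \fkn_{k,i}/N$ from an edge — remain regular uniformly down to the edge. Handling this will require a more delicate accounting of error terms than in the bulk-only arguments, and is where the bulk of the technical work in the subsequent sections is anticipated to lie.
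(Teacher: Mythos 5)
Your plan coincides in substance with the paper's actual strategy: the theorem is proved by deriving a coupled system of self-improving (``master'') inequalities for the averaged and isotropic errors via cumulant expansion, with the regularity of the observables (through $\opB^{-1}$, $\opX$, Lemma \ref{lemma:regular-obs-properties}\ref{item:ppt-opX-bound-norm} and Lemma \ref{lemma:opD-bound-regular}) preventing the loss of the factor $\beta^{-1}$, and closing the system by iteration from a trivial a priori bound --- the paper iterates over the nested spectral domains $\bbd_\ell$ (Lemmas \ref{lemma:nested-domains} and \ref{lemma:iteration}) rather than descending scale by scale in $\eta$, but this is the same bootstrap mechanism. The only points your sketch underplays are the representation-as-fully-underlined step (Proposition \ref{prop:repre-full-underlined}), which is what forces the one-point pre-regularization, the two-point regularization of $\bfSigma_M$ and the coefficient estimates of Lemmas \ref{lemma:coef-V1-Pi2}--\ref{lemma:coef-Sigma-circ}, and the reduction inequalities for chains of length $3$ and $4$ (Lemma \ref{lemma:reduction-inequality}), which are the actual source of the suboptimal prefactors $(N\eta)^{1/2}$ and $(N\eta)^{1/4}$ in \eqref{bound:ave-law-two-regular} and \eqref{bound:iso-law-two-regular}.
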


The key estimate driving our theoretical result on eigenvector overlaps is (\ref{bound:ave-law-two-regular}). The other three estimates in Theorem \ref{thm:local-law-regular} are byproducts obtained during the derivation of this primary estimate. It is noteworthy that all the estimates in (\ref{bound:multi-local-law-regular}) provide nontrivial error bounds, considering the typical order of the deterministic approximations outlined in Lemma \ref{lemma:opD-bound-regular}. By comparing (\ref{bound:ave-law-one-regular}) with the average estimate in (\ref{eqn:single-resolvent-local-law}), we observe that the fluctuation of $G - \Pi$ along a regular direction is significantly smaller than a generic direction that may not necessarily be regular. Finally, the prefactors $(N \eta)^{1/2}$ and $(N \eta)^{1/4}$ in the error bounds of (\ref{bound:ave-law-two-regular}) and (\ref{bound:iso-law-two-regular}) suggest that these rates may not be optimal. However, we refrain from optimizing over these rates as (\ref{bound:ave-law-two-regular}) is already sufficient for our purpose. 

\subsection{Proof of the main results} 
\label{subsec:proof-main-result}

In this section, we provide the proof of our main results, Theorems \ref{thm:eigenvector-overlaps} and \ref{thm:shrinkage}, utilizing the averaged law with two regular matrices (\ref{bound:ave-law-two-regular}). Prior to that, let us introduce the notion of \emph{one-point regularization}, which projects an arbitrary matrix with bounded norm into the class of regular matrices. It is worth noting that there are multiple ways to regularize a matrix, and the choice is not unique. Here, we utilize (\ref{def:one-point-regularization}) since it aligns well with our proof.

\begin{lemma}[One-point regularization] \label{lemma:one-point-regularization}
Let $D$ be a $(M+N) \times (M+N)$ deterministic matrix with $\norm{D} \lesssim 1$. Suppose its top-left $M \times M$ and bottom-right $N \times N$ diagonal blocks are given by $D_M$ and $D_N$, respectively. For $w \in \bbd_0$, we define the one-point regularization of $D$ w.r.t. $w$ as
\begin{equation}
    (D)_{w}^{\circ} 
    := D - \frac{\angles{\Im \Gamma (w) D_M}}{\angles{\Im \Gamma (w)}} \bfId_M
    - \angles{D_N} \bfId_N.
    \label{def:one-point-regularization}
\end{equation}
Equivalently, we can write
\begin{equation}
    (D)_{w}^{\circ} = D - \coefOne_w^+ (D) \bfId^+ - \coefOne_w^- (D) \bfId^-,
    \qwhere \coefOne_w^\pm (D) := \frac{1}{2} \left ( \frac{\angles{\Im \Gamma (w) D_M}}{\angles{\Im \Gamma (w)}} \pm \angles{D_N} \right ).
    \label{def:coefficients-one-point}
\end{equation}
Suppose $w_1, w_2 \in \bbd_0$ such that $w \in \{ w_1, w_2 \}$. Then, $(D)^\circ_w$ is $\{ w_1, w_2 \}$-regular.
\end{lemma}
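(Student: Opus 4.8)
The plan is to verify, for $A:=(D)_w^\circ$, the three defining properties of $\{w_1,w_2\}$-regularity from Definition \ref{Def:regular-obs}. Write $c:=\angles{\Im\Gamma(w)D_M}/\angles{\Im\Gamma(w)}=\coefOne_w^+(D)+\coefOne_w^-(D)$, so that $A_M=D_M-cI_M$ and $A_N=D_N-\angles{D_N}I_N$. I would dispose of the two elementary requirements first. From the resolvent identity for $\Gamma(w)-\Gamma(w^*)$ together with $\Gamma(w^*)=\Gamma(w)^*$ one obtains the identity $\Im\Gamma(w)=\Im w\,\Gamma(w)\Gamma(w)^*+\Im m(w)\,\Gamma(w)\Sigma\Gamma(w)^*$, whence (using $\Im w\cdot\Im m(w)>0$ and Lemma \ref{lemma:m-order-estimate}) $\Im\Gamma(w)$ is real symmetric positive semidefinite with $\operatorname{tr}\Im\Gamma(w)\gtrsim N|\Im w|>0$. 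The trace inequality $|\operatorname{tr}(PX)|\le\|X\|\operatorname{tr}(P)$ for $P\succeq0$ then gives $|c|\le\|D_M\|\lesssim1$, and $|\angles{D_N}|\le\|D_N\|\lesssim1$, so $\|A\|\lesssim1$; moreover $\angles{A_N}=\angles{D_N}(1-\angles{I_N})=0$ since $\angles{I_N}=1$. It therefore remains to establish the quantitative bound (\ref{eqn:regular-obs-condition}).

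To organize (\ref{eqn:regular-obs-condition}) I would use symmetry. By Lemma \ref{lemma:regular-obs-properties}\ref{item:ppt-constraint} it suffices to treat pairs $(\fkw_1,\fkw_2)$ with $\fks(\fkw_1)\ne\fks(\fkw_2)$. Directly from (\ref{def:one-point-regularization})--(\ref{def:coefficients-one-point}) one checks $((D)_w^\circ)^\top=(D^\top)_w^\circ$, $\overline{(D)_w^\circ}=(\overline D)_w^\circ$ (because $\Im\Gamma(w)$ is real, so its appearance in $c$ is conjugation covariant), and $(D)_w^\circ=(D)_{w^*}^\circ$ (because $\Im\Gamma(w^*)=-\Im\Gamma(w)$ cancels in the ratio defining $c$). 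Feeding these into the elementary identities $\overline{\angles{\Gamma(\fkw_1)A_M\Gamma(\fkw_2)\Sigma}}=\angles{\Gamma(\fkw_1^*)\overline{A_M}\,\Gamma(\fkw_2^*)\Sigma}$ and $\angles{\Gamma(\fkw_1)A_M\Gamma(\fkw_2)\Sigma}=\angles{\Gamma(\fkw_2)A_M^\top\Gamma(\fkw_1)\Sigma}$ (the latter using $\Gamma(\fkw)\Sigma=\Sigma\Gamma(\fkw)$ and the transposition invariance of the trace), together with the symmetry and conjugation invariance of $\beta$, every instance of (\ref{eqn:regular-obs-condition}) follows from the single \emph{core estimate}: for $w\in\bbd_0$ with $\Im w>0$, any $\widetilde w\in\bbd_0$ with $\Im\widetilde w<0$, and any $D$ with $\|D\|\lesssim1$,
\begin{equation*}
    \bigl|\angles{\Gamma(w)(D_M-cI_M)\Gamma(\widetilde w)\Sigma}\bigr|\lesssim\beta(w,\widetilde w).
\end{equation*}

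To prove the core estimate I would split $\Gamma(\widetilde w)=\Gamma(w^*)+\bigl(\Gamma(\widetilde w)-\Gamma(w^*)\bigr)$, noting $w^*\in\bbd_0$ as $\bbd_0$ is conjugation symmetric. Since $\Gamma(w),\Gamma(w^*),\Sigma$ commute, the identity of the first paragraph rearranges to $\Gamma(w)\Sigma\Gamma(w^*)=\Im m(w)^{-1}\bigl(\Im\Gamma(w)-\Im w\,\Gamma(w)\Gamma(w^*)\bigr)$, which gives
\begin{equation*}
    \angles{\Gamma(w)(D_M-cI_M)\Gamma(w^*)\Sigma}=\frac{\angles{(D_M-cI_M)\Im\Gamma(w)}}{\Im m(w)}-\frac{\Im w}{\Im m(w)}\,\angles{(D_M-cI_M)\Gamma(w)\Gamma(w^*)}.
\end{equation*}
Here the first term vanishes by the very choice of $c$, and the second is $\lesssim|\Im w|/|\Im m(w)|$ since $\Gamma(w)\Gamma(w^*)=\Gamma(w)\Gamma(w)^*\succeq0$ has normalized trace $\lesssim1$ and $\|D_M-cI_M\|\lesssim1$. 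Using $\fkm[z,z^*]=\Im\fkm(z)/\Im z$ (with $z=w^2$) together with Lemma \ref{lemma:m-order-estimate} one has $|\Im w|/|\Im m(w)|\asymp\beta(w,w^*)$, and applying (\ref{eqn:diff-estimate-opp}) to the equal-sign pair $(w^*,\widetilde w)$ yields $\beta(w,\widetilde w)\gtrsim\beta(w,w^*)$; so this piece is $\lesssim\beta(w,\widetilde w)$. For the remaining piece, the resolvent identity $\Gamma(\widetilde w)-\Gamma(w^*)=(\widetilde w-w^*)\Gamma(\widetilde w)\Gamma(w^*)+(m(\widetilde w)-m(w^*))\Gamma(\widetilde w)\Sigma\Gamma(w^*)$ and $\|\Gamma(\cdot)\|,\|\Sigma\|\lesssim1$ give $\|\Gamma(\widetilde w)-\Gamma(w^*)\|\lesssim|\widetilde w-w^*|+|m(\widetilde w)-m(w^*)|$, and (\ref{eqn:diff-estimate-opp}) applied once more to $(w^*,\widetilde w)$ bounds $|\widetilde w-w^*|$, $|\fkm((w^*)^2)-\fkm(\widetilde w^2)|$, and hence $|m(\widetilde w)-m(w^*)|$, all by $\lesssim\beta(w,\widetilde w)$; since $\|\Gamma(w)\|,\|\Sigma\|,\|D_M-cI_M\|\lesssim1$, this piece is also $\lesssim\beta(w,\widetilde w)$. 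The triangle inequality then completes the core estimate, hence Lemma \ref{lemma:one-point-regularization}.

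The hard part is the core estimate. The insight to get right is that, once $\Gamma(w^*)$ is peeled off, the only component of the bilinear form $B\mapsto\angles{\Gamma(w)B\Gamma(\widetilde w)\Sigma}$ that is not already $O(1)$ and that must be forced down to size $\beta$ is the rank-one functional $B\mapsto\angles{B\,\Im\Gamma(w)}/\Im m(w)$ --- and this is exactly the functional that the one-point regularization (\ref{def:one-point-regularization}) is engineered to annihilate. The remaining labor is bookkeeping: assembling the divided-difference bounds of Lemma \ref{lemma:diff-estimate} so that the two leftover error terms land in $\beta(w,\widetilde w)$, and running the transpose/conjugation reductions that collapse the four constraints of (\ref{eqn:regular-obs-condition}) into one.
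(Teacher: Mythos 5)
Your proposal is correct and follows essentially the same route as the paper: reduce via Lemma \ref{lemma:regular-obs-properties} \ref{item:ppt-constraint} (plus transpose/conjugation symmetries) to the opposite-sign constraints, replace the non-regularization resolvent by the one at the regularization point using the perturbation bound (\ref{eqn:Gamma-perturbation}), and then apply the identity (\ref{eqn:Gamma-imaginary-part}) so that the leading term $\angles{\Im\Gamma(w)D_M^\circ}$ vanishes by construction, with the leftover errors $\abs{w_1-w_2}+\abs{m_1-m_2}+\Im w/\Im m$ absorbed into $\beta$ via Lemma \ref{lemma:diff-estimate}. The only differences are cosmetic (you perturb toward $\Gamma(w^*)$ rather than $\Gamma_1$, and spell out the norm bound and symmetry reductions that the paper leaves implicit).
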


Without loss of generality, in the proofs of Theorems \ref{thm:eigenvector-overlaps} and \ref{thm:shrinkage}, we assume that $K = 1$, meaning that $\operatorname{supp} \varrho$, the support of the LSD $\varrho$, contains only one bulk component. In this case, the classical number of eigenvalues in this component is given by $N_1 = M \wedge N$. The extension to the case of multiple bulk components is straightforward and thus omitted.

\begin{proof}[Proof of Theorem \ref{thm:eigenvector-overlaps}]
Fix arbitrary (small) $\varepsilon > 0$ and (large) $L > 0$. Let $i,j \in \llbracket M \wedge N \rrbracket$. Recalling the classical locations defined in (\ref{def:classical-locations}), we introduce the spectral parameters
\begin{equation*}
    w_{i} := \gamma_i + \mathrm{i} \eta_i
    \qand
    w_{j} := \gamma_j + \mathrm{i} \eta_j,
\end{equation*}
where the scales are specified as
\begin{equation*}
    \eta_i := N^{-2/3+\varepsilon} [i \wedge (N_1+1-i)]^{-1/3}
    \qand
    \eta_j := N^{-2/3+\varepsilon} [j \wedge (N_1+1-j)]^{-1/3}.
\end{equation*}
By the rigidity estimates in Proposition \ref{prop:rigidity}, we have with probability at least $1 - N^{-L}$,
\begin{equation}
    \abs{s_i - \gamma_i} \leq \eta_i
    \qand
    \abs{s_j - \gamma_j} \leq \eta_j.
    \label{tmp:deviation-singular-value}
\end{equation}
Given any deterministic $D$ with $\norm{D} \lesssim 1$, let $A := (D)^\circ_{w_i}$ be its one-point regularization w.r.t. $w_i$ as defined in (\ref{def:one-point-regularization}). Thanks to Lemma \ref{lemma:one-point-regularization} and Lemma \ref{lemma:regular-obs-properties} \ref{item:ppt-reflected-para}, \ref{item:ppt-A-conjugate}, we know that both $A$ and $A^*$ are $\{ \fkw_i, \fkw_j \}$-regular, where $\fkw_i \in \{ w_i, w_i^* \}$ and $\fkw_j \in \{ w_j, w_j^* \}$. Therefore, by utilizing the averaged law with two regular matrices (\ref{bound:ave-law-two-regular}) in Theorem \ref{thm:local-law-regular}, along with the bound $\norm{\Pi(\fkw_i) A \Pi(\fkw_j)} \lesssim 1$ from Lemma \ref{lemma:opD-bound-regular}, we have
\begin{equation*}
    \angles{G (\fkw_i) A G (\fkw_j) A^* } = \oprec (1),
    \quad \text{ for } \fkw_i \in \{ w_i, w_i^* \}, \fkw_j \in \{ w_j, w_j^*\}.
\end{equation*}
Since $G (w) - G (w^*) = 2 \mathrm{i} \Im G (w)$, the above bound readily implies, with probability at least $1 - N^{-L}$,
\begin{equation*}
    \abs{\angles{\Im G (w_i) A \Im G (w_j) A^* }} \leq N^\varepsilon.
\end{equation*}
On the other hand, whenever the rigidity estimate (\ref{tmp:deviation-singular-value}) holds, we have
\begin{equation*}
    \angles{\Im G (w_i) A \Im G (w_j) A^*}
    = \frac{1}{N} \sum_{i^\prime, j^\prime \in \bbj}
    \frac{\eta_j \eta_j \abs{\angles{\bfxi_{i^\prime}, A \bfxi_{j^\prime}}}^2}
    {[(s_{i^\prime} - \gamma_i)^2 + \eta_i^2] [(s_{j^\prime} - \gamma_j)^2 + \eta_j^2]} 
    \geq \frac{\abs{\angles{\bfxi_{i}, A \bfxi_{j}}}^2}{4 N \eta_i \eta_j}.
\end{equation*}
In other words, we have established, with probability at least $1 - 2 N^{-L}$,
\begin{align*}
    \abs{\angles{\bfxi_{i}, A \bfxi_{j}}}
    \leq (4 N^{1 + \varepsilon} \eta_i \eta_j)^{1/2}
    = 2 N^{-1/6+3\varepsilon/2} [i \wedge (N_1+1-i)]^{-1/6} [j \wedge (N_1+1-j)]^{-1/6}.
    \label{tmp:overlap-with-A}    
\end{align*}
Now, it remains to reduce the l.h.s. of the preceding inequality to the l.h.s. of (\ref{bound:overlap-xi-linearization}). Utilizing the definition of one-point regularization in (\ref{def:coefficients-one-point}), along with $\angles{\bfxi_{i}, \bfxi_{j}} = \delta_{ij}$ and $\angles{\bfxi_{i}, \bfId^- \bfxi_{j}} = 0$, we have
\begin{equation*}
    \angles{\bfxi_{i}, A \bfxi_{j}}
    = \angles{\bfxi_{i}, (D)^\circ_{w_i} \bfxi_{j}}
    = \angles{\bfxi_{i}, D \bfxi_{j}} - \delta_{ij} \coefOne_{w_i}^+ (D).
\end{equation*}
Here for the coefficient $\coefOne_{w_i}^+ (D)$ we can employ the equations (\ref{eqn:Gamma-perturbation}) and (\ref{eqn:Gamma-trace}) below to get
\begin{align*}
    \coefOne_{w_i}^+ (D) 
    & = \frac{1}{2} \left ( \frac{\angles{\Im \Gamma (w_i) D_M}}{\angles{\Im \Gamma (w_i)}} + \angles{D_N} \right ) 
    = \frac{1}{2} \left ( \frac{\angles{\Im \Gamma (w_i) D_M}}{\Im m (w_i)} + \angles{D_N} \right ) 
    + O \big ( {\Im w_i/\Im m(w_i)} \big ) \\
    & = \frac{1}{2} \left ( \frac{\angles{\Im \Gamma (\gamma_i) D_M}}{\Im m (\gamma_i)} + \angles{D_N} \right ) 
    + O \big ( {\abs{w_i - \gamma_i} + \abs{m(w_i) - m(\gamma_i)} + \Im w_i/\Im m(w_i)} \big ) \\ 
    & = \frac{\angles{\Im \Pi (\gamma_i) D}}{2 \Im m (\gamma_i)} + O (\eta_i^{1/2}).
\end{align*}
where we also used the estimates in Lemmas \ref{lemma:m-order-estimate} and \ref{lemma:diff-estimate} in the last step. We mention that although the equations/estimates for $m$ and $\Gamma$ employed here are originally stated for $w$ with $\eta \not= 0$, their validity remains intact as $\Im w \downarrow 0$. Summarizing the three equations above concludes the proof.
\end{proof}

\begin{proof}[Proof of Theorem \ref{thm:shrinkage}]
As mentioned, we assume $K = 1$ to simplify the presentation. Additionally, note that this proof is conducted under the additional Assumption \ref{assump:invertibility}, which implies $N_1 = M$ when $K = 1$. Algorithm \ref{algo:shrinkage} and Theorem \ref{thm:shrinkage} are formulated using the spectral parameters $z_i = \lambda_i + \mathrm{i} \eta_i$ and the Stieltjes transform $\fkg (z) = N^{-1} \operatorname{tr} (X^\top \Sigma X - z)^{-1}$. However, for the sake of notation simplicity, we opt to formulate this proof in terms of $w_i = s_i + \mathrm{i} \eta_i$ and the Stieltjes transform
\begin{equation*}
    g(w) = w \fkg (w^2) = \angles{G(w) \bfId_N}.
\end{equation*}
Importantly, this switch in spectral parameters does not alter the essence of the argument. Moreover, according to the eigenvalue rigidity estimate (\ref{bound:rigidity}), it suffices to work with the high-probability event where the singular values adhere closely to their classical locations,
\begin{equation*}
    \abs{s_i - \gamma_i}
    \leq N^{-2/3+c/2} (\fkn_i)^{-1/3},
    \quad \text{ where } \quad
    \fkn_i \equiv i \wedge (M+1-i).
\end{equation*}
In particular, for a sufficiently large constant $C > 0$, we have $\lambda_i \in [C^{-1}, C]$ uniformly across $i \in \llbracket M \rrbracket$ and $N \in \bbn$ on the aforementioned high-probability event. Consequently, we can, without loss of generality, suppose that $\eta_i = \eta \in [N^{-2/3+c}, N^{-c}]$. 

Now, regarding the entrywise estimate (\ref{bound:entrywise-optimal-algo}), we need to prove
\begin{equation}
    \bigg | {\angles{\bfu_{i}, \Sigma \bfu_{i}} - \frac{1}{\abs{g(s_i + \mathrm{i} \eta)}^2}} \bigg |
    \prec \frac{1}{N^{1/6} (\fkn_{i})^{1/3}} 
    + \frac{\eta}{(\fkn_{i}/N)^{1/3} + \eta^{1/2}} 
    + \frac{1}{N \eta}.
    \label{tmp:elementwise-to-prove}
\end{equation}
By setting $D_1 = \Sigma$ in (\ref{bound:overlap-singular-u-u}), we obtain
\begin{equation*}
    \bigg | {\angles{\bfu_{i}, \Sigma \bfu_{i}} - \frac{1}{\abs{m(\gamma_i)}^2}} \bigg |
    \prec \frac{1}{N^{1/6} (\fkn_{i})^{1/3}},
\end{equation*}
where we utilized $\angles{\Gamma (\gamma_i) \Sigma} = - \gamma_i - 1/m(\gamma_i)$ computed from (\ref{eqn:self-consistent-w}). This corresponds to the first fraction on the r.h.s. of (\ref{tmp:elementwise-to-prove}). Next, we leverage the estimates from Lemmas \ref{lemma:m-order-estimate} and \ref{lemma:diff-estimate} to obtain
\begin{equation*}
    \bigg | {\frac{1}{\abs{m(s_i + \mathrm{i} \eta)}^2} - \frac{1}{\abs{m(\gamma_i)}^2}} \bigg | 
    \lesssim \abs{m(s_i + \mathrm{i} \eta) - m(\gamma_i)}
    \lesssim \frac{\abs{s_i - \gamma_i} + \eta}{\operatorname{dist} (s_i, \partial \operatorname{supp} \rho)^{1/2} + \eta^{1/2}}.
\end{equation*}
Here, using the triangle inequality, we have
\begin{equation*}
    \operatorname{dist} (s_i, \partial \operatorname{supp} \rho) 
    \geq \operatorname{dist} (\gamma_i, \partial \operatorname{supp} \rho) - \abs{s_i - \gamma_i}.
\end{equation*}
On the one hand, by incorporating the definition of the classical locations $\gamma_i$ along with the square root behavior of the density of the LSD $\rho(E)$ around its edges\footnote{See e.g. \cite[Theorem 3.1]{baoUniversalityLargestEigenvalue2015}. Alternatively, we can derive this square root behavior by of $\rho (E)$ by letting $\eta \downarrow 0$ in the estimate of $\Im m (E + \mathrm{i} \eta)$ as given by (\ref{eqn:m-order-estimate}).}, it is not difficult to verify that
\begin{equation*}
    \operatorname{dist} (\gamma_i, \partial \operatorname{supp} \rho) 
    \asymp (\fkn_{i}/N)^{2/3}.
\end{equation*}
On the other hand, by eigenvalue rigidity and the range of $\eta$ we have
\begin{equation*}
    \abs{s_i - \gamma_i} \leq N^{-2/3+c/2} \ll N^{-2/3+c} \leq \eta.
\end{equation*}
Combining these estimates together yields the second fraction on the r.h.s. of (\ref{tmp:elementwise-to-prove}),
\begin{equation}
    \bigg | {\frac{1}{\abs{m(s_i + \mathrm{i} \eta)}^2} - \frac{1}{\abs{m(\gamma_i)}^2}} \bigg | 
    \prec \frac{\eta}{(\fkn_{i}/N)^{1/3} + \eta^{1/2}}.
\end{equation}
Finally, we employ the single resolvent local law (\ref{eqn:single-resolvent-local-law}) to obtain 
\begin{equation*}
    \frac{1}{\abs{g(s_i + \mathrm{i} \eta)}^2} - \frac{1}{\abs{m(s_i + \mathrm{i} \eta)}^2}
    \lesssim \sup_{E} \abs{g(E + \mathrm{i} \eta) - m(E + \mathrm{i} \eta)}
    \prec \frac{1}{N \eta},
\end{equation*}
where the supremum is taken over $E$ with $E + \mathrm{i} \eta \in \bbd$. The estimate above corresponds to the last fraction on the r.h.s. of (\ref{tmp:elementwise-to-prove}). By combining these estimates, we conclude the proof of (\ref{tmp:elementwise-to-prove}).

To prove (\ref{bound:loss-optimal-algo}), let us recall that the partial order $\prec$ is preserved under summation over a set whose cardinality bounded by powers of $N$,
\begin{equation*}
    \frac{1}{M} \sum_{i=1}^{M} \bigg | {\angles{\bfu_{i}, \Sigma \bfu_{i}} - \frac{1}{\abs{g(s_i + \mathrm{i} \eta)}^2}} \bigg |^2
    \prec \frac{1}{M} \sum_{i=1}^{M} \bigg [ { \frac{1}{N^{1/3} (\fkn_{i})^{2/3}} 
    + \frac{\eta^2}{(\fkn_{i}/N)^{2/3} + \eta} 
    + \frac{1}{N^2 \eta^2} } \bigg ].
\end{equation*}
Here, the summation over the first fraction can be dominated by
\begin{equation*}
    \frac{1}{M^2} \sum_{i=1}^{M} \frac{1}{(\fkn_{i}/M)^{2/3}} 
    \lesssim \frac{1}{M^2} \sum_{i=1}^{M} \Big ( { \frac{i}{M} \wedge \frac{M+1-i}{M} } \Big )^{-2/3}
    \lesssim \frac{1}{M} \int_{0}^1 [t \wedge (1-t)]^{-2/3} \mathrm{d} t
    \lesssim \frac{1}{N}.
\end{equation*}
Similarly, for the summation over the second fraction, we can control it by
\begin{equation*}
    \frac{\eta^2}{M} \sum_{i=1}^{M} \frac{1}{(\fkn_{i}/N)^{2/3}} 
    \lesssim \frac{\eta^2}{M} \sum_{i=1}^{M} \Big ( { \frac{i}{M} \wedge \frac{M+1-i}{M} } \Big )^{-2/3}
    \lesssim \eta^2.
\end{equation*}
This completes the proof of (\ref{bound:loss-optimal-algo}) for the Frobenius loss. Regarding the adaption to the inverse Frobenius loss, the argument remains essentially the same, except that we need to employ the estimate for eigenvector overlaps of the form $\angles{\bfu_i, \Sigma^{-1} \bfu_i}$. To derive the deterministic counterpart in (\ref{bound:overlap-singular-u-u}) for $D_1 = \Sigma^{-1}$, we utilize (\ref{eqn:self-consistent-w}) again to obtain
\begin{equation*}
    \angles{\Gamma (\gamma_i) \Sigma^{-1}} 
    = - \frac{\angles{\Sigma^{-1}}}{\gamma_i} + \frac{m(\gamma_i)^2}{\gamma_i} 
    + \bigg ( {1 - \frac{M}{N}} \bigg ) \frac{m(\gamma_i)}{\gamma_i^2},
\end{equation*}
which implies that
\begin{equation*}
    \frac{\angles{\Im \Gamma (\gamma_i) \Sigma^{-1}}}{\Im m(\gamma_i)}
    = \frac{2 \Re [m(\gamma_i)]}{\gamma_i} 
    + \bigg ( {1 - \frac{M}{N}} \bigg ) \frac{1}{\gamma_i^2}
    = 2 \Re [\fkm (\gamma_i^2)]
    + \bigg ( {1 - \frac{M}{N}} \bigg ) \frac{1}{\gamma_i^2}.
\end{equation*}
Note that the $\gamma_i^2$'s serve as the classical location for the eigenvalues $\lambda_i$. Hence, the formula above illustrates how the transitional shrinkages $\lambda_i^{\finv \star}$ are obtained.
\end{proof}

\section{Self-improving inequalities} \label{sec:self-improving-inequalities}

For the subsequent sections, we delve into the proof of Theorem \ref{thm:local-law-regular}. Our methodology for proving the multi-resolvent local laws draws inspiration from \cite{cipolloniOptimalLowerBound2023}. The overarching strategy hinges on deriving a system of self-improving inequalities (Proposition \ref{prop:self-improving}) that encapsulate the errors in approximating the resolvent chains with their deterministic counterparts. The term ``self-improving'' implies that these inequalities can be iterated a certain number of times (see Lemma \ref{lemma:iteration}) to attain the desired error bound. The primary distinction in the analysis of this article, compared to that in \cite{cipolloniOptimalLowerBound2023}, lies in the inclusion of the intricate edge regime, where the Stieltjes transform $m$ loses its Lipschitz continuity. Our analysis heavily rely on the $1/2$-H\"{o}lder continuity of $m$ near the spectral edges, as demonstrated by Lemma \ref{lemma:diff-estimate}. This reliance is evident in the proofs of Lemmas \ref{lemma:coef-V1-Pi2} and \ref{lemma:coef-Sigma-circ} concerning the coefficients introduced by regularization of certain matrices. It is worth noting that while the arguments in this article are complicated by the inclusion of the edge regime, the overall proof remains concise, thanks to our flexible definition of regular observables, which circumvents redundant steps of regularization that contribute less to the overall argument.

We organize the proof as follows. Before delving into details, Section \ref{subsec:prelim-proof} introduces several definitions and lemmas that are utilized throughout the proof. In Section \ref{subsec:self-improving}, we present Proposition \ref{prop:self-improving}, which establishes the system of self-improving inequalities as mentioned above. Leveraging this result, we deduce the multi-resolvent local laws with regular matrices through iteration. The subsequent sections are dedicated to deriving Proposition \ref{prop:self-improving}. A key element of this derivation is Proposition \ref{prop:repre-full-underlined}, which represents errors in approximating the resolvent chains with their deterministic counterparts as second-order renormalization, up to negligible terms. The proof of this result, which involves considerable effort, is addressed in Section \ref{sec:representation}. Section \ref{subsec:cumulant-expansion} presents the derivation of the self-improving inequalities by incorporating this representation with the cumulant expansion formula (\ref{eqn:cumulant-expansion}). The contributions from second and higher cumulants in this expansion are quantified in Section \ref{subsec:gaussian-contribution} and \ref{subsec:high-order-contribution}, respectively. Finally, most proofs of the technical lemmas are consolidated in Section \ref{sec:tech-lemmas} to maintain the coherence of the main argument.

\subsection{Preliminaries} \label{subsec:prelim-proof}

To streamline the notations, we employ $\Upsilon$ to represent the errors incurred in approximating the resolvent chains with their deterministic counterparts,
\begin{subequations}
\begin{align}
    \Upsilon_1 & \equiv \Upsilon(w_1) := G(w_1) - \Pi(w_1), \\
    \Upsilon_{12} (A_1) & \equiv \Upsilon (w_1, A_1, w_2) := G_1 A_1 G_2 - \Pi (w_1, A_1, w_2), \\
    \Upsilon_{123} (A_1, A_2) & \equiv \Upsilon (w_1, A_1, w_2, A_2, w_3) := G_1 A_1 G_2 A_2 G_3 - \Pi  (w_1, A_1, w_2, A_2, w_3).
\end{align}    
\end{subequations}
For $k=1,2$, we introduce the normalized differences
\begin{subequations} \label{def:normalized-difference}
\begin{align}
    \ParaAVE_k (w_1, A_1, \cdots, w_k, A_k) 
    & := N \eta^{k/2} \abs{\angles{\Upsilon (w_1, A_1, \cdots, A_{k-1}, w_k) A_k}}, \\
    \ParaISO_k (w_1, A_1, \cdots, A_k, w_{k+1}, \fku, \fkv) 
    & := \sqrt{N \eta^{k+1}} \abs{\angles{\fku, \Upsilon (w_1, A_1, \cdots, A_k, w_{k+1}) \fkv}},
\end{align}
\end{subequations}
where we used the short hand notation $\eta := \min_{j} \eta_{j} = \min_{j} \abs{\Im w_{j}}$. We also introduce the following control parameters concerning the uncentralized forms of resolvent chains containing $k$ regular matrices, $1 \leq k \leq 4$,
\begin{subequations}
\begin{align}
    \BoundAVE_k (w_1, A_1, \cdots, w_k, A_k) 
    & := \abs{\angles{G_1 A_1 \cdots G_k A_k}}, \\
    \BoundISO_k (w_1, A_1, \cdots, A_k, w_{k+1}, \fku, \fkv) 
    & := \abs{\angles{\fku, G_1 A_1 \cdots G_k A_k G_{k+1} \fkv}}.
\end{align}
\end{subequations}

Recall the definitions of the spectral domains $\bbd_0 (\tau, \tau^\prime)$ and $\bbd_\infty (\varepsilon, \tau)$ in (\ref{def:bbd-0}) and (\ref{def:bbd-infty}), respectively. Due to our utilization of Cauchy's integral formula (\ref{eqn:contour-integral-product}) below, when handling spectral parameters within a specific domain, we might have to rely on estimates for those parameters from a slightly broader domain. As our estimates are refined through iteration, we need to introduce a sequence of nested domains that progressively approach our target domain $\mathbb{D}_\infty$.

\begin{lemma}[Nested spectral domains] \label{lemma:nested-domains}
Fix some (small) $\varepsilon, \tau > 0$ and let $\tau^\prime > 0$ be some sufficiently small constant depending only on $\tau$. Given arbitrary (large) $L \in \bbn$, there exists a sequence of nested spectral domains $\bbd_\ell \equiv \bbd_\ell (\varepsilon, \tau, \tau^\prime)$, $\ell \in \llbracket L \rrbracket$, such that
\begin{equation*}
    \bbd_0 (\tau, \tau^\prime) \supset \bbd_1 (\varepsilon, \tau, \tau^\prime) \supset \cdots \supset \bbd_L (\varepsilon, \tau, \tau^\prime) \supset \bbd_\infty (\varepsilon, \tau),
\end{equation*}
and the following statements hold for every $\ell \in \llbracket L \rrbracket$,
\begin{enumerate}[label=(\roman*)]
    \item $\bbd_\ell$ is formed by the union of $2K$ disjoint convex polygons.
    \item $\abs{\Im w} \geq \ell N^{-1+\varepsilon/2}$ whenever $w \in \bbd_{\ell}$.
    \item \label{item:dist-boundary-domains} $\operatorname{dist} (w, \partial \bbd_{\ell}) \geq c_{\ell} \abs{\eta}$ for all $w \in \bbd_{\ell + 1}$, where the domain $\bbd_{L+1}$ is understood as $\bbd_{\infty}$ and the constant $c_\ell \equiv c_{\ell} (\varepsilon, \tau) > 0$ is $N$-independent.
\end{enumerate}
\end{lemma}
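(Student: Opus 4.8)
The plan is to build each $\bbd_\ell$ by hand as a union of $2K$ convex polygons — one above and one below each of the $K$ bulk components of $\operatorname{supp}\rho\cap\bbr_+$ — whose shapes interpolate monotonically between the ``profile'' of $\bbd_0$ and that of $\bbd_\infty$. First I would invoke Assumption~\ref{assump:edge-regularity} (transported from $\varrho$ to $\rho$ through the square root, bi-Lipschitz away from $0$) to record that $\operatorname{supp}\rho\cap\bbr_+$ is a disjoint union of $K$ closed intervals $[c_1,d_1],\dots,[c_K,d_K]$, mutually separated by gaps $\gtrsim 1$, each of length $\gtrsim 1$, all contained in a fixed $[c_0,C_0]$ with $c_0>0$. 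With $\tau^\prime$ small enough (depending only on $\tau$) the $\tau^\prime$-neighbourhoods of the $[c_k,d_k]$ are then pairwise disjoint and lie in $\{\Re w>\tau^\prime\}$, so $\bbd_0$ and $\bbd_\infty$ each decompose into $2K$ well-separated components, one near each $[c_k,d_k]$ and each sign of $\Im w$. It thus suffices to construct the piece of $\bbd_\ell$ lying above a single $[c,d]:=[c_k,d_k]$; the remaining $2K-1$ pieces come from relabelling and from the conjugation $w\mapsto\bar w$, which fixes both $\bbd_0$ and $\bbd_\infty$.

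\textbf{The polygons.} Near $[c,d]$ in the upper half-plane, $\bbd_0(\tau,\tau^\prime)$ is the region $\{E+\mathrm{i}\eta:0<\eta\le\tau^{-1},\ c-\min(\tau^\prime,\eta)\le E\le d+\min(\tau^\prime,\eta)\}$ and $\bbd_\infty(\varepsilon,\tau)$ is the rectangle $\{E+\mathrm{i}\eta:N^{-1+\varepsilon}\le\eta\le(2\tau)^{-1},\ c\le E\le d\}$. I would introduce, for $\ell\in\llbracket 0,L+1\rrbracket$, the parameters
\[
    s_\ell:=1-\tfrac{\ell}{L+1},\qquad h_\ell:=2^{-(L+1-\ell)}N^{-1+\varepsilon},\qquad T_\ell:=(2\tau)^{-1}\bigl(1+\tfrac{L+1-\ell}{L+1}\bigr),
\]
so that $(s_0,h_0,T_0)$ matches the $\bbd_0$-profile (opening slope $1$, bottom $\downarrow 0$, top $\tau^{-1}$) and $(s_{L+1},h_{L+1},T_{L+1})=(0,N^{-1+\varepsilon},(2\tau)^{-1})$ the $\bbd_\infty$-profile, while $s_\ell$ decreases, $h_\ell$ increases and $T_\ell$ decreases in $\ell$ (only $h_\ell$ carries the $N$-scale). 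For $1\le\ell\le L$ I would then set the $[c,d]$-piece of $\bbd_\ell$ to be
\[
    \bigl\{\,E+\mathrm{i}\eta\ :\ h_\ell\le\eta\le T_\ell,\quad c-s_\ell\min(\tau^\prime,\eta)\le E\le d+s_\ell\min(\tau^\prime,\eta)\,\bigr\}.
\]
Since $-s_\ell\min(\tau^\prime,\eta)=\max(-s_\ell\tau^\prime,-s_\ell\eta)$, this is an intersection of six half-planes, hence a convex polygon — a hexagon with two slanted ``cone'' sides near $c$ and $d$, two vertical sides at $E=c-s_\ell\tau^\prime$ and $E=d+s_\ell\tau^\prime$, and horizontal top and bottom (genuine because $h_\ell\ll\tau^\prime$ for $N$ large). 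Statement (i) is then immediate, the nesting $\bbd_0\supset\bbd_1\supset\cdots\supset\bbd_L\supset\bbd_\infty$ follows half-plane by half-plane from $s_0=1$, $s_{L+1}=0$ and the monotonicity of the three parameters, and (ii) reduces to $h_\ell\ge\ell N^{-1+\varepsilon/2}$, i.e. $N^{\varepsilon/2}\ge\ell\,2^{L+1-\ell}$, which holds for all $N\ge N_0(\varepsilon,\tau,L)$.

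\textbf{Verifying the cone condition (iii).} This is the heart of the matter. Writing the $[c,d]$-piece of $\bbd_\ell$ as an intersection $\bigcap_j H_j$ of six half-planes, one has $\operatorname{dist}(w,\partial\bbd_\ell)=\min_j\operatorname{dist}(w,\partial H_j)$ for $w$ in this piece, so I would bound below, for $w=E+\mathrm{i}\eta$ in the $[c,d]$-piece of $\bbd_{\ell+1}$ (pieces near other components being at distance $\gtrsim 1\gg\eta$), the distance to each of the six bounding lines. The decisive one is the cone line $\{E^\prime+s_\ell\eta^\prime=c\}$: since $w\in\bbd_{\ell+1}$ forces $E\ge c-s_{\ell+1}\eta$, the distance is $(E+s_\ell\eta-c)/\sqrt{1+s_\ell^2}\ge(s_\ell-s_{\ell+1})\eta/\sqrt2=\eta/(\sqrt2(L+1))$ — a gap proportional to $\eta$, exactly as (iii) demands. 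The remaining five are easier: to $\{\eta=h_\ell\}$ the distance is $\eta-h_\ell\ge\tfrac12\eta$ (as $h_\ell=\tfrac12 h_{\ell+1}\le\tfrac12\eta$); to $\{\eta=T_\ell\}$ it is $T_\ell-\eta\ge\tfrac1{2\tau(L+1)}\ge\tfrac1{2(L+1)}\eta$ (as $\eta\le\tau^{-1}$); to the vertical line $\{E=c-s_\ell\tau^\prime\}$ it is $\ge(s_\ell-s_{\ell+1})\tau^\prime=\tfrac{\tau^\prime}{L+1}\ge\tfrac{\tau\tau^\prime}{L+1}\eta$; the two right-hand lines are symmetric. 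Hence (iii) holds with $c_\ell:=\tfrac{\tau\tau^\prime}{2(L+1)}$ (taking WLOG $\tau,\tau^\prime\le 1$), which is positive, $N$-independent, and below all five bounds, completing the construction.

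\textbf{The main obstacle.} The only genuinely forced design choice — and the reason the $\bbd_\ell$ cannot simply be nested rectangles — is that their near-edge boundaries must be conical. Indeed, a vertical side at a fixed inset $E=c-\delta_\ell$ with $\delta_\ell>0$ constant would, for $\eta<\delta_\ell$, poke out of $\bbd_0$, on which $\operatorname{dist}(\Re w,\operatorname{supp}\rho)\le|\Im w|$ forces the inset to vanish as $\eta\downarrow 0$, contradicting $\bbd_\ell\subset\bbd_0$. So every near-edge boundary must have the form $E=c-s_\ell\eta$ and the only freedom is the slope $s_\ell$; the computation above is arranged so that decreasing $s_\ell$ in equal steps $1/(L+1)$ buys exactly a gap $\asymp\eta$ between the boundaries of $\bbd_\ell$ and $\bbd_{\ell+1}$, while still leaving room ($s_0=1$) for $\bbd_1\subset\bbd_0$ and ($s_{L+1}=0$) for $\bbd_L\supset\bbd_\infty$. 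Beyond getting this conical parametrisation and the three monotone parameters to cohere, the remaining steps are the routine half-plane bookkeeping sketched above.
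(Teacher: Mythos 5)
Your construction is correct and is essentially the one the paper itself has in mind: the paper gives no formal proof of Lemma \ref{lemma:nested-domains}, only the hexagonal domains sketched in Figure \ref{fig:spectral-domains} (bottom edges at height $\ell N^{-1+\varepsilon/2}$ over the spectral interval, oblique sides of slope $\pm\ell$ through a common point near each edge, with $\arctan(\ell+1)-\arctan(\ell)\gtrsim 1$ yielding property \ref{item:dist-boundary-domains}), and your nested hexagons with cone sides of half-width $s_\ell\min(\tau^\prime,\eta)$ anchored at the spectral edges, geometric bottoms $h_\ell$ and shrinking tops $T_\ell$ realize the same idea with a different parametrization, and your half-plane distance bookkeeping checks out. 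The only cosmetic caveat is that your property (ii) holds for $N\ge N_0(\varepsilon,\tau,L)$ rather than for all $N$, which is harmless here since $L$ is ultimately a constant depending only on $(\varepsilon,\tau)$ and all statements in the paper are asymptotic in $N$.
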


There are various approaches to construct the spectral domains $\bbd_\ell$ to fulfill the conditions outlined in Lemma \ref{lemma:nested-domains}. However, the specific construction of these domains is not essential for subsequent discussions. Therefore, we refrain from providing a formal proof for Lemma \ref{lemma:nested-domains}. Nevertheless, for visual clarity and reader comprehension, we depict one possible construction of the $\bbd_\ell$'s in Figure \ref{fig:spectral-domains} along with a brief description of their formation. We also remark that the motivation behind imposing requirement \ref{item:dist-boundary-domains} is clarified in (\ref{bound:integral-abs-resolvent}). For what follows, we fix some sufficiently large $L \equiv L(\varepsilon, \tau) \in \bbn$ and consistently refer to $\bbd_\ell$, $\ell \in \llbracket L \rrbracket$, as a sequence of spectral domains satisfying the requirements specified in Lemma \ref{lemma:nested-domains}. 

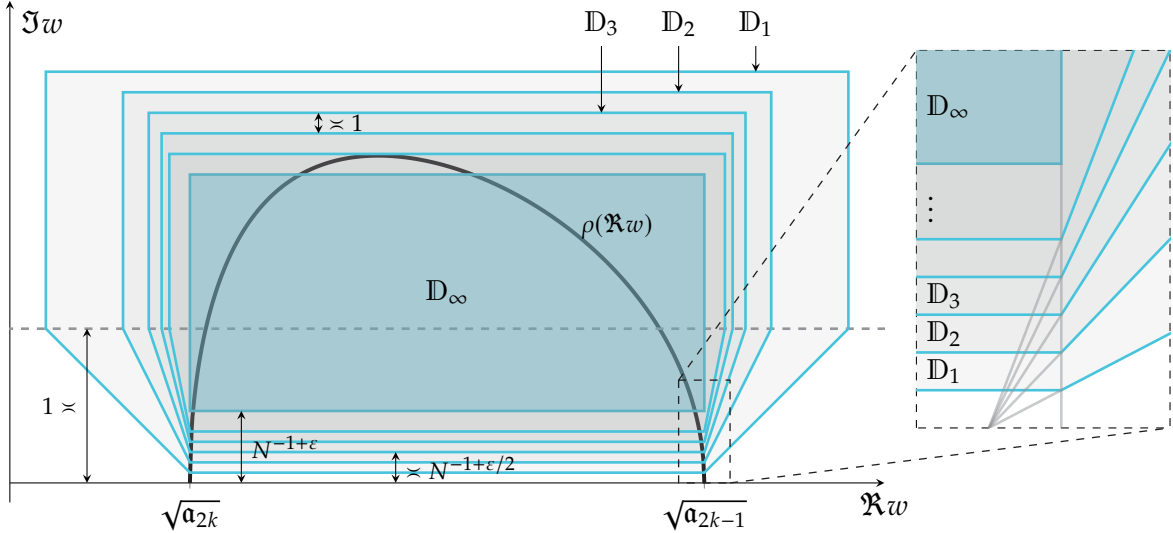
\begin{figure}[htbp]
    \centering
    \begin{tikzpicture}
        \begin{axis}[
            xlabel={$\Re w$},
            ylabel={$\Im w$},
            width=0.8\linewidth,
            height=0.5\linewidth,
            xmin=0.15,
            xmax=1.85,
            ymin=0,
            ymax=0.9,
            axis lines=middle,
            xtick={0.5,1.5},
            xticklabels={$\sqrt{\fka_{2k}}$,$\sqrt{\fka_{2k-1}}$},
            ytick=\empty,
            axis equal,
            every axis x label/.style={
                at={(ticklabel* cs:1)},
                anchor=north,
            },
            name=axMain
        ]
            \draw[line width=1.5pt,color=Black,smooth] (0.5,0) -- plot[domain=0.5:1.5,samples=301] ({\x},{density(\x)}) -- (1.5,0);
            \draw[line width=1pt,color=Gray,dashed] (-1,0.3) -- (5,0.3);
            \draw[line width=1pt,color=SkyBlue,fill=SkyBlue,fill opacity=0.4] (0.5, 0.14) rectangle (1.5, 0.6);
            \drawPolygon{1}{5}
            \drawPolygon{2}{4}
            \drawPolygon{3}{3}
            \drawPolygon{4}{2}
            \drawPolygon{5}{1}
            \draw[stealth-stealth] (0.6,0) -- (0.6,0.14) node[midway,right] {\footnotesize $N^{-1+\varepsilon}$};
            \draw[stealth-stealth] (0.9,0) -- (0.9,0.06) node[midway,right] {\footnotesize $\asymp N^{-1+\varepsilon/2}$};
            \draw[stealth-stealth] (0.3,0) -- (0.3,0.3) node[midway,left] {\footnotesize $1 \asymp$};
            \draw[stealth-stealth] (0.75,0.68) -- (0.75,0.72) node[midway,right] {\footnotesize $\asymp 1$};
            \node[above] (D1) at (Dm1 |- 0,0.85) {$\bbd_1$};
            \draw[-stealth] (D1) -- (Dm1);
            \node[above] (D2) at (Dm2 |- 0,0.85) {$\bbd_2$};
            \draw[-stealth] (D2) -- (Dm2);
            \node[above] (D3) at (Dm3 |- 0,0.85) {$\bbd_3$};
            \draw[-stealth] (D3) -- (Dm3);
            \node (Dinf) at (1, 0.37) {$\bbd_\infty$};
            \node (dens) at (1.33, 0.5) {\footnotesize $\rho(\Re w)$};
    
            \coordinate (vt1) at (1.45,0.2);
            \coordinate (vt2) at (1.55,0);
            \draw[line width=0.5pt,color=Black,dashed] (vt1) rectangle (vt2);
        \end{axis}
    
        \begin{axis}[
            xlabel={},
            ylabel={},
            width=0.3\linewidth,
            height=0.4\linewidth,
            xmin=1.46,
            xmax=1.53,
            ymin=0,
            ymax=0.2,
            axis lines=box,
            axis line style=dashed,
            xtick=\empty,
            ytick=\empty,
            at={($(axMain.south west)+(0.73*\linewidth,0.06*\linewidth)$)},
            name=axSub
        ]
            \draw[line width=1pt,color=SkyBlue,fill=SkyBlue,fill opacity=0.4] (0.5, 0.14) rectangle (1.5, 0.6);
            \drawPolygon{1}{5}
            \drawPolygon{2}{4}
            \drawPolygon{3}{3}
            \drawPolygon{4}{2}
            \drawPolygon{5}{1}
            \coordinate (Fc) at (1.48,0) {};
            \draw[line width=1pt,color=Gray,draw opacity=0.5] (Br1) -- (Fc);
            \draw[line width=1pt,color=Gray,draw opacity=0.5] (Br2) -- (Fc);
            \draw[line width=1pt,color=Gray,draw opacity=0.5] (Br3) -- (Fc);
            \draw[line width=1pt,color=Gray,draw opacity=0.5] (Br4) -- (Fc);
            \draw[line width=1pt,color=Gray,draw opacity=0.5] (Br5) -- (Fc);
            \draw[line width=1pt,color=Gray,draw opacity=0.5] (1.5,0.14) -- (1.5,0);
            \node[right] (D1sub) at (1.46,0.03) {$\bbd_1$};
            \node[right] (D2sub) at (1.46,0.05) {$\bbd_2$};
            \node[right] (D3sub) at (1.46,0.07) {$\bbd_3$};
            \node[right] (D1sub) at (1.46,0.12) {$\vdots$};
            \node[right] (D1sub) at (1.46,0.17) {$\bbd_\infty$};
        \end{axis}
        
        \draw[line width=0.5pt,color=Black,dashed] (vt1) -- (axSub.north west);
        \draw[line width=0.5pt,color=Black,dashed] (vt2) -- (axSub.south east);
    \end{tikzpicture}
    \caption{Depicted is one construction of the spectral domains $\bbd_\ell$ satisfying the conditions listed in Lemma \ref{lemma:nested-domains}. The component associated with the interval $[\sqrt{\fka_{2k}}, \sqrt{\fka_{2k-1}}]$ in the upper half-plane is displayed. The bottom two vertices of $\bbd_{\ell}$ are located at $(\sqrt{\fka_{2k}}, \ell N^{-1+\varepsilon/2})$ and $(\sqrt{\fka_{2k-1}}, \ell N^{-1+\varepsilon/2})$. The two vertices on the grey dashed line have a constant-order imaginary part, while their real parts are selected such that the slopes of the two oblique edges are $\pm \ell$. Equivalently, the grey lines in the subplot intersect at $(\sqrt{\fka_{2k-1}} - N^{-1+\varepsilon/2}, 0)$. Since $\ell \lesssim 1$, we have $\arctan (\ell+1) - \arctan (\ell) \gtrsim 1$, which can be used to verify the requirement \ref{item:dist-boundary-domains}.}
    \label{fig:spectral-domains}
\end{figure}

The subsequent two identities are utilized to simplify the analysis of resolvent chains involving $\bfId^+$ by reducing them to shorter ones.

\begin{lemma}
Let $w_1, w_2 \in \bbc \backslash \bbr$ with $w_1 \not= w_2$. We have the resolvent identity
\begin{equation}
    G(w_1) G(w_2) = \frac{G(w_1) - G(w_2)}{w_1 - w_2}.
    \label{eqn:resolvent-difference-product}
\end{equation}
Moreover, by Cauchy's integral formula, for $w_1, w_2 \in \bbd_{\ell + 1}$ with $\fks_1 = \fks_2$ we have 
\begin{equation}
    G(w_1) G(w_2) = \frac{1}{2 \pi \mathrm{i}} \oint_{\partial \bbd_{\ell}} \frac{G(\zeta)}{(\zeta - w_1) (\zeta - w_2)} \mathrm{d} \zeta,
    \label{eqn:contour-integral-product}
\end{equation}
where the boundary is oriented counter-clockwise.
\end{lemma}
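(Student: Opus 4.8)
The plan is to prove the two displays in order, deriving the contour formula from the resolvent identity. For the resolvent identity (\ref{eqn:resolvent-difference-product}), I would sandwich the elementary identity $(H-w_2)-(H-w_1)=w_1-w_2$ between $G(w_1)=(H-w_1)^{-1}$ on the left and $G(w_2)=(H-w_2)^{-1}$ on the right, which gives
\[
  G(w_1)-G(w_2) = G(w_1)\big[(H-w_2)-(H-w_1)\big]G(w_2) = (w_1-w_2)\,G(w_1)G(w_2),
\]
and then divide by $w_1-w_2\neq 0$. Since $G(w_1)$ and $G(w_2)$ are both functions of the single self-adjoint matrix $H$ they commute, so the order of the product on the right-hand side is immaterial.

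For the contour representation (\ref{eqn:contour-integral-product}) the strategy is a direct application of the residue theorem, carried out entrywise on the $(M+N)\times(M+N)$ matrix-valued integrand. First I would record that $H$ is real symmetric, so its spectrum $\{s_i\}_{i\in\bbj}$ lies on $\bbr$ and, by (\ref{def:resolvent-G}), the map $\zeta\mapsto G(\zeta)$ is holomorphic on $\bbc\setminus\bbr$. By Lemma~\ref{lemma:nested-domains}(ii) every point of $\bbd_\ell$ has imaginary part of modulus at least $\ell N^{-1+\varepsilon/2}>0$, so $\overline{\bbd_\ell}$ is a compact subset of $\bbc\setminus\bbr$ on a neighborhood of which $G$ is holomorphic; and by Lemma~\ref{lemma:nested-domains}(i) the boundary $\partial\bbd_\ell$ is a disjoint union of $2K$ simple closed polygonal curves, which is a perfectly admissible contour. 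Because $w_1,w_2\in\bbd_{\ell+1}$, the nesting property \ref{item:dist-boundary-domains} gives $\operatorname{dist}(w_k,\partial\bbd_\ell)\geq c_\ell|\eta|>0$, so $w_1$ and $w_2$ lie in the interior of $\bbd_\ell$; together with $w_1\neq w_2$ this means the integrand $\zeta\mapsto G(\zeta)/[(\zeta-w_1)(\zeta-w_2)]$ is meromorphic on a neighborhood of $\overline{\bbd_\ell}$ with exactly the two poles $w_1,w_2$ inside $\bbd_\ell$, both simple.

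The residue theorem (applied, with the counter-clockwise orientation, to each scalar component of the integrand) then yields
\[
  \frac{1}{2\pi\mathrm{i}}\oint_{\partial\bbd_\ell}\frac{G(\zeta)}{(\zeta-w_1)(\zeta-w_2)}\,\mathrm{d}\zeta
  = \frac{G(w_1)}{w_1-w_2}+\frac{G(w_2)}{w_2-w_1}
  = \frac{G(w_1)-G(w_2)}{w_1-w_2},
\]
and the right-hand side equals $G(w_1)G(w_2)$ by (\ref{eqn:resolvent-difference-product}), which is the assertion. (The coincident case $w_1=w_2$, not actually required here, follows either by continuity in $w_2$ or directly from the residue theorem applied to the resulting double pole, producing $G'(w_1)=G(w_1)^2$.) Note that the hypothesis $\fks_1=\fks_2$ plays no essential role: it only forces $w_1,w_2$ into components of $\bbd_\ell$ lying in a common half-plane.

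There is no genuine obstacle: both identities are standard. The only point demanding a little care is verifying the hypotheses of the residue theorem — namely that $G$ extends holomorphically to a neighborhood of the closed polygonal region $\overline{\bbd_\ell}$ (this is precisely why Lemma~\ref{lemma:nested-domains}(ii) keeps $|\Im w|$ bounded away from $0$ on $\bbd_\ell$) and that the poles $w_1,w_2$ are strictly interior to $\bbd_\ell$ (the role of the nesting property \ref{item:dist-boundary-domains}, which lets us integrate over $\partial\bbd_\ell$ while the parameters live in the smaller domain $\bbd_{\ell+1}$).
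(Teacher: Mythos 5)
Your argument is correct and is exactly the standard reasoning the paper implicitly invokes (it states this lemma without proof): the sandwich identity $G(w_1)-G(w_2)=(w_1-w_2)G(w_1)G(w_2)$ for the first display, and the residue theorem on the polygonal contour $\partial\bbd_\ell$ — legitimate because $G$ is holomorphic off $\bbr$, $\bbd_\ell$ stays at distance $\geq \ell N^{-1+\varepsilon/2}$ from the real axis, and the nesting property places $w_1,w_2$ strictly inside — for the second. Your side remark is also accurate: the hypothesis $\fks_1=\fks_2$ is not needed for the validity of \eqref{eqn:contour-integral-product} (both poles are enclosed by $\partial\bbd_\ell$ in any case); it reflects how the lemma is used, since for opposite half-planes the paper simply applies \eqref{eqn:resolvent-difference-product} with $|w_1-w_2|\gtrsim\eta$.
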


Next, let us make some conventions for statements regarding stochastic domination. Consider a family of nonnegative random variables, $\mathcal{Y} \equiv \mathcal{Y}^{(N)} (\{ w_{j} \}, \{ A_{j} \}, \{ D_{j} \}, \{ \fku_{j} \})$. Throughout the proof, we reserve the letter $A$ to denote regular matrices, while use $D$ to refer to generic deterministic matrices. Let $y \equiv y^{(N)} (\eta)$ be some deterministic control parameter depending only on $N$ and $\eta = \min_j \abs{\Im w_{j}}$. For the sake of brevity, we consistently use streamlined statements like
\begin{equation*}
    \mathcal{Y} \prec y,
    \quad \text{ uniformly in } \bbd_\ell,
\end{equation*}
to indicate that $\mathcal{Y}$ is stochastically dominated by $y$, uniformly for spectral parameters $w_{j} \in \bbd_\ell$, deterministic matrices $A_{j}$ regular in the sense of Definition \ref{Def:regular-obs-in-chain}, as well as generic deterministic matrices $D_j$ and vectors $\fku_{j}$ with bounded norms. We even omit the mention of $\bbd_\ell$ when the spectral domain is evident from the context.

With the notations introduced above, our objective, the local laws presented in Theorem \ref{thm:local-law-regular}, can be rephrased as follows,
\begin{equation}
    \ParaAVE_1 + \ParaISO_1 \prec 1,
    \quad
    \ParaAVE_2 \prec (N \eta)^{1/2},
    \quad
    \ParaISO_2 \prec (N \eta)^{1/4},
    \quad \text{ uniformly in } \bbd_{\infty}.
    \label{bound:objective}
\end{equation}
The following inductive hypothesis is frequently employed in our proof,
\begin{equation}
    \ParaAVE_k \prec \paraAVE_k
    \qand
    \ParaISO_k \prec \paraISO_k,
    \quad k = 1,2,
    \quad \text{ uniformly in } \bbd_{\ell},
    \label{bound:inductive-hypothesis}
\end{equation}
where $\paraAVE_k, \paraISO_k$ are deterministic control parameters depending only on $N$ and $\eta$. In light of Lemma \ref{lemma:opD-bound-regular}, it is not difficult to see that if we introduce
\begin{equation}
    \boundAVE_1 := 1 + \frac{\paraAVE_1}{N \eta^{1/2}},
    \quad 
    \boundISO_1 := 1 + \frac{\paraISO_1}{\sqrt{N \eta^{2}}},
    \quad 
    \boundAVE_2 := 1 + \frac{\paraAVE_2}{N \eta},
    \quad 
    \boundISO_2 := \frac{1}{\eta} + \frac{\paraISO_2}{\sqrt{N \eta^{3}}},
\end{equation}
then we have the following bounds provided (\ref{bound:inductive-hypothesis}),
\begin{equation}
    \BoundAVE_k \prec \boundAVE_k
    \qand
    \BoundISO_k \prec \boundISO_k,
    \quad k = 1,2,
    \quad \text{ uniformly in } \bbd_{\ell}.
\end{equation}

As will be demonstrated later, our application of the cumulant expansion formula (\ref{eqn:cumulant-expansion}) frequently involves resolvent chains containing more than two regular matrices. To avoid directly dealing with these quantities, we rely on the following reduction lemma, which simplifies the analysis of these longer resolvent chains to shorter ones, albeit with the inclusion of a prefactor (powers of $N \eta$) that is manageable for our purposes. We defer the proof to Section \ref{subsec:reduction-inequalities}.

\begin{lemma}[Reduction inequalities] \label{lemma:reduction-inequality}
Suppose we have (\ref{bound:inductive-hypothesis}). By introducing the control parameters
\begin{equation}
    \boundISO_3 := \frac{\sqrt{N}}{\eta} 
    \Big [ {1 + \frac{\paraAVE_2}{N \eta} } \Big ]^{1/2}
    \Big [ {1 + \frac{\paraISO_2}{(N \eta)^{1/2}} } \Big ]
    \qand    
    \boundISO_4 := \frac{N}{\eta} 
    \Big [ {1 + \frac{\paraAVE_2}{N \eta} } \Big ]
    \Big [ {1 + \frac{\paraISO_2}{(N \eta)^{1/2}} } \Big ],
    \label{def:bound-iso-3-4}
\end{equation}
we have the following bounds uniformly in $\bbd_{\ell}$,
\begin{equation}
    \BoundISO_k \prec \boundISO_k,
    \quad \text{ for } \quad
    k = 3,4.
    \label{bound:reduction-ISO-3-4}
\end{equation}
Moreover, if the matrices $A_k$ below are regular w.r.t. their surrounding spectral parameters, then we also have the following bounds uniformly in $\bbd_{\ell}$,
\begin{equation}
    \norm{G \fku}^2 \prec \frac{1}{\eta},
    \quad
    \norm{G_2 A_1 G_1 \fku}^2 \prec \frac{\boundISO_2}{\eta}
    \qand
    \norm{G_3 A_2 G_2 A_1 G_1 \fku}^2 \prec \frac{\boundISO_4}{\eta},
    \label{bound:vec-chain-with-Id-middle}
\end{equation}
\end{lemma}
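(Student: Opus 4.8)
The plan is to exploit the two structural devices that keep longer resolvent chains under control: (a) insertion of the identity $\bfId = \bfId_M + \bfId_N$ together with the contour-integral identity (\ref{eqn:contour-integral-product}) to telescope a product $G(w)G(w')$ into a single resolvent, and (b) the elementary "doubling" bound $\norm{G_1 A_1 \cdots A_{k} G_{k+1} \fkv}^2 = \angles{\fkv, G_{k+1}^* A_k^* \cdots A_1^* G_1^* G_1 A_1 \cdots A_k G_{k+1} \fkv}$, which reduces the square of a length-$(k+1)$ isotropic chain to a length-$(2k+1)$ isotropic chain with $G_1^* G_1 = \eta^{-1} \Im G_1$ in the middle. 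Combining (b) with the spectral decomposition $\Im G_1 = \sum_i \frac{\eta}{(s_i-\Re w_1)^2+\eta^2}\bfxi_i\bfxi_i^\top$ and the crude bound $\norm{\Im G(w)}\le \eta^{-1}$ gives the first estimate $\norm{G\fku}^2\le \eta^{-1}\angles{\fku,\Im G\,\fku}/\Im\! (\cdot)$—in fact directly $\norm{G\fku}^2=\eta^{-1}\Im\angles{\fku,G\fku}\prec\eta^{-1}$ using the single-resolvent isotropic law (\ref{eqn:single-resolvent-local-law}), since $\angles{\fku,\Pi\fku}=\mathcal O(1)$.

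For the remaining chain-length bounds I would proceed by increasing length. For $\BoundISO_3$: write $\angles{\fku, G_1 A_1 G_2 A_2 G_3 \fkv}$ and use Cauchy–Schwarz in the "middle", i.e. bound it by $\norm{(G_1 A_1 G_2)^* \fku}\cdot\norm{A_2 G_3\fkv}$, wait—rather split as $|\angles{\fku, G_1 A_1 G_2 \cdot A_2 G_3\fkv}|\le \norm{G_2^* A_1^* G_1^*\fku}\,\norm{A_2 G_3\fkv}$; the second factor is $\prec \eta^{-1/2}$ by the first display of (\ref{bound:vec-chain-with-Id-middle}), and the first factor is $\norm{G_2 A_1 G_1\fku}$ up to conjugation, which is exactly the quantity in the second display of (\ref{bound:vec-chain-with-Id-middle}). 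So the real work is the second display: $\norm{G_2 A_1 G_1\fku}^2 = \angles{\fku, G_1^* A_1^* G_2^* G_2 A_1 G_1\fku} = \eta^{-1}\angles{\fku, G_1^* A_1^* (\Im G_2) A_1 G_1\fku}$, and since $A_1^*(\Im G_2)A_1 \ge 0$ I can dominate $\Im G_2 \le \Im G_2 + \Im G_2^{\dagger}$-type positivity and reduce to $\eta^{-1}$ times an averaged/isotropic length-2 chain evaluated with the non-regular matrix $A_1^* \Im G_2 A_1$—here one instead keeps $\Im G_2$ and recognizes $\angles{\fku, G_1^* A_1^*(\Im G_2) A_1 G_1\fku}$ as (the imaginary part of) a length-$3$ isotropic chain $\angles{\fku, G_1^* A_1^* G_2 A_1 G_1\fku}$, which after using (\ref{eqn:resolvent-difference-product}) to collapse $G_1^* G_1^{?}$—no. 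The clean route: bound $\angles{\fku, G_1^* A_1^*(\Im G_2) A_1 G_1 \fku} \le \norm{A_1}^2 \angles{\fku, G_1^*(\Im G_2) G_1\fku}$ only if $A_1 A_1^*\lesssim \bfId$, which holds since $\norm{A_1}\lesssim 1$; then $\angles{\fku, G_1^*(\Im G_2)G_1\fku}$ is handled by the single-resolvent law applied twice plus $G_1^* G_1 = \eta^{-1}\Im G_1$, giving $\prec \eta^{-1}\angles{(\Im G_1)(\Im G_2)}\cdot(\cdots)$—but this is an averaged two-resolvent object without regular matrices, of size $\prec \eta^{-1}$, so altogether $\norm{G_2 A_1 G_1\fku}^2\prec \eta^{-2}$, matching $\boundISO_2/\eta$ whenever $\boundISO_2 \asymp \eta^{-1}$ (the leading term of $\boundISO_2$). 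The induction-hypothesis correction terms $\paraISO_2,\paraAVE_2$ are carried through the same inequalities, producing exactly the products in (\ref{def:bound-iso-3-4}).

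For $\BoundISO_4$ and the third display of (\ref{bound:vec-chain-with-Id-middle}) I iterate once more: $\norm{G_3 A_2 G_2 A_1 G_1\fku}^2 = \eta^{-1}\angles{\fku, G_1^* A_1^* G_2^* A_2^*(\Im G_3) A_2 G_2 A_1 G_1\fku} \le \eta^{-1}\norm{A_2}^2\,\angles{\fku, G_1^* A_1^* G_2^*(\Im G_3) G_2 A_1 G_1\fku}$, then use $\Im G_3 \le \eta^{-1}\,|G_3|^2$-type bounds or, better, expand $\Im G_3 = \tfrac{1}{2i}(G_3-G_3^*)$ and apply the isotropic law with two regular matrices (\ref{bound:iso-law-one-regular})/(\ref{bound:iso-law-two-regular}) combined with the length-2 vector bound already proven; the bookkeeping gives the stated $\boundISO_4 = (N/\eta)[\,1+\paraAVE_2/(N\eta)\,][\,1+\paraISO_2/(N\eta)^{1/2}\,]$. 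The $\BoundISO_k\prec\boundISO_k$ for $k=3,4$ then follow from the vector bounds by one Cauchy–Schwarz split as indicated above. The main obstacle is \textbf{the second display of (\ref{bound:vec-chain-with-Id-middle})}: one must show $\norm{G_2 A_1 G_1\fku}^2\prec\boundISO_2/\eta$ without losing a power of $N\eta$, which forces a careful use of the positivity $G^* \Im G'\, G \ge 0$ together with the averaged local laws rather than a naive triangle inequality through the middle resolvent; getting the correction-term structure $\boundISO_2$ right (as opposed to a cruder $\eta^{-2}$) is what makes the subsequent $\boundISO_3,\boundISO_4$ come out with the precise exponents claimed in (\ref{def:bound-iso-3-4}), and hence what makes the reduction useful in the iteration of Section \ref{subsec:self-improving}.
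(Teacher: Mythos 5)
Your first display, $\norm{G\fku}^2=\eta^{-1}\Im\angles{\fku,G\fku}\prec\eta^{-1}$, is fine, but the rest of the proposal has genuine gaps. First, you misread the objects to be bounded: by the paper's definition $\BoundISO_3$ and $\BoundISO_4$ are the chains $\angles{\fku,G_1A_1G_2A_2G_3A_3G_4\fkv}$ and $\angles{\fku,G_1A_1\cdots A_4G_5\fkv}$ (three, resp.\ four, regular matrices), not the length-two chain you write down. More importantly, your single-cut Cauchy--Schwarz strategy cannot produce the claimed control parameters: splitting $\BoundISO_3$ as $\norm{G_2^*A_1^*G_1^*\fku}\cdot\norm{A_2G_3A_3G_4\fkv}$ gives at best $\boundISO_2/\eta\asymp\eta^{-2}[1+\paraISO_2/(N\eta)^{1/2}]$, which is \emph{larger} than $\boundISO_3\asymp(\sqrt N/\eta)[\cdots]$ whenever $\eta\ll N^{-1/2}$, and it never generates the factor $[1+\paraAVE_2/(N\eta)]^{1/2}$ at all. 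The paper instead spectrally decomposes the \emph{inner} resolvents, applies Cauchy--Schwarz over the eigenvalue indices (this is where the $\sqrt N$, resp.\ $N$, prefactor comes from), and is left with averaged and isotropic chains containing absolute values $\abs{G}$ of resolvents, e.g.\ $\angles{\abs{G_2}A_2\abs{G_3}A_2^*}$ and $\angles{\fku,G_1A_1\abs{G_2}A_1^*G_1^*\fku}$; these are then controlled by an integral representation of $\abs{G}$ in terms of $\Im G$ at larger imaginary parts (Lemmas \ref{lemma:abs-resolvent}--\ref{lemma:integral-abs-resolvent}), using that regularity is preserved when the spectral parameter is pushed up (Lemma \ref{lemma:increasing-scale}) so that the inductive hypothesis $\paraAVE_2,\paraISO_2$ can be invoked. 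None of this machinery, which is what produces the precise structure of (\ref{def:bound-iso-3-4}), appears in your argument; ``the bookkeeping gives the stated bound'' is not substantiated and with your splits it is in fact false in the small-$\eta$ regime.

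Second, your treatment of the key middle display is incorrect and also unnecessarily hard. Dropping $A_1$ via $\norm{A_1}\lesssim1$ destroys exactly the gain coming from its regularity: the resulting chain $\angles{\fku,G_1^*(\Im G_2)G_1\fku}$ is an isotropic two-resolvent object with \emph{no} regular matrices and is generically of size $\eta^{-2}$ (take $w_1=w_2$ and a delocalized $\fku$), not $\prec\eta^{-1}$ as you assert; your route therefore only yields $\norm{G_2A_1G_1\fku}^2\prec\eta^{-3}$, off by a full factor of $\eta^{-1}$, and the attempted conversion of this isotropic quantity into $\eta^{-1}\angles{\Im G_1\,\Im G_2}$ has no justification. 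The correct argument is one line and goes in the opposite logical direction from yours: write $\norm{G_2A_1G_1\fku}^2=\eta_2^{-1}\angles{\fku,G_1^*A_1^*(\Im G_2)A_1G_1\fku}$ and recognize the right-hand side as $\eta^{-1}$ times a $\BoundISO_2$-type chain whose two matrices $A_1^*,A_1$ are regular (Lemma \ref{lemma:regular-obs-properties} \ref{item:ppt-reflected-para}, \ref{item:ppt-A-conjugate}), hence $\prec\boundISO_2/\eta$ directly from the inductive hypothesis (\ref{bound:inductive-hypothesis}); similarly the third display is $\eta_3^{-1}$ times a $\BoundISO_4$ chain and therefore \emph{follows from} the already-proven bound $\BoundISO_4\prec\boundISO_4$, rather than serving as an input to it as in your plan, which is where your circularity and loss of regularity (dropping $A_2$ by norm) enter.
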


\subsection{Self-improving inequalities: proof of Theorem \ref{thm:local-law-regular}} \label{subsec:self-improving}

We are now in a position to present the system of self-improving inequalities that lead to our multi-resolvent local laws (\ref{bound:multi-local-law-regular}). It is worth noting that the resolvent chains involving more than two regular matrices have already been processed by Lemma \ref{lemma:reduction-inequality}. Consequently, the inequalities in (\ref{bound:self-improving-ineqs}) below do not involve control parameters with indices larger than $2$.

However, as previously mentioned, the reduction inequalities in Lemma \ref{lemma:reduction-inequality} are unable to accurately capture the magnitude of the forms $\BoundISO_3$ and $\BoundISO_4$. The large prefactors in (\ref{def:bound-iso-3-4}) ultimately result in the non-optimal convergence rate in the local laws (\ref{bound:ave-law-two-regular}) and (\ref{bound:iso-law-two-regular}). In the context of Wigner matrices \cite{cipolloniOptimalMultiresolventLocal2022,cipolloniRankuniformLocalLaw2022}, this issue is circumvented by extending the derivation of self-improving inequalities to $k=3,4$, and using refined estimates of $\BoundISO_3$ and $\BoundISO_4$ to achieve the optimal estimate for $\ParaAVE_2$ and $\ParaISO_2$. We could have adopted this strategy to enhance our estimates in Proposition \ref{prop:self-improving} and Theorem \ref{thm:local-law-regular}. However, considering the complexity of our regularization compared to Wigner matrices, implementing such a strategy would significantly lengthen the arguments. Given the scope and focus of this article, we defer this technical improvement to future research.

\begin{proposition}[Self-improving inequalities] \label{prop:self-improving}
Given (\ref{bound:inductive-hypothesis}), we have the following uniformly in $\bbd_{\ell + 1}$,
\begin{subequations} \label{bound:self-improving-ineqs}
\begin{align}
    \ParaAVE_1 & \prec 1 
    + \frac{\paraAVE_1}{N \eta} 
    + \frac{\paraISO_1}{(N \eta)^{1/2}} 
    + \frac{(\paraISO_2)^{1/2}}{(N \eta)^{1/4}},
    \label{bound:master-AVE-1} \\
    \ParaISO_1 & \prec 1 
    + \frac{\paraAVE_1}{(N \eta)^{1/2}} 
    + \frac{\paraISO_1}{(N \eta)^{1/2}} 
    + \frac{(\paraISO_2)^{1/2}}{(N \eta)^{1/4}},
    \label{bound:master-ISO-1} \\
    \ParaAVE_2 & \prec (N \eta)^{1/2} 
    + \paraAVE_1
    + \frac{(\paraAVE_1)^2}{(N \eta)^{1/2}}
    + \frac{(\paraISO_1)^2}{(N \eta)^{1/2}}
    + \frac{\paraAVE_2}{(N \eta)^{1/2}} 
    + \paraISO_2,
    \label{bound:master-AVE-2} \\
    \ParaISO_2 & \prec (N \eta)^{1/4} 
    + \paraISO_1
    + \frac{(\paraAVE_1)^2}{N \eta}
    + \frac{(\paraISO_1)^2}{(N \eta)^{1/2}}
    +  \frac{\paraAVE_2}{(N \eta)^{3/4}} 
    + \frac{\paraISO_2}{(N \eta)^{1/4}}.
    \label{bound:master-ISO-2}
\end{align}
\end{subequations}
\end{proposition}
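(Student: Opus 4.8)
The plan is to prove the four inequalities of (\ref{bound:self-improving-ineqs}) simultaneously, working on $\bbd_{\ell+1}$ and freely using the inductive hypothesis (\ref{bound:inductive-hypothesis}) on the slightly larger domain $\bbd_\ell$. The first step is to invoke Proposition \ref{prop:repre-full-underlined} (proved in Section \ref{sec:representation}), which rewrites $\Upsilon_1$, $\Upsilon_{12}(A_1)$ and $\Upsilon_{123}(A_1,A_2)$ --- up to remainders that are already $\prec$ the targeted bounds under (\ref{bound:inductive-hypothesis}) --- as images under the stability operators $\opB^{-1}$ (and $\opX$) of second-order renormalized chains of the schematic form $\underline{H G_1 A_1 \cdots G_k}$, cf.\ (\ref{eqn:underline-HG}) and (\ref{def:second-order-renormalization}). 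After pairing against $A_k$ (averaged case) or sandwiching between $\fku,\fkv$ (isotropic case) and applying the operator-norm bounds of Lemmas \ref{lemma:opD-bound-regular} and \ref{lemma:regular-obs-properties}, the task reduces to bounding the underlined averaged/isotropic functionals. Two points need care here, both specific to the multiplicatively deformed, edge-inclusive setting: the intermediate observables produced by $\opX$ must retain enough regularity for Lemma \ref{lemma:opD-bound-regular} to apply, and the scalar coefficients created in the regularization steps must stay small up to the spectral edges; the former is handled by the permissive Definition \ref{Def:regular-obs}, the latter by the $1/2$-H\"older divided-difference bounds of Lemma \ref{lemma:diff-estimate}, which is exactly the content of Lemmas \ref{lemma:coef-V1-Pi2} and \ref{lemma:coef-Sigma-circ}.

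The second step is the cumulant expansion. Writing $H=\sum_{i\mu}x_{i\mu}\dimu$ and applying (\ref{eqn:cumulant-expansion}) to each $\bbe[x_{i\mu}(\cdots)]$, the first- and second-order terms of the expansion either vanish or are cancelled by the subtracted part of the underline, and what remains splits into a \emph{second-cumulant (Gaussian) contribution}, treated in Section \ref{subsec:gaussian-contribution}, and a \emph{higher-cumulant contribution}, treated in Section \ref{subsec:high-order-contribution}. Both are sums of resolvent chains of length at most four carrying regular matrices, together with quadratic terms (products of two shorter errors). Chains of length $\le 2$ are controlled directly by (\ref{bound:inductive-hypothesis}) through the parameters $\paraAVE_k,\paraISO_k$; the longer isotropic chains $\BoundISO_3,\BoundISO_4$ are reduced to shorter ones by Lemma \ref{lemma:reduction-inequality}, at the price of the prefactors $\boundISO_3,\boundISO_4$ of (\ref{def:bound-iso-3-4}) and the quadratic bounds (\ref{bound:vec-chain-with-Id-middle}). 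Any factor $\bfId^+$ arising inside a chain is removed by the contour-integral identity (\ref{eqn:contour-integral-product}), which turns a product $G(w_1)G(w_2)$ into a single-resolvent integral over $\partial\bbd_\ell$; this is precisely why the nested domains of Lemma \ref{lemma:nested-domains} are needed, since the estimate on $\bbd_{\ell+1}$ then legitimately uses resolvent estimates on the larger $\bbd_\ell$.

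Collecting the Gaussian and higher-order contributions, using the extra gain of averaged (trace) functionals over their isotropic analogues, and converting the resulting high-moment bounds into $\prec$-statements by Markov's inequality, one arrives at (\ref{bound:master-AVE-1})--(\ref{bound:master-ISO-2}). In each of these the term $1$, $(N\eta)^{1/2}$ or $(N\eta)^{1/4}$ is the deterministic (mean-field) contribution, while every $\psi$-parameter on the right is divided by a strictly positive power of $N\eta$: this is the ``self-improving'' feature exploited by the iteration (Lemma \ref{lemma:iteration}). The four bounds are genuinely coupled --- the Gaussian contribution to $\ParaAVE_1$ already involves a length-$2$ chain and hence $\paraISO_2$, and the bounds on $\ParaAVE_2,\ParaISO_2$ pick up $(\paraAVE_1)^2$ and $(\paraISO_1)^2$ through squared isotropic chains bounded by (\ref{bound:vec-chain-with-Id-middle}) --- so they must be run together rather than one after another.

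The main obstacle will be keeping the self-improvement intact once the cumulant expansion has generated the long isotropic chains $\BoundISO_3,\BoundISO_4$: Lemma \ref{lemma:reduction-inequality} captures these only up to the large prefactors $\sqrt N/\eta$ and $N/\eta$, and one must check that, after the $\eta$-weights built into the definitions (\ref{def:normalized-difference}) of $\ParaAVE_2,\ParaISO_2$ and the averaging gains are accounted for, what multiplies the $\psi$-parameters is still a negative power of $N\eta$ (this is also why the rates in (\ref{bound:ave-law-two-regular}) and (\ref{bound:iso-law-two-regular}) are not claimed optimal). A secondary difficulty, absent for Wigner matrices, is the bookkeeping of the regularization coefficients along the chain so that they do not accumulate near the edges, which is where the H\"older estimates of Lemma \ref{lemma:diff-estimate} and the flexible notion of regularity in Definition \ref{Def:regular-obs} come in.
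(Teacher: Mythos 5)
Your proposal follows essentially the same route as the paper: Proposition \ref{prop:repre-full-underlined} to rewrite the centralized chains as fully underlined terms up to admissible errors, a high-moment cumulant expansion in which the leading second-order term is cancelled by the underline, and control of the remaining second-cumulant and higher-cumulant contributions via the reduction inequalities of Lemma \ref{lemma:reduction-inequality} (with the $\bfId^{+}$ chains and contour integrals over the nested domains handled as in Lemma \ref{lemma:chains-with-Id}), followed by conversion of the moment bounds into $\prec$-statements. The outline, including the role of Lemmas \ref{lemma:coef-V1-Pi2}, \ref{lemma:coef-Sigma-circ} and the non-optimal prefactors from $\boundISO_3,\boundISO_4$, matches the paper's proof in Sections \ref{subsec:cumulant-expansion}--\ref{subsec:high-order-contribution}.
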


The following lemma of iteration is repeatedly utilized in the proof of Theorem \ref{thm:local-law-regular} and can be found in \cite[Lemma 4.11]{cipolloniOptimalLowerBound2023}.

\begin{lemma}[Iteration] \label{lemma:iteration}
Fix some (small) $c > 0$ and (large) $C > 0$. Suppose $\mathcal{Y} \equiv \mathcal{Y} (w) \geq 0$ is a family of nonnegative random variables satisfying $\mathcal{Y} \prec N^C$ on $\bbd_1$, and for every fixed $\ell \in \bbn$, 
\begin{equation*}
    \mathcal{Y} \prec y^{\mathrm{old}} \text{ uniformly in } \bbd_{\ell}
    \quad \Longrightarrow \quad
    \mathcal{Y} \prec y^{\mathrm{new}} + \frac{y^{\mathrm{old}}}{N^c} \text{ uniformly in } \bbd_{\ell+1},
\end{equation*}
where $y^{\mathrm{old}}, y^{\mathrm{new}} > 0$ are deterministic control parameters. Then, there exists some $L_0 \equiv L_0 (\varepsilon, \tau, c, C) \in \bbn$ such that the following holds: for every fixed $\ell \in \bbn$,
\begin{equation*}
    \mathcal{Y} \prec y^{\mathrm{old}} \text{ uniformly in } \bbd_{\ell}
    \quad \Longrightarrow \quad
    \mathcal{Y} \prec y^{\mathrm{new}} \text{ uniformly in } \bbd_{\ell+L_0}.
\end{equation*}
\end{lemma}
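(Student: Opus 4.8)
The statement is a routine but careful ``finite iteration'' argument: we apply the hypothesised one-step improvement a fixed, $N$-independent number $L_0$ of times, starting on $\bbd_\ell$, and show that the contribution of the initial (possibly large) control parameter $y^{\mathrm{old}}$ is shrunk by a factor $N^{-c}$ at each step, hence becomes negligible after $L_0 \gtrsim C/c$ steps, while the $y^{\mathrm{new}}$-part only accumulates a bounded geometric sum. The one preliminary reduction is to use the a priori bound $\mathcal{Y} \prec N^C$ on $\bbd_1 \supseteq \bbd_\ell$: since $\prec$ is stable under taking the minimum of two dominating parameters, we may and do assume from the outset that $y^{\mathrm{old}} \le N^C$ (replace $y^{\mathrm{old}}$ by $\min\{y^{\mathrm{old}},N^C\}$). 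We also record, absorbing it into the constant $C$ if necessary (this is where the $\varepsilon,\tau$-dependence of $L_0$ ultimately hides, via $\eta \ge N^{-1+\varepsilon}$), that the target parameter satisfies $y^{\mathrm{new}} \ge N^{-C}$.

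\textbf{The iteration.} Define $y_0 := y^{\mathrm{old}}$ and recursively $y_{j+1} := y^{\mathrm{new}} + y_j/N^c$. Applying the hypothesised implication successively for $\ell' = \ell, \ell+1, \dots, \ell+L_0-1$ (each $\ell'$ is fixed, so this is a legitimate use of the hypothesis, provided the nested domains $\bbd_{\ell+j}$ with $0 \le j \le L_0$ exist — which is guaranteed by taking the index $L$ in Lemma \ref{lemma:nested-domains} sufficiently large relative to $L_0$), we obtain
\begin{equation*}
    \mathcal{Y} \prec y_j \quad \text{uniformly in } \bbd_{\ell+j}, \qquad j = 0, 1, \dots, L_0 .
\end{equation*}
Solving the linear recursion gives $y_j = y^{\mathrm{new}} \sum_{r=0}^{j-1} N^{-rc} + N^{-jc} y_0$, so for all $N$ large enough that $N^c \ge 2$ we have $y_j \le 2 y^{\mathrm{new}} + N^{-jc} y_0 \le 2 y^{\mathrm{new}} + N^{C - jc}$.

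\textbf{Choice of $L_0$ and conclusion.} Set $L_0 := \lceil 2C/c \rceil$, which depends only on $\varepsilon, \tau, c, C$ as required. Then $N^{C - L_0 c} \le N^{-C} \le y^{\mathrm{new}}$, hence $y_{L_0} \le 3 y^{\mathrm{new}}$, and therefore $\mathcal{Y} \prec y_{L_0} \prec y^{\mathrm{new}}$ uniformly in $\bbd_{\ell + L_0}$, the constant factor being absorbed into $\prec$. Since $L_0$ is independent of $N$, the $L_0$-fold composition of $\prec$-statements loses at most a fixed amount in the exponents $\varepsilon'$ and $L'$ and in the thresholds $N_0(\varepsilon',L')$ of Definition (\ref{def:stochastic-domination}), so the final statement is a genuine $\prec$-bound. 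The only delicate point — and the closest thing to an obstacle — is precisely this bookkeeping: one must check that the number of iterations can be fixed independently of both $\ell$ and $N$, which is exactly what the geometric contraction by $N^{-c}$ together with the polynomial a priori bound $y^{\mathrm{old}} \le N^C$ deliver.
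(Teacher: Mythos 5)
Your proof is correct and is exactly the standard finite iteration argument that the paper itself does not spell out but invokes by citation (it refers to \cite[Lemma 4.11]{cipolloniOptimalLowerBound2023} instead of giving a proof): replace $y^{\mathrm{old}}$ by $\min\{y^{\mathrm{old}},N^{C}\}$ using the a priori bound on $\bbd_1$, iterate the one-step implication $L_0$ times along the nested domains, and absorb the geometric sum. The only caveat — which you flag yourself — is the implicit polynomial lower bound $y^{\mathrm{new}} \geq N^{-C}$ needed to guarantee $N^{C-L_0 c} \leq y^{\mathrm{new}}$ for a fixed $L_0 = \lceil 2C/c\rceil$; this is omitted from the statement as written but holds in every application in the paper (all control parameters there are polynomially bounded below), so your reading matches the intended one.
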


With Proposition \ref{prop:self-improving} and Lemma \ref{lemma:iteration}, we are now ready for the proof of our multi-resolvent local laws with regular observables.

\begin{proof}[Proof of Theorem \ref{thm:local-law-regular}]
We can initiate the iteration procedure with the following trivial estimates that directly follow from $\| G \| \leq 1/\eta$,
\begin{equation*}
    \ParaAVE_k \prec N^{k}
    \qand
    \ParaISO_k \prec N^{k+1},
    \quad k = 1,2,
    \quad \text{ uniformly in } \bbd_{1}.
\end{equation*}
By summing up the inequalities (\ref{bound:master-AVE-1}) and (\ref{bound:master-ISO-1}), it follows that
\begin{equation*}
    (\ref{bound:inductive-hypothesis})
    \quad \Longrightarrow \quad 
    \ParaAVE_1 + \ParaISO_1 \prec 1 
    + \frac{\paraAVE_1 + \paraISO_1}{(N \eta)^{1/2}} 
    + \frac{(\paraISO_2)^{1/2}}{(N \eta)^{1/4}},
    \quad \text{ uniformly in } \bbd_{\ell + 1}.
\end{equation*}
Therefore, by employing Lemma \ref{lemma:iteration}, we find that for some sufficiently large $L^\prime \equiv L^\prime (\varepsilon, \tau) \in \bbn$, the following bound holds provided (\ref{bound:inductive-hypothesis}),
\begin{equation}
    \ParaAVE_1 + \ParaISO_1 \prec 1 
    + \frac{(\paraISO_2)^{1/2}}{(N \eta)^{1/4}},
    \quad \text{ uniformly in } \bbd_{\ell + L^\prime}.
    \label{tmp:iteration-AVE-ISO-1}
\end{equation}
Plugging (\ref{tmp:iteration-AVE-ISO-1}) into (\ref{bound:master-AVE-2}), we arrive at
\begin{equation*}
    (\ref{bound:inductive-hypothesis})
    \quad \Longrightarrow \quad 
    \ParaAVE_2 \prec (N \eta)^{1/2} 
    + \frac{\paraAVE_2}{(N \eta)^{1/2}} 
    + \paraISO_2,
    \quad \text{ uniformly in } \bbd_{\ell + L^\prime}.
\end{equation*}
By utilizing Lemma \ref{lemma:iteration} again, it follows that (\ref{bound:inductive-hypothesis}) yields
\begin{equation}
    \ParaAVE_2 \prec (N \eta)^{1/2} + \paraISO_2,
    \quad \text{ uniformly in } \bbd_{\ell + L^{\prime \prime}},
    \label{tmp:iteration-AVE-2}
\end{equation}
for some sufficiently large $L^{\prime \prime} \equiv L^{\prime \prime} (\varepsilon, \tau)$. Finally, plugging the bounds (\ref{tmp:iteration-AVE-ISO-1}) and (\ref{tmp:iteration-AVE-2}) into (\ref{bound:master-ISO-2}), we conclude with the implication
\begin{equation*}
    (\ref{bound:inductive-hypothesis})
    \quad \Longrightarrow \quad 
    \ParaISO_2 \prec (N \eta)^{1/4} 
    + \frac{\paraISO_2}{(N \eta)^{1/4}},
    \quad \text{ uniformly in } \bbd_{\ell + L^{\prime \prime}}.
\end{equation*}
This, together with Lemma \ref{lemma:iteration}, implies that $\ParaISO_2 \prec (N \eta)^{1/4}$ uniformly in $\bbd_{L^{\prime \prime \prime}}$ for some sufficiently large $L^{\prime \prime \prime} \equiv L^{\prime \prime \prime} (\varepsilon, \tau)$. Now, we can complete the proof of (\ref{bound:objective}) and thus of Theorem \ref{thm:local-law-regular} by applying this estimate back to (\ref{tmp:iteration-AVE-ISO-1}) and (\ref{tmp:iteration-AVE-2}).
\end{proof}

\subsection{Cumulant expansion: proof of Proposition \ref{prop:self-improving}} \label{subsec:cumulant-expansion}

The remainder of Section \ref{sec:self-improving-inequalities} is dedicated to deriving the self-improving inequalities (\ref{bound:self-improving-ineqs}). As mentioned, the argument hinges on the cumulant expansion formula (\ref{eqn:cumulant-expansion}) in conjunction with the following proposition, which expresses the centralized forms of resolvent chains as fully underlined terms. One can compare Proposition \ref{prop:repre-full-underlined} with its counterpart in \cite{cipolloniOptimalLowerBound2023}. The primary difference is that here we require the regularity of $B_1 \bfSigma^\pm$ instead of $B_1$ itself. This distinction arises from the anisotropic nature of the operator $\opSd$ in our context.

\begin{proposition}[Representation as fully underlined] \label{prop:repre-full-underlined}
Given (\ref{bound:inductive-hypothesis}), there exists a deterministic matrix $B_1 \equiv B_1 (w_1, A_1, w_2)$ that depends linearly on $A_1$, such that the matrices $B_1 \bfSigma^\pm$ have $\{ w_1, w_k \}$-regularity\footnote{In fact, for $B_1$ constructed in Section \ref{sec:representation}, the matrices $B_1 \bfSigma^\pm$ are $\{ w_1, w \}$-regular for any given $w \in \bbd_0$. This is because the pre-regularization is performed at $w_1$. See Lemma \ref{lemma:one-point-pre-regularization}.} for all $k=1,2,3$ and the following equations hold,
\begin{subequations} \label{eqn:representations}
\begin{align}
    \angles{\Upsilon_1 A_1}
    & = - \angles{\underline{H G_1 B_1}} + \oprec(\rpeAVE_1),
    \label{eqn:representation-AVE-1} \\
    \angles{\fku, \Upsilon_{12} (A_1) \fkv}
    & = - \angles{\fku, \underline{G_1 B_1 H G_2} \fkv} + \oprec(\rpeISO_1),
    \label{eqn:representation-ISO-1} \\
    \angles{\Upsilon_{12} (A_1) A_2}
    & = - \angles{\underline{G_1 B_1 H G_2 A_2}} + \oprec(\rpeAVE_2),
    \label{eqn:representation-AVE-2} \\
    \angles{\fku, \Upsilon_{123} (A_1, A_2) \fkv}
    & = - \angles{\fku, \underline{G_1 B_1 H G_2 A_2 G_3} \fkv} + \oprec(\rpeISO_2),
    \label{eqn:representation-ISO-2}
\end{align}
\end{subequations}
where the error terms are controlled by the parameters
\begin{subequations}
\begin{align}
    \rpeAVE_1 & := \frac{1}{N \eta^{1/2}} \Big [ {1 
    + \frac{\paraAVE_1}{N \eta} 
    + \frac{\paraISO_1}{(N \eta)^{1/2}}} \Big ],
    \label{bound:representation-err-AVE-1} \\
    \rpeISO_1 & := \frac{1}{\sqrt{N \eta^2}} \Big [ {1 
    + \frac{\paraAVE_1}{(N \eta)^{1/2}} 
    + \frac{\paraISO_1}{N \eta} 
    + \frac{(\paraISO_2)^{1/2}}{(N \eta)^{3/4}} } \Big ],
    \label{bound:representation-err-ISO-1} \\
    \rpeAVE_2 & := \frac{1}{N \eta} \Big [ {1 
    + \paraAVE_1
    + \frac{(\paraAVE_1)^2}{(N \eta)^{1/2}}
    + \frac{(\paraISO_1)^2}{(N \eta)^{1/2}}
    + \frac{\paraAVE_2}{N \eta}
    + \frac{\paraISO_2}{(N \eta)^{1/2}}} \Big ],
    \label{bound:representation-err-AVE-2} \\
    \rpeISO_2 & := \frac{1}{\sqrt{N \eta^3}} \Big [ {1 
    + \paraISO_1
    + \frac{(\paraAVE_1)^2}{N \eta}
    + \frac{(\paraISO_1)^2}{(N \eta)^{1/2}}
    + \frac{\paraAVE_2}{N \eta}
    + \frac{\paraISO_2}{(N \eta)^{1/2}}} \Big ].
    \label{bound:representation-err-ISO-2}
\end{align}    
\end{subequations}    
\end{proposition}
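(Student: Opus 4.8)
The plan is to derive each of the four representations in (\ref{eqn:representations}) by a common mechanism: expand one resolvent in the chain via the identity $\underline{HG} = I + wG + \opS[G]G$ from (\ref{eqn:underline-HG}), then isolate the second-order renormalized (``fully underlined'') contribution and absorb everything else into either the deterministic approximation or a negligible error. Concretely, for a chain $G_1 A_1 \cdots$ one writes $G_1 = -\Pi_1 - \Pi_1(H-w_1)G_1 + \Pi_1 \opSd[\Pi_1]G_1$ using $I+w_1\Pi_1 = -\Pi_1\opSd[\Pi_1]$ from the single MDE (\ref{eqn:single-MDE}), so that $G_1 = -\Pi_1 + \Pi_1\big(\opSd[\Pi_1] - (H - w_1)\big)G_1$; iterating this and collecting the linear-in-$H$ term produces an expression of the form $-\angles{\underline{H G_1 B_1 \cdots}}$ plus self-energy renormalization terms $\opSd[\cdot]$ that, by the definition of $\Pi_{12}$ and $\Pi_{123}$ in (\ref{def:chain-deter-app}) and the two-body stability operator $\opB$ in (\ref{def:operator-B12}), recombine exactly into the deterministic counterparts $\Pi(w_1,A_1,w_2)$, etc. The matrix $B_1$ is then forced to be a specific linear image of $A_1$ — roughly $B_1 = \opX_{12}[\text{(regularized }A_1)]$ composed with $\Pi$'s — and one must verify $B_1\bfSigma^\pm$ is $\{w_1,w_2\}$-regular; this regularity check is exactly where Lemmas \ref{lemma:coef-V1-Pi2} and \ref{lemma:coef-Sigma-circ} (mentioned in Section \ref{sec:self-improving-inequalities} as relying on $1/2$-H\"older continuity of $m$) enter, and it is the reason the statement is phrased with $B_1\bfSigma^\pm$ rather than $B_1$ — the anisotropy of $\opSd$ (see (\ref{def:opS-diagonal})) feeds in a $\bfSigma^\pm$ factor.

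The second ingredient is error bookkeeping. After the expansion, the leftover terms fall into three buckets: (i) higher-order self-energy/stability corrections that are products of error terms $\Upsilon$ with resolvents, bounded using the inductive hypothesis (\ref{bound:inductive-hypothesis}) together with the uncentered bounds $\BoundAVE_k \prec \boundAVE_k$, $\BoundISO_k \prec \boundISO_k$ and the reduction inequalities of Lemma \ref{lemma:reduction-inequality}; (ii) terms where a regularized observable still deviates from zero, controlled by the $\beta(w_1,w_2)\asymp|\fkm[z_1,z_2]|^{-1}$ bounds from Lemma \ref{lemma:diff-estimate} and the norm bounds of Lemma \ref{lemma:opD-bound-regular}; and (iii) genuinely lower-order contributions from the cumulant expansion remainder, handled by Lemma \ref{lemma:cumulant-expansion}. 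The claimed error parameters $\rpeAVE_1,\rpeISO_1,\rpeAVE_2,\rpeISO_2$ are then obtained by carefully tracking which powers of $(N\eta)$ each bucket contributes; the structure of (\ref{bound:representation-err-AVE-1})--(\ref{bound:representation-err-ISO-2}) — a base rate $1/(N\eta^{k/2})$ or $1/\sqrt{N\eta^{k+1}}$ times a bracket built from $\paraAVE_j/(N\eta)^{\bullet}$ and $\paraISO_j/(N\eta)^{\bullet}$ — mirrors exactly the self-improving structure, so the computation is bookkeeping-heavy but mechanical once the expansion is in place.

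The main obstacle will be the regularity verification for $B_1\bfSigma^\pm$ in the edge regime. In the bulk, $\fkm$ is Lipschitz and one has margin to spare, but near a spectral edge $m$ is only $1/2$-H\"older, so the naive bound on $\angles{\Gamma(\fkw_1)(B_1\bfSigma^\pm)_M \Gamma(\fkw_2)\Sigma}$ picks up exactly a factor comparable to $\beta(\fkw_1,\fkw_2)^{-1}$ that must be cancelled by the structure of $B_1$ — this is a delicate algebraic identity rather than a soft estimate, and it is where the flexible Definition \ref{Def:regular-obs} (tolerating an $O(\beta)$ deviation rather than exact vanishing) does its work by removing the need for an extra regularization step. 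A secondary difficulty is that in the averaged case (\ref{eqn:representation-AVE-2}) the chain closes cyclically ($w_3\equiv w_1$), so one must be careful that the recursive Dyson expansion is consistent with the cyclic identification and that no self-energy term is double-counted; following the scheme of \cite[Appendix D]{cipolloniOptimalLowerBound2023} keeps this under control. I would carry out the steps in the order: (1) single-resolvent case (\ref{eqn:representation-AVE-1}) to fix notation and the expansion template; (2) the isotropic length-two case (\ref{eqn:representation-ISO-1}) and averaged length-two case (\ref{eqn:representation-AVE-2}) in parallel, defining $B_1$ and proving its regularity; (3) the isotropic length-three case (\ref{eqn:representation-ISO-2}), where the reduction inequalities (\ref{bound:reduction-ISO-3-4}) and (\ref{bound:vec-chain-with-Id-middle}) absorb the longest chains that appear; (4) a final pass collecting all error contributions into the stated $\rpeAVE_k,\rpeISO_k$.
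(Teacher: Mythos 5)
Your overall template---expand via $\underline{HG} = I + wG + \opS[G]G$, take $B_1$ to be (essentially) a pre-regularized version of $\opX_{12}[A_1]\Pi_2$, and do the bookkeeping with the inductive hypothesis and the reduction inequalities---is the paper's starting point, and your remark that the $\bfSigma^\pm$ in ``$B_1\bfSigma^\pm$ regular'' reflects the anisotropy of $\opSd$ is correct (the paper encodes this in the one-point \emph{pre}-regularization $(\cdot)^\diamond_w$ of Lemma \ref{lemma:one-point-pre-regularization}). The genuine gap is in your claim that after the expansion the self-energy terms ``recombine exactly into the deterministic counterparts'' so that everything else is either $\Pi_{12},\Pi_{123}$ or negligible. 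They do not: inserting the decomposition $V_1\Pi_2=(V_1\Pi_2)^\diamond+\varsigma^+\bfId^+ +\varsigma^-\bfId^-$ leaves behind resolvent chains evaluated on the \emph{non-regular} observables $\bfId^\pm$ and, after decomposing $\Xi^\pm$ through the two-point regularization, on $\bfSigma_M^\circ$. The $\bfId^+$ pieces are only harmless because their scalar prefactors are small (Lemma \ref{lemma:coef-V1-Pi2}) and because $\Upsilon_{12}(\bfId^+)$, $\Upsilon_{123}(\bfId^+,A_2)$ are reduced to shorter chains by the contour-integral Lemma \ref{lemma:chains-with-Id}; the $\bfId^-$ pieces need the chiral symmetry. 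More seriously, the term proportional to $\angles{\Upsilon(\bfSigma_M^\circ)}$ (resp. $\angles{\fku,\Upsilon_{12}(\bfSigma_M^\circ)\fkv}$, etc.) comes with a coefficient that in the edge regime is only $O(1)$ (it is bounded by $\beta_{1^*2}/\Im m_1$, which degenerates to constant order when $d\asymp\eta$), and since $\bfSigma_M^\circ$ is itself regular this term is of exactly the same size as the quantity being represented. Absorbing it into the error would produce a contribution $\asymp\paraAVE_1/(N\eta^{1/2})$, which is not dominated by $\rpeAVE_1$ and would wreck the contraction needed in Proposition \ref{prop:self-improving}. The paper's resolution is a self-consistent step your plan lacks: apply the representation once more with $A=\bfSigma_M^\circ$, solve the resulting scalar linear relation, and use Lemma \ref{lemma:coef-Sigma-circ} to guarantee that the coefficient $1+(m_1+m_2)\varsigma^+(\tilde V_1\Pi_2)-(m_1-m_2)\varsigma^-(\tilde V_1\Pi_2)$ is bounded away from zero; as a consequence $B_1$ is not $-(V_1\Pi_2)^\diamond$ alone but a linear combination of $(V_1\Pi_2)^\diamond$ and $(\tilde V_1\Pi_2)^\diamond$ with $\tilde V_1=\opX_{12}[\bfSigma_M^\circ]$.

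Two further corrections. First, Lemmas \ref{lemma:coef-V1-Pi2} and \ref{lemma:coef-Sigma-circ} are not what establishes the $\{w_1,w_2\}$-regularity of $B_1\bfSigma^\pm$; that follows from Lemma \ref{lemma:one-point-pre-regularization} applied to the $\diamond$-regularized factors, while the two cited lemmas control, respectively, the smallness of the coefficient products in front of the $\bfId^+$ chains and the stability of the self-consistent equation just described. Second, your error bucket (iii) is empty here: no cumulant expansion occurs in Proposition \ref{prop:repre-full-underlined} at all---the identities (\ref{eqn:representations}) are purely algebraic, with errors estimated via the single-resolvent laws, the inductive hypothesis, the reduction inequalities and (\ref{bound:vec-chain-with-Id-middle}); Lemma \ref{lemma:cumulant-expansion} only enters afterwards, in Section \ref{subsec:cumulant-expansion}, when the underlined terms are expanded.
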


The proof of Proposition \ref{prop:repre-full-underlined} is detailed in Section \ref{sec:representation}. Here, let us focus on proving Proposition \ref{prop:self-improving}. The following lemma summarizes the compatibility of stochastic domination $\prec$ with expectation $\bbe$. See e.g. \cite[Lemma 7.1]{benaych-georgesLecturesLocalSemicircle2018} for a proof.

\begin{lemma} \label{lemma:prec-expectation}
Suppose that the deterministic control parameter $y$ satisfies $y \geq N^{-C}$ for some (large) $C > 0$. Also assume that for all $p \in \bbn$ there is a constant $C_p$ such that the family of nonnegative random variables $\mathcal{Y}$ satisfies $\bbe \mathcal{Y}^p \prec N^{C_p}$. Then we have the equivalence
\begin{equation*}
    \mathcal{Y} \prec y
    \quad \Longleftrightarrow \quad
    \bbe \mathcal{Y}^p \prec y^p \text{ for every fixed } p \in \bbn.
\end{equation*}
\end{lemma}

We recurrently use Lemma \ref{lemma:prec-expectation} in the proof, sometimes without explicit reference. Particularly, this lemma allows us to derive the self-improving inequalities by estimating the moments of the average/isotropic forms. Next, we have the following identity, which is useful when dealing with the second-order contribution that arises in cumulant expansion. Recall the definitions of $\dimu$ and $\bfSigma^\pm$ in (\ref{eqn:H-sum-of-rank2}) and (\ref{def:Sigma-pm}). It is not difficult to verify that, for generic matrices $D_1, D_2$ and vectors $\fku_1, \fku_2, \fkv_1, \fkv_2$ of compatible size, we have
\begin{subequations}
\begin{align}
    \sum_{i, \mu} \angles{\dimu D_1} \angles{\dimu D_2}
    & = \frac{1}{2N} \sum_{\alpha = \pm} \alpha [ {
    \angles{D_1 \bfSigma^\alpha D_2 \bfSigma^\alpha} 
    + \angles{D_1 \bfSigma^\alpha D_2^\top \bfSigma^\alpha} } ], 
    \label{eqn:sum-dimu-matrices} \\
    \sum_{i, \mu} \angles{\fku_1, \dimu \fkv_1} \angles{\fku_2, \dimu \fkv_2}
    & = \frac{1}{2} \sum_{\alpha = \pm} \alpha [ {
    \angles{\fku_1, \bfSigma^\alpha \fkv_2} \angles{\fku_2, \bfSigma^\alpha \fkv_1}
    + \angles{\fku_1, \bfSigma^\alpha \bar{\fku}_2} \angles{\bar{\fkv}_2, \bfSigma^\alpha \fkv_1} } ].  
    \label{eqn:sum-dimu-vectors} 
\end{align}
\end{subequations}

We now start to derive the self-improving inequalities in Proposition \ref{prop:self-improving}.

\shortpara{Derivation of (\ref{bound:master-AVE-1})}: By (\ref{eqn:H-sum-of-rank2}) and definition of the second-order renormalization (\ref{def:second-order-renormalization}), we have
\begin{equation*}
    \underline{H G B}
    = \sum_{i, \mu} x_{i \mu} (\dimu G B) - \frac{1}{N} \parimu (\dimu G B).
\end{equation*}
Therefore, for arbitrary fixed $p \in \bbn$, we can leverage (\ref{eqn:representation-AVE-1}), and then (\ref{eqn:cumulant-expansion}) and (\ref{eqn:derivative-G}) to get
\begin{align*}
    \bbe \abs{\angles{\Upsilon A}}^{2p}
    & = {- \bbe \big [ {\big ( {\angles{\underline{H G B}} + \oprec(\rpeAVE_1)} \big )
    \angles{\Upsilon A}^{p-1} \angles{\Upsilon^* A^*}^{p}} \big ]} \\
    & \prec \rpeAVE_1 \bbe \abs{\angles{\Upsilon A}}^{2p-1}
    + \bbe \big [ {\gaussAVE_1 \abs{\angles{\Upsilon A}}^{2p-2} } \big ]
    + \sum_{r + \operatorname{sum} (\bbt) \geq 2}
    \bbe \big [ {\highAVE_1 (r, \bbt) \abs{\angles{\Upsilon A}}^{2p-1-\abs{\bbt}} } \big ],
\end{align*}
where the quantities $\gaussAVE_1$ and $\highAVE_1 (r, \bbt)$ will be specified below, and the summation is taken over nonnegative integers $r$ and multisets of natural numbers $\bbt$. We use $\abs{\bbt}$ to denote the cardinality of $\bbt$ and define $\operatorname{sum} (\bbt) := \sum_{t \in \bbt} t$. Here, the second-order contribution from cumulant expansion (\ref{eqn:cumulant-expansion}) comprises two parts. The leading part has been offset due to the fully underlined term, while the subleading part is encapsulated by the quantity
\begin{equation}
    \gaussAVE_1
    := \frac{1}{N^2} \sum_{\alpha = \pm}
    \abs{\angles{G B \bfSigma^\alpha G A G \bfSigma^\alpha}}
    + \abs{\angles{G B \bfSigma^\alpha G A^\top G \bfSigma^\alpha}} 
    + \cdots,
    \label{def:gaussian-AVE-1}
\end{equation}
where we used (\ref{eqn:sum-dimu-matrices}) to derive this expression. We opt not to list all the terms, as the omitted terms\footnote{The number of omitted terms is finite.} can be controlled in a manner analogous to the listed representative terms. On the other hand, for the contribution stemming from higher-order cumulants, we used the notation
\begin{equation}
    \highAVE_1 (r, \bbt)
    := N^{-(r + \operatorname{sum} (\bbt) + 1)/2} \sum_{i, \mu} \big | {\parimu^r \angles{\dimu G B} } \big |
    \prod_{t \in \bbt} \big | {\parimu^{t} \angles{G A} } \big |.
    \label{def:high-AVE-1}
\end{equation}
Strictly speaking, the summation over $r$ and $\bbt$ should have an upper limit and we should include a remainder term as in (\ref{eqn:cumulant-expansion}). However, by choosing this upper limit to be sufficiently large, while depending only $p$ (e.g., $100p$), one can easily prove that the corresponding remainder term is dominated by $\oprec (N^{-2p})$. In other words, this additional error term has no significant impact on our argument. Therefore, we omit this technical detail from the equations above.

The quantities $\gaussAVE_1$ and $\highAVE_1 (r, \bbt)$ are estimated in Lemmas \ref{lemma:gaussian-contribution} and \ref{lemma:high-order-contribution} below. Applying the estimates (\ref{bound:gaussian-AVE-1}) and (\ref{bound:high-AVE-1}) provided in these lemmas, we obtain
\begin{equation*}
    \bbe \abs{\angles{\Upsilon A}}^{2p}
    \prec \sum_{q=1}^{2p} \bigg ( {\rpeAVE_1 + \frac{1}{N \eta^{1/2}} \Big [ {1 
    + \frac{\paraISO_1}{(N \eta)^{1/2}} 
    + \frac{(\paraISO_2)^{1/2}}{(N \eta)^{1/4}}} \Big ]} \bigg )^q
    \big ( {\bbe \abs{\angles{\Upsilon A}}^{2p}} \big )^{1-q/(2p)}.
\end{equation*}
Utilizing Young's inequalities, we therefore arrive at
\begin{equation*}
    \bbe \abs{\angles{\Upsilon A}}^{2p} \prec
    \bigg ( {\rpeAVE_1 + \frac{1}{N \eta^{1/2}} \Big [ {1 
    + \frac{\paraISO_1}{(N \eta)^{1/2}} 
    + \frac{(\paraISO_2)^{1/2}}{(N \eta)^{1/4}}} \Big ]} \bigg )^{2p}.
\end{equation*}
Recalling (\ref{bound:representation-err-AVE-1}) and Lemma \ref{lemma:prec-expectation}, this concludes the proof of (\ref{bound:master-AVE-1}) since $p \in \bbn$ is arbitrary.


\shortpara{Derivation of (\ref{bound:master-ISO-1})}: The derivation of the remaining three self-improving inequalities follows the same procedure as that for (\ref{bound:master-AVE-1}). Therefore, we only highlight the definitions that are necessary for the subsequent discussions in Sections \ref{subsec:gaussian-contribution} and \ref{subsec:high-order-contribution}. Referring to (\ref{def:second-order-renormalization}), we have
\begin{equation*}
    \underline{G_1 B_1 H G_2}
    = \sum_{i, \mu} x_{i \mu} (G_1 B_1 \dimu G_2) - \frac{1}{N} \parimu (G_1 B_1 \dimu G_2).
\end{equation*}
Hence, we can apply the cumulant expansion formula (\ref{eqn:cumulant-expansion}) and the representation (\ref{subsec:repre-ISO-1}) to get
\begin{align*}
    \bbe \abs{\angles{\fku, \Upsilon_{12} (A_1) \fkv}}^{2p}
    = & \ {- \bbe \big [ { \big ( {\angles{\fku, \underline{G_1 B_1 H G_2} \fkv} + \oprec(\rpeISO_1)} \big )
    \angles{\fku, \Upsilon_{12} (A_1) \fkv}^{p-1} 
    \angles{\fku, \Upsilon_{1^* 2^*} (A_1) \fkv}^{p}} \big ]} \\
    \prec & \ \rpeISO_1 \bbe \abs{\angles{\fku, \Upsilon_{12} (A_1) \fkv}}^{2p-1}
    + \bbe \big [ {\gaussISO_1 \abs{\angles{\fku, \Upsilon_{12} (A_1) \fkv}}^{2p-2} } \big ] \\
    & + \sum_{r + \operatorname{sum} (\bbt) \geq 2}
    \bbe \big [ {\highISO_1 (r, \bbt) 
    \abs{\angles{\fku, \Upsilon_{12} (A_1) \fkv}}^{2p-1-\abs{\bbt} } } \big ].
\end{align*}
Here, the error term related to the second-order cumulant is given by
\begin{align}
\begin{split}
    \gaussISO_1 := & \ \frac{1}{N} \sum_{\alpha = \pm} 
    \abs{ \angles{\fku, G_1 B_1 \bfSigma^{\alpha} G_2 \fkv}
    \angles{\fku, G_1 A_1 G_2 \bfSigma^{\alpha} G_2 \fkv} } \\
    & + \frac{1}{N} \sum_{\alpha = \pm} 
    \abs{ \angles{\fku, G_1 B_1 \bfSigma^{\alpha} G_1 A_1 G_2 \fkv}
    \angles{\fku, G_1 \bfSigma^{\alpha} G_2 \fkv} } + \cdots,
\end{split} \label{def:gaussian-ISO-1}
\end{align}
while the error term associated with higher-order cumulants is defined as
\begin{equation}
    \highISO_1 (r, \bbt)
    := N^{-(r + \operatorname{sum} (\bbt) + 1)/2}  \sum_{i, \mu} \big | {\parimu^r \angles{\fku, G_1 B_1 \dimu G_2 \fkv} } \big |
    \prod_{t \in \bbt} \big | {\parimu^{t} \angles{\fku, G_1 A_1 G_2 \fkv} } \big |.
    \label{def:high-ISO-1}
\end{equation}
We obtain the self-improving inequality (\ref{bound:master-ISO-1}) by utilizing the estimates (\ref{bound:gaussian-ISO-1}) and (\ref{bound:high-ISO-1}).

\shortpara{Derivation of (\ref{bound:master-AVE-2})}: Again, we employ (\ref{eqn:cumulant-expansion}) and (\ref{eqn:representation-AVE-2}) to get
\begin{align*}
    \bbe \abs{\angles{\Upsilon_{12} (A_1) A_2}}^{2p}
    \prec & \ \rpeAVE_2 \bbe \abs{\angles{\Upsilon_{12} (A_1) A_2}}^{2p-1}
    + \bbe \big [ {\gaussAVE_2 \abs{\angles{\Upsilon_{12} (A_1) A_2}}^{2p-2} } \big ] \\
    & + \sum_{r + \operatorname{sum} (\bbt) \geq 2}
    \bbe \big [ {\highAVE_2 (r, \bbt) 
    \abs{\angles{\Upsilon_{12} (A_1) A_2}}^{2p-1-\abs{\bbt} } } \big ],
\end{align*}
where we introduced
\begin{align}
\begin{split}
    \gaussAVE_2 = & \ \frac{1}{N^2} \sum_{\alpha = \pm} 
    \abs{\angles{G_2 A_2 G_1 B_1 \bfSigma^\alpha 
    G_1 A_1 G_2 A_2 G_1 \bfSigma^\alpha}} \\
    & + \frac{1}{N^2} \sum_{\alpha = \pm} \abs{\angles{G_2 A_2 G_1 B_1 \bfSigma^\alpha 
    G_1 A_2^\top G_2 A_1^\top G_1 \bfSigma^\alpha}} 
    + \cdots
\end{split} \label{def:gaussian-AVE-2}    
\end{align}
as well as
\begin{equation}
    \highAVE_2 (r, \bbt)
    := N^{-(r + \operatorname{sum} (\bbt) + 1)/2} \sum_{i, \mu} \big | {\parimu^r \angles{G_1 B_1 \dimu G_2 A_2} } \big |
    \prod_{t \in \bbt} \big | {\parimu^{t} \angles{G_1 A_1 G_2 A_2} } \big |.
    \label{def:high-AVE-2}
\end{equation}
This time, we employ (\ref{bound:gaussian-AVE-2}) and (\ref{bound:high-AVE-2}) below to conclude the derivation.


\shortpara{Derivation of (\ref{bound:master-ISO-2})}: Finally, we have
\begin{align*}
    \bbe \abs{\angles{\fku, \Upsilon_{123} (A_1, A_2) \fkv}}^{2p}
    \prec & \ \rpeISO_2 \bbe \abs{\angles{\fku, \Upsilon_{123} (A_1, A_2) \fkv}}^{2p-1}
    + \bbe \big [ {\gaussISO_2 \abs{\angles{\fku, \Upsilon_{123} (A_1, A_2) \fkv}}^{2p-2} } \big ] \\
    & + \sum_{r + \operatorname{sum} (\bbt) \geq 2}
    \bbe \big [ {\highISO_2 (r, \bbt) 
    \abs{\angles{\fku, \Upsilon_{123} (A_1, A_2) \fkv}}^{2p-1-\abs{\bbt} } } \big ],
\end{align*}
where we introduced
\begin{align}
\begin{split}
    \gaussISO_2 := & \ \frac{1}{N} \sum_{\alpha = \pm} \abs{ \angles{\fku, G_1 B_1 \bfSigma^{\alpha} G_3 \fkv}
    \angles{\fku, G_1 A_1 G_2 A_2 G_3 \bfSigma^{\alpha} G_2 A_2 G_3 \fkv} } \\
    & + \frac{1}{N} \sum_{\alpha = \pm} \abs{ \angles{\fku, G_1 B_1 \bfSigma^{\alpha} G_2 A_2 G_3 \fkv}
    \angles{\fku, G_1 A_1 G_2 \bfSigma^{\alpha} G_2 A_2 G_3 \fkv} } \\
    & + \frac{1}{N} \sum_{\alpha = \pm}
    \abs{ \angles{\fku, G_1 B_1 \bfSigma^{\alpha} G_1 A_1 G_2 A_2 G_3 \fkv}
    \angles{\fku, G_1 \bfSigma^{\alpha} G_2 A_2 G_3 \fkv} }
    + \cdots
\end{split} \label{def:gaussian-ISO-2}
\end{align}
as well as
\begin{equation}
    \highISO_2 (r, \bbt)
    := N^{-(r + \operatorname{sum} (\bbt) + 1)/2} \sum_{i, \mu} \big | {\parimu^r \angles{\fku, G_1 B_1 \dimu G_2 A_2 G_3 \fkv} } \big |
    \prod_{t \in \bbt} \big | {\parimu^{t} \angles{\fku, G_1 A_1 G_2 A_2 G_3 \fkv} } \big |.
    \label{def:high-ISO-2}
\end{equation}
The derivation is completed by applying (\ref{bound:gaussian-ISO-2}) and (\ref{bound:high-ISO-2}) below.


As highlighted above, the following two lemmas provide control over the intermediate quantities that arise in the derivation of the self-improving inequalities. Their proofs are presented in Section \ref{subsec:gaussian-contribution} and Section \ref{subsec:high-order-contribution}, respectively.

\begin{lemma} \label{lemma:gaussian-contribution}
Given the inductive hypothesis (\ref{bound:inductive-hypothesis}), we have the following uniformly in $\bbd_{\ell+1}$,
\begin{subequations}
\begin{align}
    (\gaussAVE_1)^{1/2} & \prec \frac{1}{N \eta^{1/2}} \Big [ {1 
    + \frac{(\paraISO_2)^{1/2}}{(N \eta)^{1/4}}} \Big ], 
    \label{bound:gaussian-AVE-1} \\
    (\gaussISO_1)^{1/2} & \prec \frac{1}{\sqrt{N \eta^2}} \Big [ {1 
    + \frac{\paraISO_1}{(N \eta)^{1/2}} 
    + \frac{(\paraISO_2)^{1/2}}{(N \eta)^{1/4}}} \Big ], 
    \label{bound:gaussian-ISO-1} \\
    (\gaussAVE_2)^{1/2} & \prec \frac{1}{N \eta} \Big [ {(N \eta)^{1/2} 
    + \frac{\paraAVE_2}{(N \eta)^{1/2}} 
    + \paraISO_2} \Big ], 
    \label{bound:gaussian-AVE-2} \\
    (\gaussISO_2)^{1/2} & \prec \frac{1}{\sqrt{N \eta^3}} \Big [ { (N \eta)^{1/4} 
    + \frac{(\paraISO_1)^2}{(N \eta)^{3/4}} 
    + \frac{\paraAVE_2}{(N \eta)^{3/4}} 
    + \frac{\paraISO_2}{(N \eta)^{1/4}} } \Big ]. 
    \label{bound:gaussian-ISO-2} 
\end{align}
\end{subequations}
\end{lemma}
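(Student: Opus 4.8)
The plan is to estimate each of the four quantities $\gaussAVE_1$, $\gaussISO_1$, $\gaussAVE_2$, $\gaussISO_2$ defined in \eqref{def:gaussian-AVE-1}, \eqref{def:gaussian-ISO-1}, \eqref{def:gaussian-AVE-2}, \eqref{def:gaussian-ISO-2} by systematically reducing the resolvent chains that appear in them to the control parameters $\BoundAVE_k$, $\BoundISO_k$ (equivalently $\boundAVE_k$, $\boundISO_k$ and $\boundISO_3$, $\boundISO_4$) governed by the inductive hypothesis \eqref{bound:inductive-hypothesis} and the reduction inequalities of Lemma \ref{lemma:reduction-inequality}. The first step is to observe that every term inside these sums is of the schematic form $\abs{\angles{\cdots}}$ or a product $\abs{\angles{\cdots}}\cdot\abs{\angles{\cdots}}$ of resolvent chains interspersed with the regular matrices $A_j$, the deterministic matrix $B_1$ (which by Proposition \ref{prop:repre-full-underlined} has the property that $B_1\bfSigma^\pm$ is $\{w_1,w_2\}$-regular), and insertions of $\bfSigma^\alpha$. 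Since $\bfSigma^\alpha = \bfSigma_M \pm \bfId_N$, one splits each $\bfSigma^\alpha$-insertion into a piece that behaves like the identity and a piece that behaves like $\bfSigma_M$; crucially, $\bfSigma_M$ is block-diagonal with $\norm{\bfSigma_M}\lesssim 1$, so inserting it into a chain between two regular matrices preserves the bounded-norm structure needed to invoke the multi-resolvent estimates.

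The key step for the \emph{averaged} quantities is to use the averaged two-resolvent bound together with the resolvent identity \eqref{eqn:resolvent-difference-product} (or the contour representation \eqref{eqn:contour-integral-product}) to collapse products like $G_1 \cdots G_1$ (repeated spectral parameter) appropriately, and then to bound $\abs{\angles{\,\cdot\,}}$ of a chain of length $k$ with two regular matrices by $\boundAVE_2 = 1 + \paraAVE_2/(N\eta)$, paying the prefactor $N^{-2}$ (for $\gaussAVE_1$, $\gaussAVE_2$) out front. For $\gaussAVE_1 = N^{-2}\sum_\alpha \abs{\angles{GB\bfSigma^\alpha G A G \bfSigma^\alpha}}+\cdots$ one recognizes a chain of three $G$'s with two regular insertions ($B\bfSigma^\alpha$ and $A$, then closing with $\bfSigma^\alpha$), which after using $\angles{ \cdot }$-cyclicity and the length-two averaged law yields a bound of order $N^{-2}\cdot\eta^{-1}\cdot\boundAVE_2$ up to the regularization bookkeeping; matching against the stated $(\gaussAVE_1)^{1/2}\prec (N\eta^{1/2})^{-1}[1+(\paraISO_2)^{1/2}(N\eta)^{-1/4}]$ requires tracking where the $\paraISO_2$ contribution enters — it comes from the worst non-optimally-reducible sub-chain, bounded via $\boundISO_2$ or $\boundISO_3$ from Lemma \ref{lemma:reduction-inequality}. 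For the \emph{isotropic} quantities $\gaussISO_1$, $\gaussISO_2$, the strategy is the same but one uses the vector bounds \eqref{bound:vec-chain-with-Id-middle}: a product $\abs{\angles{\fku, \mathcal{C}_1 \fkv}}\cdot\abs{\angles{\fku,\mathcal{C}_2\fkv}}$ is estimated by Cauchy–Schwarz into $\norm{\mathcal{C}_1^*\fku}\norm{\mathcal{C}_1\fkv}$-type quantities, or more directly by inserting $\norm{G\fku}^2\prec\eta^{-1}$, $\norm{G_2A_1G_1\fku}^2\prec \boundISO_2/\eta$, $\norm{G_3A_2G_2A_1G_1\fku}^2\prec\boundISO_4/\eta$, together with the isotropic multi-resolvent laws \eqref{bound:iso-law-one-regular}, \eqref{bound:iso-law-two-regular} rephrased as bounds on $\BoundISO_1$, $\BoundISO_2$. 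Substituting the definitions of $\boundISO_3$, $\boundISO_4$ from \eqref{def:bound-iso-3-4} and simplifying (using $\eta\le\tau^{-1}$, $N\eta\ge N^{\varepsilon}$) produces exactly the four right-hand sides claimed.

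The main obstacle I anticipate is the \textbf{bookkeeping of the $\bfSigma^\alpha$ insertions and the regularity structure}: when one inserts $\bfSigma^\alpha$ into the middle of a chain between two regular matrices, the resulting three-link sub-chain (e.g. $A_1 G \bfSigma^\alpha G A_1$) is \emph{not} of the form "regular matrix, resolvent, regular matrix" unless one first re-expresses $\bfSigma^\alpha$ — and more importantly, the deterministic counterpart one must subtract is not simply $\Pi$'s chained naively. This is precisely why Proposition \ref{prop:repre-full-underlined} was stated with the regularity of $B_1\bfSigma^\pm$ rather than $B_1$, and the same anisotropy of $\opSd$ means that each $\bfSigma^\alpha$ must be "absorbed" into an adjacent regular block via the observation that if $A$ is $\{w_1,w_2\}$-regular then so is a suitable combination involving $A\bfSigma^\pm$ (cf. the one-point regularization Lemma \ref{lemma:one-point-regularization} and Lemma \ref{lemma:regular-obs-properties}\ref{item:ppt-A-negative}). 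Carrying out this absorption cleanly, so that every sub-chain to which one applies \eqref{bound:multi-local-law-regular} or \eqref{bound:vec-chain-with-Id-middle} genuinely has regular matrices at the right places, and verifying that no sub-chain of length $>4$ survives (they are all killed by \eqref{eqn:resolvent-difference-product} or \eqref{eqn:contour-integral-product} applied to repeated parameters, or reduced by Lemma \ref{lemma:reduction-inequality}), is the delicate part; the arithmetic of combining the resulting powers of $N\eta$ is then routine. The "$+\cdots$" terms omitted in \eqref{def:gaussian-AVE-1}–\eqref{def:gaussian-ISO-2} are handled identically to the displayed representatives, differing only in which resolvent carries the conjugated spectral parameter $w_j^*$, which affects none of the magnitude estimates since $\abs{\Im w_j^*}=\abs{\Im w_j}$.
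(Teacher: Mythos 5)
Your treatment of the isotropic quantities $\gaussISO_1$, $\gaussISO_2$ is essentially the paper's argument: bound the factor containing the non-regular $\bfSigma^\pm$ insertion by Cauchy--Schwarz through the vector bounds (\ref{bound:vec-chain-with-Id-middle}), and bound the other factor by the isotropic control parameters $\boundISO_1,\boundISO_2$ (and $\boundISO_3,\boundISO_4$ from Lemma \ref{lemma:reduction-inequality}). Your worries about collapsing repeated spectral parameters via (\ref{eqn:resolvent-difference-product})/(\ref{eqn:contour-integral-product}) are unnecessary here; no such step is needed for this lemma.

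There is, however, a genuine gap in your plan for the averaged quantities $\gaussAVE_1$, $\gaussAVE_2$. You propose to use cyclicity, a norm bound $\norm{G}\le 1/\eta$ on one resolvent, and the two-regular averaged law to get $N^{-2}\eta^{-1}\boundAVE_2$. This fails on two counts. First, it cannot be implemented as stated: the averaged law (\ref{bound:ave-law-two-regular}) requires \emph{both} deterministic matrices in the chain to be regular, while the closing $\bfSigma^\alpha$ is not, and estimating a resolvent in operator norm inside a normalized trace leaves you with $\angles{\abs{\cdot}}$ of a chain (trace of an absolute value), not with the shorter averaged form $\BoundAVE_2$; ``absorbing'' $\bfSigma^\alpha$ into an adjacent block does not help since $\bfSigma_M$-insertions are genuinely non-regular. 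Second, even if such a reduction went through, it would produce a bound carrying $\paraAVE_2$, whereas the stated bounds (\ref{bound:gaussian-AVE-1}), (\ref{bound:gaussian-AVE-2}) carry $\paraISO_2$ (and, for $\gaussAVE_2$, the product structure of $\boundISO_4$); you acknowledge this mismatch but leave the mechanism vague. The missing idea is simple and specific: in (\ref{def:gaussian-AVE-1}) and (\ref{def:gaussian-AVE-2}) the \emph{only} non-regular matrix is the trailing $\bfSigma^\pm$, so one expands the normalized trace over the spectral decomposition of $\bfSigma^\pm$, turning the averaged chain into an $N^{-1}$-average of isotropic chains in which every remaining deterministic matrix ($B_1\bfSigma^\alpha$, $A_1$, $A_2$, $A_2^\top$, \dots) is regular with respect to its surrounding parameters. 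This yields directly $\gaussAVE_1\prec\boundISO_2/N^2$ and $\gaussAVE_2\prec\boundISO_4/N^2$, which after inserting (\ref{def:bound-iso-3-4}) and taking square roots are exactly (\ref{bound:gaussian-AVE-1}) and (\ref{bound:gaussian-AVE-2}). Without this step your argument does not produce the stated right-hand sides.
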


\begin{lemma} \label{lemma:high-order-contribution}
Given the inductive hypothesis (\ref{bound:inductive-hypothesis}), we have the following uniformly in $\bbd_{\ell+1}$,
\begin{subequations}
\begin{align}
    \highAVE_1 (r, \bbt)^{1/(\abs{\bbt} + 1)}
    & \prec \frac{1}{N \eta^{1/2}} \Big [ {1 
    + \frac{\paraISO_1}{(N \eta)^{1/2}} 
    + \frac{(\paraISO_2)^{1/4}}{(N \eta)^{1/8}}} \Big ],
    \label{bound:high-AVE-1} \\
    \highISO_1 (r, \bbt)^{1/(\abs{\bbt} + 1)}
    & \prec \frac{1}{\sqrt{N \eta^2}} \Big [ {1 
    + \frac{\paraISO_1}{(N \eta)^{1/2}} 
    + \frac{(\paraISO_2)^{1/4}}{(N \eta)^{1/8}}} \Big ], 
    \label{bound:high-ISO-1} \\
    \highAVE_2 (r, \bbt)^{1/(\abs{\bbt} + 1)}
    & \prec \frac{1}{N \eta} \Big [ {(N \eta)^{1/4} 
    + \frac{(\paraISO_1)^2}{(N \eta)^{3/4}} 
    + \frac{\paraISO_2}{(N \eta)^{1/4}}} \Big ], 
    \label{bound:high-AVE-2} \\
    \highISO_2 (r, \bbt)^{1/(\abs{\bbt} + 1)}
    & \prec \frac{1}{\sqrt{N \eta^3}} \Big [ {(N \eta)^{1/4} 
    + \frac{(\paraISO_1)^2}{(N \eta)^{3/4}} 
    + \frac{\paraISO_2}{(N \eta)^{1/4}}} \Big ]. 
    \label{bound:high-ISO-2}
\end{align}
\end{subequations}
\end{lemma}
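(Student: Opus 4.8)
The plan is to prove all four estimates (\ref{bound:high-AVE-1})--(\ref{bound:high-ISO-2}) by a single mechanism, reducing the high-cumulant sums to iterated Cauchy--Schwarz together with the Ward identity, the reduction inequalities of Lemma \ref{lemma:reduction-inequality}, and the isotropic single-resolvent law of Proposition \ref{prop:single-resolvent-local}. First I would expand every derivative $\parimu$ through (\ref{eqn:derivative-G}), so that $\parimu^r\angles{\dimu G B}$ and $\parimu^t\angles{\fku, G A G \fkv}$ become finite Leibniz sums (with combinatorial weights depending only on the fixed orders $r$, $t$ and $p$) of averaged, respectively isotropic, chains in which the rank-two matrix $\dimu$ appears $r+1$, respectively $t+1$, times. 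Writing $\dimu = \bfSigma^{1/2}(\mathbf{e}_i\mathbf{e}_\mu^\top + \mathbf{e}_\mu\mathbf{e}_i^\top)\bfSigma^{1/2}$ and expanding each such factor, every resulting term becomes a product of ``generalized entries'' $\angles{\mathbf{x},\,(\text{resolvent chain})\,\mathbf{y}}$ with $\mathbf{x},\mathbf{y}\in\{\bfSigma^{1/2}\mathbf{e}_i,\mathbf{e}_\mu\}$, multiplied by a bounded number of normalized traces of chains; the total number of inserted $\dimu$'s is exactly $r+\operatorname{sum}(\bbt)+1$, which is why the prefactors defining $\highAVE_1,\highISO_1,\highAVE_2,\highISO_2$ carry precisely this exponent halved.

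Next I would perform the summations over $i$ and $\mu$. Organizing them by repeated Cauchy--Schwarz, each pair of entries sharing an index collapses into a product of squared vector norms $\norm{(\text{chain})\,\mathbf{x}}^2$, which are controlled through $\norm{G\fku}^2\prec 1/\eta$, $\norm{G_2 A_1 G_1 \fku}^2 \prec \boundISO_2/\eta$ and $\norm{G_3 A_2 G_2 A_1 G_1 \fku}^2 \prec \boundISO_4/\eta$ from (\ref{bound:vec-chain-with-Id-middle}); any unpaired ``dangling'' entry is bounded by the isotropic law (\ref{eqn:single-resolvent-local-law}) and by the inductive hypothesis (\ref{bound:inductive-hypothesis}) in the form $\BoundISO_k\prec\boundISO_k$. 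A recurring complication is that each inserted $\dimu$ carries a factor $\bfSigma = \bfSigma_M + \bfId_N$, which is \emph{not} a regular observable, so a naive application of Theorem \ref{thm:local-law-regular} to the chains so produced would waste powers of $\eta$. I would resolve this by systematically peeling off the $\bfId_N$ part of every such $\bfSigma$ and using the resolvent identity (\ref{eqn:resolvent-difference-product}) --- or, when the two spectral parameters lie in the same half-plane, the contour representation (\ref{eqn:contour-integral-product}) over $\partial\bbd_\ell$ --- to break the long chains created by high-order derivatives into short ones; by Lemma \ref{lemma:nested-domains}\ref{item:dist-boundary-domains} the passage from $\bbd_{\ell+1}$ to $\bbd_\ell$ costs only an $\eta$-independent constant. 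The remaining $\bfSigma_M$ part is absorbed into the neighbouring regular matrix $B_1$ (recall from Proposition \ref{prop:repre-full-underlined} that $B_1\bfSigma^\pm$ is $\{w_1,w_2\}$-regular in the sense of Definition \ref{Def:regular-obs}) or into $A_1,A_2$, so that whenever a short chain reduces to a single resolvent times a regular matrix one may use the sharper $\eta$-power encoded in $\paraAVE_k,\paraISO_k$ rather than the trivial $\norm{G}\le\eta^{-1}$.

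After these reductions, every monomial in the expansion is a product of powers of $N^{-1}$, $\eta^{-1}$, $\boundISO_2$, $\boundISO_4$ and the inductive parameters; a finite case analysis over the ways the $r+\operatorname{sum}(\bbt)$ derivatives can be distributed among the chain factors then shows that each monomial is bounded by the $(\abs{\bbt}+1)$-th power of the respective right-hand side in (\ref{bound:high-AVE-1})--(\ref{bound:high-ISO-2}). The exponent $\abs{\bbt}+1$ is exactly what the cumulant-expansion step producing (\ref{bound:self-improving-ineqs}) demands, since there the relevant high-cumulant quantity multiplies $\abs{\angles{\Upsilon A}}^{2p-1-\abs{\bbt}}$ and the moments are redistributed by Young's inequality precisely as in the derivation of (\ref{bound:master-AVE-1}). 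The extremal terms --- those responsible for $(\paraISO_2)^{1/4}/(N\eta)^{1/8}$ in (\ref{bound:high-AVE-1})--(\ref{bound:high-ISO-1}) and for $(\paraISO_1)^2/(N\eta)^{3/4}$ in (\ref{bound:high-AVE-2})--(\ref{bound:high-ISO-2}) --- arise when two derivatives fold the chain back on itself, creating a length-three or length-four isotropic chain whose norm is governed by $\boundISO_3$ or $\boundISO_4$; tracking these via Lemma \ref{lemma:reduction-inequality} yields the stated bounds. The companion Lemma \ref{lemma:gaussian-contribution} is handled by the same machinery restricted to the single second-order term ($r=1,\bbt=\emptyset$ or $r=0,\abs{\bbt}=1$), whose leading part has already been cancelled by the underline renormalization of Proposition \ref{prop:repre-full-underlined}.

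The main obstacle I anticipate is the uniform control of the arbitrarily long resolvent chains generated by high-order derivatives in the presence of the anisotropy of $\opSd$: every $\dimu$ injects a non-regular $\bfSigma$, and unless its $\bfId_N$ component is extracted to shorten the chain --- with its $\bfSigma_M$ component re-attached to an adjacent regular observable --- the accumulated $\eta^{-1}$ factors will outstrip what the prefactor $N^{-(r+\operatorname{sum}(\bbt)+1)/2}$ and the reduction inequalities can compensate. Making this peeling procedure precise, keeping the combinatorics finite, and verifying that the extremal monomials reproduce exactly the right-hand sides of (\ref{bound:high-AVE-1})--(\ref{bound:high-ISO-2}) with the claimed exponents is the delicate part; everything else is routine resolvent bookkeeping.
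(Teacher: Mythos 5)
Your plan misidentifies the actual difficulty, and the mechanism you build to meet it is not the one that makes the lemma work. The key structural fact is that $\dimu=\bfSigma^{1/2}(\bfe_i\bfe_\mu^\top+\bfe_\mu\bfe_i^\top)\bfSigma^{1/2}$ is rank two, so each derivative does not ``inject a non-regular $\bfSigma$ into the chain'': it \emph{factorizes} the averaged or isotropic form into a product of generalized entries whose test vectors are $\bfSigma^{1/2}\bfe_i$ and $\bfe_\mu$, while the regular matrices $A_1,A_2,B_1$ already present stay regular with respect to their surrounding parameters. Consequently no long chain with an interior deterministic $\bfSigma$ ever appears, and the entire peeling procedure you propose --- splitting off $\bfId_N$, shortening chains via (\ref{eqn:resolvent-difference-product}) or the contour formula (\ref{eqn:contour-integral-product}), re-attaching $\bfSigma_M$ to a neighbouring regular observable --- is machinery borrowed from the proof of Proposition \ref{prop:repre-full-underlined}, not needed here; the ``main obstacle'' you anticipate (accumulating $\eta^{-1}$ factors from non-regular insertions) does not arise. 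The paper's proof instead splits into two regimes: when $r+\operatorname{sum}(\bbt)$ is large, the crude entrywise bounds $|\parimu^r\angles{\dimu GB}|\prec N^{-1}$, $|\parimu^t\angles{GA}|\prec N^{-1}\boundISO_1$, $|\parimu^t\angles{G_1A_1G_2A_2}|\prec N^{-1}[(\boundISO_1)^2+\boundISO_2]$ (and their isotropic analogues) together with the trivial $\sum_{i,\mu}\le N^2$ already beat the prefactor $N^{-(r+\operatorname{sum}(\bbt)+1)/2}$; in the remaining low-order cases ($r+\operatorname{sum}(\bbt)=2$ for the averaged forms, and the handful of isotropic cases with $r+\operatorname{sum}(\bbt)<\abs{\bbt}+3$) the gain comes from the Ward-type off-diagonal summation of Lemma \ref{lemma:sum-off-diagonal} combined with Cauchy--Schwarz.

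Your identification of the borderline terms is also off. The contributions $(\paraISO_2)^{1/4}/(N\eta)^{1/8}$ in (\ref{bound:high-AVE-1})--(\ref{bound:high-ISO-1}) and $(\paraISO_1)^2/(N\eta)^{3/4}$ in (\ref{bound:high-AVE-2})--(\ref{bound:high-ISO-2}) do not come from length-three or length-four chains controlled by $\boundISO_3,\boundISO_4$ through Lemma \ref{lemma:reduction-inequality}; those parameters never enter the proof of this lemma (the paper explicitly remarks after Lemma \ref{lemma:sum-off-diagonal} that routing the estimate through $\BoundISO_4$ would be too lossy). They arise from the factor $[1+\paraISO_2/(N\eta)^{1/2}]^{1/2}$ produced by (\ref{bound:sum-off-diagonal-1-regular}) in the low-order cases, after extracting the $(\abs{\bbt}+1)$-th root. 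To your credit, two ingredients of your sketch are essentially correct and coincide with the paper: the trivial bounds in terms of $\boundISO_1,\boundISO_2$ for the large-$(r,\bbt)$ regime, and the idea of collapsing index sums into squared vector norms via (\ref{bound:vec-chain-with-Id-middle}), which is exactly how Lemma \ref{lemma:sum-off-diagonal} is proved. But the decisive content of the lemma --- the finite case analysis over $(r,\bbt)$ (e.g.\ $r+\operatorname{sum}(\bbt)=2$ with $r\in\{0,2\}$; $r=1,\bbt=\{1\}$; $r=0$ with $\bbt=\{1,\dots,1\}$; $r=0$, $\operatorname{sum}(\bbt)-\abs{\bbt}=1$) with the precise power counting that reproduces the stated exponents, including checking that stray factors like $(N\eta)^{1/2}$ are absorbed because $\abs{\bbt}\ge 1$ or $\ge 2$ --- is asserted in your proposal but not carried out, and with the chain-peeling framework you set up it would not come out in the stated form.
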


\subsection{Second-order contribution: proof of Lemma \ref{lemma:gaussian-contribution}} \label{subsec:gaussian-contribution}

Let us begin with the two estimates concerning the averaged forms, (\ref{bound:gaussian-AVE-1}) and (\ref{bound:gaussian-AVE-2}). Recall that $B_1$ comes from Proposition \ref{prop:repre-full-underlined}. Hence, the matrices $B_1 \bfSigma^\pm$ are $\{ w_1, w_k \}$-regular for all $k=1,2,3$. Upon examining the resolvent chains in (\ref{def:gaussian-AVE-1}) and (\ref{def:gaussian-AVE-2}), we observe that all the deterministic matrices are regular w.r.t. their surrounding spectral parameters expect for the matrix $\bfSigma^\pm$ that we place at the end of the chains. Therefore, we can utilize the spectral decomposition of $\bfSigma^\pm$ and then apply the isotropic estimates, yielding
\begin{equation*}
    \gaussAVE_1 \prec \frac{\boundISO_2}{N^2} 
    = \frac{1}{N^2 \eta} \Big [ {1 + \frac{\paraISO_2}{(N \eta)^{1/2}} } \Big ]
    \qand
    \gaussAVE_2 \prec \frac{\boundISO_4}{N^2} 
    = \frac{1}{N \eta} \Big [ {1 + \frac{\paraAVE_2}{N \eta} } \Big ]
    \Big [ {1 + \frac{\paraISO_2}{(N \eta)^{1/2}} } \Big ],
\end{equation*}
where we used the definition of $\boundISO_4$ in (\ref{def:bound-iso-3-4}).

Moving forward, let us consider the estimate (\ref{bound:gaussian-ISO-1}). We can utilize (\ref{bound:vec-chain-with-Id-middle}) for the second factor in the first line and the second factor in the second line of (\ref{def:gaussian-ISO-1}). It turns out that
\begin{equation*}
    \gaussISO_1 \prec \frac{\boundISO_1 (\boundISO_2)^{1/2}}{N \eta} 
    + \frac{\boundISO_2}{N \eta} 
    \lesssim \frac{1}{N \eta^2} \Big [ {1 + \frac{(\paraISO_1)^2}{N \eta} + \frac{\paraISO_2}{(N \eta)^{1/2}} } \Big ].
\end{equation*}
It remains to demonstrate the estimate (\ref{bound:gaussian-ISO-2}). For the three representative terms outlined in (\ref{def:gaussian-ISO-2}), we utilize (\ref{bound:vec-chain-with-Id-middle}) to control their second factor due to the presence of the non-regular matrix $\bfSigma^\pm$. Specifically, the first line of (\ref{def:gaussian-ISO-2}) can be controlled by
\begin{equation*}
    \frac{\boundISO_1 (\boundISO_2)^{1/2} (\boundISO_4)^{1/2}}{N \eta}
    \lesssim \frac{(N \eta)^{1/2}}{N \eta^3} 
    \Big [ {1 + \frac{\paraISO_1}{(N \eta)^{1/2}} } \Big ]
    \Big [ {1 + \frac{\paraAVE_2}{N \eta} } \Big ]^{1/2}
    \Big [ {1 + \frac{\paraISO_2}{(N \eta)^{1/2}} } \Big ].
\end{equation*}
While the second line of (\ref{def:gaussian-ISO-2}) is dominated by
\begin{equation*}
    \frac{(\boundISO_2)^2}{N \eta} = \frac{(N \eta)^{1/2}}{N \eta^3} 
    \Big [ {1 + \frac{\paraISO_2}{(N \eta)^{1/2}} } \Big ]^2.
\end{equation*}
Finally, for the third line of (\ref{def:gaussian-ISO-2}), we can estimate it with
\begin{equation*}
    \frac{\boundISO_3 (\boundISO_2)^{1/2}}{N \eta} 
    = \frac{(N \eta)^{1/2}}{N \eta^3} 
    \Big [ {1 + \frac{\paraAVE_2}{N \eta} } \Big ]^{1/2}
    \Big [ {1 + \frac{\paraISO_2}{(N \eta)^{1/2}} } \Big ]^{3/2}.
\end{equation*}
Summarizing the three estimates above concludes the proof of (\ref{bound:gaussian-ISO-2}).

\subsection{Higher-order contribution: proof of Lemma \ref{lemma:high-order-contribution}}
\label{subsec:high-order-contribution}

This section is devoted to the proof of Lemma \ref{lemma:high-order-contribution}. Recall (\ref{eqn:derivative-G}). We have the following trivial bounds given (\ref{bound:inductive-hypothesis}),
\begin{alignat*}{2}
    & \big | { \parimu^r \angles{\dimu G B} } \big |
    \prec N^{-1},
    & 
    & \big | {\parimu^r \angles{G_1 B_1 \dimu G_2 A_2} } \big |
    \prec N^{-1} \boundISO_1, \\
    & \big | {\parimu^{t} \angles{G A}} \big | 
    \prec N^{-1} \boundISO_1, 
    & \qquad 
    & \big | {\parimu^t \angles{G_1 A_1 G_2 A_2} } \big |
    \prec N^{-1} \big [ {(\boundISO_1)^2 + \boundISO_2} \big ].
\end{alignat*}
Plugging these bounds into the definition of $\highAVE_1$ and $\highAVE_2$ in (\ref{def:high-AVE-1}) and (\ref{def:high-AVE-2}), we obtain
\begin{align*}
    \highAVE_1 (r, \bbt)
    & \prec \frac{N^2}{N^{(r + \operatorname{sum} (\bbt) + 1)/2}} 
    \cdot \frac{1}{(N \eta^{1/2})^{\abs{\bbt} + 1}} 
    \Big [ {1 + \frac{\paraISO_1}{(N \eta)^{1/2}}} \Big ]^{\abs{\bbt} + 1}, \\
    \highAVE_2 (r, \bbt)
    & \prec \frac{N^2}{N^{(r + \operatorname{sum} (\bbt) + 1)/2}} 
    \cdot \frac{1}{(N \eta)^{\abs{\bbt} + 1}} 
    \Big [ {1 + \frac{(\paraISO_1)^2}{N \eta} + \frac{\paraISO_2}{(N \eta)^{1/2}}} \Big ]^{\abs{\bbt} + 1}, 
\end{align*}
which establishes the averaged bounds (\ref{bound:high-AVE-1}) and (\ref{bound:high-AVE-2}) when $r + \operatorname{sum} (\bbt) \geq 3$. Similarly, for the isotropic forms defined in (\ref{def:high-ISO-1}) and (\ref{def:high-ISO-2}), we can apply the elementary bounds
\begin{alignat*}{2}
    & \big | {\parimu^r \angles{\fku, G_1 B_1 \dimu G_2 \fkv} } \big |
    \prec 1 ,
    & 
    & \big | {\parimu^t \angles{\fku, G_1 A_1 G_2 \fkv} } \big |
    \prec \boundISO_1, \\
    & \big | {\parimu^r \angles{\fku, G_1 B_1 \dimu G_2 A_2 G_3 \fkv} } \big |
    \prec \boundISO_1, 
    & \qquad 
    & \big | {\parimu^{t} \angles{\fku, G_1 A_1 G_2 A_2 G_3 \fkv} } \big | 
    \prec (\boundISO_1)^2 + \boundISO_2.
\end{alignat*}
It follows that
\begin{align*}
    \highISO_1 (r, \bbt)
    & \prec \frac{N^{(\abs{\bbt} + 5)/2}}{N^{(r + \operatorname{sum} (\bbt) + 1)/2}}
    \cdot \frac{1}{(\sqrt{N \eta^2})^{\abs{\bbt} + 1}} 
    \Big [ {1 + \frac{\paraISO_1}{(N \eta)^{1/2}}} \Big ]^{\abs{\bbt} + 1}, \\
    \highISO_2 (r, \bbt)
    & \prec \frac{N^{(\abs{\bbt} + 5)/2}}{N^{(r + \operatorname{sum} (\bbt) + 1)/2}}
    \cdot \frac{1}{(\sqrt{N \eta^3})^{\abs{\bbt} + 1}} 
    \Big [ {1 + \frac{(\paraISO_1)^2}{N \eta} + \frac{\paraISO_2}{(N \eta)^{1/2}}} \Big ]^{\abs{\bbt} + 1},
\end{align*}
which resolves (\ref{bound:high-ISO-1}) and (\ref{bound:high-ISO-2}) when $r + \operatorname{sum} (\bbt) - \abs{\bbt} \geq 4$. 

In conclusion, it remains to demonstrate the averaged bounds (\ref{bound:high-AVE-1}) and (\ref{bound:high-AVE-2}) when $r + \operatorname{sum} (\bbt) = 2$, and the isotropic bounds (\ref{bound:high-ISO-1}) and (\ref{bound:high-ISO-2}) when $2 \leq r + \operatorname{sum} (\bbt) < \abs{\bbt} + 3$. In these cases, we need to handle the summation $\sum_{i, \mu}$ more delicately instead of estimating it simply with $O (N^2)$. Specifically, we rely on the following lemma.

\begin{lemma} \label{lemma:sum-off-diagonal}
Suppose the $A_k$'s are regular in the sense of Definition \ref{Def:regular-obs-in-chain}, and the $D_k$'s are generic deterministic matrices with $\norm{D_k} \lesssim 1$. Then, given (\ref{bound:inductive-hypothesis}), we have the following estimates uniformly in $\bbd_{\ell}$,
\begin{subequations}
\begin{align}
    \sum_{i} \abs{\angles{\fku, G_1 D_1 \bfe_i}}^2
    + \sum_{\mu} \abs{\angles{\fku, G_1 D_1 \bfe_\mu}}^2
    & \prec \frac{1}{\eta}, \\
    \sum_{i} \abs{\angles{\fku, G_1 A_1 G_2 D_2 \bfe_i}}^2
    + \sum_{\mu} \abs{\angles{\fku, G_1 A_1 G_2 D_2 \bfe_\mu}}^2
    & \prec \frac{1}{\eta^2} \Big [ {1 + \frac{\paraISO_2}{(N \eta)^{1/2}}} \Big ].
    \label{bound:sum-off-diagonal-1-regular}
\end{align}
\end{subequations}
\end{lemma}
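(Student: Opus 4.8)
The plan is to dualize each sum over a block of the canonical basis into the squared Euclidean norm of a vector-valued resolvent chain, and then to close the estimate using the vector-chain bounds (\ref{bound:vec-chain-with-Id-middle}) from Lemma~\ref{lemma:reduction-inequality}. The only input beyond that lemma is the trivial identity that, since $\{ \bfe_i \}_{i \in \llbracket M \rrbracket} \cup \{ \bfe_\mu \}_{\mu \in \llbracket N \rrbracket}$ is exactly the canonical basis of $\bbc^{M+N}$, one has $\sum_i \abs{\angles{\bfe_i, \mathbf{w}}}^2 + \sum_\mu \abs{\angles{\bfe_\mu, \mathbf{w}}}^2 = \norm{\mathbf{w}}^2$ for every $\mathbf{w} \in \bbc^{M+N}$, together with the facts that $G(w)^* = G(w^*)$ because $H$ is real symmetric, and that $\bbd_\ell$ is invariant under conjugation (it is a union of convex polygons occurring in conjugate pairs, and $\eta$ is unchanged under $w \mapsto w^*$).

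For the first estimate I would apply the identity with $\mathbf{w} = D_1^* G_1^* \fku$, so that the left-hand side equals $\norm{D_1^* G_1^* \fku}^2 \le \norm{D_1}^2 \norm{G(w_1^*) \fku}^2$, and the first bound in (\ref{bound:vec-chain-with-Id-middle}), applied at $w_1^* \in \bbd_\ell$, gives $\norm{G(w_1^*)\fku}^2 \prec 1/\eta$, which is the claim. For the second estimate I would take $\mathbf{w} = D_2^* G_2^* A_1^* G_1^* \fku$, reducing the statement to $\norm{G(w_2^*) A_1^* G(w_1^*) \fku}^2 \prec \boundISO_2/\eta$; the conclusion then follows from the elementary identity $\boundISO_2/\eta = \eta^{-2}\big[\, 1 + \paraISO_2 (N\eta)^{-1/2} \,\big]$, which is exactly the advertised right-hand side.

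The one step needing care — and the closest thing to an obstacle — is verifying that the second bound in (\ref{bound:vec-chain-with-Id-middle}) applies to the chain $G(w_2^*) A_1^* G(w_1^*)$, i.e. that $A_1^*$ is regular with respect to its surrounding parameters $w_2^*$ and $w_1^*$, \emph{in that order}. Starting from the $\{w_1, w_2\}$-regularity of $A_1$, Lemma~\ref{lemma:regular-obs-properties}\ref{item:ppt-A-conjugate} gives that $\overline{A_1}$ is $\{w_1, w_2\}$-regular, and \ref{item:ppt-reflected-para} promotes this to $\{w_1^*, w_2^*\}$-regularity; then, since $A_1^* = (\overline{A_1})^\top$ and transposing the trace $\angles{\Gamma(\fkw_1)(\,\cdot\,)_M\Gamma(\fkw_2)\Sigma}$ in Definition~\ref{Def:regular-obs} interchanges the two $\Gamma$-arguments (using that $\Gamma$ and $\Sigma$ are symmetric and that the control parameter $\beta$ is symmetric), one concludes that $A_1^*$ is $\{w_2^*, w_1^*\}$-regular, so the reduction inequalities apply verbatim. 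Aside from this routine regularity bookkeeping the lemma carries no genuine difficulty: its content is already contained in (\ref{bound:vec-chain-with-Id-middle}) of Lemma~\ref{lemma:reduction-inequality}, which this statement merely repackages in the form needed in Section~\ref{subsec:high-order-contribution}.
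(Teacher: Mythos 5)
Your proposal is correct and follows essentially the same route as the paper: the paper likewise writes $\bfId_M=\sum_i \bfe_i\bfe_i^\top$ (and its counterpart for the $\bfe_\mu$'s) to convert each sum into $\norm{D_k^* G_k^*\cdots G_1^*\fku}^2$ and then invokes (\ref{bound:vec-chain-with-Id-middle}), with $\boundISO_2/\eta$ giving exactly the stated bound. The only difference is that you spell out the regularity of $A_1^*$ with respect to the conjugated, reordered parameters, a routine check the paper leaves implicit.
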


We informally refer to the isotropic forms of the type $\angles{\fku, G_1 A_1 \cdots A_{k-1} G_k D_k \bfe_i}$ as off-diagonal resolvent entries. Lemma \ref{lemma:sum-off-diagonal} refines the trivial estimate for the summation over these off-diagonal resolvent entries with $k$ regular matrices by a factor of $N \eta$, at the expense of introducing control parameters indexed by $2k$. We could have provided an analogous estimate for the summation over $\angles{\fku, G_1 A_1 G_2 A_2 G_3 D_3 \bfe_i}$. However, the suboptimal control of $\BoundISO_4$ through reduction inequalities (\ref{bound:reduction-ISO-3-4}) offsets our expected gain. In other words, in our context, extending Lemma \ref{lemma:sum-off-diagonal} to off-diagonal entries with two regular matrices does not yield any effective improvement. 

The proof of Lemma \ref{lemma:sum-off-diagonal} is deferred to Section \ref{subsec:reduction-inequalities}. For the subsequent discussions, we frequently utilize Lemma \ref{lemma:sum-off-diagonal} alongside the Cauchy-Schwarz inequality. For instance, we have
\begin{equation*}
    \sum_{i} \abs{\angles{\fku, G_1 A_1 G_2 D_2 \bfe_i}}
    \prec \frac{N^{1/2}}{\eta} \Big [ {1 + \frac{\paraISO_2}{(N \eta)^{1/2}}} \Big ]^{1/2}.
\end{equation*}

\shortpara{Proof of (\ref{bound:high-AVE-1}) and (\ref{bound:high-AVE-2}) for $r + \operatorname{sum} (\bbt) = 2$ with $r \in \{ 0, 2 \}$}: In this case, the first factor of the summands in (\ref{def:high-AVE-1}) and (\ref{def:high-AVE-2}) must contain at least one off-diagonal resolvent entry. Consequently, we can leverage Lemma \ref{lemma:sum-off-diagonal} to obtain
\begin{align*}
    \highAVE_1 (r, \bbt)
    & \prec \frac{1}{N \eta^{1/2}} \Big [ {\frac{\boundISO_1}{N}} \Big ]^{\abs{\bbt}}
    \lesssim \frac{1}{(N \eta^{1/2})^{\abs{\bbt} + 1}} 
    \Big [ {1 + \frac{\paraISO_1}{(N \eta)^{1/2}}} \Big ]^{\abs{\bbt} + 1}, \\
    \highAVE_2 (r, \bbt)
    & \prec \frac{1}{N \eta} \Big [ {1 + \frac{\paraISO_2}{(N \eta)^{1/2}}} \Big ]^{1/2}
    \Big [ {\frac{(\boundISO_1)^2 + \boundISO_2}{N}} \Big ]^{\abs{\bbt}}
    \lesssim \frac{1}{(N \eta)^{\abs{\bbt} + 1}} 
    \Big [ {1 + \frac{(\paraISO_1)^2}{N \eta} + \frac{\paraISO_2}{(N \eta)^{1/2}}} \Big ]^{\abs{\bbt} + 1}.
\end{align*}

\shortpara{Proof of (\ref{bound:high-AVE-1}) and (\ref{bound:high-AVE-2}) for $r=1, \bbt = \{1 \}$}: In this case, at least one off-diagonal resolvent entry is present in the second factor of the summands in (\ref{bound:high-AVE-1}) and (\ref{bound:high-AVE-2}). It turns out that
\begin{align*}
    \highAVE_1 (r, \bbt)
    & \prec \frac{1}{N^2 \eta} \Big [ {1 + \frac{\paraISO_2}{(N \eta)^{1/2}}} \Big ]^{1/2}
    \asymp \frac{1}{(N \eta^{1/2})^2} \Big [ {1 + \frac{(\paraISO_2)^{1/4}}{(N \eta)^{1/8}}} \Big ]^{2}, \\
    \highAVE_2 (r, \bbt)
    & \prec \frac{\boundISO_1 \boundISO_2}{N^{3/2}} 
    \lesssim \frac{1}{(N \eta)^2} \Big [ {1 + \frac{(\paraISO_1)^2}{N \eta}} \Big ]
    \Big [ {1 + \frac{\paraISO_2}{(N \eta)^{1/2}}} \Big ]
    \cdot (N \eta)^{1/2}.
\end{align*}
The estimates (\ref{bound:high-AVE-1}) and (\ref{bound:high-AVE-2}) are obtained by taking the square root of the preceding bounds.

\shortpara{Proof of (\ref{bound:high-ISO-1}) and (\ref{bound:high-ISO-2}) for $r + \operatorname{sum} (\bbt) - \abs{\bbt} \in \{ 2, 3 \}$}: In this case, we have $r = \operatorname{sum} (\bbt) \geq \abs{\bbt} + 2$. Hence, applying Lemma \ref{lemma:sum-off-diagonal} yields
\begin{align*}
    \highISO_1 (r, \bbt) & \prec \frac{1}{N^{(\abs{\bbt} + 3)/2}}
    \cdot \frac{N}{\eta} \cdot (\boundISO_1)^{\abs{\bbt}}
    \lesssim \frac{\eta^{\abs{\bbt}/2}}{(\sqrt{N \eta^2})^{\abs{\bbt} + 1}} 
    \Big [ {1 + \frac{\paraISO_1}{(N \eta)^{1/2}}} \Big ]^{\abs{\bbt} + 1}, \\
    \highISO_2 (r, \bbt) & \prec \frac{1}{N^{(\abs{\bbt} + 3)/2}}
    \cdot \bigg \{ \frac{N}{\eta} \boundISO_1
    + \frac{N}{\eta^{3/2}} \Big [ {1 + \frac{\paraISO_2}{(N \eta)^{1/2}}} \Big ]^{1/2} \bigg \} \cdot \big [ {(\boundISO_1)^2 + \boundISO_2} \big ]^{\abs{\bbt}} \\
    & \lesssim \frac{\eta^{\abs{\bbt}/2}}{(\sqrt{N \eta^3})^{\abs{\bbt} + 1}} 
    \Big [ {1 + \frac{(\paraISO_1)^2}{N \eta} + \frac{\paraISO_2}{(N \eta)^{1/2}}} \Big ]^{\abs{\bbt} + 1}.
\end{align*}
Note that we have $\eta^{\abs{\bbt}/2} \lesssim 1$.

\shortpara{Proof of (\ref{bound:high-ISO-1}) and (\ref{bound:high-ISO-2}) for $r + \operatorname{sum} (\bbt) - \abs{\bbt} = 0$}: Essentially, in this case we must have $r = 0$ and $\bbt = \{ 1, \cdots, 1 \}$. In addition, we have $\operatorname{sum} (\bbt) = \abs{\bbt} \geq 2$. Therefore, we can apply Lemma \ref{lemma:sum-off-diagonal} to both the two factors of the terms in (\ref{def:high-ISO-1}) and (\ref{def:high-ISO-2}), resulting in
\begin{align*}
    \highISO_1 (r, \bbt) & \prec \frac{1}{N^{(\abs{\bbt} + 1)/2}}
    \cdot \frac{1}{\eta^{5/2}} \Big [ {1 + \frac{\paraISO_2}{(N \eta)^{1/2}}} \Big ]^{1/2}
    \cdot (\boundISO_1)^{\abs{\bbt} - 1} \\
    & \lesssim \frac{\eta^{\abs{\bbt}/2-1}}{(\sqrt{N \eta^2})^{\abs{\bbt} + 1}} 
    \Big [ {1 + \frac{(\paraISO_2)^{1/4}}{(N \eta)^{1/8}}} \Big ]^{2}
    \Big [ {1 + \frac{\paraISO_1}{(N \eta)^{1/2}}} \Big ]^{\abs{\bbt} - 1}, \\
    \highISO_2 (r, \bbt) & \prec \frac{1}{N^{(\abs{\bbt} + 1)/2}}
    \cdot \bigg \{ \frac{N^{1/2}}{\eta^2}  \boundISO_2 \Big [ {1 + \frac{\paraISO_2}{(N \eta)^{1/2}}} \Big ]^{1/2}
    + \frac{1}{\eta^{7/2}} \Big [ {1 + \frac{\paraISO_2}{(N \eta)^{1/2}}} \Big ]^{3/2} \bigg \} 
    \cdot \big [ {(\boundISO_1)^2 + \boundISO_2} \big ]^{\abs{\bbt}-1} \\
    & \lesssim \frac{\eta^{\abs{\bbt}/2-1}}{(\sqrt{N \eta^3})^{\abs{\bbt} + 1}} 
    \Big [ {1 + \frac{(\paraISO_1)^2}{N \eta} + \frac{\paraISO_2}{(N \eta)^{1/2}}} \Big ]^{\abs{\bbt} + 1}
    \cdot (N \eta)^{1/2}.
\end{align*}
Since $\abs{\bbt} \geq 2$, the factor $(N \eta)^{1/2}$ appearing at the end of the estimate for $\highISO_2 (r, \bbt)$ can, at most, weaken the bound in square bracket by $(N \eta)^{1/6}$.

\shortpara{Proof of (\ref{bound:high-ISO-1}) and (\ref{bound:high-ISO-2}) for $r = 1$ and $\operatorname{sum} (\bbt) - \abs{\bbt} = 0$}: In this case, we have $\bbt = \{ 1, \cdots, 1 \}$ with $\abs{\bbt} \geq 1$. Again, we apply Lemma \ref{lemma:sum-off-diagonal} to both the two factors, leading to
\begin{align*}
    \highISO_1 (r, \bbt) & \prec \frac{1}{N^{(\abs{\bbt} + 2)/2}}
    \cdot \frac{N^{1/2}}{\eta^{2}} \Big [ {1 + \frac{\paraISO_2}{(N \eta)^{1/2}}} \Big ]^{1/2}
    \cdot (\boundISO_1)^{\abs{\bbt} - 1} \\
    & \lesssim \frac{\eta^{(\abs{\bbt}-1)/2}}{(\sqrt{N \eta^2})^{\abs{\bbt} + 1}} 
    \Big [ {1 + \frac{(\paraISO_2)^{1/4}}{(N \eta)^{1/8}}} \Big ]^{2}
    \Big [ {1 + \frac{\paraISO_1}{(N \eta)^{1/2}}} \Big ]^{\abs{\bbt} - 1}, \\
    \highISO_2 (r, \bbt) & \prec \frac{1}{N^{(\abs{\bbt} + 2)/2}}
    \cdot \bigg \{ \frac{N}{\eta} \boundISO_1 \boundISO_2
    + \frac{N^{1/2}}{\eta^{5/2}}  \boundISO_1 \Big [ {1 + \frac{\paraISO_2}{(N \eta)^{1/2}}} \Big ] \bigg \} 
    \cdot \big [ {(\boundISO_1)^2 + \boundISO_2} \big ]^{\abs{\bbt}-1} \\
    & \lesssim \frac{\eta^{(\abs{\bbt}-1)/2}}{(\sqrt{N \eta^3})^{\abs{\bbt} + 1}} 
    \Big [ {1 + \frac{(\paraISO_1)^2}{N \eta} + \frac{\paraISO_2}{(N \eta)^{1/2}}} \Big ]^{\abs{\bbt} + 1}
    \cdot (N \eta)^{1/2}.
\end{align*}
As $\abs{\bbt} \geq 1$, the impact of the factor $(N \eta)^{1/2}$ on the r.h.s of (\ref{bound:high-ISO-2}) is at most $(N \eta)^{1/4}$.

\shortpara{Proof of (\ref{bound:high-ISO-1}) and (\ref{bound:high-ISO-2}) for $r = 0$ and $\operatorname{sum} (\bbt) - \abs{\bbt} = 1$}: Finally, in this case, we need to examine $\bbt = \{ 2, 1, \cdots, 1 \}$ with $\abs{\bbt} \geq 1$. By employing Lemma \ref{lemma:sum-off-diagonal} once more, we obtain
\begin{align*}
    \highISO_1 (r, \bbt) & \prec \frac{1}{N^{(\abs{\bbt} + 2)/2}}
    \cdot \frac{N^{1/2}}{\eta^{3/2}} \boundISO_1
    \cdot (\boundISO_1)^{\abs{\bbt} - 1}
    \lesssim \frac{\eta^{(\abs{\bbt}-1)/2}}{(\sqrt{N \eta^2})^{\abs{\bbt} + 1}} 
    \Big [ {1 + \frac{\paraISO_1}{(N \eta)^{1/2}}} \Big ]^{\abs{\bbt} + 1}, \\
    \highISO_2 (r, \bbt) & \prec \frac{1}{N^{(\abs{\bbt} + 2)/2}}
    \cdot \bigg \{ \frac{N^{1/2}}{\eta^{2}} \boundISO_2 \Big [ {1 + \frac{\paraISO_2}{(N \eta)^{1/2}}} \Big ]^{1/2}
    + \frac{N^{1/2}}{\eta^{5/2}}  \boundISO_1 \Big [ {1 + \frac{\paraISO_2}{(N \eta)^{1/2}}} \Big ] \bigg \} 
    \cdot \big [ {(\boundISO_1)^2 + \boundISO_2} \big ]^{\abs{\bbt}-1} \\
    & \lesssim \frac{\eta^{(\abs{\bbt}-1)/2}}{(\sqrt{N \eta^3})^{\abs{\bbt} + 1}} 
    \Big [ {1 + \frac{(\paraISO_1)^2}{N \eta} + \frac{\paraISO_2}{(N \eta)^{1/2}}} \Big ]^{\abs{\bbt} + 1}.
\end{align*}
Summarizing the estimates above, we complete the proof of Lemma \ref{lemma:high-order-contribution}.

\section{Representation as fully underlined} \label{sec:representation}

This section is devoted to the proof of Proposition \ref{prop:repre-full-underlined}. Here, we present several technical lemmas that are instrumental in deriving the representations in (\ref{eqn:representations}). Readers can navigate directly to the actual proof and refer back to these lemmas when they are utilized.

The following lemma introduces the concept of ``pre-regularizing'' a matrix. The motivation behind this operation is to ensure that the product of the pre-regularized matrix and $\bfSigma^\pm$ becomes regularized in the sense of Definition \ref{Def:regular-obs}.

\begin{lemma}[One-point pre-regularization] \label{lemma:one-point-pre-regularization}
Let $D$ be a $(M+N) \times (M+N)$ deterministic matrix with $\norm{D} \lesssim 1$. Suppose its top-left $M \times M$ and bottom-right $N \times N$ diagonal blocks are given by $D_M$ and $D_N$, respectively. For $w \in \bbd_0$, we define the one-point pre-regularization of $D$ w.r.t. $w$ as
\begin{equation}
    (D)_{w}^{\diamond} 
    := D - \frac{\angles{\Im \Gamma (w) \Sigma D_M}}{\angles{\Im \Gamma (w) \Sigma}} \bfId_M
    - \angles{D_N} \bfId_N.
    \label{def:one-point-pre-regularization}
\end{equation}
Equivalently, we can write
\begin{equation}
    (D)_{w}^{\diamond} = D - \coefOnePre_w^+ (D) \bfId^+ - \coefOnePre_w^- (D) \bfId^-,
    \qwhere
    \coefOnePre_w^\pm (D)
    := \frac{1}{2} \left ( \frac{\angles{\Im \Gamma (w) \Sigma D_M}}{\angles{\Im \Gamma (w) \Sigma}} \pm \angles{D_N} \right ).
    \label{def:coefficients-one-point-pre}
\end{equation}    
Suppose $w_1, w_2 \in \bbd_0$ such that $w \in \{ w_1, w_2 \}$. Then, $(D)^\diamond_w \bfSigma^\pm$ is $\{ w_1, w_2 \}$-regular.
\end{lemma}

Given $w_1, w_2 \in \bbd_0$, we introduce the deterministic block-diagonal matrices
\begin{equation}
    \Xi^\pm \equiv \Xi^\pm (w_1, w_2) 
    := \Pi_2^{-1} \bfId^\pm - \opSd [\Pi_1 \bfId^\pm].
    \label{def:matrix-Xi}
\end{equation}
Explicitly, by the MDE (\ref{eqn:single-MDE}) or the self-consistent equation (\ref{eqn:self-consistent-w}), we have
\begin{subequations}  \label{eqn:explicit-Xi}
\begin{align}
    \Xi^+ & = \Pi_2^{-1} \bfId^+ - \opSd [\Pi_1 \bfId^+]
    = \begin{bmatrix}
        - w_2 - (m_1 + m_2) \Sigma & 0 \\
        0 & w_1 + 1/m_1 + 1/m_2
    \end{bmatrix}, \\
    \Xi^- & = \Pi_2^{-1} \bfId^- - \opSd [\Pi_1 \bfId^-]
    = \begin{bmatrix}
        - w_2 + (m_1 - m_2) \Sigma & 0 \\
        0 & w_1 + 1/m_1 - 1/m_2
    \end{bmatrix}.
\end{align}
\end{subequations}
Moreover, recalling our definition of $\opX_{12}$ in (\ref{def:operator-X12}) and $\Pi_{12}$ in (\ref{def:chain-deter-app-length2}), it is straightforward to verify
\begin{equation}
    \opX_{12} [\Xi^\pm] = \Pi_2^{-1} \bfId^\pm
    \qand
    \Pi_{12} (\Xi^\pm) = \Pi_1 \bfId^\pm.
    \label{tmp:D-Xi-equals-Pi}
\end{equation}

In the proof of Proposition \ref{prop:repre-full-underlined}, regularization of the matrices $\Xi^\pm$ is required. Given the explicit formulas for $\Xi^\pm$ in (\ref{eqn:explicit-Xi}), this essentially translates to regularizing $\bfSigma_M = \operatorname{diag} (\Sigma, 0)$. Initially, one might consider employing the one-point regularization defined in (\ref{def:one-point-regularization}). However, such regularization of $\Xi^\pm$ would fail to guarantee the lower estimate stated in Lemma \ref{lemma:coef-Sigma-circ} below, complicating the subsequent proof. Therefore, we are compelled to consider the following two-point regularization instead. The coefficients $\coefTwo_{w_1, w_2} (\bfSigma_M)$ given in (\ref{def:coefficients-two-point}) below are specifically designed to ensure the lower estimate (\ref{bound:coefficients-Sigma-circ}), and therefore, appear somewhat artificial.

Recall that the parameters $\fkt(w_1, w_2)$ and $\fkb(w_1, w_2)$ were introduced in (\ref{def:fkt-fkb}).
\begin{lemma}[Two-point regularization] \label{lemma:two-point-regularization}
Given any two spectral parameters $w_1, w_2 \in \bbd_0$, we define the two-point regularization of $\bfSigma_M$ w.r.t. $(w_1, w_2)$ as follows (note that the order of $w_1, w_2$ matters):
\begin{equation}
    (\bfSigma_M)^\circ_{w_1, w_2} 
    := \bfSigma_M - \coefTwo_{w_1, w_2} (\bfSigma_M) \bfId_M,
    \label{def:two-point-regularization-Sigma}
\end{equation}
where the coefficient is given by
\begin{equation}
    \coefTwo_{w_1, w_2} (\bfSigma_M)
    := \mathbb{1}(\abs{w_1-w_2} \leq \tau^\prime) \cdot \frac{w_2}{w_1} \cdot \begin{cases}
        \fkb(w_1^*, w_2)^{-1}, & \text{ if } \fks_1 = \fks_2 \text{ and } d_1 \geq d_2, \\
        \fkb(w_1, w_2^*)^{-1}, & \text{ if } \fks_1 = \fks_2 \text{ and } d_1 < d_2, \\
        \fkt(w_1, w_2), & \text{ if } \fks_1 \not= \fks_2.
    \end{cases}
    \label{def:coefficients-two-point}
\end{equation}
The matrix $(\bfSigma_M)^\circ_{w_1, w_2}$ is $\{ w_1, w_2 \}$-regular.
\end{lemma}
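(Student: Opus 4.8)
The plan is to verify the three defining conditions of Definition \ref{Def:regular-obs} for $A := (\bfSigma_M)^\circ_{w_1,w_2}$, whose diagonal blocks are $A_M = \Sigma - \coefTwo_{w_1,w_2}(\bfSigma_M)\,I_M$ and $A_N = 0$; thus $\angles{A_N} = 0$ is immediate. Using $\abs{m(w)} \asymp 1$ from Lemma \ref{lemma:m-order-estimate} (so $\abs{\fkb(\cdot,\cdot)^{-1}} \asymp 1$), $\norm{\Gamma(\cdot)}, \norm{\Sigma} \lesssim 1$ (so $\abs{\fkt(\cdot,\cdot)} \lesssim 1$), and $\abs{w_j} \asymp 1$ on $\bbd_0$, each branch of (\ref{def:coefficients-two-point}) gives $\abs{\coefTwo_{w_1,w_2}(\bfSigma_M)} \lesssim 1$, hence $\norm{A} \lesssim 1$. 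For the bound (\ref{eqn:regular-obs-condition}) I would first invoke Lemma \ref{lemma:regular-obs-properties} \ref{item:ppt-constraint} to reduce to the two opposite-sign pairs $(\fkw_1,\fkw_2)$, namely $\{(w_1,w_2^*),(w_1^*,w_2)\}$ if $\fks_1 = \fks_2$ and $\{(w_1,w_2),(w_1^*,w_2^*)\}$ if $\fks_1 \ne \fks_2$; moreover, whenever $\coefTwo_{w_1,w_2}(\bfSigma_M) = 0$, i.e. $\abs{w_1-w_2} > \tau^\prime$, these pairs satisfy $\abs{\fkw_1-\fkw_2} \gtrsim 1$, so $\abs{\fkm[\fkw_1^2,\fkw_2^2]} \lesssim 1$ and $\beta(\fkw_1,\fkw_2) \asymp 1$ by Lemma \ref{lemma:diff-estimate}, making (\ref{eqn:regular-obs-condition}) trivial with $A_M=\Sigma$. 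So I may assume $\abs{w_1-w_2} \le \tau^\prime$.

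The key step is a deterministic identity from the MDE. For $\fkw_1,\fkw_2 \in \bbd_0$, writing $z_k := \fkw_k^2$ and $\fkm_k := \fkm(z_k)$, the $M$-block of (\ref{eqn:single-MDE}) reads $I_M + \fkw_k\Gamma(\fkw_k) + m(\fkw_k)\Sigma\Gamma(\fkw_k) = 0$ and (\ref{eqn:self-consistent-w}) gives $\angles{\Sigma\Gamma(\fkw_k)} = -1/m(\fkw_k) - \fkw_k$; combining these with the resolvent identity $(1+\fkm_1\Sigma)^{-1} - (1+\fkm_2\Sigma)^{-1} = (\fkm_2-\fkm_1)(1+\fkm_1\Sigma)^{-1}\Sigma(1+\fkm_2\Sigma)^{-1}$ and with $\angles{(1+\fkm(z)\Sigma)^{-1}} = M/N - 1 - z\fkm(z)$, a short computation yields
\begin{equation*}
    \angles{\Gamma(\fkw_1)\Sigma\Gamma(\fkw_2)\Sigma} = \fkt(\fkw_1,\fkw_2) = \frac{1}{m(\fkw_1)m(\fkw_2)} - \frac{1}{\fkw_1\fkw_2\,\fkm[z_1,z_2]}, \qquad \angles{\Gamma(\fkw_1)\Gamma(\fkw_2)\Sigma} = \frac{\fkw_1}{\fkw_2}\Big(1 + \frac{\fkm_2}{z_1\,\fkm[z_1,z_2]}\Big).
\end{equation*}
Subtracting $\coefTwo_{w_1,w_2}(\bfSigma_M)$ times the second identity from the first and using $\abs{\coefTwo_{w_1,w_2}(\bfSigma_M)} \lesssim 1$, $\abs{m(\fkw_k)} \asymp \abs{\fkw_k} \asymp \abs{\fkm_k} \asymp 1$ and $\beta(\fkw_1,\fkw_2) \asymp \abs{\fkm[z_1,z_2]}^{-1}$ (see (\ref{def:beta})) gives
\begin{equation*}
    \angles{\Gamma(\fkw_1) A_M \Gamma(\fkw_2)\Sigma} = \Big(\frac{1}{m(\fkw_1)m(\fkw_2)} - \coefTwo_{w_1,w_2}(\bfSigma_M)\,\frac{\fkw_1}{\fkw_2}\Big) + \mathcal{O}\big(\beta(\fkw_1,\fkw_2)\big).
\end{equation*}
Thus the lemma reduces to showing that $\coefTwo_{w_1,w_2}(\bfSigma_M)$ lies within $\mathcal{O}(\beta(\fkw_1,\fkw_2))$ of the ideal coefficient $\coefTwo^\sharp(\fkw_1,\fkw_2) := \fkw_2/(\fkw_1\,m(\fkw_1)m(\fkw_2))$ for each of the two opposite-sign pairs; note $m(\fkw^*) = \overline{m(\fkw)}$ gives $\coefTwo^\sharp(\fkw_1^*,\fkw_2^*) = \overline{\coefTwo^\sharp(\fkw_1,\fkw_2)}$.

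For the opposite-sign pair singled out by the branch of (\ref{def:coefficients-two-point}) this is essentially tautological: e.g. when $\fks_1 = \fks_2$ and $d_1 \ge d_2$, $\coefTwo_{w_1,w_2}(\bfSigma_M) = (w_2/w_1)\fkb(w_1^*,w_2)^{-1} = w_2/(w_1\,m(w_1^*)m(w_2))$ differs from $\coefTwo^\sharp(w_1^*,w_2) = w_2/(w_1^*\,m(w_1^*)m(w_2))$ only by the prefactor $1/w_1$ versus $1/w_1^*$, i.e. by $\mathcal{O}(\abs{\Im w_1})$; since $w_1,w_2$ are same-sign while $(w_1^*,w_2)$ is opposite-sign, (\ref{eqn:diff-estimate-opp}) with Lemma \ref{lemma:m-order-estimate} gives $\abs{\fkm[z_1^*,z_2]} \lesssim \Im\fkm(z_1)/\Im z_1 \asymp d_1^{1/2}/\abs{\Im w_1}$, so $\beta(w_1^*,w_2) \gtrsim \abs{\Im w_1}/d_1^{1/2} \gtrsim \abs{\Im w_1}$ using $d_1 \lesssim 1$. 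The branches $\fks_1 = \fks_2$, $d_1 < d_2$ (target pair $(w_1,w_2^*)$) and $\fks_1 \ne \fks_2$ (using $\fkt(w_1,w_2) = m(w_1)^{-1}m(w_2)^{-1} + \mathcal{O}(\beta)$, target pair $(w_1,w_2)$) are handled the same way.

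The hard part will be the \emph{other} opposite-sign pair, where the single number $\coefTwo_{w_1,w_2}(\bfSigma_M)$ must also approximate a second ideal coefficient. By the previous step and $\coefTwo^\sharp(w_1,w_2^*) = \overline{\coefTwo^\sharp(w_1^*,w_2)}$, it suffices to bound $\abs{\Im\coefTwo^\sharp(w_1^*,w_2)}$ by $\beta(w_1,w_2^*) \asymp \beta(w_1^*,w_2)$, the comparability coming from $\abs{\fkm[z_1^*,z_2]} = \abs{\fkm[z_1,z_2^*]}$. Expanding, $\Im\coefTwo^\sharp(w_1^*,w_2)$ is controlled by $\abs{m(w_1)-m(w_2)}$ and $\abs{\Im w_2}$, and both are $\lesssim \beta(w_1^*,w_2)$ because (\ref{eqn:diff-estimate-opp}) also gives $\beta(w_1^*,w_2) \gtrsim \abs{\fkm(z_1)-\fkm(z_2)} \vee \abs{z_1-z_2} \gtrsim \abs{m(w_1)-m(w_2)}$ and $\beta(w_1^*,w_2) \gtrsim \Im z_2/\Im\fkm(z_2) \gtrsim \abs{\Im w_2}$. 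In the near-edge regime, where $m$ is only $\tfrac12$-H\"older, this chain of inequalities is exactly what the estimates (\ref{eqn:diff-estimate-opp})--(\ref{eqn:diff-compare-same-opp}) supply, and the case distinction on $d_1,d_2$ in (\ref{def:coefficients-two-point}) is precisely what makes these inequalities close (it is also dictated by the companion lower bound in Lemma \ref{lemma:coef-Sigma-circ}); the mechanism parallels that of Lemma \ref{lemma:coef-V1-Pi2}. The case $\fks_1 \ne \fks_2$ is analogous, bounding $\abs{\Im\coefTwo^\sharp(w_1,w_2)}$ via the $\tfrac12$-H\"older bound on $\fkm$ near $z_1 \approx \overline{z_2}$. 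Once both opposite-sign instances of (\ref{eqn:regular-obs-condition}) hold, Lemma \ref{lemma:regular-obs-properties} \ref{item:ppt-constraint} recovers the two same-sign ones, which completes the proof that $(\bfSigma_M)^\circ_{w_1,w_2}$ is $\{w_1,w_2\}$-regular.
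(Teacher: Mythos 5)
Your proposal is correct, but it takes a genuinely different route from the paper. The paper does not re-verify (\ref{eqn:regular-obs-condition}) from scratch: it observes that $(\bfSigma_M)^\circ_{w_1,w_2}$ differs from the one-point regularization $(\bfSigma_M)^\circ_{w_1}$ (already shown $\{w_1,w_2\}$-regular in Lemma \ref{lemma:one-point-regularization}) only by a multiple of $\bfId_M$, so everything reduces to the single scalar comparison (\ref{bound:compare-coefOne-coefTwo}) between $\coefTwo_{w_1,w_2}(\bfSigma_M)$ and the one-point coefficient, which is evaluated via (\ref{eqn:ratio-Im-Gamma-Sigma}) as $1/\abs{m_1}^2 + \mathcal{O}(\Im w/\Im m)$ and then compared branch by branch using (\ref{eqn:ctrl-para-bound-opp}). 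You instead compute the two traces $\angles{\Gamma(\fkw_1)\Sigma\Gamma(\fkw_2)\Sigma}$ and $\angles{\Gamma(\fkw_1)\Gamma(\fkw_2)\Sigma}$ in closed form (your formula $\angles{\Gamma_1\Gamma_2\Sigma} = (\fkw_1/\fkw_2)(1+\fkm_2/(z_1\fkm[z_1,z_2]))$ checks out), which reduces regularity to showing $\coefTwo_{w_1,w_2}(\bfSigma_M)$ lies within $\mathcal{O}(\beta(\fkw_1,\fkw_2))$ of the ideal coefficient $\fkw_2/(\fkw_1 m(\fkw_1)m(\fkw_2))$ for both opposite-sign pairs; the branch-selected pair is then tautological and the other pair follows from bounding the imaginary part by $\abs{m_1-m_2}$ and the $\Im w$ terms, exactly the quantities that (\ref{eqn:diff-estimate-opp}) (equivalently (\ref{eqn:ctrl-para-bound-opp})) dominates by $\beta$. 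In the end both proofs rest on the same divided-difference estimates of Lemma \ref{lemma:diff-estimate} together with Lemma \ref{lemma:regular-obs-properties} \ref{item:ppt-constraint}; the paper's pivot through Lemma \ref{lemma:one-point-regularization} is shorter and avoids the explicit formula for $\angles{\Gamma_1\Gamma_2\Sigma}$, whereas your computation is self-contained and makes transparent why $\fkb(w_1^*,w_2)^{-1}$, $\fkb(w_1,w_2^*)^{-1}$ and $\fkt(w_1,w_2)$ are the natural choices of coefficient. Two small remarks: in your imaginary-part step for the "other" pair the difference $1/w_1$ versus $1/w_1^*$ also contributes an $\abs{\Im w_1}$ term that you do not list, but it is controlled by the same bound $\beta_{1^*2}\gtrsim \Im z_1/\Im\fkm_1$ you already use, so there is no gap; and, as you suspect, the $d_1\geq d_2$ versus $d_1<d_2$ distinction is not really needed for regularity itself (either choice works here), it is forced by the lower bound in Lemma \ref{lemma:coef-Sigma-circ}.
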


Now, with (\ref{def:matrix-Xi}) and (\ref{def:two-point-regularization-Sigma}), we can define the two-point regularization of $\Xi^\pm$ w.r.t. $( w_1, w_2 )$ as 
\begin{equation}
    (\Xi^\pm)^\circ_{w_1,w_2} := (\pm m_1 - m_2) (\bfSigma_M)^\circ_{w_1,w_2}.
    \label{def:two-point-regularization-Xi}
\end{equation}
The proofs of Lemmas \ref{lemma:one-point-pre-regularization} and \ref{lemma:two-point-regularization} are provided in Section \ref{subsec:proof-regularity}. Here, we summarize how these two lemmas are utilized in this section. Let $w_1, w_2 \in \bbd_0$ and set $V_1 \equiv \opX_{12} [A_1]$ for some deterministic $A_1$. The following two decompositions are crucial in deriving the representations in (\ref{eqn:representations}),
\begin{subequations} \label{eqn:decompositions}
\begin{align}
    V_1 \Pi_2 & = (V_1 \Pi_2)^\diamond_{w_1} + \coefOnePre^+_{w_1} (V_1 \Pi_2) \bfId^+ + \coefOnePre^-_{w_1} (V_1 \Pi_2) \bfId^-,
    \label{eqn:decomposition-V-Pi} \\
    \Xi^\pm & = (\Xi^\pm)^\circ_{w_1,w_2} + \coefTwo^+_{w_1,w_2} (\Xi^\pm) \bfId^+ + \coefTwo^-_{w_1,w_2} (\Xi^\pm) \bfId^-,
    \label{eqn:decomposition-Xi}
\end{align}    
\end{subequations}
where the coefficients $\coefOnePre^\pm$ in (\ref{eqn:decomposition-V-Pi}) are given by (\ref{def:coefficients-one-point-pre}), while the coefficients $\coefTwo^\pm_{w_1,w_2} (\Xi^\pm)$ in (\ref{eqn:decomposition-Xi}) are defined through the equations (\ref{eqn:explicit-Xi}), (\ref{def:coefficients-two-point}) and (\ref{def:two-point-regularization-Xi}). Although one can explicitly write down the formulas of $\coefTwo^\pm_{w_1,w_2} (\Xi^\pm)$, we refrain from doing so since these explicit formulas are not relevant to our proof. We have the following two estimates concerning the coefficients in (\ref{eqn:decompositions}), the proofs of which are postponed to Section \ref{subsec:coeffcients}. Recall the operator $\opX$ that we defined in (\ref{def:operator-X12}).

\begin{lemma} \label{lemma:coef-V1-Pi2}
Let $w_1, w_2 \in \bbd_0$ and suppose that the deterministic matrix $A_1$ is $\{ w_1, w_2 \}$-regular. Then, for $V_1 \equiv \opX_{12} [A_1]$ and $\Xi^\pm \equiv \Xi^\pm (w_1, w_2)$ defined in (\ref{def:matrix-Xi}), we have
\begin{equation}
    \abs{\coefOnePre_{w_1}^+ (V_1 \Pi_2) \coefTwo_{w_1,w_2}^+ (\Xi^+)} 
    + \abs{\coefOnePre_{w_1}^- (V_1 \Pi_2) \coefTwo_{w_1,w_2}^+ (\Xi^-)}
    \lesssim \abs{w_1-w_2}^{1/2} + \eta^{1/2},
    \label{bound:coefficients-V-Pi-Xi}
\end{equation}
where we abbreviate $\eta := \eta_1 \wedge \eta_2$.
\end{lemma}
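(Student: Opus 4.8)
\textbf{Proof proposal for Lemma \ref{lemma:coef-V1-Pi2}.}

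The plan is to estimate each factor appearing in (\ref{bound:coefficients-V-Pi-Xi}) separately and then combine. First I would obtain explicit formulas for the coefficients. For $\coefOnePre^\pm_{w_1}(V_1 \Pi_2)$, by the definition (\ref{def:coefficients-one-point-pre}) these are given by $\frac12\big(\angles{\Im\Gamma(w_1)\Sigma (V_1\Pi_2)_M}/\angles{\Im\Gamma(w_1)\Sigma} \pm \angles{(V_1\Pi_2)_N}\big)$, so I need to control the diagonal blocks $(V_1\Pi_2)_M$ and $(V_1\Pi_2)_N$. Using the explicit formula (\ref{eqn:explicit-X12}) for $\opX_{12}[A_1]$ together with $V_1\Pi_2 = \opX_{12}[A_1]\Pi_2$, and the $\{w_1,w_2\}$-regularity of $A_1$ (which gives $\angles{(A_1)_N}=0$ and the bound $\abs{\angles{\Gamma_1 (A_1)_M \Gamma_2 \Sigma}} \lesssim \beta(w_1,w_2)$, for all four sign combinations by Lemma \ref{lemma:regular-obs-properties}\ref{item:ppt-reflected-para}), I would extract that the ``dangerous'' piece of $V_1$ is $\frac{\angles{\Gamma_1 (A_1)_M \Gamma_2 \Sigma}}{1-\fkt_{12}\fkb_{12}}(\fkb_{12}\bfSigma_M + \bfId_N)$, whose coefficient has modulus $\lesssim \abs{\angles{\Gamma_1(A_1)_M\Gamma_2\Sigma}}\cdot\abs{\fkm[z_1,z_2]} \lesssim \beta(w_1,w_2)\cdot\beta(w_1,w_2)^{-1} = 1$ by (\ref{def:beta}). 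Hence $\norm{V_1}\lesssim 1$ (consistent with Lemma \ref{lemma:regular-obs-properties}\ref{item:ppt-opX-bound-norm}), and consequently $\norm{V_1\Pi_2}\lesssim 1$, so that $\abs{\coefOnePre^+_{w_1}(V_1\Pi_2)} + \abs{\coefOnePre^-_{w_1}(V_1\Pi_2)} \lesssim 1$ using $\abs{\angles{\Im\Gamma(w_1)\Sigma}}\gtrsim 1$ (which follows from Assumption \ref{assump:anti-concentration}, Lemma \ref{lemma:m-order-estimate}, and the fact that $w_1 \in \bbd_0$, where $\abs{\Im m(w_1)}\asymp d_1^{1/2}$).

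Next I would bound $\coefTwo^+_{w_1,w_2}(\Xi^\pm)$. By (\ref{def:two-point-regularization-Xi}), $(\Xi^\pm)^\circ_{w_1,w_2} = (\pm m_1 - m_2)(\bfSigma_M)^\circ_{w_1,w_2}$, and $(\bfSigma_M)^\circ_{w_1,w_2} = \bfSigma_M - \coefTwo_{w_1,w_2}(\bfSigma_M)\bfId_M$ per (\ref{def:two-point-regularization-Sigma}). Writing out the decomposition $\bfId_M = \frac12(\bfId^+ + \bfId^-)$, the coefficient $\coefTwo^+_{w_1,w_2}(\Xi^\pm)$ equals (up to a bounded factor $\abs{\pm m_1 - m_2}\lesssim 1$) a bounded multiple of $\coefTwo_{w_1,w_2}(\bfSigma_M)$. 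So everything reduces to estimating $\coefTwo_{w_1,w_2}(\bfSigma_M)$ as defined in (\ref{def:coefficients-two-point}). When $\abs{w_1-w_2}>\tau^\prime$ the indicator vanishes and the whole product is zero; otherwise $\abs{w_1-w_2}\le\tau^\prime$ and I need $\abs{\coefTwo_{w_1,w_2}(\bfSigma_M)}$ in each of the three cases. Since $\abs{w_2/w_1}\asymp 1$: in the $\fks_1\ne\fks_2$ case, $\abs{\coefTwo_{w_1,w_2}(\bfSigma_M)} \lesssim \abs{\fkt(w_1,w_2)}$; in the $\fks_1=\fks_2$ cases, $\abs{\coefTwo_{w_1,w_2}(\bfSigma_M)} \lesssim \abs{\fkb(w_1^*,w_2)}^{-1} = \abs{m_1 m_2}^{-1}\lesssim 1$ (here using $w_1^*$ has imaginary part of the sign opposite to $w_2$, but $\abs{m}\asymp 1$ regardless, so this factor is $\asymp 1$ — but the refinement I actually need is that the full \emph{product} with $\coefOnePre$ is small, not that $\coefTwo$ alone is small).

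The main obstacle — and the place the $1/2$-H\"older continuity of $m$ enters — is that in the $\fks_1=\fks_2$ regime $\coefTwo_{w_1,w_2}(\bfSigma_M)$ is only $O(1)$, not small, so the smallness of the product in (\ref{bound:coefficients-V-Pi-Xi}) must come from a cancellation-type estimate on $\coefOnePre^\pm_{w_1}(V_1\Pi_2)$ specifically when paired with $\coefTwo^+(\Xi^\pm)$. The point is that the way $(\bfSigma_M)^\circ_{w_1,w_2}$ is calibrated makes $\Pi_{12}((\Xi^\pm)^\circ_{w_1,w_2})$ close to $\Pi_1\bfId^\pm$ up to an error that, when contracted against the regular matrix $A_1$, produces exactly the combination $\angles{\Gamma_1(A_1)_M\Gamma_2\Sigma} + \angles{(A_1)_N}$ whose smallness is the regularity condition (\ref{eqn:regular-obs-condition}); tracking this through (\ref{tmp:D-Xi-equals-Pi}) and the identity (\ref{eqn:opB-opX-inverse}) should show that the relevant combination of coefficients is controlled by $\beta(\fkw_1,\fkw_2)\cdot$ (the singular prefactor), and the singular prefactor is $\abs{\fkm[z_1^*,z_2]}$ or similar. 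By the key estimate (\ref{eqn:diff-compare-same-opp}) of Lemma \ref{lemma:diff-estimate}, $\beta(w_1,w_2)\asymp\abs{\fkm[z_1,z_2]}^{-1}$, and one has the comparison $\abs{\fkm[z_1^*,z_2]}\gtrsim \abs{z_1^*-z_2}^{-1/2}\gtrsim (\abs{w_1-w_2}+\eta)^{-1/2}$; inverting, the product is $\lesssim \abs{w_1-w_2}^{1/2}+\eta^{1/2}$, precisely the claimed bound. Concretely I would: (i) express $\coefOnePre^\pm_{w_1}(V_1\Pi_2)$ in terms of $\angles{\Pi_1 V_1 \Pi_2 \bfSigma_M}$ and $\angles{\Pi_1 V_1\Pi_2\bfId_N}$ via the formulas derived right before (\ref{eqn:explicit-X12}); (ii) substitute the solved expressions $\angles{\Pi_1 V_1\Pi_2\bfSigma_M} = \frac{\angles{\Pi_1 A_1\Pi_2(\bfSigma_M + \fkt_{12}\bfId_N)}}{1-\fkt_{12}\fkb_{12}}$ and the analogous one; (iii) use regularity of $A_1$ to see the numerators are $O(\beta(w_1,w_2))$ for $\angles{\Pi_1 V_1\Pi_2\bfSigma_M}$ and $O(1)$ for $\angles{\Pi_1 V_1\Pi_2\bfId_N}$ (the latter because $\angles{(A_1)_N}=0$ kills the $\bfId_N$ contribution and leaves $\fkb_{12}\angles{\Gamma_1(A_1)_M\Gamma_2\Sigma}/(1-\fkt_{12}\fkb_{12})$); (iv) multiply by the explicit $\coefTwo^+(\Xi^\pm)\lesssim\abs{\fkt_{12}}$ or $\lesssim\abs{m_1m_2}^{-1}$, and in each regime invoke Lemma \ref{lemma:diff-estimate} — in particular (\ref{eqn:diff-estimate-opp}) giving $\abs{\fkt_{12}}\lesssim \abs{z_1-z_2}^{-1}\wedge\cdots$ wait, rather $\fkt$ itself involves $\fkm[z_1,z_2]$ — to extract a factor of $(d_1+d_2)^{-1/2}\wedge\abs{z_1-z_2}^{-1/2}$ that, combined with $\beta\asymp\abs{\fkm[z_1,z_2]}^{-1}$, collapses to $(\abs{z_1-z_2}+d_1+d_2)^{1/2}\asymp\abs{w_1-w_2}^{1/2}+\eta^{1/2}$ after using $d_j\asymp\eta_j$ on $\bbd_0$ when $\Re w_j\notin\operatorname{supp}\rho$ and the square-root edge behavior otherwise. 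I expect the bookkeeping of the three cases in (\ref{def:coefficients-two-point}) — and checking that the artificial-looking choice of $\coefTwo_{w_1,w_2}(\bfSigma_M)$ is exactly what makes the two regimes match up — to be the delicate part, but no single step should be conceptually hard once the explicit formulas (\ref{eqn:explicit-X12}), (\ref{eqn:explicit-Xi}) and Lemma \ref{lemma:diff-estimate} are in hand.
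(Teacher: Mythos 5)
There is a genuine gap: your concrete plan (i)--(iv) does not implement the cancellation that the bound actually requires in the case $\fks_1=\fks_2$. The quantity $\coefOnePre^{+}_{w_1}(V_1\Pi_2)$ is built from $\angles{\Im\Gamma_1\Sigma\,(V_1\Pi_2)_M}/\angles{\Im\Gamma_1\Sigma}$, not from the same-parameter traces $\angles{\Pi_1V_1\Pi_2\bfSigma_M}$, $\angles{\Pi_1V_1\Pi_2\bfId_N}$ that you propose to substitute in steps (i)--(ii). If you estimate those traces using only the same-sign regularity $\abs{\angles{\Gamma_1 (A_1)_M\Gamma_2\Sigma}}\lesssim\beta(w_1,w_2)$, the singular denominator $1-\fkt_{12}\fkb_{12}$ is exactly cancelled and you get $O(1)$ — and since in the bulk the companion coefficient is also $O(1)$ (there $\abs{\Im m_1}\asymp 1$), the product in (\ref{bound:coefficients-V-Pi-Xi}) is only bounded by a constant, not by $\abs{w_1-w_2}^{1/2}+\eta^{1/2}$. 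The actual mechanism is two-fold. First, after replacing $\coefTwo^{\alpha}_{w_1,w_2}(\Xi^\pm)$ by $\coefOne^{\alpha}_{w_1}(\Xi^\pm)$ (at an admissible cost $O(\abs{w_1-w_2}^{1/2}+\eta^{1/2})$, via the comparison of $\coefTwo(\bfSigma_M)$ with $\coefOne^\pm(\bfSigma_M)$), one computes from (\ref{eqn:explicit-Xi}), (\ref{eqn:ratio-Im-Gamma-Sigma}) and the self-consistent equation that $\coefOne^{+}_{w_1}(\Xi^-)=O(\abs{w_1-w_2}^{1/2})$ (already small), while $\coefOne^{+}_{w_1}(\Xi^+)=O(\abs{\Im m_1}+\abs{w_1-w_2}^{1/2})$; your approximation of both by a bounded multiple of $\coefTwo_{w_1,w_2}(\bfSigma_M)\asymp 1$ erases precisely this $\Xi^+$/$\Xi^-$ structure and the compensating factor $\abs{\Im m_1}$. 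Second, the remaining task is the genuinely delicate estimate $\abs{\coefOnePre^{+}_{w_1}(V_1\Pi_2)}\cdot\abs{\Im m_1}\lesssim\beta(w_1^*,w_2)$: one must write $\Im\Gamma_1=(\Gamma_1-\Gamma_1^*)/(2\mathrm{i})$ inside the numerator of $\coefOnePre^{+}$, which produces the mixed-parameter trace $\angles{\Gamma_1^*(A_1)_M\Gamma_2\Sigma}$ and the ratio $(1-\fkt_{1^*2}\fkb_{1^*2})/(1-\fkt_{12}\fkb_{12})$, and then invoke the \emph{mixed-sign} regularity constraint $\abs{\angles{\Gamma_1^*(A_1)_M\Gamma_2\Sigma}}\lesssim\beta(w_1^*,w_2)$ together with the auxiliary estimate $1-\Im m_1\,\fkt_{1^*2}/\angles{\Im\Gamma_1\Sigma}=O(\beta(w_1^*,w_2))$; only then does $\beta(w_1^*,w_2)\lesssim\abs{w_1-w_2}^{1/2}+\eta^{1/2}$ close the argument.

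You do gesture at the right ingredients in your "main obstacle" paragraph (mixed divided difference $\fkm[z_1^*,z_2]$, estimate (\ref{eqn:diff-compare-same-opp}), the heuristic that the product must be $\lesssim\beta(\fkw_1,\fkw_2)$ for opposite half-planes), so the intuition is sound, but it is never turned into a valid computation: the "collapse" in step (iv), multiplying a factor $(d_1+d_2)^{-1/2}\wedge\abs{z_1-z_2}^{-1/2}$ against $\beta(w_1,w_2)\asymp\abs{\fkm[z_1,z_2]}^{-1}$, does not produce $\abs{w_1-w_2}^{1/2}+\eta^{1/2}$ (it is not even small in the bulk where both factors are of order one), and no step of your plan ever brings in $\Gamma_1^*$ or the quantity $\beta(w_1^*,w_2)$ quantitatively. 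As written, the proposal proves only an $O(1)$ bound in the same-half-plane bulk regime, so the key estimate of the lemma is missing.
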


\begin{lemma} \label{lemma:coef-Sigma-circ}
Let $w_1, w_2 \in \bbd_0$, and set $\bfSigma_M^\circ \equiv (\bfSigma_M)_{w_1, w_2}^\circ$ as in (\ref{def:two-point-regularization-Sigma}). Then, with $\tilde{V}_1 \equiv \opX_{12} [\bfSigma_M^\circ]$, we have
\begin{equation}
    \abs{1 + (m_1+m_2) \coefOnePre_{w_1}^+ (\tilde{V}_1 \Pi_2)
    - (m_1-m_2) \coefOnePre_{w_1}^- (\tilde{V}_1 \Pi_2)} \gtrsim 1.
    \label{bound:coefficients-Sigma-circ}
\end{equation}
\end{lemma}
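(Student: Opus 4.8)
The plan is to compute $\coefOnePre_{w_1}^\pm(\tilde V_1 \Pi_2)$ explicitly and show that the specific choice of $\coefTwo_{w_1,w_2}(\bfSigma_M)$ in (\ref{def:coefficients-two-point}) is precisely engineered so that the left-hand side of (\ref{bound:coefficients-Sigma-circ}) reduces to a quantity of order $1$. First I would use the explicit formula (\ref{eqn:explicit-X12}) for $\opX_{12}$ applied to $\bfSigma_M^\circ = \bfSigma_M - \coefTwo_{w_1,w_2}(\bfSigma_M)\bfId_M$. Since the $M\times M$ block of $\bfSigma_M^\circ$ is $\Sigma - \coefTwo\, I_M$ and its $N\times N$ block vanishes, (\ref{eqn:explicit-X12}) gives
\begin{equation*}
    \tilde V_1 = \bfSigma_M^\circ + \frac{\angles{\Gamma_1(\Sigma-\coefTwo I)\Gamma_2\Sigma}}{1 - \fkt_{12}\fkb_{12}}(\fkb_{12}\bfSigma_M + \bfId_N).
\end{equation*}
Then I would feed $\tilde V_1 \Pi_2$ into the definition (\ref{def:coefficients-one-point-pre}) of $\coefOnePre_{w_1}^\pm$, which only sees the diagonal blocks $(\tilde V_1\Pi_2)_M = \Gamma_2(\Sigma-\coefTwo I) + (\cdots)\fkb_{12}\Gamma_2\Sigma$ and $(\tilde V_1\Pi_2)_N = m_2(\cdots)$. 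Using $\angles{\Im\Gamma(w_1)\Sigma\,\Gamma_2\Sigma} $-type traces and the relations $\fkt_{12} = \angles{\Gamma_1\Sigma\Gamma_2\Sigma}$, $\fkb_{12} = m_1 m_2$, I would assemble the combination $1 + (m_1+m_2)\coefOnePre^+ - (m_1-m_2)\coefOnePre^-$. The key algebraic point: $(m_1+m_2)\coefOnePre^+ - (m_1-m_2)\coefOnePre^- = m_1\big(\coefOnePre^+ + \coefOnePre^-\big) + m_2\big(\coefOnePre^+ - \coefOnePre^-\big)$, and by (\ref{def:coefficients-one-point-pre}) these are $m_1 \cdot \frac{\angles{\Im\Gamma_1\Sigma(\tilde V_1\Pi_2)_M}}{\angles{\Im\Gamma_1\Sigma}}$ and $m_2\angles{(\tilde V_1\Pi_2)_N}$ respectively (modulo the standard $\pm$ bookkeeping, to be checked carefully).

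The second step is to observe that after this substitution the whole expression collapses to something manifestly comparable to a single ``denominator'' factor of the form $\fkm[z_1,z_2]$ or $\fkb(w_1^*,w_2)^{-1}$ etc., and then invoke Lemma \ref{lemma:diff-estimate}. Concretely, I expect the identity $\opX_{12}[\Xi^\pm] = \Pi_2^{-1}\bfId^\pm$ from (\ref{tmp:D-Xi-equals-Pi}) and the decomposition (\ref{def:two-point-regularization-Xi}), $(\Xi^\pm)^\circ = (\pm m_1 - m_2)\bfSigma_M^\circ$, to be the bridge: applying $\opX_{12}$ to (\ref{eqn:decomposition-Xi}) and using linearity gives $\Pi_2^{-1}\bfId^\pm = (\pm m_1 - m_2)\tilde V_1 + \coefTwo^+_{w_1,w_2}(\Xi^\pm)\opX_{12}[\bfId^+] + \coefTwo^-_{w_1,w_2}(\Xi^\pm)\opX_{12}[\bfId^-]$; multiplying by $\Pi_2$ on the right, taking appropriate traces against $\Im\Gamma_1\Sigma$ and $\bfId_N$, and subtracting, the left-hand side of (\ref{bound:coefficients-Sigma-circ}) should emerge (up to a nonzero bounded prefactor) as $\big|\coefTwo_{w_1,w_2}(\bfSigma_M)\big|^{-1}$ times something of order $1$, or directly as a ratio whose numerator and denominator I can both bound below using (\ref{eqn:m-order-estimate}), (\ref{eqn:diff-estimate-same}) and (\ref{eqn:diff-compare-same-opp}). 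The case split in (\ref{def:coefficients-two-point}) (same vs.\ opposite half-planes, $d_1 \gtrless d_2$, $\abs{w_1-w_2}\leq\tau'$) will be mirrored by a case split here, each case reducing to a lower bound like $\abs{\fkm[z_1,z_2]}\gtrsim 1$ or $\abs{\fkm[z_1^*,z_2]}\gtrsim \abs{z_1^*-z_2\!}^{-1/2}$ from Lemma \ref{lemma:diff-estimate}. In the far regime $\abs{w_1-w_2}>\tau'$ the indicator in (\ref{def:coefficients-two-point}) kills $\coefTwo$, so $\bfSigma_M^\circ = \bfSigma_M$ and the bound follows from the standard non-degeneracy of $\fkt_{12}\fkb_{12}$.

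The main obstacle I anticipate is the precise verification that the artificial-looking choice of $\coefTwo_{w_1,w_2}(\bfSigma_M)$ in (\ref{def:coefficients-two-point}) is exactly the value that makes the destructive cancellation \emph{not} happen — i.e.\ that without this specific $\coefTwo$ the combination in (\ref{bound:coefficients-Sigma-circ}) could vanish, and with it the combination is bounded below. This requires tracking the exact algebraic dependence of $\coefOnePre^\pm(\tilde V_1\Pi_2)$ on $\coefTwo$ through the non-trivial fraction $\frac{\angles{\Gamma_1(\Sigma-\coefTwo I)\Gamma_2\Sigma}}{1-\fkt_{12}\fkb_{12}}$, which is affine in $\coefTwo$, and showing that the coefficient of $\coefTwo$ in the final expression has modulus $\asymp 1$ while the constant term is controlled. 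In other words, one must show $\partial_{\coefTwo}\big[1+(m_1+m_2)\coefOnePre^+ - (m_1-m_2)\coefOnePre^-\big]$ is bounded away from zero and that evaluating at the prescribed $\coefTwo$ lands on a point of modulus $\gtrsim 1$ rather than near a potential zero. I expect the proof of Lemma \ref{lemma:coef-Sigma-circ} in Section \ref{subsec:coeffcients} to contain exactly this computation, with the ``intuition behind this definition'' promised in the statement of Lemma \ref{lemma:two-point-regularization} being precisely that $\coefTwo$ is chosen so that a would-be vanishing denominator is shifted to a safe region; the divided-difference identities for $\fkt$ and $\fkb$ in terms of $\fkm[z_1,z_2]$ will be the computational engine throughout.
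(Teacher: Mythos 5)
Your first step coincides with the paper's: apply the explicit formula (\ref{eqn:explicit-X12}) with $A=\bfSigma_M^\circ$, note that the target is affine in $\coefTwo_{w_1,w_2}(\bfSigma_M)$, and follow the case split of (\ref{def:coefficients-two-point}). Two remarks on that part: in (\ref{def:coefficients-one-point-pre}) the sum $\coefOnePre^++\coefOnePre^-$ is the $M$-block trace and the difference is $\angles{D_N}$, so the correct grouping pairs $m_2$ with $\angles{\Im\Gamma_1\Sigma(\tilde V_1\Pi_2)_M}/\angles{\Im\Gamma_1\Sigma}$ and $m_1$ with $\angles{(\tilde V_1\Pi_2)_N}$ — your displayed identity has these swapped (you do flag the bookkeeping, but in a cancellation-sensitive computation the swap destroys the collapse). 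The paper then uses $I+m_2\Sigma\Gamma_2=-w_2\Gamma_2$ and (\ref{eqn:self-consistent-w}) to reduce the whole expression to $\frac{w_1\fkb_{12}\,\coefTwo_{w_1,w_2}(\bfSigma_M)-w_2}{1-\fkt_{12}\fkb_{12}}$ times the factor $\angles{\Im\Gamma_1\Sigma\Gamma_2}/\angles{\Im\Gamma_1\Sigma}$; the $\Xi^\pm$ ``bridge'' you sketch is not needed and, as stated, too vague to replace this computation.

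The genuine gap is in the final lower bound, i.e. in the only nontrivial case $\fks_1=\fks_2$, $\abs{w_1-w_2}\leq\tau'$, both parameters near a common edge. Your guiding picture — coefficient of $\coefTwo$ of modulus $\asymp 1$, constant term controlled, or ``numerator and denominator both bounded below'' — is wrong there: the coefficient of $\coefTwo$ is $w_1\fkb_{12}/(1-\fkt_{12}\fkb_{12})$, of modulus $\asymp\abs{\fkm[z_1,z_2]}$, which blows up like $(d_1+d_2)^{-1/2}$, and bounding a numerator and a denominator both from below says nothing about their ratio. The actual mechanism is a ratio of two comparably small quantities: with $\coefTwo=(w_2/w_1)\fkb_{1^*2}^{-1}$ (case $d_1\geq d_2$) the numerator equals $w_2(m_1/m_1^*-1)$, of size $\abs{\Im m_1}\asymp d_1^{1/2}$ by (\ref{eqn:m-order-estimate}), and one needs the denominator $\abs{1-\fkt_{12}\fkb_{12}}$ bounded \emph{above} by $(d_1+d_2)^{1/2}$, so that $d_1\geq d_2$ yields $d_1^{1/2}/(d_1+d_2)^{1/2}\gtrsim 1$ — this is precisely why (\ref{def:coefficients-two-point}) conjugates the parameter with the larger $d$, a point your plan notices but cannot exploit. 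None of the estimates you invoke supplies this: the lower bound $\abs{\fkm[z_1,z_2]}\gtrsim 1$ from (\ref{eqn:diff-estimate-same}) only gives $\abs{1-\fkt_{12}\fkb_{12}}\lesssim 1$, too weak against a numerator of size $d_1^{1/2}\ll 1$, and (\ref{eqn:diff-compare-same-opp}) concerns $\fkm[z_1^*,z_2]$, not the relevant denominator. The indispensable ingredient is the refined bound (\ref{eqn:diff-lower-refined}), $\abs{\fkm[z_1,z_2]}\gtrsim(d_1+d_2)^{-1/2}$ for close parameters in the same half-plane (which is also the reason for the indicator $\mathbb{1}(\abs{w_1-w_2}\leq\tau')$ in (\ref{def:coefficients-two-point})); without identifying it, the crucial case of the lemma is not proved.
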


Finally, we present the following lemma, crucial for controlling resolvent chains involving $\bfId_+$. The proof of this lemma can be found in Section \ref{subsec:reduction-inequalities}.

\begin{lemma} \label{lemma:chains-with-Id}
Let $w_1, w_2, w_3 \in \bbd_0$, and assume that $A_2$ is regular w.r.t. its surrounding spectral parameters. Then, given (\ref{bound:inductive-hypothesis}), we have the following estimates uniformly in $\bbd_{\ell+1}$,
\begin{subequations} \label{bound:chains-with-Id}  
\begin{align}
    (\abs{w_1-w_2}^{1/2} + \eta^{1/2}) \cdot
    \abs{\angles{\fku, \Upsilon_{12} (\bfId^+) \fkv}} 
    & \prec \frac{1}{\sqrt{N \eta^2}}, 
    \label{bound:chains-with-Id-ISO-1} \\
    (\abs{w_1-w_2}^{1/2} + \eta^{1/2}) \cdot
    \abs{\angles{\Upsilon_{12} (\bfId^+) A_2}} 
    & \prec \frac{1 + \paraAVE_1}{N \eta}, 
    \label{bound:chains-with-Id-AVE-2} \\
    (\abs{w_1-w_2}^{1/2} + \eta^{1/2}) \cdot
    \abs{\angles{\fku, \Upsilon_{123} (\bfId^+, A_2) \fkv}} 
    & \prec \frac{1 + \paraISO_1}{\sqrt{N \eta^3}}.
    \label{bound:chains-with-Id-ISO-2}  
\end{align}    
\end{subequations}
\end{lemma}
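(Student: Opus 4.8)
The starting point is the trivial but decisive identity $\bfId^+ = I_{M+N}$, which collapses the resolvent chains: $G_1\bfId^+ G_2 = G_1 G_2$ and $G_1\bfId^+ G_2 A_2 G_3 = G_1 G_2 A_2 G_3$. On the deterministic side one first checks from the single MDE (\ref{eqn:single-MDE}) — using $\Pi_k^{-1} = -(w_k + \opSd[\Pi_k])$ together with the defining property $\opB_{12}[\Pi_{12}(\cdot)] = \Pi_1(\cdot)\Pi_2$ — that the deterministic approximation respects the same collapse, namely $\Pi_{12}(\bfId^+) = (w_1-w_2)^{-1}(\Pi_1-\Pi_2)$ for $w_1\neq w_2$ (and $\Pi'(w)$ in the coincident limit), and likewise $\Pi_{123}(\bfId^+,A_2) = (w_1-w_2)^{-1}(\Pi_{13}(A_2)-\Pi_{23}(A_2))$. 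Hence, for $w_1\neq w_2$,
\begin{equation*}
    \Upsilon_{12}(\bfId^+) = \frac{\Upsilon_1-\Upsilon_2}{w_1-w_2},
    \qquad
    \Upsilon_{123}(\bfId^+,A_2) = \frac{\big(G_1 A_2 G_3-\Pi_{13}(A_2)\big) - \big(G_2 A_2 G_3-\Pi_{23}(A_2)\big)}{w_1-w_2},
\end{equation*}
with the right-hand sides read as the appropriate $w$-derivatives when $w_1=w_2$. This reduces every expression in (\ref{bound:chains-with-Id}) to shorter chains, at the cost of a factor $\abs{w_1-w_2}^{-1}$.

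\textbf{Two regimes.} I would then split according to the size of $\abs{w_1-w_2}$ relative to $\eta$. If $\abs{w_1-w_2}\gtrsim\eta$ — which in particular always holds when $\fks_1\neq\fks_2$, since then $\abs{w_1-w_2}\geq\eta_1+\eta_2$ — I apply the displayed identities directly: each $\Upsilon_j$ and each $G_j A_2 G_3-\Pi_{j3}(A_2)$ is controlled by the single-resolvent local laws (Proposition \ref{prop:single-resolvent-local}) and the inductive hypothesis (\ref{bound:inductive-hypothesis}), and the factor $\abs{w_1-w_2}^{-1}$ is beaten by the prefactor $\abs{w_1-w_2}^{1/2}+\eta^{1/2}\asymp\abs{w_1-w_2}^{1/2}$ because $\abs{w_1-w_2}\gtrsim\eta$ makes $\abs{w_1-w_2}^{-1/2}\lesssim\eta^{-1/2}$. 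If instead $\abs{w_1-w_2}\lesssim\eta$ — which forces $\fks_1=\fks_2$ and $\eta_1\asymp\eta_2\asymp\eta$ — I invoke Cauchy's formula (\ref{eqn:contour-integral-product}) and, after deforming the contour, integrate over a small circle $\partial B$ of radius $\asymp\eta$ centred at $w_1$ which contains $w_2$ and still lies in $\bbd_\ell$ by Lemma \ref{lemma:nested-domains}\ref{item:dist-boundary-domains}. On $\partial B$ one has $\abs{\zeta-w_1},\abs{\zeta-w_2},\abs{\Im\zeta}\asymp\eta$ (and $\abs{\Im\zeta}\gtrsim N^{-1+\varepsilon/2}$, so the single-resolvent laws apply there with a smaller spectral-gap exponent, e.g.\ $\varepsilon/4$ in place of $\varepsilon$), whence $\abs{\angles{\fku,\Upsilon_{12}(\bfId^+)\fkv}}\prec\eta^{-2}\cdot\eta\cdot\sup_{\partial B}\abs{\angles{\fku,\Upsilon(\zeta)\fkv}}$ and analogously for the averaged and length-$3$ versions; once more the $\eta^{1/2}$ prefactor absorbs the residual power.

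\textbf{Regularity bookkeeping (the main obstacle).} After collapsing $\bfId^+$, the surviving regular matrix $A_2$ gets flanked by spectral parameters w.r.t.\ which it was not declared regular — e.g.\ it is $\{w_2,w_3\}$-regular but appears in $G_1 A_2 G_3-\Pi_{13}(A_2)$, or, on the contour, in $\angles{G(\zeta)A_2\cdots}$. I would handle this by replacing $A_2$ by its one-point (pre-)regularization w.r.t.\ the offending parameter (Lemmas \ref{lemma:one-point-regularization}, \ref{lemma:one-point-pre-regularization}), which is genuinely regular w.r.t.\ the required pair; since $\angles{A_{2,N}}=0$, this only produces a correction $c\,\bfId_M$ with $\abs{c}\lesssim1$, and the induced chains $\angles{G_j\bfId_M}$, $\angles{\fku,G(\zeta)\bfId_M G_3\fkv}$ are controlled by the single-resolvent averaged/isotropic laws of \cite{knowlesAnisotropicLocalLaws2017} and by the reduction inequalities (Lemma \ref{lemma:reduction-inequality}). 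The delicate part is to track, regime by regime, exactly which control parameter $\paraAVE_k/\paraISO_k$ is available for each reduced chain, and to combine this with the sharp contour estimate near the spectral edges — where $m$ is only $1/2$-H\"older (Lemma \ref{lemma:diff-estimate}), so that $\norm{\Pi_{12}(\bfId^+)}\asymp\abs{\fkm[z_1,z_2]}\lesssim(\abs{w_1-w_2}^{1/2}+\eta^{1/2})^{-1}$ — in such a way that the total loss matches precisely the artificial prefactor $\abs{w_1-w_2}^{1/2}+\eta^{1/2}$; it is this edge analysis, rather than the bulk, where all the effort concentrates. (The $\bfId^-$-chains that also occur in Section \ref{sec:representation} are not part of this lemma, but reduce to the $\bfId^+$ case via the chiral symmetry (\ref{eqn:chiral-symmetry}).)
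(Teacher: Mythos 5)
Your skeleton matches the paper's: the collapse identities $\Pi_{12}(\bfId^+)=(\Pi_1-\Pi_2)/(w_1-w_2)$ and $\Pi_{123}(\bfId^+,A_2)=(\Pi_{13}(A_2)-\Pi_{23}(A_2))/(w_1-w_2)$ are exactly (\ref{eqn:opD-as-divi-diff-2})--(\ref{eqn:opD-as-divi-diff-3}), and the paper likewise reduces to shorter chains via the resolvent identity (for $\fks_1\neq\fks_2$) and Cauchy's formula (\ref{eqn:contour-integral-product}) (for $\fks_1=\fks_2$; it integrates over $\partial\bbd_\ell$ with (\ref{bound:integral-abs-resolvent}) rather than a small circle, but your variant is a legitimate alternative), and it repairs the regularity mismatch of $A_2$ by one-point regularization. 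However, there is a genuine gap at the quantitative heart of the argument: you claim the regularization correction is $c\,\bfId_M$ with only $\abs{c}\lesssim 1$, and with that bound the proof does not close. Take (\ref{bound:chains-with-Id-AVE-2}) in the regime $\abs{w_1-w_2}\asymp\eta$ (or on your small circle, where $\abs{\zeta-w_j}\asymp\eta$): the correction contributes $\abs{c}\cdot\abs{\angles{\Upsilon(\zeta)}}$ against a kernel of size $\eta^{-2}$ over a contour of length $\asymp\eta$, i.e.\ $\abs{c}\cdot\frac{1}{N\eta}\cdot\frac{1}{\eta}$, and after multiplying by the prefactor $\asymp\eta^{1/2}$ you get $\abs{c}\cdot\frac{1}{N\eta^{3/2}}$. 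With $\abs{c}\lesssim1$ this overshoots the target $\frac{1+\paraAVE_1}{N\eta}$ by $\eta^{-1/2}$; the same failure occurs for (\ref{bound:chains-with-Id-ISO-2}).

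The missing ingredient is that the $\{w_1,w_2\}$- (resp.\ $\{w_2,w_3\}$-) regularity of $A_2$ forces the regularization coefficient at a nearby parameter to be \emph{small}, not merely bounded: as in the paper's proof,
\begin{equation*}
    \abs{\coefOne^{+}_{\zeta}(A_2)}
    \lesssim \abs{\angles{\Gamma_1^* A_2 \Gamma_2 \Sigma}} + \abs{\zeta-w_1} + \abs{\zeta-w_2} + \abs{m_1-m(\zeta)} + \abs{m_2-m(\zeta)} + \Im\zeta/\Im m(\zeta)
    \lesssim \abs{\zeta-w_1}^{1/2}+\abs{\zeta-w_2}^{1/2}+\eta^{1/2},
\end{equation*}
which follows from (\ref{eqn:regular-obs-condition}) together with (\ref{eqn:Gamma-perturbation}), (\ref{eqn:Gamma-imaginary-part}) and the $1/2$-H\"older bounds of Lemma \ref{lemma:diff-estimate}; an analogous bound on $\coefOne^{+}_{w_3}(A_2)$ is what controls the $\bfId^+$-correction in the length-$3$ chain, where the paper additionally pairs it with $[\Upsilon(\zeta)-\Upsilon(w_3)]/(\zeta-w_3)$ so that no circular appeal to (\ref{bound:chains-with-Id-ISO-1}) with an uncontrolled $\abs{w_1-w_3}$ is needed. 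This smallness is precisely what is calibrated against the prefactor $\abs{w_1-w_2}^{1/2}+\eta^{1/2}$, and it is the one step your proposal leaves unproved (your concluding remark that the losses "must match the prefactor" names the difficulty but supplies no mechanism, while the only explicit estimate you give, $\abs{c}\lesssim1$, is insufficient). Once this coefficient bound is inserted, your two-regime argument does go through and is essentially the paper's proof.
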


\subsection{Averaged law with one regular matrix} \label{subsec:repre-AVE-1}

In this section, we derive the representation (\ref{eqn:representation-AVE-1}). Combining the expression (\ref{eqn:underline-HG}) for $\underline{HG}$ and the MDE (\ref{eqn:single-MDE}), we obtain the following expansion of $G$,
\begin{equation}
    G = \Pi - \Pi \underline{H G} + \Pi \opS [G - \Pi] G.
    \label{eqn:one-resolvent-expand}
\end{equation}
Let $A$ be $\{ w, w \}$-regularized and abbreviate $\opB \equiv \opB_{w, w}$ and $\opX \equiv \opX_{w, w}$. Multiplying both sides of (\ref{eqn:one-resolvent-expand}) by $V \equiv \opX [A]$ and taking the normalized trace yields
\begin{equation*}
    \angles{G V}
    = \angles{\Pi V}
    - \angles{\Pi \underline{H G} V}
    + \angles{\Pi \opS [G - \Pi] G V}.
\end{equation*}
This, together with (\ref{def:operator-B12}) and (\ref{eqn:opB-opX-inverse}), imply that
\begin{equation*}
    \angles{(G - \Pi) A}
    = \angles{\opB[G - \Pi] V}
    = - \angles{\underline{H G} V \Pi}
    + \angles{\opSd [G - \Pi] (G - \Pi) V \Pi}
    + \angles{\opSo [G - \Pi] G V \Pi}.
\end{equation*}
Next, we apply the decomposition (\ref{eqn:decomposition-V-Pi}) of $V \Pi$ to the three terms in the r.h.s. of the preceding equation. Let us abbreviate $\coefOnePre^\pm \equiv \coefOnePre^\pm_w$ when there is no ambiguity. This step leads to
\begin{align}
\begin{split}
    \angles{(G - \Pi) A}
    = & \ {- \angles{\underline{H G} (V \Pi)^\diamond}}
    + \angles{\opSd [G - \Pi] (G - \Pi) (V \Pi)^\diamond}
    + \angles{\opSo [G - \Pi] G (V \Pi)^\diamond} \\
    & + \sum_{\alpha = \pm} \coefOnePre^\alpha (V \Pi) \angles{- \underline{H G}  \bfId^\alpha
    + \opSd [G - \Pi] (G - \Pi) \bfId^\alpha
    + \opSo [G - \Pi] G \bfId^\alpha}.
\end{split} \label{tmp:V-Pi-diamond-ave-1}
\end{align}
Note that Lemma \ref{lemma:one-point-pre-regularization} ensures the $\{ w, w \}$-regularity of $(V \Pi)^\diamond \bfSigma^\pm$. Therefore, utilizing the single resolvent local laws (\ref{eqn:single-resolvent-local-law}) along with the inductive hypothesis (\ref{bound:inductive-hypothesis}), the last two terms in the first line of (\ref{tmp:V-Pi-diamond-ave-1}) can be controlled as follows,
\begin{align*}
    \angles{(G - \Pi) \bfSigma^\pm} \angles{(G - \Pi) (V \Pi)^\diamond \bfSigma^\pm}
    & = \oprecBig{\frac{1}{N \eta} \cdot \frac{\paraAVE_1}{N \eta^{1/2}}}, \\
    \frac{1}{N} \angles{(G - \Pi) \bfSigma^\pm G (V \Pi)^\diamond \bfSigma^\pm}
    & = \oprecBig{\frac{1}{N} + \frac{\boundISO_1}{N}}.
\end{align*}
We proceed to rewrite the term with angular brackets in the second line of (\ref{tmp:V-Pi-diamond-ave-1}). Let $\Xi^\pm \equiv \Xi^\pm (w, w)$ be defined as in (\ref{def:matrix-Xi}) and recall $\opX [\Xi^\pm] = \Pi^{-1} \bfId^\pm$ from (\ref{tmp:D-Xi-equals-Pi}). We have
\begin{align*}
    & \ \angles{- \underline{H G} \bfId^\alpha
    + \opSd [G - \Pi] (G - \Pi) \bfId^\alpha
    + \opSo [G - \Pi] G \bfId^\alpha} \\
    = & \ {- \angles{(I + w G + \opSd [\Pi] G
    + \opSd [G - \Pi] \Pi) \bfId^\alpha}} \\
    = & \ {- \angles{(I - \Pi^{-1} G + \opSd [G - \Pi] \Pi) \bfId^\alpha}}
    = \angles{\Pi^{-1} \opB [G - \Pi] \bfId^\alpha}
    = \angles{(G - \Pi) \Xi^\alpha},
\end{align*}
where we utilized (\ref{eqn:underline-HG}) and $\opSo[\Pi] = 0$ in the first step, the MDE (\ref{eqn:single-MDE}) in the second step, and (\ref{eqn:opB-opX-inverse}) in the last step. Plugging this back into (\ref{tmp:V-Pi-diamond-ave-1}), we obtain
\begin{equation*}
    \angles{(G - \Pi) A}
    = - \angles{\underline{H G (V \Pi)^\diamond}} 
    + \sum_{\alpha = \pm} \coefOnePre^{\alpha} (V \Pi)  
    \angles{(G - \Pi) \Xi^\alpha}
    + \oprec (\rpeAVE_1).
\end{equation*}
Now, we can apply the decomposition (\ref{eqn:decomposition-Xi}) of $\Xi^\pm$ to the r.h.s and end up with
\begin{align}
\begin{split}
    \angles{(G-\Pi) A}
    = & \ {- \angles{\underline{H G (V \Pi)^\diamond}} 
    - 2m \coefOnePre^{+} (V \Pi) \angles{(G-\Pi) \bfSigma_M^{\circ}}} \\
    & + [ {\coefOnePre^+ (V \Pi) \coefTwo^+ (\Xi^+) + \coefOnePre^- (V \Pi) \coefTwo^+ (\Xi^-)} ] \angles{G - \Pi} + \oprec (\rpeAVE_1),    
\end{split} \label{tmp:Sigma-circ-ave-1}
\end{align}
where we abbreviate $\bfSigma_M^{\circ} \equiv (\bfSigma_M)^{\circ}_{w,w}$ and implicitly used that $\angles{(G - \Pi) \bfId^-} = 0$. 

Leveraging Lemma \ref{lemma:coef-V1-Pi2}, the coefficients preceding $\angles{G - \Pi}$ in the second line of (\ref{tmp:Sigma-circ-ave-1}) can be controlled by $\eta^{1/2}$. With the single resolvent average law (\ref{eqn:single-resolvent-local-law}), we conclude that
\begin{equation}
    \angles{(G-\Pi) A}
    = - \angles{\underline{H G (V \Pi)^\diamond}} 
    - 2m \coefOnePre^{+} (V \Pi) \angles{(G-\Pi) \bfSigma_M^{\circ}}
    + \oprec (\rpeAVE_1).
    \label{tmp:underline-with-Sigma-circ}
\end{equation}
Note that the representation (\ref{tmp:underline-with-Sigma-circ}) is derived for all $\{ w, w \}$-regular matrices $A$. In particular, it is applicable to $\bfSigma_M^{\circ} \equiv (\bfSigma_M)_{w, w}^{\circ}$ since the regularity is guaranteed by Lemma \ref{lemma:two-point-regularization}. By specifying $A = \bfSigma_M^{\circ}$ in (\ref{tmp:underline-with-Sigma-circ}) and applying Lemma \ref{lemma:coef-Sigma-circ}, we can obtain the underline representation of $\angles{(G-\Pi) \bfSigma_M^\circ}$ without altering the control parameter of error terms,
\begin{equation*}
    \angles{(G-\Pi) \bfSigma_M^\circ}
    = - \frac{1}{1 + 2m \coefOnePre^{+} (\tilde{V} \Pi)} 
    \angles{\underline{H G (\tilde{V} \Pi)^\diamond}}
    + \oprec (\rpeAVE_1),
    \qwhere \tilde{V} \equiv \opX [\bfSigma_M^\circ].    
\end{equation*}
Plugging this representation of $\angles{(G-\Pi) \bfSigma_M^\circ}$ back into (\ref{tmp:underline-with-Sigma-circ}) completes the derivation of (\ref{eqn:representation-AVE-1}). Explicitly, the pre-regularized observable $B \equiv B(w, A)$ in (\ref{eqn:representation-AVE-1}) is given by
\begin{equation*}
    B = -(V \Pi)^\diamond + \frac{2m \coefOnePre^{+} (V \Pi)}{1 + 2m \coefOnePre^{+} (\tilde{V} \Pi)} (\tilde{V} \Pi)^\diamond,
    \qwhere \tilde{V} \equiv \opX [\bfSigma_M^\circ].
\end{equation*}

\subsection{Isotropic law with one regular matrix} \label{subsec:repre-ISO-1}

This section is dedicated to deriving (\ref{eqn:representation-ISO-1}). The initial step involves obtaining an expansion of the resolvent chain $G_1 A_1 G_2$, akin to (\ref{eqn:one-resolvent-expand}). Let $A_1$ be $\{ w_1, w_2 \}$-regularized and set $V_1 \equiv \opX_{12} [A_1]$. Left multiplying both sides of (\ref{eqn:one-resolvent-expand}) with $G_1 V_1$ results in
\begin{align*}
    G_1 V_1 G_2
    & = G_1 V_1 \Pi_2 - G_1 V_1 \Pi_2 \underline{H G_2} + G_1 V_1 \Pi_2 \opS [G_2 - \Pi_2] G_2 \\
    & = G_1 V_1 \Pi_2 - \underline{G_1 V_1 \Pi_2 H G_2} 
    + G_1 \opS [G_1 V_1 \Pi_2] G_2 + G_1 V_1 \Pi_2 \opS [G_2 - \Pi_2] G_2,    
\end{align*}
where we extended the underline to the whole term using that
\begin{align*}
    \underline{G_1 V_1 \Pi_2 H G_2}
    & = G_1 V_1 \Pi_2 H G_2 - \frac{1}{N} \sum_{i, \mu} G_1 V_1 \Pi_2 \dimu (\parimu G_2) 
    + \frac{1}{N} \sum_{i, \mu} G_1 \dimu G_1 V_1 \Pi_2 \dimu G_2 \\
    & = G_1 V_1 \Pi_2 \underline{H G_2}
    + G_1 \opS[G_1 V_1 \Pi_2] G_2.    
\end{align*}
By subtracting both sides of the equation with $G_1 \opSd [\Pi_1 V_1 \Pi_2] G_2$ and recalling (\ref{def:operator-X12}), we obtain\footnote{The equation (\ref{eqn:two-resolvent-expand}) also illustrates the intuition behind the definition of $\Pi_{12} (A_1)$ in (\ref{def:chain-deter-app-length2}).}
\begin{align}
\begin{split}
    G_1 A_1 G_2
    = & \ \Pi_1 V_1 \Pi_2 
    + (G_1 - \Pi_1) V_1 \Pi_2 
    - \underline{G_1 V_1 \Pi_2 H G_2} \\
    & + G_1 \opSd [(G_1 - \Pi_1) V_1 \Pi_2] G_2
    + G_1 \opSo [G_1 V_1 \Pi_2] G_2
    + G_1 V_1 \Pi_2 \opS [G_2 - \Pi_2] G_2.    
\end{split} \label{eqn:two-resolvent-expand}    
\end{align}
Now, applying the decomposition (\ref{eqn:decomposition-V-Pi}) to the terms involving $V_1 \Pi_2$, we obtain
\begin{align}
\begin{split}
    \Upsilon_{12} (A_1)
    = & \ {- \underline{G_1 (V_1 \Pi_2)^\diamond H G_2}}
    + \sum_{\alpha = \pm} \coefOnePre^\alpha (V_1 \Pi_2) \mathcal{T}_{12}^\alpha
    + (G_1 - \Pi_1) V_1 \Pi_2 \\
    & + G_1 \opSd [(G_1 - \Pi_1) (V_1 \Pi_2)^\diamond] G_2 
    + G_1 \opSo [G_1 (V_1 \Pi_2)^\diamond] G_2
    + G_1 (V_1 \Pi_2)^\diamond \opS [G_2 - \Pi_2] G_2,
\end{split} \label{tmp:V-Pi-diamond-iso-1}
\end{align}
where abbreviate $(V_1 \Pi_2)^\diamond \equiv (V_1 \Pi_2)^\diamond_{w_1}$ and used the notation
\begin{equation*}
    \mathcal{T}_{12}^\alpha := - \underline{G_1 \bfId^\alpha H G_2}
    + G_1 \opSd [(G_1 - \Pi_1) \bfId^\alpha] G_2
    + G_1 \opSo [G_1 \bfId^\alpha] G_2
    + G_1 \bfId^\alpha \opS [G_2 - \Pi_2] G_2.
\end{equation*}
We can now multiply both sides of (\ref{tmp:V-Pi-diamond-iso-1}) on the left with $\fku^*$ and on the right with $\fkv$. Note that for the first term on r.h.s. and the three terms in the second line, we can control them by $\rpeISO_1$ since
\begin{align*}
    \angles{\fku, (G_1 - \Pi_1) V_1 \Pi_2 \fkv}
    & = \oprecBig{\frac{1}{\sqrt{N \eta}}}, \\
    \angles{(G_1 - \Pi_1) (V_1 \Pi_2)^\diamond \bfSigma^\pm} 
    \angles{\fku, G_1 \bfSigma^\pm G_2 \fkv} 
    & = \oprecBig{\frac{\paraAVE_1}{N \eta^{1/2}} \cdot \frac{1}{\eta}}, \\
    \frac{1}{N} \angles{\fkv, G_2 \bfSigma^\pm G_1 (V_1 \Pi_2)^\diamond \bfSigma^\pm G_1 \fku} 
    & = \oprecBig{\frac{(\boundISO_2)^{1/2}}{N \eta}}, \\
    \angles{(G_2 - \Pi_2) \bfSigma^\pm} 
    \angles{\fku, G_1 (V_1 \Pi_2)^\diamond \bfSigma^\pm G_2 \fkv} 
    & = \oprecBig{\frac{\boundISO_1}{N \eta}}, \\
    \frac{1}{N} \angles{\fku, G_1 (V_1 \Pi_2)^\diamond \bfSigma^\pm (G_2 - \Pi_2) \bfSigma^\pm G_2 \fkv} 
    & = \oprecBig{\frac{1}{N \eta} + \frac{(\boundISO_2)^{1/2}}{N \eta}},
\end{align*}
where (\ref{bound:vec-chain-with-Id-middle}) was utilized. Additionally, we took advantage of the regularity of $(V_1 \Pi_2)^\diamond_{w_1}$ w.r.t. both $\{ w_1, w_2 \}$ and $\{ w_1, w_1 \}$ since the pre-regularization is performed at $w_1$.

Next, we proceed to rewrite $\mathcal{T}_{12}^\pm$. As per (\ref{def:second-order-renormalization}), the underlined term in $\mathcal{T}_{12}^\alpha$ can be expressed as
\begin{equation*}
    \underline{G_1 \bfId^\alpha H G_2}
    = G_1 \bfId^\alpha H G_2 
    + G_1 \opS [G_1 \bfId^\alpha] G_2 
    + G_1 \bfId^\alpha \opS [G_2] G_2.
\end{equation*}
Let $\Xi^\pm \equiv \Xi^\pm (w_1, w_2)$ be defined as in (\ref{def:matrix-Xi}). It turns out that we can rewrite $\mathcal{T}_{12}^\pm$ as follows,
\begin{align*}
    - \mathcal{T}_{12}^\alpha 
    & = G_1 \bfId^\alpha H G_2 
    + G_1 \opSd [\Pi_1 \bfId^\alpha] G_2
    + G_1 \bfId^\alpha \opS [\Pi_2] G_2 \\
    & = G_1 \bfId^\alpha
    + G_1 (w_2 \bfId^\alpha + \opSd [\Pi_1 \bfId^\alpha] + \opSd [\Pi_2] \bfId^\alpha) G_2 \\
    & = G_1 \bfId^\alpha - G_1 \Xi^\alpha G_2 
    = (G_1 - \Pi_1) \bfId^\alpha - [G_1 \Xi^\alpha G_2 - \Pi_{12} (\Xi^\alpha)]
    = \Upsilon_1 \bfId^\alpha - \Upsilon_{12} (\Xi^\alpha),
\end{align*}
where in the last line we utilized $\Pi_{12} (\Xi^\pm) = \Pi_1 \bfId^\pm$ from (\ref{tmp:D-Xi-equals-Pi}). We can plug this equation into (\ref{tmp:V-Pi-diamond-iso-1}). In summary, we have reached the following representation,
\begin{equation*}
    \angles{\fku, \Upsilon_{12} (A_1) \fkv}
    = - \angles{\fku, \underline{G_1 (V_1 \Pi_2)^\diamond H G_2} \fkv}
    + \sum_{\alpha = \pm} \coefOnePre^\alpha (V_1 \Pi_2) \angles{\fku, \Upsilon_{12} (\Xi^\alpha) \fkv} 
    + \oprec (\rpeISO_1),
\end{equation*}
where the terms $\angles{\fku, \Upsilon_1 \bfId^\alpha \fkv}$ are also absorbed into $\oprec (\rpeISO_1)$, leveraging the single resolvent isotropic law (\ref{eqn:single-resolvent-local-law}). Now, by decomposing $\Xi^\alpha$ as in (\ref{eqn:decomposition-Xi}), we obtain
\begin{align}
\begin{split}
    \angles{\fku, \Upsilon_{12} (A_1) \fkv}
    = & \ {- \angles{\fku, \underline{G_1 (V_1 \Pi_2)^\diamond H G_2} \fkv}} \\
    & - [ {(m_1+m_2) \coefOnePre^+ (V_1 \Pi_2)
    - (m_1-m_2) \coefOnePre^- (V_1 \Pi_2)} ]
    \angles{\fku, \Upsilon_{12} (\bfSigma_M^\circ) \fkv} \\
    & + [ {\coefOnePre^+ (V_1 \Pi_2) \coefTwo^+ (\Xi^+) + \coefOnePre^- (V_1 \Pi_2) \coefTwo^+ (\Xi^-)} ] \angles{\fku, \Upsilon_{12} (\bfId^+) \fkv} \\
    & + [ {\coefOnePre^+ (V_1 \Pi_2) \coefTwo^- (\Xi^+) + \coefOnePre^- (V_1 \Pi_2) \coefTwo^- (\Xi^-)} ] \angles{\fku, \Upsilon_{12} (\bfId^-) \fkv}
    + \oprec(\rpeISO_1).
\end{split} \label{tmp:Sigma-circ-iso-1}
\end{align}

Leveraging Lemma \ref{lemma:coef-V1-Pi2} as well as the estimate (\ref{bound:chains-with-Id-ISO-1}), we can dominate the third line of (\ref{tmp:Sigma-circ-iso-1}) by $\rpeISO_1$. Furthermore, using the chiral symmetry (\ref{eqn:chiral-symmetry}) and the resolvent identity (\ref{eqn:resolvent-difference-product}), the isotropic form in the last line of (\ref{tmp:Sigma-circ-iso-1}) can be controlled as follows\footnote{Strictly speaking, we also need to leverage the counterparts of (\ref{eqn:chiral-symmetry}) and (\ref{eqn:resolvent-difference-product}) for the deterministic equivalent $\Pi$. In fact, since the density $\rho$ is even, we have $m(-w) = -m(w)$, and thus (\ref{eqn:chiral-symmetry}) holds for $\Pi$ in place of $G$. For the resolvent identity (\ref{eqn:resolvent-difference-product}), its counterpart for $\Pi$ is given in (\ref{eqn:opD-as-divi-diff-2}) below, which follows directly from the definition (\ref{def:chain-deter-app-length2}).},
\begin{equation*}
    \abs{\angles{\fku, \Upsilon_{12} (\bfId^-) \fkv}}
    \lesssim \abs{\angles{\fku, (G_1 - \Pi_1) \bfId^- \fkv}} 
    + \abs{\angles{\bfId^- \fku, (G_2 - \Pi_2) \fkv}}
    \prec \frac{1}{\sqrt{N \eta}}.
\end{equation*}
In conclusion, we have
\begin{align*}
    \angles{\fku, \Upsilon_{12} (A_1) \fkv}
    = & \ {- \angles{\fku, \underline{G_1 (V_1 \Pi_2)^\diamond H G_2} \fkv}} \\
    & - [ {(m_1+m_2) \coefOnePre^+ (V_1 \Pi_2)
    - (m_1-m_2) \coefOnePre^- (V_1 \Pi_2)} ]
    \angles{\fku, \Upsilon_{12} (\bfSigma_M^\circ) \fkv} + \oprec(\rpeISO_1).
\end{align*}
Now, by repeating the steps following (\ref{tmp:underline-with-Sigma-circ}), we complete the derivation of (\ref{eqn:representation-ISO-1}). Note that for (\ref{eqn:representation-ISO-1}), the pre-regularized observable $B_1 \equiv B_1 (w_1, A_1, w_2)$ is given by
\begin{equation*}
    B_1 = -(V_1 \Pi_2)^\diamond 
    + \frac{(m_1+m_2) \coefOnePre^+ (V_1 \Pi_2) - (m_1-m_2) \coefOnePre^- (V_1 \Pi_2)}
    {1 + (m_1+m_2) \coefOnePre^+ (\tilde{V}_1 \Pi_2) - (m_1-m_2) \coefOnePre^- (\tilde{V}_1 \Pi_2)} 
    (\tilde{V}_1 \Pi_2)^\diamond, \qwhere \tilde{V}_1 \equiv \opX_{12} [\bfSigma_M^\circ].
\end{equation*}

\subsection{Averaged law with two regular matrices} \label{subsec:repre-AVE-2}

The derivation of (\ref{eqn:representation-AVE-2}) closely resembles that of (\ref{eqn:representation-ISO-1}). Therefore, we only outline the steps that differ in this case. We need to multiply both sides of (\ref{tmp:V-Pi-diamond-iso-1}) with $A_2$ and then take the normalized trace. The negligible terms can be managed as follows,
\begin{align*}
    \angles{(G_1 - \Pi_1) V_1 \Pi_2 A_2} 
    & = \oprecBig{\frac{1}{N \eta} + \frac{\paraAVE_1}{N \eta^{1/2}}}, \\
    \angles{(G_1 - \Pi_1) (V_1 \Pi_2)^\diamond \bfSigma^\pm} 
    \angles{G_1 \bfSigma^\pm G_2 A_2} 
    & = \oprecBig{\frac{\paraAVE_1}{N \eta^{1/2}} \cdot \boundISO_1}, \\
    \frac{1}{N} \angles{G_2 \bfSigma^\pm G_1 (V_1 \Pi_2)^\diamond \bfSigma^\pm G_1 A_2} 
    & = \oprecBig{\frac{\boundISO_2}{N}}, \\
    \angles{(G_2 - \Pi_2) \bfSigma^\pm} 
    \angles{G_1 (V_1 \Pi_2)^\diamond \bfSigma^\pm G_2 A_2} 
    & = \oprecBig{\frac{\boundAVE_2}{N \eta}}, \\
    \frac{1}{N} \angles{G_1 (V_1 \Pi_2)^\diamond \bfSigma^\pm (G_2 - \Pi_2) \bfSigma^\pm G_2 A_2} 
    & = \oprecBig{\frac{1}{N \eta} + \frac{\boundISO_2}{N}}.
\end{align*}
By replicating the steps that led up to equation (\ref{tmp:Sigma-circ-iso-1}), we can obtain
\begin{align*}
    \angles{\Upsilon_{12} (A_1) A_2}
    = & \ {- \angles{\underline{G_1 (V_1 \Pi_2)^\diamond H G_2 A_2}}} \\
    & - [ {(m_1+m_2) \coefOnePre^+ (V_1 \Pi_2)
    - (m_1-m_2) \coefOnePre^- (V_1 \Pi_2)} ]
    \angles{\Upsilon_{12} (\bfSigma_M^\circ) A_2} \\
    & + [ {\coefOnePre^+ (V_1 \Pi_2) \coefTwo^+ (\Xi^+) + \coefOnePre^- (V_1 \Pi_2) \coefTwo^+ (\Xi^-)} ] \angles{\Upsilon_{12} (\bfId^+) A_2} \\
    & + [ {\coefOnePre^+ (V_1 \Pi_2) \coefTwo^- (\Xi^+) + \coefOnePre^- (V_1 \Pi_2) \coefTwo^- (\Xi^-)} ] \angles{\Upsilon_{12} (\bfId^-) A_2}
    + \oprec (\rpeAVE_2).
\end{align*}
Here, the last two lines can again be incorporated into $\oprec (\rpeAVE_2)$. Specifically, for the third line, we can apply Lemma \ref{lemma:coef-V1-Pi2} and (\ref{bound:chains-with-Id-AVE-2}). While for the last line, we can utilize the chiral symmetry (\ref{eqn:chiral-symmetry}) and the resolvent identity (\ref{eqn:resolvent-difference-product}) to get
\begin{equation*}
    \abs{\angles{\Upsilon_{12} (\bfId^-) A_2}} 
    \lesssim \abs{\angles{(G_1 - \Pi_1) \bfId^- A_2}} + \abs{\angles{(G_2 - \Pi_2) A_2 \bfId^-}}
    \prec \frac{1}{N \eta} + \frac{\paraAVE_1}{N \eta^{1/2}}.
\end{equation*}
The subsequent steps, being completely analogous, are omitted here for the sake of brevity.

\subsection{Isotropic law with two regular matrices} \label{subsec:repre-ISO-2}

In this section, we focus on deriving the last representation (\ref{eqn:representation-ISO-2}) in Proposition \ref{prop:repre-full-underlined}. Again, we need to obtain an expansion of the resolvent chain $G_1 A_1 G_2 A_2 G_2$, resembling (\ref{eqn:one-resolvent-expand}) and (\ref{eqn:two-resolvent-expand}). Let $A_k$ be $\{ w_k, w_{k+1} \}$-regularized and abbreviate $V_k \equiv \opX_{w_k,w_{k+1}} [A_k]$, $k = 1,2$. Multiplying both sides of (\ref{eqn:one-resolvent-expand}) on the left by $G_1 V_1$ and on the right by $A_2 G_3$, we obtain
\begin{align*}
    G_1 V_1 G_2 A_2 G_3
    = & \ G_1 V_1 \Pi_2 A_2 G_3
    - G_1 V_1 \Pi_2 \underline{H G_2} A_2 G_3
    + G_1 V_1 \Pi_2 \opS [G_2 - \Pi_2] G_2 A_2 G_3 \\
    = & \ G_1 V_1 \Pi_2 A_2 G_3
    - \underline{G_1 V_1 \Pi_2 H G_2 A_2 G_3} \\
    & + G_1 \opS [G_1 V_1 \Pi_2] G_2 A_2 G_3
    + G_1 V_1 \Pi_2 \opS [G_2 A_2 G_3] G_3
    + G_1 V_1 \Pi_2 \opS [G_2 - \Pi_2] G_2 A_2 G_3.
\end{align*}

We subtract both sides of the equation with $G_1 \opSd [\Pi_1 V_1 \Pi_2] G_2 A_2 G_3$ and apply (\ref{def:operator-X12}), which transforms the l.h.s. into the target chain $G_1 A_1 G_2 A_2 G_3$. Furthermore, we can subtract both sides of the resulting equation with 
\begin{equation*}
    \Pi_{123} (A_1, A_2)
    = \Pi_{13} ({V_1 \Pi_2 V_2})
    = \Pi_{13} ({V_1 \Pi_2 A_2}) + \Pi_{13} ({V_1 \Pi_2 \opSd[\Pi_2 V_2 \Pi_3]}).
\end{equation*}
After rearranging, we find that these two steps lead to
\begin{align}
\begin{split}
    & \ G_1 A_1 G_2 A_2 G_3 - \Pi_{123} (A_1, A_2) \\
    = & \ G_1 V_1 \Pi_2 A_2 G_3 - \Pi_{13} (V_1 \Pi_2 A_2) 
    + G_1 V_1 \Pi_2 \opS [\Pi_2 V_2 \Pi_3] G_3 
    - \Pi_{13} ( {V_1 \Pi_2 \opS [\Pi_2 V_2 \Pi_3]} ) \\ 
    & - \underline{G_1 V_1 \Pi_2 H G_2 A_2 G_3} 
    + G_1 \opSd [(G_1 - \Pi_1) V_1 \Pi_2] G_2 A_2 G_3 
    + G_1 \opSo [G_1 V_1 \Pi_2] G_2 A_2 G_3 \\
    & + G_1 V_1 \Pi_2 \opS [G_2 A_2 G_3 - \Pi_2 V_2 \Pi_3] G_3 
    + G_1 V_1 \Pi_2 \opS [G_2 - \Pi_2] G_2 A_2 G_3.
\end{split} \label{eqn:three-resolvent-expand}
\end{align} 
Applying the decomposition (\ref{eqn:decomposition-V-Pi}) again to the terms involving $V_1 \Pi_2$, we arrive at
\begin{align}
\begin{split}
    \Upsilon_{123} (A_1, A_2)
    = & \ {- \underline{G_1 (V_1 \Pi_2)^\diamond H G_2 A_2 G_3}
    + \sum_{\alpha = \pm} \coefOnePre^\alpha (V_1 \Pi_2) \mathcal{T}_{123}^{\alpha}} \\
    & + \Upsilon_{13} (V_1 \Pi_2 A_2) 
    + \Upsilon_{13} ( {V_1 \Pi_2 \opS [\Pi_2 V_2 \Pi_3]} ) \\
    & + G_1 \opSd [(G_1 - \Pi_1) (V_1 \Pi_2)^\diamond] G_2 A_2 G_3 
    + G_1 \opSo [G_1 (V_1 \Pi_2)^\diamond] G_2 A_2 G_3 \\
    & + G_1 (V_1 \Pi_2)^\diamond \opS [\Upsilon_{23} (A_2)] G_3 
    + G_1 (V_1 \Pi_2)^\diamond \opS [G_2 - \Pi_2] G_2 A_2 G_3
\end{split} \label{tmp:V-Pi-diamond-iso-2}
\end{align} 
where we used the notation
\begin{align*}
    \mathcal{T}_{123}^\alpha = & \ {- \underline{G_1 \bfId^\alpha H G_2 A_2 G_3}
    + G_1 \opSd [(G_1 - \Pi_1) \bfId^\alpha] G_2 A_2 G_3} \\
    & + G_1 \opSo [G_1 \bfId^\alpha] G_2 A_2 G_3
    + G_1 \bfId^\alpha \opS [G_2 A_2 G_3 - \Pi_2 V_2 \Pi_3] G_3 
    + G_1 \bfId^\alpha \opS [G_2 - \Pi_2] G_2 A_2 G_3.
\end{align*}

Next, we multiply both sides of (\ref{tmp:V-Pi-diamond-iso-2}) on the left with $\fku^*$ and on the right with $\fkv$. All terms on the r.h.s. except for the first line can be dominated by $\rpeISO_2$. Specifically, for the two terms in the second line of (\ref{tmp:V-Pi-diamond-iso-2}), we can use (\ref{bound:chains-with-Id-ISO-1}) to get
\begin{align*}
    \abs{\angles{\fku, \Upsilon_{13} (V_1 \Pi_2 A_2) \fkv}}
    + \abs{\angles{\fku, \Upsilon_{13} ( {V_1 \Pi_2 \opS [\Pi_2 V_2 \Pi_3]} ) \fkv}}
    \prec \frac{1}{\sqrt{N \eta^3}} + \frac{\paraISO_1}{\sqrt{N \eta^2}}.
\end{align*}
While the terms in the last two lines of (\ref{tmp:V-Pi-diamond-iso-2}), we can leverage (\ref{bound:vec-chain-with-Id-middle}) to get
\begin{align*}
    \angles{(G_1 - \Pi_1) (V_1 \Pi_2)^\diamond \bfSigma^\pm}
    \angles{\fku, G_1 \bfSigma^\pm G_2 A_2 G_3 \fkv}
    & = \oprecBig{\frac{\paraAVE_1}{N \eta^{1/2}} \cdot \frac{(\boundISO_2)^{1/2}}{\eta}}, \\ 
    \frac{1}{N} \angles{\fkv, G_3 A_2^\top G_2 \bfSigma^\pm G_1 (V_1 \Pi_2)^\diamond \bfSigma^\pm G_1 \fku} 
    & = \oprecBig{\frac{\boundISO_2}{N \eta}}, \\ 
    \angles{\Upsilon_{23} (A_2) \bfSigma^\pm} \angles{\fku, G_1 (V_1 \Pi_2)^\diamond \bfSigma^\pm G_3 \fkv} 
    & = \oprecBig{\frac{\paraISO_1}{\sqrt{N \eta^2}} \cdot \boundISO_1}, \\ 
    \frac{1}{N} \angles{\fku, G_1 (V_1 \Pi_2)^\diamond \bfSigma^\pm (G_3 A_2^\top G_2 - \Pi_3 V_2^\top \Pi_2) \bfSigma^\pm G_3 \fkv} 
    & = \oprecBig{\frac{1}{N \eta} + \frac{(\boundISO_4)^{1/2}}{N \eta}}, \\ 
    \angles{(G_2 - \Pi_2) \bfSigma^\pm} \angles{\fku, G_1 (V_1 \Pi_2)^\diamond \bfSigma^\pm G_2 A_2 G_3 \fkv}
    & = \oprecBig{\frac{\boundISO_2}{N \eta}}, \\ 
    \frac{1}{N} \angles{\fku, G_1 (V_1 \Pi_2)^\diamond \bfSigma^\pm (G_2 - \Pi_2) \bfSigma^\pm G_2 A_2 G_3 \fkv}
    & = \oprecBig{\frac{\boundISO_2}{N \eta} + \frac{(\boundISO_2)^{1/2}}{N \eta}}. 
\end{align*}
On the other hand, we can rewrite $\mathcal{T}_{123}^\pm$ as follows. As per definition (\ref{def:second-order-renormalization}), we have
\begin{equation*}
    \underline{G_1 \bfId^\alpha H G_2 A_2 G_3} 
    = G_1 \bfId^\alpha H G_2 A_2 G_3
    + G_1 \opS [G_1 \bfId^\alpha] G_2 A_2 G_3
    + G_1 \bfId^\alpha \opS [G_2 A_2 G_3] G_3    
    + G_1 \bfId^\alpha \opS [G_2] G_2 A_2 G_3.
\end{equation*}
Consequently, 
\begin{align*}
    - \mathcal{T}_{123}^\alpha 
    & = G_1 \bfId^\alpha H G_2 A_2 G_3
    + G_1 \opSd [\Pi_1 \bfId^\alpha] G_2 A_2 G_3
    + G_1 \bfId^\alpha \opS [\Pi_2 V_2 \Pi_3] G_3 
    + G_1 \bfId^\alpha \opS [\Pi_2] G_2 A_2 G_3 \\
    & = G_1 \bfId^\alpha ( {A_2 + \opS [\Pi_2 V_2 \Pi_3]} ) G_3
    + G_1 (w_2 \bfId^\alpha + \opSd [\Pi_1 \bfId^\alpha] + \opSd [\Pi_2] \bfId^\alpha) G_2 A_2 G_3 \\
    & = G_1 \bfId^\alpha V_2 G_3
    - G_1 \Xi^\alpha G_2 A_2 G_3 \\
    & = [G_1 \bfId^\alpha V_2 G_3 - \Pi_{13} (\bfId^\alpha V_2)]
    - [G_1 \Xi^\alpha G_2 A_2 G_3 - \Pi_{123} (\Xi^\alpha, A_2)],
\end{align*}
where in the last step we have utilized (\ref{def:chain-deter-app-length3}) and (\ref{tmp:D-Xi-equals-Pi}) to get
\begin{equation*}
    \Pi_{123} (\Xi^\alpha, A_2)
    = \Pi_{13} (\opX_{12} [\Xi^\alpha] \Pi_2 V_2) 
    = \Pi_{13} (\bfId^\alpha V_2).
\end{equation*}
To summarise, we have obtained the following representation,
\begin{equation*}
    \angles{\fku, \Upsilon_{123} (A_1, A_2) \fkv}
    = - \angles{\fku, \underline{G_1 (V_1 \Pi_2)^\diamond H G_2 A_2 G_3} \fkv}
    + \sum_{\alpha = \pm} \coefOnePre^\alpha (V_1 \Pi_2) \angles{\fku, \Upsilon_{123} (\Xi^\alpha, A_2) \fkv}
    + \oprec (\rpeISO_2).
\end{equation*}

Now, the subsequent steps are largely analogous to the derivations in the preceding sections. The only point to highlight here is regarding the isotropic forms $\angles{\fku, \Upsilon_{123} (\bfId^\pm, A_2) \fkv}$, which arise from applying the decomposition (\ref{eqn:decomposition-Xi}). For the term involving $\bfId^+$, we need to utilize Lemma \ref{lemma:coef-V1-Pi2} along with (\ref{bound:chains-with-Id-ISO-2}). On the other hand, for the isotropic form involving $\bfId^-$, we have
\begin{equation*}
    \abs{\angles{\fku, \Upsilon_{123} (\bfId^-, A_2) \fkv}}
    \lesssim \abs{\angles{\fku, \Upsilon_{13} (\bfId^- A_2) \fkv}}
    + \abs{\angles{\bfId^- \fku, \Upsilon_{23} (A_2) \fkv}}
    \prec \frac{1}{\sqrt{N \eta^3}} + \frac{\paraISO_1}{\sqrt{N \eta^2}}.
\end{equation*}
Again, the first step is achieved using the chiral symmetry (\ref{eqn:chiral-symmetry}) and the resolvent identity (\ref{eqn:resolvent-difference-product}). We also used (\ref{bound:chains-with-Id-ISO-1}) and the $\{ w_2, w_3 \}$-regularity of $A_2$ in the second step.

Therefore, we have successfully completed the derivation of (\ref{eqn:representation-ISO-2}), and consequently, we have finished the proof of Proposition \ref{prop:repre-full-underlined}.

\section{Proof of the technical lemmas} \label{sec:tech-lemmas}

This section collects the proofs of the technical lemmas used in Sections \ref{sec:self-improving-inequalities} and \ref{sec:representation}. We remark that the proofs provided in this section are self-contained and do not rely on any conclusion from Sections \ref{sec:self-improving-inequalities} and \ref{sec:representation}. More specifically, the proofs in Sections \ref{subsec:proof-stieltjes-transform}, \ref{subsec:proof-regularity} and \ref{subsec:coeffcients} are purely deterministic and depend only on our assumptions regarding the aspect ratio $M/N$ and the spectrum of $\Sigma$. The proofs in Sections \ref{subsec:reduction-inequalities} are built upon the inductive hypothesis (\ref{bound:inductive-hypothesis}) and the single resolvent local laws (\ref{eqn:single-resolvent-local-law}). In other words, the arguments in this article are not circular. 

\subsection{Divided difference of the Stieltjes transform}
\label{subsec:proof-stieltjes-transform}

This section is dedicated to proving Lemma \ref{lemma:diff-estimate}. We rely on the following characterization of the edges of $\varrho$ as detailed in \cite{knowlesAnisotropicLocalLaws2017}: the edges $\fka_k$ can be identified as the critical values of the function $f$, where the function $f$ is given in (\ref{eqn:self-consistent-z}). Specifically, by letting $\fkx_k := \fkm (\fka_k)$, we have $f^\prime (\fkx_k) = 0$ and $\fka_k = f(\fkx_k)$. Moreover, according to \cite[Lemma A.3]{knowlesAnisotropicLocalLaws2017}, we have $\abs{f^{\prime\prime}(\fkx_{k})} \asymp 1$ and
\begin{equation}
    z - \fka_{k}
    = \frac{f^{\prime\prime}(\fkx_{k})}{2} (\fkm(z) - \fkx_{k})^{2} 
    + {O} (\abs{\fkm(z)-\fkx_{k}}^{3}) 
    \quad \text{ whenever } \quad 
    \abs{z-\fka_{k}} \leq c,
    \label{tmp:square-root-expansion}
\end{equation}
where $c \equiv c (\tau) > 0$ is some small constant. Therefore, by letting $c$ sufficiently small, we have
\begin{equation}
    \abs{\fkm(z) - \fkx_{k}} \asymp | {z-\fka_{k}} |^{1/2} = d(z)^{1/2},
    \quad \text{ whenever } \quad 
    \abs{z-\fka_{k}} \leq c.
    \label{tmp:square-root-behavior}
\end{equation}

We prove each of the three estimates in Lemma \ref{lemma:diff-estimate} sequentially.

\begin{proof}[Proof of (\ref{eqn:diff-estimate-same})]
We begin by examining the case where $z_1 = z_2 = z$. In this case, $\fkm [z_1, z_2]$ is given by $\fkm^\prime \equiv \partial_z \fkm (z)$. Differentiating the self-consistent equation (\ref{eqn:self-consistent-z}), we obtain
\begin{equation*}
    {\abs{\fkm}^2} / \abs{\fkm^\prime}
    = \bigg | 1 - \sum_i \frac{\fkm^2 \sigma_i^2}{(1 + \fkm \sigma_i)^2} \bigg |
    \asymp d^{1/2},
\end{equation*}
where the comparison relation has been established in the proof of \cite[Lemma A.5]{knowlesAnisotropicLocalLaws2017} for the edge regime and \cite[Lemma A.6]{knowlesAnisotropicLocalLaws2017} for the bulk regime. Now $\abs{\fkm^\prime} \asymp d^{-1/2}$ follows from $\abs{\fkm} \asymp 1$.

For what follows, we assume $z_1 \not= z_2$. For the lower bound in (\ref{eqn:diff-estimate-same}), we utilize the following expression for $\fkm [z_1, z_2]^{-1}$, which can be deduced using (\ref{eqn:self-consistent-z}),
\begin{equation*}
    \frac{z_1 - z_2}{\fkm_1 - \fkm_2}
    = \frac{1}{\fkm_1 \fkm_2} - \frac{1}{N} \operatorname{tr} \frac{\Sigma^2}{(1 + \fkm_1 \Sigma) (1 + \fkm_2 \Sigma)}.
\end{equation*}
The lower bound then follows from Lemma \ref{lemma:m-order-estimate}. Let us turn to the upper bound in (\ref{eqn:diff-estimate-same}). Using $\abs{\fkm} \asymp 1$, we readily obtain the following elementary estimate, 
\begin{equation}
    \abs{\fkm [z_1, z_2]} \lesssim \abs{z_1 - z_2}^{-1}.
    \label{tmp:basic-diff-bound}
\end{equation}
As $\fks_1 = \fks_2$ and $\fkm$ is analytic on the upper (resp. lower) half-plane, we can employ the mean value theorem. To be precise, let $\mathbb{L} \equiv \mathbb{L} (z_1, z_2) \subset \bbc$ represent the line segment from $z_1$ to $z_2$. Then
\begin{equation}
    \abs{\fkm [z_1, z_2]} \lesssim \max_{z_0 \in \mathbb{L}} \abs{\fkm^\prime (z_0)}
    \asymp \max_{z_0 \in \mathbb{L}} d(z_0)^{-1/2}.
    \label{tmp:mean-value-diff-bound}
\end{equation}
By definition of $d(z)$, there exist integers $k_1, k_2$ such that
\begin{equation}
    d_1 = \abs{z_1 - \fka_{k_1}} 
    \qand
    d_2 = \abs{z_2 - \fka_{k_2}}.
    \label{tmp:index-edge}
\end{equation}

The case where $k_1 \not= k_2$ is relatively straightforward to handle.
\begin{itemize}[leftmargin=*]
    \item If $\abs{z_1 - z_2} \geq \tau/5$, then by (\ref{tmp:basic-diff-bound}) we have $\abs{\fkm [z_1, z_2]} \lesssim \abs{z_1 - z_2}^{-1} \lesssim 1$.
    \item Suppose $\abs{z_1 - z_2} < \tau/5$. In this case, referring to the definition of $d(z)$ and $\abs{\fka_{k_1} - \fka_{k_2}} \geq \tau$ from the edge regularity condition (\ref{assump:edge-regularity}), we must have $\abs{k_1 - k_2} = 1$ and $d(z_0) = \abs{z_0 - \fka_{k_1}} \wedge \abs{z_0 - \fka_{k_2}}$. On the other hand, we have $d_1 \geq \tau/4$ and $d_2 \geq \tau/4$, as otherwise $k_1 = k_2$. Consequently, by the triangle inequality, $d_0 \geq \tau/20$. Now we have $\abs{\fkm [z_1, z_2]} \lesssim 1$ according to (\ref{tmp:mean-value-diff-bound}).
\end{itemize}

It remains to examine $k_1 = k_2 = k$. We only need to derive the upper bound $(d_1 + d_2)^{-1/2}$ since $\abs{z_1-z_2} \leq d_1+d_2$ in this case. Moreover, it suffices to consider
\begin{equation}
    \abs{z_1 - z_2} > (d_1 \vee d_2) / 2
    \qand
    d_1, d_2 \leq c,
    \label{tmp:diff-avoid-trivial}
\end{equation}
where $c > 0$ is the constant introduced in (\ref{tmp:square-root-expansion}). Indeed, if $\abs{z_1 - z_2} \leq (d_1 \vee d_2) / 2$, the upper bound follows from $\min_{z_0 \in \mathbb{L}} d_0 \geq (d_1 \vee d_2) / 2$ and (\ref{tmp:mean-value-diff-bound}). On the other hand, if $\abs{z_1 - z_2} > (d_1 \vee d_2) / 2$ and $d_1 \vee d_2 \gtrsim 1$, then the upper bound is a consequence of $\abs{z_1 - z_2} \gtrsim 1$ and (\ref{tmp:basic-diff-bound}). 

Referring to (\ref{tmp:square-root-expansion}) and (\ref{tmp:square-root-behavior}), for some sufficiently large $C \equiv C(\tau) > 0$, we have 
\begin{equation}
    \frac{\abs{f^{\prime\prime}(\fkx_{k})}}{2} 
    \big | {(\fkm_1 - \fkx_{k})^{2} - (\fkm_2 - \fkx_{k})^{2}} \big |
    \leq \abs{z_1 - z_2} + C (d_1 \vee d_2)^{3/2},
    \label{tmp:diff-expansion}
\end{equation}
as well as
\begin{equation}
    C^{-1} d_1^{1/2} \leq \abs{\fkm_1 - \fkx_k} \leq C d_1^{1/2}
    \qand
    C^{-1} d_2^{1/2} \leq \abs{\fkm_2 - \fkx_k} \leq C d_2^{1/2}.
    \label{tmp:diff-dist-m-to-edges}
\end{equation}

Let us first focus on the case where $d_1 > (4C)^2 d_2$ (a similar argument applies to $d_2 > (4C)^2 d_1$). By (\ref{tmp:diff-dist-m-to-edges}), this implies $\abs{\fkm_1 - \fkx_k} \geq 4 \abs{\fkm_2 - \fkx_k}$. Hence, by the triangle inequality, we must have
\begin{equation*}
    \abs{(\fkm_1+\fkm_2)/2 - \fkx_k} \geq \abs{\fkm_1 - \fkx_k}/8.
\end{equation*}
On the other hand, we can make $c > 0$ small enough such that $c \leq 1/C^2$. Given (\ref{tmp:diff-avoid-trivial}), this yields
\begin{equation*}
    C (d_1 \vee d_2)^{3/2} \leq C \sqrt{c} \cdot (d_1 \vee d_2)
    \leq 2 \abs{z_1 - z_2}.
\end{equation*}
Therefore, (\ref{tmp:diff-expansion}), along with the estimate $\abs{f^{\prime\prime}(\fkx_{k})} \asymp 1$, implies
\begin{equation*}
    \abs{\fkm_1 - \fkm_2} \cdot \abs{(\fkm_1+\fkm_2)/{2} - \fkx_k}
    \lesssim \abs{z_1 - z_2}.
\end{equation*}
Summarizing the estimates above, we arrive at
\begin{equation*}
    \abs{\fkm[z_1, z_2]}
    \lesssim \abs{(\fkm_1+\fkm_2)/2 - \fkx_k}^{-1}
    \lesssim \abs{\fkm_1 - \fkx_k}^{-1}
    \asymp d_1^{-1/2} \asymp (d_1 + d_2)^{-1/2}.
\end{equation*}

We now turn to the case where $d_1 \asymp d_2$. Let us denote the angle between the two segments connecting $z_1$ or $z_2$ with $\fka_k$ as $\alpha \in [0, \pi)$. We further divide the analysis into three cases. 
\begin{itemize}
    \item When $\alpha \in [0, \pi/4]$, we have $\cos \alpha \asymp 1$ and therefore $\min_{z_0 \in \mathbb{L}} d_0 \gtrsim (d_1 \wedge d_2) \cos \alpha \gtrsim d_1 + d_2$. Now the upper bound is obtained by utilizing (\ref{tmp:mean-value-diff-bound}).
    \item For $\alpha \in (\pi/4, 3\pi/4]$, we have $\sin \alpha \asymp 1$ and therefore
    \begin{equation*}
        \min_{z_0 \in \mathbb{L}} d_0 \gtrsim \frac{d_1 d_2 \sin \alpha}{\abs{z_1 - z_2}} 
        \gtrsim d_1 \sin \alpha \gtrsim d_1 \asymp d_1 + d_2.
    \end{equation*}
    \item Suppose $\alpha \in (3\pi/4, \pi)$. In this case, the estimate (\ref{tmp:mean-value-diff-bound}) might be too loose, necessitating a more delicate analysis. Without loss of generality, we can assume that
    \begin{equation*}
        z_1 = \fka_k - \nu_1 + \mathrm{i} \eta_1
        \qand
        z_2 = \fka_k + \nu_2 + \mathrm{i} \eta_2,
        \qwhere \nu_1, \nu_2 > 0 \text{ and } \operatorname{sgn} (\eta_1) = \operatorname{sgn} (\eta_2).
    \end{equation*}
    As $\alpha > 3\pi/4$, it must hold that $\nu_1 \asymp d_1 \asymp d_2 \asymp \nu_2$. Therefore, we have
    \begin{align*}
        \abs{\fkm[z_1, z_2]} 
        & = \bigg | { \int_0^1 \fkm^\prime ( (1-t) z_1 + t z_2 ) \mathrm{d} t} \bigg |
        \lesssim \int_0^1 d ((1-t) z_1 + t z_2)^{-1/2} \mathrm{d} t \\
        & \leq \int_0^1 \abs{t \nu_2 - (1-t) \nu_1}^{-1/2} \mathrm{d} t 
        \asymp (\nu_1 + \nu_2)^{-1/2} \asymp (d_1 + d_2)^{-1/2}.
    \end{align*}
\end{itemize}
This concludes the proof of the upper estimate in (\ref{eqn:diff-estimate-same}).
\end{proof}

\begin{remark}
Note that the lower estimate of $\abs{\fkm [z_1, z_2]}$ in (\ref{eqn:diff-estimate-opp}) can be further refined as follows: for sufficiently small $\tau^\prime > 0$ (depending only on $\tau$), we have for $w_1, w_2 \in \bbd (\tau, \tau^\prime)$,
\begin{equation}
    \abs{\fkm [z_1, z_2]} \gtrsim (d_1 + d_2)^{-1/2},
    \quad \text{ whenever } \quad
    \fks_1 = \fks_2 \text{ and } \abs{w_1 - w_2} \leq \tau^\prime.
    \label{eqn:diff-lower-refined}
\end{equation}
Essentially, (\ref{eqn:diff-lower-refined}) effectively refines (\ref{eqn:diff-estimate-opp}) when the two spectral parameters are sufficiently closed to the same edge, i.e. $\abs{z_1-\fka_{k}} \vee \abs{z_2-\fka_{k}} \leq c$ for some $k$, where $c > 0$ is the constant introduced in (\ref{tmp:square-root-expansion}). Indeed, in this case, one can apply the mean value theorem to $f$ to obtain, 
\begin{equation*}
    \abs{\fkm [z_1, z_2]}^{-1} \lesssim \max_{\fkm_0} \abs{f^\prime (\fkm_0)}
    \asymp \max_{\fkm_0} d(z_0)^{-1/2},
\end{equation*}
where the maximum is taken over $\fkm_0$ lying on the line segment connecting $\fkm_1$ to $\fkm_2$. Now, with $z_0 = f(\fkm_0)$, we can derive (\ref{eqn:diff-lower-refined}) through
\begin{equation*}
    \abs{f^\prime (\fkm_0)} \asymp \abs{\fkm^\prime (z_0)}^{-1} \asymp d(z_0)^{1/2} 
    \asymp \abs{\fkm_0 - \fkx_k} \lesssim \abs{\fkm_1 - \fkx_k} + \abs{\fkm_1 - \fkx_k}
    \asymp (d_1 + d_2)^{1/2}.
\end{equation*}
This refined lower estimate is utilized in the proof on Lemma \ref{lemma:coef-Sigma-circ} below.
\end{remark}

\begin{proof}[Proof of (\ref{eqn:diff-estimate-opp})]
Next, let us consider the upper estimate of $\abs{\fkm[z_1^*, z_2]}$. Here $\abs{\fkm[z_1^*, z_2]} \lesssim \abs{z_1 - z_2}$ directly follows from $\abs{m} \asymp 1$ and $\abs{z_1 - z_2} \leq \abs{z_1^* - z_2}$. We proceed to show that $\abs{\fkm[z_1^*, z_2]} \lesssim (\Im \fkm_1 / \Im z_1) \wedge (\Im \fkm_2 / \Im z_2)$. Without loss of generality, we assume $\Im \fkm_1 / \Im z_1 \leq \Im \fkm_2 / \Im z_2$. Note that
\begin{equation*}
    \abs{\fkm[z_1^*, z_2]}
    \leq \bigg | \frac{\fkm_1^* - \fkm_1}{z_1^* - z_2} \bigg | 
    + \bigg | \frac{\fkm_1 - \fkm_2}{z_1^* - z_2} \bigg |
    \leq \frac{2 \abs{\Im \fkm_1}}{\abs{z_1^* - z_2}} + \abs{\fkm [z_1, z_2]}.
\end{equation*}
For the first term on r.h.s., we can utilize $\abs{z_1^* - z_2} \geq \Im z_1$. While for the second term, we can employ (\ref{eqn:diff-estimate-same}). That is, we have
\begin{equation*}
    \abs{\Im \fkm_1} / \abs{z_1^* - z_2}
    \leq \Im \fkm_1 / \Im z_1
    \qand
    \abs{\fkm [z_1, z_2]}
    \lesssim d_1^{-1/2} \lesssim \Im \fkm_1 / \Im z_1,
\end{equation*}
where we also used (\ref{eqn:m-order-estimate}) for the second estimate. 

Hence, to finalize the proof of (\ref{eqn:diff-estimate-opp}), it remains to verify $\abs{\fkm[z_1^*, z_2]} \lesssim \abs{\fkm_1 - \fkm_2}^{-1}$. Note that
\begin{equation*}
    \abs{\fkm_1 - \fkm_2} \abs{\fkm [z_1^*, z_2]}
    \leq \frac{2 \abs{\Im \fkm_1} \abs{\fkm_1 - \fkm_2}}{\abs{z_1^* - z_2}} 
    + \frac{\abs{\fkm_1 - \fkm_2}^2}{\abs{z_1^* - z_2}}.
\end{equation*}
For the first term on r.h.s., we can employ (\ref{eqn:m-order-estimate}) and (\ref{eqn:diff-estimate-same}) to get
\begin{equation*}
    \frac{\abs{\Im \fkm_2} \abs{\fkm_1 - \fkm_2}}{\abs{z_1^* - z_2}}
    \leq \abs{\Im \fkm_2} \abs{\fkm[z_1, z_2]}
    \lesssim \frac{d_2^{1/2}}{(d_1 + d_2)^{1/2}}
    \leq 1.
\end{equation*}
We now proceed to the second term. Let $k_1, k_2$ be as defined in (\ref{tmp:index-edge}). If $k_1 \not= k_2$, based on the the discussion following (\ref{tmp:index-edge}), we have either $\abs{z_1 - z_2} \gtrsim 1$ or $d_1 \wedge d_2 \gtrsim 1$. For the former case, it is not hard to see that $\abs{\fkm_1 - \fkm_2}^2 / \abs{z_1^* - z_2} \lesssim 1$. Hence, it remains to examine the cases $d_1 \wedge d_2 \gtrsim 1$ or $k_1 = k_2$. Note that in either situation, we have $\abs{z_1^* - z_2} \lesssim d_1 \vee d_2$. Consequently,
\begin{equation*}
     \frac{\abs{\fkm_1 - \fkm_2}^2}{\abs{z_1^* - z_2}}
     \lesssim \frac{\abs{z_1 - z_2}^2}{(d_1 \vee d_2) \abs{z_1^* - z_2}}
     \leq \frac{\abs{z_1^* - z_2}}{d_1 \vee d_2}
     \lesssim 1.
\end{equation*}
In conclusion, we have completed the proof of (\ref{eqn:diff-estimate-opp}).
\end{proof}

\begin{proof}[Proof of (\ref{eqn:diff-compare-same-opp})]
As demonstrated in the proof of (\ref{eqn:diff-estimate-opp}), we have either $\abs{z_1 - z_2} \gtrsim 1$ or $\abs{z_1^* - z_2} \lesssim d_1 \vee d_2$. For the former case, we observe that
\begin{equation*}
    \abs{\fkm[z_1^*, z_2]} \gtrsim \abs{\fkm_1^* - \fkm_2} 
    \geq \abs{\fkm_1 - \fkm_2} \gtrsim \abs{\fkm[z_1, z_2]} 
    \gtrsim 1 \gtrsim \abs{z_1^* - z_2}^{1/2},
\end{equation*}
where we also used the lower estimate in (\ref{eqn:diff-estimate-same}). Now let us turn to the case where $\abs{z_1^* - z_2} \lesssim d_1 \vee d_2$. Thanks to our assumption of $w_1, w_2 \in \bbd_0$, we can leverage $\abs{\Im \fkm_\ell} \asymp d_\ell^{1/2}$ to obtain
\begin{equation}
    \abs{\fkm[z_1^*, z_2]}
    \geq \frac{\abs{\Im \fkm_1} + \abs{\Im \fkm_2}}{\abs{z_1^* - z_2}}
    \asymp \frac{(d_1 + d_2)^{1/2}}{\abs{z_1^* - z_2}}
    \gtrsim (d_1 + d_2)^{-1/2} \wedge \abs{z_1^* - z_2}^{-1/2}.
\end{equation}
The proof is complete by applying the upper estimate in (\ref{eqn:diff-estimate-same}).
\end{proof}

\subsection{Observable regularization} \label{subsec:proof-regularity}

In this section, we first prove the properties of regular matrices as stated in Lemma \ref{lemma:regular-obs-properties}. Following this, we provide proofs for the three lemmas concerning regularization: Lemmas \ref{lemma:one-point-regularization}, \ref{lemma:one-point-pre-regularization} and \ref{lemma:two-point-regularization}. Finally, we include Lemma \ref{lemma:increasing-scale} and its proof here, which essentially states that the regularity of matrices is preserved if the scales (imaginary parts) of the spectral parameters under consideration are increased. Let us start with some preliminary results that are recurrently utilized in the proof.

Recall the definition of the control parameter $\beta$ in (\ref{def:beta}). For simplicity, we use abbreviations such as $\beta_{12} \equiv \beta(w_1, w_2)$ and $\beta_{1^*2} \equiv \beta(w_1^*, w_2)$ when the spectral parameters $w_1, w_2$ are clear from the context. According to Lemmas \ref{lemma:m-order-estimate} and \ref{lemma:diff-estimate}, for $w_1, w_2 \in \bbd_0$ with $\fks_1 = \fks_2$, we have
\begin{subequations} \label{eqn:ctrl-para-bounds}
\begin{align}
    1 \gtrsim \beta_{12} & \gtrsim (d_1 + d_2)^{1/2} + \beta_{1^*2},
    \label{eqn:ctrl-para-bound-same} \\
    \abs{w_1-w_2}^{1/2} + \eta^{1/2} \gtrsim \beta_{1^*2} & \gtrsim 
    ({\Im w_1}/{\Im m_1}) + ({\Im w_2}/{\Im m_2}) + \abs{w_1 - w_2} + \abs{m_1 - m_2},
    \label{eqn:ctrl-para-bound-opp}
\end{align}
\end{subequations}
where $\eta \equiv \eta_1 \wedge \eta_2$. We also rely on the following properties concerning the deterministic matrix $\Gamma(w) = -(w + m \Sigma)^{-1}$. Suppose $w_1, w_2 \in \bbd_0$. By definition, we have the resolvent identity
\begin{subequations} \label{eqn:Gamma-properties}
\begin{equation}
    \Gamma_1 - \Gamma_2 
    = (w_1 - w_2) \Gamma_1 \Gamma_2
    + (m_1 - m_2) \Gamma_1 \Sigma \Gamma_2.
    \label{eqn:Gamma-resolvent-identity}
\end{equation}
Combined with $\norm{\Sigma}, \norm{\Gamma} \lesssim 1$, this yields the perturbation bound
\begin{equation}
    \norm{\Gamma_1 - \Gamma_2} \lesssim \abs{w_1 - w_2} + \abs{m_1 - m_2}.
    \label{eqn:Gamma-perturbation}
\end{equation}
Moreover, setting $w_1 = w_2^* = w$ in (\ref{eqn:Gamma-resolvent-identity}), we obtain
\begin{equation}
    \Im \Gamma 
    = (\Im w) \Gamma \Gamma^* + (\Im m) \Gamma \Sigma \Gamma^*.
    \label{eqn:Gamma-imaginary-part}
\end{equation}    
Lastly, using the self-consistent equation (\ref{eqn:self-consistent-w}), we can deduce that
\begin{equation}
    \angles{\Im \Gamma} = \Im m + \left ( 1 - \frac{M}{N} \right ) \frac{\Im w}{\abs{w}^2}
    \qand
    \angles{\Im \Gamma \Sigma} = \frac{\Im m}{\abs{m}^2} - \Im w.
    \label{eqn:Gamma-trace}
\end{equation}
\end{subequations}
It is noteworthy that, by Lemma \ref{lemma:m-order-estimate} and (\ref{eqn:Gamma-trace}), it is not difficult to verify that the coefficients introduced in (\ref{def:coefficients-one-point}), (\ref{def:coefficients-one-point-pre}) and (\ref{def:coefficients-two-point}) are of constant order provided $\norm{D} \lesssim 1$, i.e.
\begin{equation*}
    \abs{\coefOne_w^\pm (D)} \lesssim 1,
    \quad
    \abs{\coefTwo_{w_1, w_2}^\pm (\bfSigma_M)} \lesssim 1
    \qand
    \abs{\coefOnePre_w^\pm (D)} \lesssim 1.
\end{equation*}
We utilize this property without explicit reference to it.

\begin{proof}[Proof of Lemma \ref{lemma:regular-obs-properties}]
The statements \ref{item:ppt-reflected-para}, \ref{item:ppt-A-conjugate} and \ref{item:ppt-A-negative} can be easily verified using Definition \ref{Def:regular-obs}. Regarding the statement \ref{item:ppt-opX-bound-norm}, it is observed that due to the $\{ w_1, w_2 \}$-regularity of $A$, we have
\begin{equation*}
    \abs{\angles{\Gamma (\fkw_1) A_M \Gamma (\fkw_2) \Sigma} + \angles{A_N}} \lesssim \abs{1 - \fkt (\fkw_1, \fkw_2) \fkb (\fkw_1, \fkw_2)},
    \quad \text{ for } \fkw_1 \in \{ w_1, w_1^* \}, \fkw_2 \in \{ w_2, w_2^*\}.
\end{equation*}
Now, the statement \ref{item:ppt-opX-bound-norm} becomes evident upon applying this estimate to the second line of (\ref{eqn:explicit-X12}).

Next, we turn to statement \ref{item:ppt-constraint}. Without loss of generality, we assume $\fks_1 = \fks_2$. We show that $\abs{\angles{\Gamma_1 A_M \Gamma_2 \Sigma}} \lesssim \beta_{1 2}$ can be derived from $\abs{\angles{\Gamma_1^* A_M \Gamma_2 \Sigma}} \lesssim \beta_{1^*2}$. The other implications are entirely analogous and thus omitted. Applying $\Gamma = \Gamma^* + 2 \mathrm{i} \Im \Gamma$ and (\ref{eqn:Gamma-imaginary-part}), we get
\begin{align*}
    \abs{\angles{\Gamma_1 A_M \Gamma_2 \Sigma}} 
    & \lesssim \abs{\angles{\Gamma_1^* A_M \Gamma_2 \Sigma}} + \eta_1 + \abs{\Im m_1}
    \lesssim \beta_{1^*2} + d_1^{1/2}
    \lesssim \beta_{12},  
\end{align*}
where we also utilized (\ref{eqn:m-order-estimate}) in the second step and (\ref{eqn:ctrl-para-bound-same}) in the last step. This concludes our proof of Lemma \ref{lemma:regular-obs-properties}.
\end{proof}

\begin{proof}[Proof of Lemma \ref{lemma:one-point-regularization}]
Let us assume, without loss of generality, that $\fks_1 = \fks_2$ and $w = w_1$. For brevity, we denote the top-left and bottom-right diagonal blocks of the regularized observable $D^\circ \equiv (D)^\circ_{w_1}$ as $D^\circ_M \in \bbc^{M \times M}$ and $D^\circ_N \in \bbc^{N \times N}$, respectively. As per definition (\ref{def:one-point-regularization}), we have 
\begin{equation*}
    \angles{\Im \Gamma_1 D^\circ_M} = 0
    \qand \angles{D^\circ_N} = 0.
\end{equation*}
Therefore, by invoking Lemma \ref{lemma:regular-obs-properties} \ref{item:ppt-constraint}, to establish the regularity of $D^\circ$, it remains to verify 
\begin{equation*}
    \abs{\angles{\Gamma_1^* D_M^\circ \Gamma_2 \Sigma}} \lesssim \beta_{1^* 2}
    \qand
    \abs{\angles{\Gamma_1 D_M^\circ \Gamma_2^* \Sigma}} \lesssim \beta_{1 2^*}.
\end{equation*}
We only present the proof of the first estimate, as the second one can be proved analogously.\footnote{Actually, these two estimates make no difference when $D$ is real.} Leveraging the perturbation bound (\ref{eqn:Gamma-perturbation}) as well as the identity (\ref{eqn:Gamma-imaginary-part}), we obtain
\begin{align*}
    \angles{\Gamma_1^* D^\circ_M \Gamma_2 \Sigma}
    & = \angles{\Gamma_1^* D^\circ_M \Gamma_1 \Sigma}
    + {O} (\abs{w_1 - w_2} + \abs{m_1 - m_2}) \\
    & = {\angles{\Im \Gamma_1 D^\circ_M}} / {\Im m_1}
    + {O} ( {\abs{w_1 - w_2} + \abs{m_1 - m_2} + {\Im w_1} / {\Im m_1}} )
    = {O} (\beta_{1^*2}),
\end{align*}
where in the last step we used $\angles{\Im \Gamma_1 D^\circ_M} = 0$ and (\ref{eqn:ctrl-para-bound-opp}).
\end{proof}

\begin{proof}[Proof of Lemma \ref{lemma:one-point-pre-regularization}]
The proof of Lemma \ref{lemma:one-point-pre-regularization} is essentially the same as that of Lemma \ref{lemma:one-point-regularization}. Again, we consider $\fks_1 = \fks_2$ and $w = w_1$. We denote the two diagonal blocks of $D^\diamond \equiv (D)^\circ_{w_1}$ as $D^\diamond_M$ and $D^\diamond_N$, respectively. The estimate $\abs{\angles{\Gamma_1^* (D_M^\diamond \Sigma) \Gamma_2 \Sigma}} \lesssim \beta_{1^* 2}$ can be verified as follows,
\begin{align*}
    \angles{\Gamma_1^* (D^\diamond_M \Sigma) \Gamma_2 \Sigma}
    & = \angles{\Gamma_1^* (D^\diamond_M \Sigma) \Gamma_1 \Sigma}
    + {O} (\abs{w_1 - w_2} + \abs{m_1 - m_2}) \\
    & = \angles{\Im \Gamma_1 \Sigma D^\diamond_M} / {\Im m_1}
    + {O} ( {\abs{w_1 - w_2} + \abs{m_1 - m_2} + {\Im w_1} / {\Im m_1}} )
    = {O} (\beta_{1^*2}).
\end{align*}
The key point to note is that this time we should utilize $\angles{\Im \Gamma_1 \Sigma D^\diamond_M} = 0$ for the final step, which follows from the definition (\ref{def:one-point-pre-regularization}). This concludes the proof of Lemma \ref{lemma:one-point-pre-regularization}.
\end{proof}

\begin{proof}[Proof of Lemma \ref{lemma:two-point-regularization}]
Note that the bottom-right $N \times N$ block of $\bfSigma_M$ is null. Therefore, by referring to Definition \ref{Def:regular-obs} and Lemma \ref{lemma:one-point-regularization}, it suffices to prove that
\begin{equation}
    \abs{\coefTwo_{w_1, w_2} (\bfSigma_M) - \coefOne_{w_1}^+ (\bfSigma_M) - \coefOne_{w_1}^- (\bfSigma_M)}
    \lesssim \begin{cases}
        \beta_{1^*2} = \beta_{12^*}, & \text{ if } \fks_1 = \fks_2, \\
        \beta_{12} = \beta_{1^*2^*}, & \text{ if } \fks_1 \not= \fks_2,
    \end{cases}
    \label{bound:compare-coefOne-coefTwo}
\end{equation}
where $\coefTwo_{w_1, w_2}$ represents the coefficient for two-point regularization, while $\coefOne_{w_1}^\pm$ corresponds to the coefficients for two-point regularization. This estimate is trivial when $\abs{w_1 - w_2} \gtrsim 1$, as in this case, according to (\ref{eqn:ctrl-para-bounds}), we have $\beta (\fkw_1, \fkw_2) \gtrsim 1$, for all $\fkw_1 \in \{ w_1, w_1^* \}$ and $\fkw_2 \in \{ w_2, w_2^*\}$.

Therefore, for the subsequent analysis, we focus on the case where $w_1, w_2$ are sufficiently close such that the indicator function in (\ref{def:coefficients-two-point}) is nonzero. We also mention that the fraction $w_2/w_1$ in (\ref{def:coefficients-two-point}) is irrelevant here, as removing it will only introduce an error of ${O} (\abs{w_1 - w_2})$, which can be easily managed using (\ref{eqn:ctrl-para-bound-opp}). On the other hand, leveraging (\ref{eqn:self-consistent-w}) and (\ref{eqn:Gamma-trace}), we have
\begin{equation}
    2 \coefOne_{w}^+ (\bfSigma_M) = 2 \coefOne_{w}^- (\bfSigma_M)
    = \frac{\angles{\Im \Gamma \Sigma}}{\angles{\Im \Gamma}}
    = \frac{\Im m/\abs{m}^2 + {O} (\eta)}{\Im m + {O}(\eta)} 
    = \frac{1}{\abs{m}^2} + {O} ( {\Im w}/{\Im m} ),
    \label{eqn:ratio-Im-Gamma-Sigma}
\end{equation}
uniformly for all $w \in \bbd_0$. It turns out that when $\fks_1 = \fks_2$ and $d_1 \geq d_2$, we can use (\ref{eqn:ctrl-para-bound-opp}) to get
\begin{align*}
    \coefTwo_{w_1, w_2} (\bfSigma_M) - \coefOne_{w_1}^+ (\bfSigma_M) - \coefOne_{w_1}^- (\bfSigma_M)
    & = {1}/{(m_1^* m_2)} - {1}/{\abs{m_1}^2} + {O} ( \abs{w_1 - w_2} + {\Im w_1}/{\Im m_1} ) \\
    & = {O} ( \abs{w_1 - w_2} + \abs{m_1 - m_2} + {\Im w_1}/{\Im m_1} )
    = {O} (\beta_{1^*2}).
\end{align*}
The case where $\fks_1 = \fks_2$ but $d_1 < d_2$ can be handled similarly. As for $\fks_1 \not= \fks_2$, we have
\begin{align*}
    \coefTwo_{w_1, w_2} (\bfSigma_M) - \coefOne_{w_1}^+ (\bfSigma_M) - \coefOne_{w_1}^- (\bfSigma_M)
    & = \fkt_{12} - {1}/{\abs{m_1}^2} + {O} ( \abs{w_1 - w_2} + {\Im w_1}/{\Im m_1} ) \\
    & = \fkt_{12} - {1}/\fkb_{12} + {O} ( \abs{w_1 - w_2} + \abs{m_1 - m_2} + {\Im w_1}/{\Im m_1} ) \\
    & = {O} (\abs{1 - \fkt_{12} \fkb_{12}} + \abs{w_1 - w_2} + \abs{m_1 - m_2} + {\Im w_1}/{\Im m_1} )
    = {O} (\beta_{12}).
\end{align*}
This completes the proof of Lemma \ref{lemma:two-point-regularization}.
\end{proof}

As mentioned, we conclude this section with the following lemma and its proof.

\begin{lemma} \label{lemma:increasing-scale}
Consider $w_k = E_k + \mathrm{i} \eta_k \in \bbd_0 \cap \bbc_+$, $k = 1,2$. Assume that $A$ is $\{ w_1, w_2 \}$-regularized. Then, the matrix $A$ is also $\{ \tilde{w}_1, \tilde{w}_2 \}$-regularized for any spectral parameters $\tilde{w}_{k} = E_k + \mathrm{i} \tilde{\eta}_k \in \bbd_0$ with $\tilde{\eta}_k \geq \eta_k$. 
\end{lemma}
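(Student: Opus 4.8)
The plan is to verify the single defining inequality of Definition~\ref{Def:regular-obs} for $A$ with respect to $\{\tilde w_1, \tilde w_2\}$, using that it already holds with respect to $\{w_1, w_2\}$. By Lemma~\ref{lemma:regular-obs-properties}~\ref{item:ppt-constraint}, it suffices to check the constraint in (\ref{eqn:regular-obs-condition}) for a pair $(\fkw_1, \fkw_2)$ with $\fks(\fkw_1) \neq \fks(\fkw_2)$; moreover $\|A\|\lesssim 1$ and $\angles{A_N}=0$ are unaffected by changing spectral parameters. So the whole task reduces to showing, for $\tilde w_1 = E_1 + \mathrm{i}\tilde\eta_1$, $\tilde w_2 = E_2 + \mathrm{i}\tilde\eta_2$ with $\tilde\eta_k \geq \eta_k$,
\begin{equation*}
    \abs{\angles{\Gamma(\tilde w_1^*) A_M \Gamma(\tilde w_2) \Sigma}} \lesssim \beta(\tilde w_1^*, \tilde w_2),
\end{equation*}
given the analogous bound at $(w_1^*, w_2)$.

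The strategy is a two-step interpolation in the imaginary parts, increasing $\eta_1 \to \tilde\eta_1$ first and then $\eta_2 \to \tilde\eta_2$; by symmetry one step suffices to describe. First I would compare the numerators: using the resolvent identity (\ref{eqn:Gamma-resolvent-identity}) and the perturbation bound (\ref{eqn:Gamma-perturbation}), together with the $1/2$-H\"older continuity of $m$ from Lemma~\ref{lemma:diff-estimate}, one controls $\norm{\Gamma(\tilde w_1^*) - \Gamma(w_1^*)}$ by $\abs{\tilde w_1 - w_1} + \abs{m(\tilde w_1) - m(w_1)} \lesssim \abs{\tilde\eta_1 - \eta_1} + \abs{\fkm[\,\cdot\,,\,\cdot\,]}\cdot|\tilde\eta_1-\eta_1|$, which one checks is $\lesssim \beta(\tilde w_1^*, w_1^*)$-type quantities via (\ref{eqn:ctrl-para-bounds}). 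Combined with $\norm{A_M}, \norm{\Gamma}, \norm{\Sigma} \lesssim 1$ this gives
\begin{equation*}
    \abs{\angles{\Gamma(\tilde w_1^*) A_M \Gamma(w_2) \Sigma} - \angles{\Gamma(w_1^*) A_M \Gamma(w_2) \Sigma}} \lesssim \abs{\tilde\eta_1 - \eta_1} + \abs{m(\tilde w_1) - m(w_1)}.
\end{equation*}
The second step is to compare this error, and the original bound $\beta(w_1^*, w_2)$, with the target control parameter $\beta(\tilde w_1^*, \tilde w_2)$. Here the key monotonicity fact is that, since $\beta(\fkw_1,\fkw_2) \asymp \abs{\fkm[z_1,z_2]}^{-1}$, the estimates (\ref{eqn:diff-estimate-opp})--(\ref{eqn:ctrl-para-bound-opp}) show $\beta(\fkw_1^*,\fkw_2) \gtrsim (\Im \fkw_1/\Im m(\fkw_1)) + \abs{m(\fkw_1) - m(\fkw_2)} + \dots$, so increasing an imaginary part only enlarges (up to constants) this lower control: one needs $\beta(w_1^*, w_2) \lesssim \beta(\tilde w_1^*, \tilde w_2)$ and also $\abs{\tilde\eta_1-\eta_1} + \abs{m(\tilde w_1)-m(w_1)} \lesssim \beta(\tilde w_1^*, \tilde w_2)$.

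The main obstacle, and the place requiring care, is precisely this last monotonicity claim $\beta(w_1^*, w_2) \lesssim \beta(\tilde w_1^*, \tilde w_2)$ when $\tilde\eta_1 \geq \eta_1$ (and the companion estimate for the perturbation term). Since $\beta \asymp \abs{\fkm[z^*, z']}^{-1}$ and $\abs{\fkm[z_1^*,z_2]}$ is, by (\ref{eqn:diff-estimate-opp}), bounded above by $(\Im\fkm_1/\Im z_1) \wedge (\Im\fkm_2/\Im z_2) \wedge \abs{z_1-z_2}^{-1}$, one must argue that each of these three ``candidate'' upper bounds is (up to a constant) non-increasing as $\eta_1$ grows: for the term $\Im\fkm_1/\Im z_1$ this follows because $E \mapsto \Im \fkm(E+\mathrm{i}\eta)/\eta$ is essentially decreasing in $\eta$ by the integral representation of $\fkm$ (a Poisson-type kernel monotonicity), after translating between $d^{1/2}$, $\eta/d^{1/2}$ and $\Im\fkm$ via (\ref{eqn:m-order-estimate}); for $\abs{z_1-z_2}^{-1}$ it is clear since $\abs{z_1 - z_2}$ grows. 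The perturbation bound $\abs{m(\tilde w_1)-m(w_1)}$ is handled by integrating $\abs{\fkm'}$ along the vertical segment and using $\abs{\fkm'(E+\mathrm{i}\eta)} \asymp d(E+\mathrm{i}\eta)^{-1/2}$ from (\ref{eqn:diff-estimate-same}); since $d$ increases along that segment, the integral is dominated by its endpoint contribution, yielding a bound comparable to $\beta(\tilde w_1^*, \tilde w_2)$. Once these monotonicity statements are in hand, the two interpolation steps chain together to give the desired $\{\tilde w_1, \tilde w_2\}$-regularity, completing the proof.
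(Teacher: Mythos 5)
Your reduction to the mixed-sign constraint via Lemma \ref{lemma:regular-obs-properties} \ref{item:ppt-constraint}, the one-parameter-at-a-time interpolation, and the perturbation of the numerator through (\ref{eqn:Gamma-perturbation}) all match the actual argument. The gap is in the step you yourself flag as the main obstacle: you claim $\beta(w_1^*,w_2)\lesssim\beta(\tilde w_1^*,\tilde w_2)$ because the upper-bound candidates in (\ref{eqn:diff-estimate-opp}) are (essentially) non-increasing in $\eta_1$. Monotonicity of an \emph{upper} bound only yields $\abs{\fkm[\tilde z_1^*, z_2]}\lesssim U(z_1^*,z_2)$, where $U$ denotes the candidate minimum at the \emph{original} parameters; to conclude $\abs{\fkm[\tilde z_1^*, z_2]}\lesssim\abs{\fkm[z_1^*,z_2]}$ you additionally need the tightness $U(z_1^*,z_2)\lesssim\abs{\fkm[z_1^*,z_2]}$, which you never establish (the paper only remarks tightness at $w_1=w_2$). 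With the three candidates you list — you drop $\abs{\fkm_1-\fkm_2}^{-1}$ from (\ref{eqn:diff-estimate-opp}) — tightness genuinely fails: for two parameters inside the support near the same regular edge, with $\abs{\Re z_1-\Re z_2}\asymp\delta$ and $\eta\ll\delta^{3/2}$, one has $\abs{\fkm[z_1^*,z_2]}\asymp\delta^{-1/2}$ while the candidate minimum is $\asymp\delta^{-1}$, so your chain of inequalities cannot close. (Including the fourth candidate does not save the argument either, since $\abs{\fkm_1-\fkm_2}^{-1}$ is not monotone under increasing $\eta_1$: it blows up as $\tilde w_1$ approaches $w_2$.)

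The paper closes exactly this hole by a direct comparison rather than a monotone-upper-bound argument: write $\abs{\fkm[\tilde z_1^*,z_2]}\leq\big(\abs{\fkm_1^*-\fkm_2}+\abs{\tilde\fkm_1-\fkm_1}\big)/\abs{\tilde z_1^*-z_2}$, use $\abs{\tilde z_1^*-z_2}\gtrsim\abs{z_1^*-z_2}$ for the first term, bound the second via $\abs{\tilde\fkm_1-\fkm_1}\lesssim\tilde\eta_1/d(\tilde z_1)^{1/2}$ so that its contribution is $\lesssim\abs{\tilde z_1^*-z_2}^{-1/2}$, and then — this is the ingredient missing from your proposal — absorb it using the \emph{lower} bound (\ref{eqn:diff-compare-same-opp}), $\abs{\fkm[z_1^*,z_2]}\gtrsim\abs{z_1^*-z_2}^{-1/2}$. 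Your treatment of the perturbation term (integrating $\abs{\fkm'}\asymp d^{-1/2}$ along the vertical segment) does recover the bound $\tilde\eta_1/d(\tilde z_1)^{1/2}\lesssim\Im\tilde w_1/\Im\tilde m_1\lesssim\beta(\tilde w_1^*,\cdot)$ and is fine, but without a lower bound on the mixed divided difference at the original parameters the comparison $\beta(w_1^*,w_2)\lesssim\beta(\tilde w_1^*,\tilde w_2)$ — and hence the regularity transfer — is not proved. Replacing your monotonicity heuristic by the triangle-inequality computation together with (\ref{eqn:diff-compare-same-opp}) repairs the proof.
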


\begin{proof}[Proof of Lemma \ref{lemma:increasing-scale}]
It suffices to prove the lemma with $\tilde{w}_2 = w_2$. For simplicity, we use abbreviations such as $\tilde{z}_1 \equiv \tilde{w}_1^2$ and $\tilde{m}_1 \equiv m(\tilde{w}_1)$. Referring to Lemma \ref{lemma:regular-obs-properties} \ref{item:ppt-constraint}, we need to verify
\begin{equation*}
    \abs{\angles{\tilde{\Gamma}_1^* A_M \Gamma_2 \Sigma}} \lesssim \beta(\tilde{w}_1^*, w_2)
    \qand
    \abs{\angles{\tilde{\Gamma}_1 A_M \Gamma_2^* \Sigma}} \lesssim \beta(\tilde{w}_1, w_2^*).
\end{equation*}
Again, for illustration purposes, we only prove the former. By utilizing (\ref{eqn:Gamma-perturbation}), we have
\begin{equation*}
    \angles{\tilde{\Gamma}_1^* A_M \Gamma_2 \Sigma}
    = \angles{\Gamma_1^* A_M \Gamma_2 \Sigma}
    + {O} (\abs{\tilde{w}_1 - w_1} + \abs{\tilde{\fkm}_1 - \fkm_1}).    
\end{equation*}
Here we can invoke Lemma \ref{lemma:diff-estimate} to get
\begin{align*}
    \abs{\tilde{w}_1 - w_1} & \leq \tilde{\eta}_1
    \lesssim \Im \tilde{w}_1 / \Im \tilde{m}_1
    \lesssim \beta(\tilde{w}_1^*, w_2), \\
    \abs{\tilde{\fkm}_1 - \fkm_1} 
    & \lesssim \abs{\tilde{w}_1 - w_1} \cdot \abs{\fkm [\tilde{z}_1, z_1]}
    \lesssim \tilde{\eta}_1 / d(\tilde{z}_1)^{1/2} 
    \asymp \Im \tilde{w}_1 / \Im \tilde{m}_1
    \lesssim \beta(\tilde{w}_1^*, w_2).
\end{align*}
On the other hand, due to the $\{ w_1, w_2 \}$-regularity of $A$, we have $\abs{\angles{\Gamma_1^* A_M \Gamma_2 \Sigma}} \lesssim \beta(w_1^*, w_2)$. Therefore, the proof is complete once we can demonstrate that $\beta(w_1^*, w_2) \lesssim \beta(\tilde{w}_1^*, w_2)$, or equivalently, $\abs{\fkm[\tilde{z}_1^*, z_2]} \lesssim \abs{\fkm[z_1^*, z_2]}$. To establish this comparison relation, we can utilize $\abs{\tilde{z}_1^* - z_2} \gtrsim \abs{z_1^* - z_2}$, and the estimate $\abs{\tilde{\fkm}_1 - \fkm_1} \lesssim \tilde{\eta}_1 / d(\tilde{z}_1)^{1/2}$ obtained in the preceding equation, yielding
\begin{align*}
    \abs{\fkm[\tilde{z}_1^*, z_2]}
    & = \frac{\abs{\tilde{\fkm}_1^* - \fkm_2}}{\abs{\tilde{z}_1^* - z_2}}
    \leq \frac{\abs{\fkm_1^* - \fkm_2}}{\abs{\tilde{z}_1^* - z_2}}
    + \frac{\abs{\tilde{\fkm}_1 - \fkm_1}}{\abs{\tilde{z}_1^* - z_2}} \\
    & \lesssim \abs{\fkm[z_1^*, z_2]}
    + \frac{\tilde{\eta}_1 / d(\tilde{z}_1)^{1/2}}{\abs{\tilde{z}_1^* - z_2}} \\
    & \lesssim \abs{\fkm[z_1^*, z_2]}
    + \abs{\tilde{z}_1^* - z_2}^{-1/2}
    \lesssim \abs{\fkm[z_1^*, z_2]}
    + \abs{z_1^* - z_2}^{-1/2}
    \lesssim \abs{\fkm[z_1^*, z_2]}.
\end{align*}
Here, in the last line, we used $\tilde{\eta}_1 \lesssim d(\tilde{z}_1) \wedge \abs{\tilde{z}_1^* - z_2}$ for the first inequality and (\ref{eqn:diff-compare-same-opp}) for the last inequality. This completes the proof of Lemma \ref{lemma:increasing-scale}.
\end{proof}

\subsection{Coefficients in regularization} \label{subsec:coeffcients}

This section contains the proofs of the two lemmas used in Section \ref{sec:representation} concerning the coefficients introduced by regularizing $V_1 \Pi_2$ and $\Xi^\pm$.

\begin{proof}[Proof of Lemma \ref{lemma:coef-V1-Pi2}]
Recall that the coefficients $\coefTwo^\pm_{w_1, w_2} (\Xi^\pm)$ can be determined using the equations (\ref{eqn:explicit-Xi}), (\ref{def:coefficients-two-point}), (\ref{def:two-point-regularization-Xi}) and (\ref{eqn:decomposition-Xi}). As mentioned, we omit their explicit forms since they are not essential here. In fact, thanks to our bound (\ref{bound:compare-coefOne-coefTwo}) on the difference between $\coefTwo (\bfSigma_M)$ and $\coefOne^\pm (\bfSigma_M)$, it is not difficult to deduce that for $\alpha = \pm$,
\begin{equation*}
    \abs{\coefTwo^{\alpha}_{w_1, w_2} (\Xi^{\pm}) - \coefOne^{\alpha}_{w_1} (\Xi^{\pm})}
    \lesssim \abs{w_1 - w_2}^{1/2} + \eta^{1/2},
\end{equation*}
where we also used the upper bound of the control parameters $\beta$ from (\ref{eqn:ctrl-para-bound-opp}). This should not be surprising, as both the parameters $\vartheta$ and $\theta$ are designed for regularization. While we allow for certain flexibility in regularizing a matrix, as depicted by (\ref{eqn:regular-obs-condition}), the discrepancy between any two different methods of regularization should not surpass our predetermined threshold for tolerating such differences.

Returning to the proof of Lemma \ref{lemma:coef-V1-Pi2}, the discussion above implies that it is sufficient to derive (\ref{bound:coefficients-V-Pi-Xi}) with $\coefTwo^{\alpha}_{w_1, w_2}$ replaced by $\coefOne^{\alpha}_{w_1}$. Let us start with the case where $\fks_1 = \fks_2$. Referring to the explicit formula for $\Xi^\pm \equiv \Xi^\pm (w_1, w_2)$ in (\ref{eqn:explicit-Xi}) as well as the formula (\ref{eqn:ratio-Im-Gamma-Sigma}), when $\fks_1 = \fks_2$, we have
\begin{align*}
    \coefOne_{w_1}^+ (\Xi^+)
    & = {O} (\abs{w_1 - w_2} + \abs{m_1^* - m_2} + {\Im w_1} / {\Im m_1}) 
    = {O} (\abs{\Im m_1} + \abs{w_1 - w_2}^{1/2}), \\
    \coefOne_{w_1}^+ (\Xi^-)
    & = {O} (\abs{w_1 - w_2} + \abs{m_1 - m_2} )
    = {O} (\abs{w_1 - w_2}^{1/2}),
\end{align*}
where we used (\ref{eqn:diff-estimate-same}) and the fact that $\eta \lesssim \abs{\Im m}^2$ for $w \in \bbd_0$. Plugging these estimates into (\ref{bound:coefficients-V-Pi-Xi}), it remains to demonstrate
\begin{equation}
    \abs{\coefOnePre_{w_1}^+ (V_1 \Pi_2)} \cdot \abs{\Im m_1} \lesssim \abs{w_1 - w_2}^{1/2} + \eta^{1/2}.
    \label{tmp:target-sign-equal}
\end{equation}

We proceed to compute the coefficient $\coefOnePre_{w_1}^+ (V_1 \Pi_2)$. Let us denote the two diagonal blocks of $A_1$ as $A_M \in \bbc^{M \times M}$ and $A_N \in \bbc^{N \times N}$, respectively. Note that we have $\angles{A_N} = 0$ by regularity of $A_1$. Utilizing the explicit form of $\opX_{12}$ in (\ref{eqn:explicit-X12}), one can compute
\begin{subequations} \label{tmp:coef-V1-Pi2}
\begin{align}
    \angles{\Im \Pi_1 (V_1 \Pi_2) \bfSigma_M}
    & = \angles{\Im \Gamma_1 A_{M} \Gamma_2 \Sigma}
    + \frac{\fkb_{12} \angles{\Gamma_1 A_{M} \Gamma_2 \Sigma}}
    {1 - \fkt_{12} \fkb_{12}} \angles{\Im \Gamma_1 \Sigma \Gamma_2 \Sigma}, \\
    \angles{(V_1 \Pi_2) \bfId_N}
    & = \frac{m_2}{1 - \fkt_{12} \fkb_{12}}
    \angles{\Gamma_1 A_{M} \Gamma_2 \Sigma}.
\end{align} 
\end{subequations}
It turns out that we can utilize $\Im \Gamma = (\Gamma - \Gamma^*)/(2 \mathrm{i})$ to get
\begin{align}
\begin{split}
    2 \coefOnePre^+ (V_1 \Pi_2)
    & = \frac{\angles{\Im \Pi_1 (V_1 \Pi_2) \bfSigma_M}}{\angles{\Im \Gamma_1 \Sigma}} 
    + \angles{(V_1 \Pi_2) \bfId_N} \\
    & = \bigg ( \frac{1 - \fkt_{1^*2} \fkb_{12}}{2 \mathrm{i} \angles{\Im \Gamma_1 \Sigma}} + m_2 \bigg )
    \frac{\angles{\Gamma_1 A_M \Gamma_2 \Sigma}}{1 - \fkt_{12} \fkb_{12}}
    - \frac{\angles{\Gamma_1^* A_M \Gamma_2 \Sigma}}{2 \mathrm{i} \angles{\Im \Gamma_1 \Sigma}} \\
    & = \frac{1 - \fkt_{1^*2} \fkb_{1^*2}}{ 1 - \fkt_{12} \fkb_{12}}
    \frac{\angles{\Gamma_1 A_M \Gamma_2 \Sigma}}{2 \mathrm{i} \angles{\Im \Gamma_1 \Sigma}}
    + \left ( 1 - \frac{\Im m_1 \fkt_{1^*2}}{\angles{\Im \Gamma_1 \Sigma}} \right )
    \frac{m_2 \angles{\Gamma_1 A_M \Gamma_2 \Sigma}}{1 - \fkt_{12} \fkb_{12}}
    - \frac{\angles{\Gamma_1^* A_M \Gamma_2 \Sigma}}{2 \mathrm{i} \angles{\Im \Gamma_1 \Sigma}}.    
\end{split} \label{eqn:rewrite-coef-V-Pi}
\end{align}
Note that $A_1$ satisfies (\ref{eqn:regular-obs-condition}) due to its $\{ w_1, w_2 \}$-regularity. Therefore, the first and the third term in the last line of (\ref{eqn:rewrite-coef-V-Pi}) can be controlled by $\beta_{1^*2} / \abs{\Im m_1}$. As for the second term, we have
\begin{align*}
    1 - \frac{\Im m_1 \fkt_{1^*2}}{\angles{\Im \Gamma_1 \Sigma}}
    & = \bigg ( {1 - \frac{\Im m_1}{\angles{\Im \Gamma_1 \Sigma} \fkb_{1^*2}}} \bigg )
    + \frac{\Im m_1 (1 - \fkt_{1^*2} \fkb_{1^*2})}{\angles{\Im \Gamma_1 \Sigma} \fkb_{1^*2}} \\
    & = \bigg ( {1 - \frac{1/(m_1^*m_2)}{1/\abs{m_1}^2 + {O}({\Im w_1}/{\Im m_1})}} \bigg )
    + {O} (\beta_{1^*2}) \\
    & = {O} (\abs{m_1 - m_2} + {\Im w_1}/{\Im m_1} + \beta_{1^*2})
    = {O} (\beta_{1^*2}),
\end{align*}
where we also used (\ref{eqn:ctrl-para-bound-opp}) and (\ref{eqn:ratio-Im-Gamma-Sigma}). To summarize, we have established 
\begin{equation*}
    \abs{\coefOnePre_{w_1}^+ (V_1 \Pi_2)} \cdot \abs{\Im m_1} \lesssim \beta_{1^*2},    
\end{equation*}
which results in (\ref{tmp:target-sign-equal}) as one can employ the upper estimate of $\beta_{1^*2}$ from (\ref{eqn:ctrl-para-bound-opp}).

Now, it remains to verify (\ref{bound:coefficients-V-Pi-Xi}) for the case where $\fks_1 \not= \fks_2$, which is much simpler. In fact, by leveraging (\ref{eqn:diff-estimate-same}) and (\ref{eqn:ratio-Im-Gamma-Sigma}), we directly arrive at
\begin{equation*}
    \coefOne_{w_1}^+ (\Xi^\pm)
    = {O} (\abs{w_1 - w_2} + \abs{m_1^* - m_2} + {\Im w_1} / {\Im m_1}) 
    = {O} (\abs{w_1 - w_2}^{1/2} + \eta^{1/2}).
\end{equation*}
This concludes our proof of Lemma \ref{lemma:coef-V1-Pi2}.
\end{proof}

\begin{proof}[Proof of Lemma \ref{lemma:coef-Sigma-circ}]
By definition of $\coefOnePre^\pm$ in (\ref{def:coefficients-one-point-pre}), the target quantity can be rewritten as
\begin{equation*}
    \caQ := 1 + (m_1+m_2) \coefOnePre^+ (\tilde{V}_1 \Pi_2)
    - (m_1-m_2) \coefOnePre^- (\tilde{V}_1 \Pi_2)
    = 1 + m_2 \frac{\angles{\Im \Pi_1 (\tilde{V}_1 \Pi_2) \bfSigma_M}}{\angles{\Im \Gamma_1 \Sigma}} 
    + m_1 \angles{(\tilde{V}_1 \Pi_2) \bfId_N},
\end{equation*}
where $\tilde{V}_1 = \opX_{12} [\bfSigma_M^\circ]$ and $\bfSigma_M^\circ$ is regularized using (\ref{def:two-point-regularization-Sigma}). As per (\ref{def:two-point-regularization-Sigma}), the matrix $\bfSigma_M^\circ$ is null outside its top-left $M \times M$ block. By abuse of notation, here we denote this top-left block as $\Sigma^\circ$. In view of (\ref{tmp:coef-V1-Pi2}), the quantity $\caQ$ can be further expressed as 
\begin{equation*}
    \caQ = 1 + \frac{\angles{\Im \Gamma_1 \Sigma^\circ (m_2 \Gamma_2 \Sigma)}}{\angles{\Im \Gamma_1 \Sigma}}
    + \frac{\fkb_{12} \angles{\Gamma_1 \Sigma^\circ \Gamma_2 \Sigma}}
    {1 - \fkt_{12} \fkb_{12}} 
    \frac{\angles{\Im \Gamma_1 \Sigma (m_2 \Gamma_2 \Sigma)}}{\angles{\Im \Gamma_1 \Sigma}}
    + \frac{\fkb_{12} \angles{\Gamma_1 \Sigma^\circ \Gamma_2 \Sigma}}{1 - \fkt_{12} \fkb_{12}}.
\end{equation*}
Plugging the definition of $\bfSigma_M^\circ$ from (\ref{def:two-point-regularization-Sigma}) into the first term on r.h.s. of the above equation and applying the identity $I + m \Sigma \Gamma = -w \Gamma$ to the last two terms, we arrive at
\begin{equation*}
    \caQ = - \frac{\angles{\Im \Gamma_1 \Sigma (w_2 \Gamma_2)}}{\angles{\Im \Gamma_1 \Sigma}}
    - m_2 \coefTwo (\bfSigma_M) \frac{\angles{\Im \Gamma_1 \Sigma \Gamma_2}}{\angles{\Im \Gamma_1 \Sigma}}
    - \frac{\fkb_{12} \angles{\Gamma_1 \Sigma^\circ \Gamma_2 \Sigma}}{1 - \fkt_{12} \fkb_{12}} 
    \frac{\angles{\Im \Gamma_1 \Sigma (w_2 \Gamma_2)}}{\angles{\Im \Gamma_1 \Sigma}}
    =: \tilde{\caQ} \cdot \frac{\angles{\Im \Gamma_1 \Sigma \Gamma_2}}{\angles{\Im \Gamma_1 \Sigma}}. 
\end{equation*}
Therefore, to prove (\ref{bound:coefficients-Sigma-circ}), it suffices to demonstrate $\abs{\tilde{\caQ}} \gtrsim 1$. The quantity $\tilde{\caQ}$ introduced earlier can be rewritten as follows,
\begin{align*}
    \tilde{\caQ} 
    & = - w_2 - m_2 \coefTwo (\bfSigma_M)
    - \frac{w_2 \fkb_{12} \angles{\Gamma_1 \Sigma^\circ \Gamma_2 \Sigma}}{1 - \fkt_{12} \fkb_{12}} \\
    & = - w_2 \bigg ( {1 + \frac{\fkt_{12} \fkb_{12}}{1 - \fkt_{12} \fkb_{12}}} \bigg )
    - \coefTwo (\bfSigma_M) \bigg ( {m_2 - \frac{\fkb_{12} \angles{\Gamma_1 \Sigma (w_2 \Gamma_2)}}{1 - \fkt_{12} \fkb_{12}}} \bigg ) \\
    & = - \frac{w_2}{1 - \fkt_{12} \fkb_{12}}
    - \coefTwo (\bfSigma_M) \bigg ( { \frac{m_2}{1 - \fkt_{12} \fkb_{12}}
    + \frac{\fkb_{12} \angles{\Gamma_1 \Sigma}}{1 - \fkt_{12} \fkb_{12}} } \bigg ) \\
    & = - \frac{w_2}{1 - \fkt_{12} \fkb_{12}}
    - \coefTwo (\bfSigma_M) \bigg ( { \frac{m_2}{1 - \fkt_{12} \fkb_{12}}
    - \frac{w_1 \fkb_{12} + m_2}{1 - \fkt_{12} \fkb_{12}} } \bigg )
    = \frac{w_1 \fkb_{12} \coefTwo (\bfSigma_M) - w_2}{1 - \fkt_{12} \fkb_{12}},
\end{align*}
where we utilized $I + m \Sigma \Gamma = -w \Gamma$ again in the third line and (\ref{eqn:self-consistent-w}) in the last line. Now, recall the definition of the coefficient $\coefTwo (\bfSigma_M) \equiv \coefTwo_{w_1, w_2} (\bfSigma_M)$ specified in (\ref{def:coefficients-two-point}).
\begin{itemize}
    \item If $\abs{w_1 - w_2} > \tau^\prime$, then $\coefTwo (\bfSigma_M) = 0$ and thus $\abs{\tilde{\caQ}} = \abs{w_2}/\abs{1 - \fkt_{12} \fkb_{12}} \gtrsim \abs{w_1 - w_2} \gtrsim 1$. For the following, we assume $\abs{w_1 - w_2} \leq \tau^\prime$ so that the indicator function in (\ref{def:coefficients-two-point}) is nonzero.
    \item  If $\fks_1 \not= \fks_2$, then we have $\coefTwo (\bfSigma_M) = (w_2/w_1) \fkt_{12}$ and therefore $\abs{\tilde{\caQ}} = \abs{w_2} \gtrsim 1$.
    \item It remains to discuss the two cases in (\ref{def:coefficients-two-point}) corresponding to $\fks_1 = \fks_2$. Without loss of generality, let us assume $d_1 \geq d_2$. In this case, $\coefTwo (\bfSigma_M)$ is given by $(w_2/w_1) / \fkb_{1^*2}$ and we have
    \begin{equation*}
        \abs{\tilde{\caQ}} 
        \asymp \bigg | {\frac{\fkb_{12} / \fkb_{1^*2} - 1}{1 - \fkt_{12} \fkb_{12}}} \bigg |
        \asymp \bigg | {\frac{\Im m_1}{1 - \fkt_{12} \fkb_{12}}} \bigg |
        \gtrsim \frac{d_1^{1/2}}{(d_1 + d_2)^{1/2}} \gtrsim 1,
    \end{equation*}
    where we lower bounded $\abs{1 - \fkt_{12} \fkb_{12}}^{-1} \asymp \abs{\fkm [z_1, z_2]}$ by $(d_1 + d_2)^{-1/2}$ as remarked in (\ref{eqn:diff-lower-refined}). 
\end{itemize}
In conclusion, we have completed the proof of Lemma \ref{lemma:coef-Sigma-circ}.
\end{proof}

\subsection{Reduction inequalities} \label{subsec:reduction-inequalities}

The section is dedicated to proving Lemmas \ref{lemma:reduction-inequality}, \ref{lemma:sum-off-diagonal} and \ref{lemma:chains-with-Id}. In the proof of Lemma \ref{lemma:reduction-inequality}, we rely on the following estimates involving the absolute values of resolvents.

\begin{lemma}[Absolute values of resolvents] \label{lemma:abs-resolvent}
We have the following estimates uniformly in $\bbd_0$,
\begin{equation}
    \angles{\abs{G}} \prec 1
    \qand
    \angles{\fku, \abs{G} \fkv} \prec 1.
    \label{bound:single-abs-resolvent}
\end{equation}
In addition, suppose the $A_k$'s below are regular in the sense of Definition \ref{Def:regular-obs-in-chain}. Then, we have the following estimates uniformly in $\bbd_{\ell}$ provided (\ref{bound:inductive-hypothesis}),
\begin{equation}
    \abs{\angles{\abs{G_1} A_1 \abs{G_2} A_2}} \prec 1 + \frac{\paraAVE_2}{N \eta}
    \qand
    \abs{\angles{\fku, G_1 A_1 \abs{G_2} A_2 G_3 \fkv}} \prec \frac{1}{\eta} + \frac{\paraISO_2}{\sqrt{N} \eta^{3/2}}.
    \label{bound:multi-abs-resolvent}
\end{equation}
\end{lemma}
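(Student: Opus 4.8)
The plan is to reduce all four bounds to the single- and multi-resolvent local laws already at our disposal, by means of the integral representation of the absolute value of a resolvent. Writing $\eta = \abs{\Im w}$ and applying the scalar identity $\abs{s-w}^{-1} = \pi^{-1}\int_{\bbr}\big((s-\Re w)^2+\eta^2+t^2\big)^{-1}\,\mathrm{d}t$ to each eigenvalue of $H$ gives the operator identity
\[
    \abs{G(w)} = \big( G(w)G(w)^\ast \big)^{1/2} = \frac{1}{\pi}\int_{\bbr}\frac{\Im G\big( \Re w + \mathrm{i}\sqrt{\eta^2+t^2} \big)}{\sqrt{\eta^2+t^2}}\,\mathrm{d}t .
\]
Hence any chain containing a factor $\abs{G_k}$ becomes, after inserting this identity and expanding $\Im G = (G-G^\ast)/(2\mathrm{i})$, a finite combination of $t$-averages of ordinary resolvent chains in which $\abs{G_k}$ has been replaced by $G$ or $G^\ast$ at the shifted parameter $\Re w_k+\mathrm{i}\tilde\eta_k$ with $\tilde\eta_k := \sqrt{\eta_k^2+t^2}\ge\eta_k$. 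It then remains to bound those chains by the relevant local law and to integrate the bound against $\mathrm{d}t/\sqrt{\eta^2+t^2}$, splitting the $t$-range at $\abs{t}\asymp 1$: for $\abs{t}\lesssim 1$ one uses the genuine local-law estimates (with $\int_{\abs{t}\lesssim 1}\mathrm{d}t/\sqrt{\eta^2+t^2}\lesssim\log N$ harmless under $\prec$), whereas for $\abs{t}\gtrsim 1$ the trivial decay $\norm{G(E+\mathrm{i}\tilde\eta)}\le\tilde\eta^{-1}$ makes the tail $O(1)$.

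For (\ref{bound:single-abs-resolvent}) this is immediate: the averaged and isotropic single-resolvent local laws (\ref{eqn:single-resolvent-local-law}), together with $\norm{\Pi}\lesssim 1$ from Lemma \ref{lemma:m-order-estimate}, give $\angles{\Im G(E+\mathrm{i}\tilde\eta)}\prec 1 + (N\tilde\eta)^{-1}$ and $\abs{\angles{\fku,\Im G(E+\mathrm{i}\tilde\eta)\fkv}}\prec 1 + (N\tilde\eta)^{-1/2}$; integrating yields $\angles{\abs G}\prec 1$ and $\angles{\fku,\abs G\fkv}\prec 1$, the $(N\tilde\eta)^{-1}$ part contributing $\lesssim(N\eta)^{-1}\lesssim 1$. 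For parameters in $\bbd_0$ below the scale $N^{-1+\varepsilon}$ one first descends by the monotonicity of $\tilde\eta\mapsto\tilde\eta\angles{\Im G(E+\mathrm{i}\tilde\eta)}$ to reduce to parameters covered by Proposition \ref{prop:single-resolvent-local}.

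For the first bound in (\ref{bound:multi-abs-resolvent}), apply the representation to both $\abs{G_1}$ and $\abs{G_2}$; the resulting double $t$-integral has integrand of the form $\angles{G^{(1)}A_1 G^{(2)}A_2}$, where $G^{(1)},G^{(2)}$ are $G$ or $G^\ast$ at shifted parameters. By Lemma \ref{lemma:increasing-scale} the matrices $A_1,A_2$ stay regular with respect to these shifted parameters, so the inductive hypothesis (\ref{bound:inductive-hypothesis}) in the form $\ParaAVE_2\prec\paraAVE_2$, together with $\norm{\Pi_{12}(A_1)}\lesssim 1$ from Lemma \ref{lemma:opD-bound-regular}, bounds the integrand by $\prec 1 + \paraAVE_2/(N\tilde\eta)$ with $\tilde\eta = \tilde\eta_1\wedge\tilde\eta_2$. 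Performing the two $t$-integrals (the worst term being $\int\!\!\int\mathrm{d}t_1\,\mathrm{d}t_2/(\tilde\eta_1\tilde\eta_2(\tilde\eta_1\wedge\tilde\eta_2))\lesssim\eta^{-1}\log N$) gives $\abs{\angles{\abs{G_1}A_1\abs{G_2}A_2}}\prec 1 + \paraAVE_2/(N\eta)$. For the second bound in (\ref{bound:multi-abs-resolvent}), apply the representation only to the middle factor $\abs{G_2}$; the numerator becomes a $t$-average of the length-three isotropic chains $\angles{\fku,G_1 A_1 G^{(2)}A_2 G_3\fkv}$, which are governed by (\ref{bound:iso-law-two-regular})/the inductive hypothesis $\ParaISO_2\prec\paraISO_2$ and by $\norm{\Pi_{123}(A_1,A_2)}\lesssim\eta'^{-1}$ (Lemma \ref{lemma:opD-bound-regular}), with $\eta' = \eta_1\wedge\tilde\eta_2\wedge\eta_3\ge\eta$; thus the integrand is $\prec\eta'^{-1} + \paraISO_2(N\eta'^3)^{-1/2}$, and the remaining integral against $\tilde\eta_2^{-1}$ — split according to whether $\tilde\eta_2$ realizes $\eta'$ — yields $\prec\eta^{-1} + \paraISO_2 N^{-1/2}\eta^{-3/2}$, as claimed.

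The main difficulty is the bookkeeping of the several imaginary parts so that, after the $t$-integrations, the denominators collapse to the single scale $\eta = \min_k\eta_k$ rather than to whichever parameter is currently being integrated; this forces a case analysis on which of the (scaled) parameters realizes the minimum and relies on the facts that raising an imaginary part preserves regularity (Lemma \ref{lemma:increasing-scale}) and that the deterministic approximations carry the correct scaling (Lemma \ref{lemma:opD-bound-regular}). A secondary point is that a shifted parameter $\Re w_k+\mathrm{i}\tilde\eta_k$ may leave the domain $\bbd_\ell$ on which the inductive hypothesis is posited; this occurs only once $\tilde\eta_k$ is of order one relative to the local length scale, where the nontrivial inductive bounds can be replaced by the trivial resolvent bounds and the corresponding $t$-tail is summable.
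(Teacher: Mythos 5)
Your proposal is correct and follows essentially the same route as the paper: the integral representation of $\abs{G}$ (the paper's Lemma \ref{lemma:integral-abs-resolvent}), a local/global split of the $t$-integral with the single-resolvent law or the inductive hypothesis plus Lemma \ref{lemma:increasing-scale} and Lemma \ref{lemma:opD-bound-regular} in the local regime and trivial resolvent bounds in the tail. The only differences are bookkeeping (you keep the shifted scale $\tilde\eta$ inside the integrand while the paper bounds the local part uniformly, and the paper additionally spells out the mixed regime $t_1\leq 1\leq t_2$ via $\angles{\abs{G}}\prec 1$ and a far-tail cutoff), none of which affects the argument.
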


The proof of \ref{lemma:abs-resolvent} is grounded on the following integral representation for the absolute values of resolvents, which has previously appeared in \cite[Lemma 5.1]{cipolloniOptimalMultiresolventLocal2022}.

\begin{lemma} \label{lemma:integral-abs-resolvent}
Let $w = E + \mathrm{i} \eta \in \bbc \backslash \bbr$. Then the absolute value of the resolvent can be represented as
\begin{equation*}
    \abs{G(E + \mathrm{i} \eta)} = \frac{2}{\pi} \int_0^\infty \frac{\Im G (E + \mathrm{i} \sqrt{\eta^2 + t^2})}{\sqrt{\eta^2 + t^2}} \mathrm{d} t.
\end{equation*}
\end{lemma}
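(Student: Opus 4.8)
The identity is purely a statement of functional calculus for the self-adjoint matrix $H$, so the plan is to reduce it to an elementary scalar integral and then lift it back via the spectral theorem. Write $w = E + \mathrm{i}\eta$ with $\eta \neq 0$ and set $A := (H - E)^2 + \eta^2$, which is a positive-definite matrix since every eigenvalue of $(H-E)^2$ is nonnegative and $\eta^2 > 0$. First I would record the two algebraic facts. On the one hand, using $G(w)^* = (H - E + \mathrm{i}\eta)^{-1}$ and $(H-E+\mathrm{i}\eta)^{-1}(H-E-\mathrm{i}\eta)^{-1} = [(H-E)^2 + \eta^2]^{-1} = A^{-1}$, we get
\begin{equation*}
    \abs{G(E + \mathrm{i}\eta)} = \big( G(w)^* G(w) \big)^{1/2} = A^{-1/2}.
\end{equation*}
On the other hand, for any $\eta' > 0$ one has $\Im G(E + \mathrm{i}\eta') = \tfrac{1}{2\mathrm{i}}\big( G(E+\mathrm{i}\eta') - G(E+\mathrm{i}\eta')^* \big) = \eta' \big[ (H-E)^2 + \eta'^2 \big]^{-1}$. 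Applying this with $\eta' = \sqrt{\eta^2 + t^2}$ gives
\begin{equation*}
    \frac{\Im G\big(E + \mathrm{i}\sqrt{\eta^2 + t^2}\big)}{\sqrt{\eta^2 + t^2}} = \big[ (H-E)^2 + \eta^2 + t^2 \big]^{-1} = (A + t^2)^{-1},
\end{equation*}
so the claimed identity becomes $\tfrac{2}{\pi}\int_0^\infty (A + t^2)^{-1}\,\mathrm{d} t = A^{-1/2}$.

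Next I would prove this matrix identity by diagonalizing. Writing $H = \sum_{i \in \bbj} s_i \bfxi_i \bfxi_i^\top$ as in (\ref{def:linearization}), the matrix $A$ has the same eigenbasis, with eigenvalues $a_i := (s_i - E)^2 + \eta^2 \geq \eta^2 > 0$. For each $i$ the scalar identity
\begin{equation*}
    \frac{2}{\pi}\int_0^\infty \frac{\mathrm{d} t}{a_i + t^2} = \frac{2}{\pi} \cdot \frac{1}{\sqrt{a_i}} \cdot \frac{\pi}{2} = \frac{1}{\sqrt{a_i}}
\end{equation*}
holds by the standard antiderivative $\arctan$. Since $0 \le (a_i + t^2)^{-1} \le (\eta^2 + t^2)^{-1}$, which is integrable on $(0,\infty)$, the finite spectral sum may be moved inside the integral, giving $\tfrac{2}{\pi}\int_0^\infty (A+t^2)^{-1}\,\mathrm{d} t = \sum_{i} a_i^{-1/2}\,\bfxi_i\bfxi_i^\top = A^{-1/2}$, as required. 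Combining this with the two algebraic facts above completes the proof.

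Two minor points round things off. The right-hand side of the asserted identity depends on $\eta$ only through $\eta^2$ (the factor $\sqrt{\eta^2 + t^2}$ being unaffected by the sign of $\eta$), which is consistent with $\abs{G(E+\mathrm{i}\eta)} = \abs{G(E-\mathrm{i}\eta)}$, so no case distinction on $\operatorname{sgn}(\eta)$ is needed; and the integral converges absolutely because the integrand decays like $t^{-2}$ at infinity. I do not anticipate any genuine obstacle here: the only things requiring a moment's care are the definition $\abs{G} = (G^*G)^{1/2}$ (legitimate because $G$ is normal, being a function of the self-adjoint $H$), the identity $\Im G(E+\mathrm{i}\eta') = \eta'[(H-E)^2+\eta'^2]^{-1}$, and the justification for exchanging the spectral sum with the $t$-integral, all of which are routine.
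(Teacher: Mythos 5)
Your proof is correct: the reduction $\abs{G} = [(H-E)^2+\eta^2]^{-1/2}$, the identity $\Im G(E+\mathrm{i}\eta') = \eta'[(H-E)^2+\eta'^2]^{-1}$, and the scalar arctangent integral applied through the spectral decomposition of $H$ give exactly the stated representation, and the exchange of the (finite) spectral sum with the $t$-integral is justified as you say. The paper itself does not prove this lemma but cites it from an earlier work, where the argument is precisely this functional-calculus reduction to the elementary integral $\frac{2}{\pi}\int_0^\infty (a+t^2)^{-1}\,\mathrm{d}t = a^{-1/2}$, so your proof coincides with the standard one.
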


\begin{proof}[Proof of Lemma \ref{lemma:abs-resolvent}]
For (\ref{bound:single-abs-resolvent}), we activate Lemma \ref{lemma:integral-abs-resolvent} to get
\begin{align*}
    \angles{\abs{G(w)}}
    = \frac{2}{\pi} \bigg ( {\int_0^1 + \int_1^\infty} \bigg ) 
    \frac{\angles{\Im G (E + \mathrm{i} \sqrt{\eta^2 + t^2})}}
    {\sqrt{\eta^2 + t^2}} \mathrm{d} t
    \lesssim \int_0^1 \frac{\mathrm{d} t}{\sqrt{\eta^2 + t^2}} 
    + \int_1^\infty \frac{\mathrm{d} t}{\eta^2 + t^2} \prec 1,
\end{align*}
where we employed the single resolvent averaged law (\ref{eqn:single-resolvent-local-law}) for the local regime $0 \leq t \leq 1$ and utilized the trivial bound $\norm{G(w)} \leq 1/\abs{\Im w}$ for the global regime $t \geq 1$. Note that in the last step, we controlled the integral over $[0,1]$ by $\log (1/\eta) \leq \log N \prec 1$. The isotropic estimate in (\ref{bound:single-abs-resolvent}) can be derived completely analogously using the same approach.

Next, let us turn to the proof of (\ref{bound:multi-abs-resolvent}). In viewing of $\abs{G(w)} = \abs{G(w^*)}$ and Lemma \ref{lemma:regular-obs-properties} \ref{item:ppt-reflected-para}, we may assume, without loss of generality, that $w_k = E_k + \mathrm{i} \eta_k$, where $\eta_k > 0$. For simplicity, we also abbreviate $w_{k, t_k} := E_k + \mathrm{i} \sqrt{\eta_k^2 + t_k^2}$. Recall $\eta = \min_k \eta_k$. By invoking Lemma \ref{lemma:integral-abs-resolvent}, we obtain
\begin{align*}
    \abs{\angles{\abs{G_1} A_1 \abs{G_2} A_2}}
    \leq & \ \frac{4}{\pi^2} \int_0^\infty \int_0^\infty \frac{\abs{\angles{\Im G (w_{1, t_1}) A_1 \Im G (w_{2, t_2}) A_2}}}
    {\sqrt{\eta_1^2 + t_1^2} \sqrt{\eta_2^2 + t_2^2}} \mathrm{d} t_1 \mathrm{d} t_2 \\    
    \prec & \ \boundAVE_2 \int_0^1 \frac{\mathrm{d} t}{\sqrt{\eta^2 + t^2}} 
    \int_0^1 \frac{\mathrm{d} t}{\sqrt{\eta^2 + t^2}}
    + \int_0^1 \frac{\mathrm{d} t}{\sqrt{\eta^2 + t^2}} 
    \int_1^{N^{100}} \frac{\mathrm{d} t}{\eta^2 + t^2} \\
    & + \int_1^{N^{100}} \frac{\mathrm{d} t}{\eta^2 + t^2}
    \int_1^{N^{100}} \frac{\mathrm{d} t}{\eta^2 + t^2}
    + \int_0^\infty \frac{\mathrm{d} t}{\eta^2 + t^2} 
    \int_{N^{100}}^\infty \frac{\mathrm{d} t}{\eta^2 + t^2} 
    \lesssim \boundAVE_2 + 1.
\end{align*}
Here, in the second step, we utilized the decomposition $\Im G = (G - G^*)/(2 \mathrm{i})$ for both $\Im G$, and partition the integral into four parts:
\begin{itemize}
    \item For the local regime $t_1, t_2 \leq 1$, we note that the $A_k$'s are still regular w.r.t. their surrounding spectral parameters, thanks to Lemma \ref{lemma:increasing-scale}.
    \item For the case where $t_1 \leq 1 \leq t_2 \leq N^{100}$, we can apply the elementary inequality
    \begin{equation*}
        \abs{\angles{G(w_{1,t_1}) A_1 G(w_{2,t_2}) A_2}} 
        \leq \angles{\abs{G(w_{1,t_1})}} \| A_1 G(w_{2,t_2}) A_2\| 
        \prec 1/\sqrt{\eta_2^2 + t_2^2},
    \end{equation*}
    where we also utilized (\ref{bound:single-abs-resolvent}) to estimate $\angles{\abs{G(w_{1,t_1})}}$.
    \item When $1 \leq t_1, t_2 \leq N^{100}$, it suffices to employ the trivial bound $\| G (w) \| \leq 1/\abs{\Im w}$.
    \item Finally, for the case where $t_1 \vee t_2 \leq N^{100}$, we once again estimate the resolvents using the trivial bound. The key observation here is that the integral over $[N^{100}, \infty)$ exhibits a rapidly decreasing rate of ${O} (N^{-100})$, whereas the integral over $[0, \infty)$ can be controlled by $1/(\eta_1 \wedge \eta_2) \leq N$.
\end{itemize}

We proceed to prove the isotropic estimate in (\ref{bound:multi-abs-resolvent}), which is actually easier to handle. Again, we leverage the integral representation provided in Lemma \ref{lemma:integral-abs-resolvent} to get
\begin{align*}
    \abs{\angles{\fku, G_1 A_1 \abs{G_2} A_2 G_3 \fkv}}
    & \leq \frac{2}{\pi} \int_0^\infty \frac{\abs{\angles{\fku, G_1 A_1 \Im G (w_{2, t_2}) A_2 G_3 \fkv}}}
    {\sqrt{\eta_2^2 + t_2^2}} \mathrm{d} t_2 \\    
    & \prec \boundISO_2 \int_0^1 \frac{\mathrm{d} t}{\sqrt{\eta^2 + t^2}} 
    + \frac{1}{\eta} \int_1^{\infty} \frac{\mathrm{d} t}{\eta^2 + t^2}
    \lesssim \boundISO_2 + \frac{1}{\eta}.
\end{align*}
Here the integral is partitioned into two parts:
\begin{itemize}
    \item For the local regime $0 \leq t_2 \leq 1$, we again make use of the regularity of the $A_k$'s as guaranteed by Lemma \ref{lemma:increasing-scale}.
    \item While for the global regime $t_2 \geq 1$, we apply the elementary estimate
    \begin{equation*}
        \abs{\angles{\fku, G_1 A_1 \abs{G_2} A_2 G_3 \fkv}}
        \leq  \| G_1^* \fku \| 
        \| A_1 G (w_{2, t_2}) A_2 \| 
        \| G_3 \fkv \|
        \prec \eta^{-1} (\eta_2^2 + t_2^2)^{-1/2},
    \end{equation*}
    where we used estimates of the form
    \begin{equation*}
        \| G_1^* \fku \|^2
        = \angles{\fku, G_1 G_1^* \fku}
        = {\angles{\fku, \Im G_1 \fku}}/{\eta_1}
        \prec 1/\eta.
    \end{equation*}
\end{itemize}
This concludes our proof of Lemma \ref{lemma:abs-resolvent}.
\end{proof}

We are now ready to present the proof of Lemma \ref{lemma:reduction-inequality}. The proof we provide here closely parallels that of \cite[Lemma 3.6]{cipolloniOptimalMultiresolventLocal2022}, with the notable difference being that we avoid the need to centralize the resolvent chains, which is, in fact, unnecessary for deriving the self-improving inequalities (\ref{bound:self-improving-ineqs}). We have distilled the essential steps and compiled them here for the convenience of readers.


\begin{proof}[Proof of Lemma \ref{lemma:reduction-inequality}]
For the isotropic estimate involving $3$ regular matrices, we have
\begin{align*}
    & \abs{\angles{\fku, G_1 A_1 G_2 A_2 G_3 A_3 G_4 \fkv}} \\
    & = \bigg | {\sum_{i_2, i_3} \frac{
    \angles{\fku, G_1 A_1 \bfxi_{i_2}} \angles{\bfxi_{i_2}, A_2 \bfxi_{i_3}} 
    \angles{\bfxi_{i_3}, A_3 G_4 \fkv} }
    {(s_{i_2} - w_2) (s_{i_3} - w_3)} } \bigg | \\
    & \lesssim \bigg ( { \sum_{i_2, i_3}
    \frac{ \abs{\angles{\fku, G_1 A_1 \bfxi_{i_2}}}^2 \abs{\angles{\bfxi_{i_3}, A_3 G_4 \fkv}}^2}
    {\abs{s_{i_2} - w_2} \abs{s_{i_3} - w_3}} } \bigg )^{1/2}
    \bigg ( { \sum_{i_2, i_3} \frac{\abs{ \angles{\bfxi_{i_2}, A_2 \bfxi_{i_3}} }^2}
    {\abs{s_{i_2} - w_2} \abs{s_{i_3} - w_3}} } \bigg )^{1/2} \\
    & = \sqrt{N} \angles{\fku, G_1 A_1 \abs{G_2} A_1^* G_1^* \fku}^{1/2} 
    \angles{\fkv, G_4^* A_3^* \abs{G_3} A_3 G_4 \fkv}^{1/2} 
    \angles{\abs{G_2} A_2 \abs{G_3} A_2^*}^{1/2},
\end{align*}
where the summation indices $i_2, i_3$ range over the set $\bbj = \llbracket -M \wedge N, M \vee N \rrbracket \backslash \{ 0 \}$. Now, the first estimate in (\ref{bound:reduction-ISO-3-4}) is obtained by employing (\ref{bound:multi-abs-resolvent}). Similarly, for the isotropic estimate involving $4$ regular matrices, we have
\begin{align*}
    & \abs{\angles{\fku, G_1 A_1 G_2 A_2 G_3 A_3 G_4 A_4 G_5 \fkv}} \\
    & = \bigg | {\sum_{i_2, i_3, i_4} \frac{
    \angles{\fku, G_1 A_1 \bfxi_{i_2}} \angles{\bfxi_{i_2}, A_2 \bfxi_{i_3}} 
    \angles{\bfxi_{i_3}, A_3 \bfxi_{i_4}} \angles{\bfxi_{i_4}, A_4 G_5 \fkv} }
    {(s_{i_2} - w_2) (s_{i_3} - w_3) (s_{i_4} - w_4)} } \bigg | \\
    & \lesssim \sum_{i_2, i_3, i_4}
    \frac{ \abs{\angles{\fku, G_1 A_1 \bfxi_{i_2}}}^2 \abs{\angles{\bfxi_{i_3}, A_3 \bfxi_{i_4}}}^2
    + \abs{\angles{\bfxi_{i_4}, A_4 G_5 \fkv}}^2 \abs{\angles{\bfxi_{i_2}, A_2 \bfxi_{i_3}}}^2}
    {\abs{s_{i_2} - w_2} \abs{s_{i_3} - w_3} \abs{s_{i_4} - w_4}} \\
    & = N \angles{\fku, G_1 A_1 \abs{G_2} A_1^* G_1^* \fku} \angles{\abs{G_3} A_3 \abs{G_4} A_3^*}
    + N \angles{\fkv, G_5^* A_4^* \abs{G_4} A_4 G_5 \fkv} \angles{\abs{G_2} A_2 \abs{G_3} A_2^*}.
\end{align*}
This concludes the proof of (\ref{bound:reduction-ISO-3-4}). Regarding the estimates in (\ref{bound:vec-chain-with-Id-middle}), we only prove the last one to illustrate the idea. Note that
\begin{align*}
    \norm{G_3 A_2 G_2 A_1 G_1 \fku}^2 
    & = \angles{\fku, G_1^* A_1^* G_2^* A_2^* G_3^* G_3 A_2 G_2 A_1 G_1 \fku} \\
    & = \angles{\fku, G_1^* A_1^* G_2^* A_2^* (\Im G_3) A_2 G_2 A_1 G_1 \fku} / \eta_3
    = \oprec (\boundISO_4/\eta).
\end{align*}
Therefore, we have completed the proof of Lemma \ref{lemma:reduction-inequality}.
\end{proof}

\begin{proof}[Proof of Lemma \ref{lemma:sum-off-diagonal}]
Lemma \ref{lemma:sum-off-diagonal} follows as a straightforward corollary of (\ref{bound:vec-chain-with-Id-middle}). As an illustration, we estimate the first summation in (\ref{bound:sum-off-diagonal-1-regular}). By noting that $\bfId_M = \sum_i \bfe_i \bfe_i^\top$, we have
\begin{equation*}
    \sum_{i} \abs{\angles{\fku, G_1 A_1 G_2 D_2 \bfe_i}}^2
    = \angles{\fku, G_1 A_1 G_2 D_2 \bfId_M D_2^* G_2^* A_1^* G_1^* \fku}
    \lesssim \norm{G_2^* A_1^* G_1^* \fku}^2 
    \lesssim {\boundISO_2}/{\eta},
\end{equation*}
where we used (\ref{bound:vec-chain-with-Id-middle}) in the last step. The other estimates can be derived analogously.
\end{proof}

We now move forward to provide the proof of Lemma \ref{lemma:chains-with-Id}. By the restrictions imposed on $\bbd_{\ell}$ as stated in Lemma \ref{lemma:nested-domains} \ref{item:dist-boundary-domains}, for $w, w_1, w_2 \in \bbd_{\ell + 1}$, we have
\begin{equation}
    \int_{\partial \bbd_{\ell}} \frac{\mathrm{d} \zeta}{\abs{\zeta - w}} \lesssim \log \bigg ( {\frac{1}{\eta}} \bigg ) \prec 1
    \qand
    \int_{\partial \bbd_{\ell}} \frac{\mathrm{d} \zeta}{\abs{\zeta - w_1} \abs{\zeta - w_2}} \lesssim \frac{1}{\eta}.
    \label{bound:integral-abs-resolvent}
\end{equation}

\begin{proof}[Proof of Lemma \ref{lemma:chains-with-Id}]
We begin by deriving the expressions for $\Pi_{12} (\bfId^+)$ and $\Pi_{123} (\bfId^+, A_2)$. For brevity, we assume $w_1 \not= w_2$. The case of $w_1 = w_2$ can be obtained by letting $w_2 \to w_1$. Using (\ref{eqn:explicit-X12}), it is not difficult to verify that 
\begin{equation}
    \Pi_{12} (\bfId^+) = \Pi_1 \opX_{12} [\bfId^+] \Pi_2 = \frac{\Pi_1 - \Pi_2}{w_1 - w_2}.
    \label{eqn:opD-as-divi-diff-2}
\end{equation}
Therefore, as per definition (\ref{def:chain-deter-app-length3}), we have
\begin{equation}
    \Pi_{123} (\bfId^+, A_2)
    = \opB_{13}^{-1} [\Pi_{12} (\bfId^+) V_2 \Pi_3]
    = \frac{\opB_{13}^{-1} [(\Pi_1 - \Pi_2) V_2 \Pi_3]}{w_1 - w_2}
    = \frac{\Pi_{13} (A_2) - \Pi_{23} (A_2)}{w_1 - w_2},
    \label{eqn:opD-as-divi-diff-3}
\end{equation}
where in the last step we used (\ref{def:operator-B12}) and (\ref{def:operator-X12}) to get
\begin{align*}
    \opB_{13} [\Pi_{13} (A_2) - \Pi_{23} (A_2)]
    & = \Pi_1 A_2 \Pi_3 - \opB_{13} [\Pi_2 V_2 \Pi_3] \\
    & = \Pi_1 (A_2 + \opSd[\Pi_2 V_2 \Pi_3]) \Pi_3 - \Pi_2 V_2 \Pi_3 
    = (\Pi_1 - \Pi_2) V_2 \Pi_3.
\end{align*}

Consider $w_1, w_2 \in \bbd_{\ell + 1}$. Let us assume $\fks_1 = \fks_2$. The case where the spectral parameters reside in different half-planes is actually easier to handle, since in that case, we can always employ (\ref{eqn:resolvent-difference-product}) and lower bound the corresponding denominator $(w_1 - w_2)$ by $\eta$.

For the averaged forms in (\ref{bound:chains-with-Id-AVE-2}), we employ (\ref{eqn:contour-integral-product}) and (\ref{eqn:opD-as-divi-diff-2}) to rewrite it as
\begin{align*}
    \angles{\Upsilon_{12} (\bfId^+) A_2}
    & = \angles{[G_1 G_2 - \Pi_{12} (\bfId^+)] A_2}
    = \int_{\partial \bbd_{\ell}} \frac{\angles{\Upsilon(\zeta) A_2}}{(\zeta - w_1) (\zeta - w_2)} \mathrm{d} \zeta \\
    & = \int_{\partial \bbd_{\ell}} \frac{\angles{\Upsilon(\zeta) (A_2)^\circ_{\zeta}}}{(\zeta - w_1) (\zeta - w_2)} \mathrm{d} \zeta
    + \int_{\partial \bbd_{\ell}} \frac{\coefOne^{+}_{\zeta} (A_2) \angles{\Upsilon(\zeta)}}{(\zeta - w_1) (\zeta - w_2)} \mathrm{d} \zeta.
\end{align*}
With the assistance of (\ref{bound:integral-abs-resolvent}), the first integral above can be controlled as follows,
\begin{align*}
    (\abs{w_1-w_2}^{1/2} + \eta^{1/2}) &
    \int_{\partial \bbd_{\ell}} \bigg | {\frac{\angles{\Upsilon(\zeta) (A_2)^\circ_{\zeta}}}{(\zeta - w_1) (\zeta - w_2)}} \bigg | \mathrm{d} \zeta \\
    & \lesssim \frac{\paraAVE_1}{N \eta^{1/2}} \int_{\partial \bbd_{\ell}} 
    \frac{\abs{\zeta - w_1}^{1/2} + \abs{\zeta - w_2}^{1/2} + \eta^{1/2}}{\abs{\zeta - w_1} \abs{\zeta - w_2}} \mathrm{d} \zeta
    \prec \frac{\paraAVE_1}{N \eta}.
\end{align*}
As for the second integral, we utilize the single resolvent averaged law (\ref{eqn:single-resolvent-local-law}) alongside with the following perturbation bound,
\begin{align*}
    2 \abs{\coefOne^{+}_{\zeta} (A_2)}
    & = \abs{\angles{\Im \Gamma (\zeta) A_2} / {\angles{\Im \Gamma (\zeta)}}}  
    \lesssim \abs{\angles{\Gamma (\zeta)^* A_2 \Gamma (\zeta) \Sigma}} + \Im \zeta / \Im m(\zeta) \\
    & \lesssim \abs{\angles{\Gamma_1^* A_2 \Gamma_2 \Sigma}} + \abs{w_1 - \zeta} + \abs{w_2 - \zeta} + \abs{m_1 - m(\zeta)} + \abs{m_2 - m(\zeta)} + \Im \zeta / \Im m(\zeta) \\
    & \lesssim \beta_{1^* 2} + \abs{\zeta - w_1}^{1/2} + \abs{\zeta - w_2}^{1/2} + \eta^{1/2} \\
    & \lesssim \abs{\zeta - w_1}^{1/2} + \abs{\zeta - w_2}^{1/2} + \eta^{1/2}.
\end{align*}
It turns out that the second integral can be dominated as follows,
\begin{align*}
    (\abs{w_1-w_2}^{1/2} + \eta^{1/2}) &
    \int_{\partial \bbd_{\ell}} \bigg | {\frac{\coefOne^{+}_{\zeta} (A_2) \angles{\Upsilon(\zeta)}}{(\zeta - w_1) (\zeta - w_2)}} \bigg | \mathrm{d} \zeta \\
    & \lesssim \frac{1}{N \eta} \int_{\partial \bbd_{\ell}} 
    \frac{\abs{\zeta - w_1} + \abs{\zeta - w_2} + \eta}{\abs{\zeta - w_1} \abs{\zeta - w_2}} \mathrm{d} \zeta
    \prec \frac{1}{N \eta}.    
\end{align*}
Summarizing these estimates on the two integrals leads to (\ref{bound:chains-with-Id-AVE-2}). Next, let us turn to the isotropic form in (\ref{bound:chains-with-Id-ISO-2}). Again, we leverage (\ref{eqn:contour-integral-product}) and (\ref{eqn:opD-as-divi-diff-3}) to obtain
\begin{align*}
    \abs{\angles{\fku, \Upsilon_{123} (\bfId^+, A_2) \fkv}} 
    & = \angles{\fku, [G_1 G_2 A_2 G_3 - \Upsilon_{123} (\bfId^+, A_2)] \fkv}
    = \int \frac{\angles{\fku, \Upsilon(\zeta, A_2, w_3) \fkv}}{(\zeta - w_1) (\zeta - w_2)} \mathrm{d} \zeta \\
    & = \int \frac{\angles{\fku, \Upsilon(\zeta, (A_2)^\circ_{w_3}, w_3) \fkv}}{(\zeta - w_1)(\zeta - w_2)} \mathrm{d} \zeta
    + \sum_{\alpha = \pm} \coefOne^{\alpha}_{w_1} (A_1) \int \frac{\angles{\fku, \Upsilon(\zeta, \bfId^\alpha, w_3) \fkv}}{(\zeta - w_1)(\zeta - w_2)} \mathrm{d} \zeta.
\end{align*} 
We mention that one can alternatively regularize $A_2$ at $\zeta$. However, this does not make any essential difference to the argument presented here. The first integral above is relatively easy to estimate. Let us focus on the integral corresponding to $\alpha = +$. The perturbation bound we shall use here is
\begin{equation*}
    \abs{\coefOne^+_{w_3} (A_2)} 
    \lesssim \abs{w_2 - w_3}^{1/2} + \eta^{1/2}
    \lesssim \abs{\zeta - w_2}^{1/2} + \abs{\zeta - w_3}^{1/2} + \eta^{1/2}.
\end{equation*}
It turns out that, we can utilize (\ref{eqn:single-resolvent-local-law}) and (\ref{eqn:resolvent-difference-product}) to obtain
\begin{align*}
    (\abs{w_1-w_2}^{1/2} + \eta^{1/2}) & \int \bigg | { \frac{\coefOne^{+}_{w_3} (A_2) \angles{\fku, [\Upsilon (\zeta) - \Upsilon (w_3)] \fkv}}{(\zeta - w_1)(\zeta - w_2)(\zeta - w_3)}} \bigg |
    \mathrm{d} \zeta \\
    \prec & \ \frac{1}{\sqrt{N \eta}}
    \int \frac{\abs{\zeta - w_2} + \abs{\zeta - w_3} + \eta}{\abs{\zeta - w_1} \abs{\zeta - w_2} \abs{\zeta - w_3}} \mathrm{d} \zeta
    \prec \frac{1}{\sqrt{N \eta^3}}.
\end{align*}
As for the integral corresponding to $\alpha = -$, it can be handled straightforwardly using the chiral symmetry (\ref{eqn:chiral-symmetry}). This concludes our proof of (\ref{bound:chains-with-Id-ISO-2}). The derivation of (\ref{bound:chains-with-Id-ISO-1}) is omitted here as the argument follows the same structure. Therefore, we have completed the proof of Lemma \ref{lemma:chains-with-Id}.
\end{proof}


\printbibliography

\end{document}